\let\@listlla\list
\def\list#1#2{\@listlla{#1}{#2\itemsep=2pt\parsep=0pt\topsep=3pt plus 1pt minus 1 pt}}
\newcommand{\tridiagram}[6]{{\par\par \centering
\@picture(120,120)(0,0) \put(30,95){\makebox(0,0)[r]{$#1$}}
\put(90,30){\makebox(0,0)[tl]{$#3$}}
\put(90,95){\makebox(0,0)[l]{$#2$}}
\put(60,102){\makebox(0,0)[b]{$#4$}}
\put(102,60){\makebox(0,0)[l]{$#6$}}
\put(50,50){\makebox(0,0)[tr]{$#5$}} \thinlines
\put(40,95){\vector(1,0){40}} \put(95,80){\vector(0,-1){40}}
\put(25,80){\vector(1,-1){55}}
\endpicture\par\par}\noindent\ignorespaces}
\def\@map#1#2[#3]{\mbox{$#1 \colon #2 \longrightarrow #3$}}
\def\map#1#2{\@ifnextchar [{\@map{#1}{#2}}{\@map{#1}{#2}[#2]}}
\newcommand{\Aa}{{\mathcal A}}
\newcommand{\Ff}{{\mathcal F}}
\newcommand{\Jj}{{\mathcal J}}
\newcommand{\Oo}{{\mathcal O}}
\newcommand{\PP}{{\bf P}}
\newcommand{\Aut}[1]{\mbox{\rm Aut}{(#1)}}
\newcommand{\Out}[1]{\mbox{\rm Out}{(#1)}}
\newcommand{\Inn}[1]{\mbox{\rm Inn}{(#1)}}
\newcommand{\mr}{\mathbb{R}}
\newcommand{\mc}{\mathbb{C}}
\newcommand{\mz}{\mathbb{Z}}
\newcommand{\mh}{\mathbb{H}}
\newcommand{\mn}{\mathbb{N}}
\newcommand{\md}{\mathbb{D}}
\newcommand{\Z}{\ensuremath{\mathbb{Z}}}
\newcommand{\SI}{\ensuremath{\mathbb{S}^1}}
\newcommand{\F}[1]{\ensuremath{\mathbb{F}_{#1}}}
\newtheorem{theorem}{Theorem}[section]
\newtheorem{theo}{Theorem}[]
\newtheorem{proposition}[theorem]{Proposition}
\newtheorem{corollary}[theorem]{Corollary}
\newtheorem{lemma}[theorem]{Lemma}
\theoremstyle{definition}
\newtheorem{definition}[theorem]{Definition}
\newtheorem{remark}[theorem]{Remark}
\title{Poisson boundary of groups acting on $\mr$-trees \footnote{Version January 2011}}
\author{Fran\c{c}ois Gautero, Fr\'ed\'eric Math\'eus}
\address{Fran\c{c}ois Gautero, Universit\'e de Nice Sophia Antipolis,
Laboratoire de Math\'ematiques J.A. Dieudonn\'e (U.M.R. C.N.R.S. 6621), Parc Valrose, 06108
Nice, France} \email{Francois.Gautero@unice.fr}
\address{Fr\'ed\'eric Math\'eus, Universit\'e de Bretagne-Sud, L.M.A.M., Campus de Tohannic
BP 573, 56017 Vannes, France} \email{Frederic.Matheus@univ-ubs.fr}
\keywords{Poisson boundary, random walks, free groups, hyperbolic
and relatively hyperbolic groups, semi-direct products}
\subjclass[2000]{20F65, 60J50, 20F67, 60B15, 20P05}
\begin{document}

\begin{abstract}
We give a geometric description of the Poisson boundaries of certain
extensions of free and hyperbolic groups. In particular, we get a
full description of the Poisson boundaries of free-by-cyclic groups
in terms of the boundary of the free subgroup.
We rely upon the description of Poisson boundaries by means of a
topological compactification as developed by Kaimanovich. All the
groups studied here share the property of admitting a sufficiently
complicated action on some $\mr$-tree.
\end{abstract}

\maketitle

\section*{Introduction}

Let $G$ be a finitely generated group and $\mu$ a probability
measure on $G$. A bounded function $f:G\rightarrow \mr$ is {\em
$\mu$-harmonic} if
\[
\forall g\in G\: , \: f(g)\:=\:\sum_{h}\mu(h)f(gh)\:.
\]
Denote by $H^{\infty}(G,\mu)$ the Banach space of all bounded
$\mu$-harmonic functions on $G$ equipped with the sup norm. 
There is a simple way to build such functions. Consider a probability 
$G$-space $(X,\lambda)$ such that the measure  $\lambda$ is
$\mu$-stationary, which means that 
$$\lambda\:=\:\mu*\lambda\:\buildrel \rm def \over = \sum\mu(g)\cdot g\lambda\:\:$$
To any function $F\in L^{\infty}(X,\lambda)$ one
can assign a function $f$ on $G$ defined, for $g\in G$, by the {\em
Poisson formula} $f(g)\:=\:\langle F,g\lambda\rangle\:=\:\int_{X}F(gx)d\lambda(x)$.
This function $f$ belongs to $H^{\infty}(G,\mu)$ since $\lambda$ is
$\mu$-stationary.
In \cite{Fu63,Fu71}, Furstenberg constructed a measured
$G$-space $\Gamma$ with a $\mu$-stationary measure $\nu$ such that
the Poisson formula states an isometric isomorphism between
$L^{\infty}(\Gamma,\nu)$ and $H^{\infty}(G,\mu)$. The space
$(\Gamma,\nu)$ is called the {\em Poisson boundary} of the measured
group $(G,\mu)$. It is well defined up to $G$-equivariant measurable
isomorphism. A natural question is to decide whether a given $G$-space $X$ 
endowed with a certain $\mu$-stationary measure $\lambda$ is actually the 
Poisson boundary of the pair $(G,\mu)$.

\medskip

Let $F$ be a finitely generated free group, and $G$ a finitely generated
group containing $F$ as a normal subgroup. The action of $G$ on $F$ by inner 
automorphisms extends to an action by homeomorphisms on the geometric boundary $\partial F$
of $F$. The group $G$ is said to be a cyclic extension of $F$
if the factor $G/F$ is an infinite cyclic group. A probability measure $\mu$ on
$G$ is non-degenerate if the semigroup generated by its support is $G$. 
In this paper, we prove:

\begin{theo}\label{short}
Let $G$ be a cyclic extension of a free group $F$, and $\mu$ a 
non-degenerate probability measure on $G$ with a finite first moment
$($with respect to some gauge$)$. Then there exists a unique $\mu$-stationary probability 
measure $\lambda$ on $\partial F$ and the $G$-space $(\partial F,\lambda)$ is the 
Poisson boundary of $(G,\mu)$.
\end{theo}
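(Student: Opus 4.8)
The plan is to invoke Kaimanovich's entropy theory and, in particular, his \emph{strip criterion}, to promote $(\partial F,\lambda)$ from a mere $\mu$-boundary to the full Poisson boundary. The argument divides into three tasks: (i) produce a $\mu$-stationary measure $\lambda$ on $\partial F$ and prove it is unique; (ii) show that almost every sample path of the $\mu$-random walk converges to a point of $\partial F$, so that $(\partial F,\lambda)$ is realized as a $\mu$-boundary (a measurable $G$-equivariant quotient of the Poisson boundary), and symmetrically that $(\partial F,\check\lambda)$ is a $\check\mu$-boundary; and (iii) attach $G$-equivariant ``strips'' to almost every pair of boundary points and bound their growth. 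Here the hypothesis of a finite first moment with respect to a gauge is used twice: it is a standard fact that finite first moment on a finitely generated group forces $H(\mu)<\infty$ (bound $H(\mu)\le\beta m+\log Z$ by comparing $\mu$ with $g\mapsto e^{-\beta|g|}/Z$ for $\beta$ above the growth rate), and finiteness of entropy and first moment are exactly the inputs the entropy criteria require.

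First I would settle the stationary measure. Existence of some $\mu$-stationary $\lambda\in P(\partial F)$ is automatic: $\partial F$ is compact metrizable, $G$ acts by homeomorphisms, and any weak-$*$ limit of the Ces\`aro averages $\tfrac1n\sum_{k<n}\mu^{*k}*\lambda_0$ is stationary. Uniqueness, together with the boundary map, comes from the dynamics of the $G$-action on the tree underlying $F$ (the ``sufficiently complicated action on an $\mathbb{R}$-tree''): the semigroup generated by the support of $\mu$ acts non-elementarily, so loxodromic-type elements exhibit north--south contraction on $\partial F$. Concretely, for any stationary $\lambda$ the martingale convergence theorem gives, for almost every path $\mathbf x=(X_n)$, a limit $X_n\lambda\to\lambda^{\mathbf x}_\infty$ with $\lambda=\int\lambda^{\mathbf x}_\infty\,d\mathbb P(\mathbf x)$; the contraction furnished by the tree dynamics then forces $\lambda^{\mathbf x}_\infty=\delta_{\mathbf{bnd}(\mathbf x)}$ for a single limit point $\mathbf{bnd}(\mathbf x)\in\partial F$. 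Since this Dirac decomposition does not depend on which stationary measure one started from, $\lambda=\mathbf{bnd}_*\mathbb P$ is unique and $(\partial F,\lambda)$ is a $\mu$-boundary; the same run with $\check\mu$ yields $(\partial F,\check\lambda)$ as a $\check\mu$-boundary.

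The heart of the proof is step (iii). For this I would realize $\partial F$ as the boundary $\partial\mathcal T$ of an $\mathbb{R}$-tree $\mathcal T$ on which \emph{all} of $G$ acts by isometries, with $\partial\mathcal T\cong\partial F$ $G$-equivariantly (the expansion of the monodromy being absorbed into the metric of $\mathcal T$). Given distinct $\xi_-,\xi_+\in\partial F$, let $\gamma(\xi_-,\xi_+)\subset\mathcal T$ be the bi-infinite geodesic joining them, fix a basepoint $o\in\mathcal T$, and set $S(\xi_-,\xi_+)=\{g\in G:\ d_{\mathcal T}(g\cdot o,\gamma(\xi_-,\xi_+))\le D\}$ for a suitable $D$; equivariance $S(g\xi_-,g\xi_+)=gS(\xi_-,\xi_+)$ is then built in. The strip criterion reduces the theorem to the growth estimate that, for $\lambda\otimes\check\lambda$-almost every pair, $\tfrac1n\log\#\bigl(S(\xi_-,\xi_+)\cap B(e,Rn)\bigr)\to0$, where $B(e,Rn)$ is the gauge-ball of radius growing linearly in $n$ (at the rate of escape).

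This growth count is the main obstacle, and it is precisely why one must pass to an isometric $\mathbb{R}$-tree action rather than work in the Cayley tree of $F$. The extension $1\to F\to G\to\mathbb{Z}\to1$ is badly distorted on the $F$-factor: an element $t^{k}ft^{-k}$ (with $t$ a lift of a generator of $\mathbb{Z}$) has $G$-gauge roughly $|k|+|f|$ yet $F$-length possibly exponential in $k$, so a strip that is thin in the intrinsic $F$-geometry may a priori be fat in the $G$-gauge. Working in $\mathcal T$ makes the strips genuine metric neighborhoods of a single geodesic, whose count inside a gauge-ball is controlled by the linear growth along that geodesic together with the linear drift in the $\mathbb{Z}$-direction. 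Proving that this count is sub-exponential---equivalently that the logarithmic volume of the strips inside gauge-balls has zero exponential rate---is the technical core; once it is in place, Kaimanovich's strip criterion identifies $(\partial F,\lambda)$ with the Poisson boundary of $(G,\mu)$, completing the proof.
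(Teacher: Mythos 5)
Your high-level strategy --- Kaimanovich's entropy/strip machinery applied to an action on an $\mathbb{R}$-tree --- is indeed the one the paper follows, and your steps (i) and (ii) essentially reproduce the paper's uniqueness lemma for the stationary measure together with the convergence part of Kaimanovich's theorem. The gap is in the central geometric input of step (iii): you posit an $\mathbb{R}$-tree $\mathcal T$ on which \emph{all} of $G=F\rtimes_\alpha\mathbb{Z}$ acts by \emph{isometries} with $\partial\mathcal T\cong\partial F$ $G$-equivariantly, ``the expansion of the monodromy being absorbed into the metric.'' No such tree exists in general. When $\alpha$ has exponential growth, the invariant $\mathbb{R}$-tree one obtains (the Levitt--Lustig/GJLL limit tree) carries only a \emph{projectively} invariant structure: the generator $t$ of $\mathbb{Z}$ acts as a strict homothety, and rescaling the metric cannot turn a homothety of ratio $\lambda\neq 1$ into an isometry. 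Moreover the natural map $\mathcal Q\colon\partial F\to\widehat{\mathcal T}$ does not land in $\partial\mathcal T$: the hitting measure is carried by the compactified tree itself (in the observers' topology), not by its Gromov boundary --- the paper stresses that the limit measure is \emph{not} concentrated on $\partial\mathcal T$ --- so the identification $\partial\mathcal T\cong\partial F$ fails, and $\partial F$ has to be recovered by pulling back through $\mathcal Q$ after closing the eigenrays. When $\alpha$ has polynomial growth the tree is simplicial and genuinely isometric, but then single elements of $G$ fix entire subtrees, so strips of the form $S(\xi_-,\xi_+)=\{g:\ d_{\mathcal T}(g\cdot o,\gamma(\xi_-,\xi_+))\le D\}$ accumulate on third boundary points and the (CS) condition fails; this is precisely why the paper passes to a finite-index subgroup (whence only subsequential convergence, and the need for a finite first moment rather than just finite entropy) and makes $G$ act on a \emph{product} of two copies of the tree via two automorphisms in the same outer class, so that singular elements become hyperbolic on the second factor.

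A second, related difficulty: even granting a suitable tree, your growth bound for $\#\bigl(S(\xi_-,\xi_+)\cap B(e,Rn)\bigr)$ is asserted rather than proved, and a metric $D$-neighborhood of a geodesic in a non-locally-finite tree with infinite vertex stabilizers can meet a gauge-ball in exponentially many orbit points. The paper avoids this by defining the strips combinatorially --- as unions of relative geodesics in the strongly relatively hyperbolic structure of $G_\alpha$ in the exponential case, or of ``exit points'' along tree geodesics in the polynomial case --- and then verifying polynomial growth directly. So the skeleton of your argument is right, but the growth dichotomy for $\alpha$ forces two genuinely different constructions, neither of which is the single isometric tree with boundary $\partial F$ that your proof presupposes.
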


The Poisson boundary is trivial for all non-degenerate measures on abelian groups
\cite{Cho-De} and nilpotent groups \cite{Dyn-Ma} : on such groups, there are no 
non constant bounded harmonic functions. If $F$ is a finitely generated free group
then the action mentioned above on $\partial F$ is just the left-action. Let $\mu$ be
a non-degenerate probability measure on $F$. Then there exists a unique $\mu$-stationary 
probability measure $\lambda$ on $\partial F$ \cite{ledr}. If $\mu$ has a finite first
moment, then $(\partial F,\lambda)$ is the Poisson boundary of $(F,\mu)$ 
\cite{Dyn-Ma,Derrien2,Kaim4}. 

\medskip

Assume now that $G$ contains $F$ as a normal subgroup. Then  there still exists a unique 
probability measure $\lambda$ on $\partial F$ which is $\mu$-stationary for the aforementioned 
action of $G$ on $\partial F$, and the $G$-space $(\partial F,\lambda)$ is a so-called
$\mu$-boundary of $G$, that is a $G$-equivariant quotient of the Poisson boundary of $(G,\mu)$ \cite{Vershik}. 
So the question is to find conditions on $G$ and $\mu$ ensuring that $(\partial F,\lambda)$ {\it is}
actually the whole Poisson boundary of $(G,\mu)$. For instance, if $F$ has finite index in $G$, then
there exists a measure $\mu^{0}$ on $F$ such that the Poisson boundaries of $(G,\mu)$
and $(F,\mu^{0})$ are isomorphic \cite {Fu71}. Moreover, if $\mu$ has a finite first moment, then 
so has $\mu^{0}$ \cite{Kaim2}, hence the Poisson boundary of $(G,\mu)$ is $(\partial F,\lambda)$.

\medskip

Theorem \ref{short} above provides an answer when $G$ is a cyclic extension of the free
group $F$. In this case, there is an exact short sequence 
$\{1\} \rightarrow F \rightarrow G \rightarrow \mz \rightarrow \{0\} $.
Such an extension is obviously splitting: there exists an automorphism $\alpha$ of 
$F$ such that $G$ is isomorphic to the semi-direct product $F \rtimes_{\alpha} \mz$.
Of course, the Poisson boundary is already known if $\alpha$ is an inner automorphism of $F$, 
because $G$ is then isomorphic to a direct product $F \times \mz$. Therefore, since $\mz$ is 
central, its acts trivially on the Poisson boundary of $G$ which can be identified to the 
Poisson boundary of $G/\mz \sim F$ \cite{Kaim5}.

\medskip

There are two types of growth for an automorphism of a finitely generated group:
polynomial growth, and exponential growth. It refers to the growth of the iterates
of a given element under the automorphism, see Def. \ref{growth} below. In the case of
an automorphism $\alpha$ of a finitely generated free group $F$, by the pioneering work of Bestvina-Handel \cite{BestvinaHandel} (see
also \cite{Levitt}  for precise and detailed statements about the growth of free group automorphisms) there is a dichotomy:
\begin{itemize}
   \item [$\bullet$] either $\alpha$ has a polynomial growth, $i.e.$ $|\alpha^n(g)|$ grows
                   polynomially for every $g\in F$,
   \item [$\bullet$] or $\alpha$ has an exponential growth, $i.e.$ there exists $g\in F$ such that
                   $|\alpha^n(g)|$ grows exponentially.
\end{itemize}

If the automorphism $\alpha$ has exponential growth, then the group $G = F \rtimes_{\alpha} \mz$
is hyperbolic relatively to the family ${\mathcal H}$ of polynomially growing subgroups \cite{Gautero1}, 
therefore its Poisson boundary may be identified with its relative hyperbolic boundary 
$\partial^{RH}(G,{\mathcal H})$ (see Section \ref{rh}). But this answer is not completely satisfactory 
for us since the link between $\partial F$ and the Poisson boundary of $G$ remains unclear.

\medskip

We list now a few generalizations of Theorem 1. First, we can deal with non-cyclic
extension, but we have a restriction on the growth. In the next statement, we denote
by $\Out{F} = \Aut{F} / \Inn{F}$ the group of outer automorphisms of $F$ where $\Inn{F}$
stands for the group of inner automorphisms.

\begin{theo}\label{polynomial}
Let $G$ be a semi-direct product $G = F \rtimes_\theta \mathcal P$ of a free group $F$
with a finitely generated subgroup $\mathcal P$ over a monomorphism 
$\theta \colon {\mathcal P} \rightarrow \Aut{F}$ such that $\theta({\mathcal P})$ maps 
injectively into $\Out{F}$ and consists entirely of polynomially growing automorphisms.
Let $\mu$ be a non-degenerate probability measure on $G$ with a finite first moment. 
Then there exists a unique $\mu$-stationary probability 
measure $\lambda$ on $\partial F$ and the $G$-space $(\partial F,\lambda)$ is the Poisson 
boundary of $(G,\mu)$.
\end{theo}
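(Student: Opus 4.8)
The plan is to derive Theorem~\ref{polynomial} from Kaimanovich's strip criterion, following and extending the strategy already used for the cyclic case of Theorem~\ref{short}. Three ingredients have to be produced: a unique $\mu$-stationary measure $\lambda$ on $\partial F$; the fact that $(\partial F,\lambda)$ and its reflected analogue are genuine boundaries on which $G$ acts with enough contraction; and finally a $G$-equivariant assignment of sufficiently thin ``strips'' in $G$.

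First I would settle the stationary measure. As recalled in the Introduction, the inner action of $G$ on $F$ together with $\theta$ extends to an action by homeomorphisms on the compact space $\partial F$, so a Furstenberg averaging argument produces a $\mu$-stationary $\lambda$. For uniqueness and the boundary property I would use the action of $G$ on the $\mr$-tree attached to the polynomially growing family $\theta(\mathcal P)$: since every element of $\theta(\mathcal P)$ grows polynomially, this action is ``sufficiently complicated'' in the sense of the abstract, and the $F$-component $w_n$ of the random walk $g_n=w_n\,p_n$ converges almost surely to a point $\xi_+\in\partial F$. The resulting hitting measure is the unique $\mu$-stationary measure, exactly as in the free-group case \cite{ledr}, and by \cite{Vershik} the $G$-space $(\partial F,\lambda)$ is a $\mu$-boundary; the same argument applied to $\check\mu$ gives a $\check\mu$-boundary $(\partial F,\check\lambda)$.

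Next comes the criterion itself. Since $G$ has at most exponential growth and $\mu$ has finite first moment with respect to a gauge, $\mu$ has finite entropy and finite logarithmic moment, so Kaimanovich's strip criterion applies: it suffices to exhibit a measurable $G$-equivariant map $(\xi_-,\xi_+)\mapsto S(\xi_-,\xi_+)\subseteq G$, with nonempty values for $\check\lambda\otimes\lambda$-almost every pair, such that along almost every sample path
\[
\frac{1}{n}\,\log\#\bigl\{\,s\in S(\xi_-,\xi_+)\ :\ |s|\le |g_n|\,\bigr\}\ \longrightarrow\ 0 .
\]
For the strip I would take the bi-infinite geodesic $L(\xi_-,\xi_+)$ joining $\xi_-$ to $\xi_+$ in the $\mr$-tree on which $G$ acts, and let $S(\xi_-,\xi_+)$ consist of those $g=w\,p$ whose $F$-part $w$ lies within a fixed bounded neighbourhood of $L(\xi_-,\xi_+)$. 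Because the strip is built from the $G$-equivariant tree and its geodesics, equivariance is immediate, and nonemptiness holds whenever $\xi_-\neq\xi_+$.

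The main obstacle is the growth estimate, and this is exactly where the polynomial-growth hypothesis enters twice. On the one hand, a finitely generated subgroup of $\Out{F}$ all of whose elements grow polynomially is virtually nilpotent, so $\mathcal P$ has polynomial growth and the number of $p\in\mathcal P$ of gauge at most $r$ is polynomial in $r$. On the other hand, polynomial growth of the automorphisms $\theta(p)$ forces $F$ to be only subexponentially distorted in $G$, so the $F$-length of $w$ is bounded by a subexponential function of $|wp|$. Combining these, the number of elements $w\in F$ tracking $L(\xi_-,\xi_+)$ inside an $F$-ball of radius comparable to $|g_n|$ grows linearly, each carrying only polynomially many admissible $\mathcal P$-parts, so the total count is polynomial in $|g_n|\sim n$; hence the left-hand side above is $\tfrac1n\log(\mathrm{poly}(n))\to 0$ and the criterion gives that $(\partial F,\lambda)$ is the full Poisson boundary. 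The delicate points to nail down are the precise distortion estimate of $F$ in $G$ under a polynomially growing $\mathcal P$, and the measurability of $\xi\mapsto L(\xi_-,\xi_+)$; both are geometric rather than probabilistic and rest on the structure theory of polynomially growing automorphisms already invoked for the cyclic case.
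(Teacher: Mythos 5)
Your overall strategy --- strips drawn from the geodesics of an invariant $\mr$-tree, fed into Kaimanovich's entropy/strip criterion --- is the same general route the paper takes (Theorems \ref{dernier resultat} and \ref{dernier resultat interessant}, then Corollary \ref{le dernier corollaire interessant} via the LL-map $\mathcal Q$), but two of your quantitative claims are false and each one by itself sinks the growth estimate. First, a finitely generated subgroup of $\Out{F}$ consisting of polynomially growing outer automorphisms need \emph{not} be virtually nilpotent: Example \ref{exemple1} of the paper exhibits a rank-$2$ \emph{free} subgroup $\langle\alpha,\beta\rangle$ of $\Aut{\F{3}}$ all of whose elements grow (linearly, hence) polynomially. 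So $\mathcal P$ may itself have exponential growth, and your count of ``polynomially many admissible $\mathcal P$-parts'' fails. Second, your strip --- all $g=wp$ whose $F$-part $w$ lies in a bounded neighbourhood of the tree geodesic $L(\xi_-,\xi_+)$ --- already has exponential growth in its $F$-part alone: the simplicial invariant tree of Theorem \ref{Bestvina2} has infinite vertex stabilizers (nontrivial finitely generated subgroups of $F$, e.g. $\langle a,b\rangle<\F{3}$ in Example \ref{exemple1}), so a metric neighbourhood of $L$ in $\mathcal T$ pulls back to entire cosets $wH$ of exponentially growing subgroups of $F$. The paper's remedy is to put into the strip only the \emph{exit points}: at each vertex $v$ on the geodesic one keeps the single element of $\mathrm{Stab}(v)$ attached to each of the two directions the geodesic uses, so that a basic strip is a set of elements of $F$ lined up along a geodesic and grows linearly (Propositions \ref{verificationCSbis} and \ref{Francois6.5bis}); no $\mathcal P$-part is included at all.

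Beyond this, you skip two structural steps the paper cannot avoid. The invariant tree exists only after replacing $\mathcal P$ by a finite-index unipotent subgroup $\mathcal U$ (Theorem \ref{Bestvina2}); this forces a passage to the first-return walk (Theorem \ref{indice fini}), is the reason the hypothesis is a finite first moment rather than finite entropy plus finite logarithmic moment, and means convergence is only obtained along the return times to $F\rtimes\mathcal U$. Moreover, because some elements of $G$ fix non-trivial subtrees of $\mathcal T$ (the ``singular elements''), the single-tree compactification fails the (CP) condition and the two endpoints of your strip need not be separated points; the paper circumvents this by letting the group act on $\widetilde{\mathcal T}\times\widetilde{\mathcal T}$ through two representatives $\theta_0,\theta_1$ of the same outer class, chosen so that an element singular for one factor is hyperbolic on the other (Lemma \ref{intermediaire}), and only then projecting back to one factor and to $\partial F$ via $\mathcal Q$. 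If you prefer to run the bare strip criterion directly on $\partial F\times\partial F$ and bypass the product of trees, you must still at minimum repair the two counting errors above and prove that your strips are nonempty and measurable for $\check\lambda\otimes\lambda$-almost every pair, which requires knowing when $\mathcal Q(\xi_-)=\mathcal Q(\xi_+)$.
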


If $\theta({\mathcal P})$ is trivial in $\Out{F}$ then $G$ is isomorphic to a direct
product $F \times F'$ where $F'$ is a finitely generated free group. Then modifying the proof
of Theorem \ref{polynomial} yields:

\begin{theo}\label{direct} 
Let $G = F \times F'$ be the direct product of two finitely generated free groups. 
Consider the left-action of $G$ on $\partial F$ given by some fixed monomorphism 
$F' \hookrightarrow \Inn{F}$. Let $\mu$ be a non-degenerate probability measure on $G$ with a finite 
first moment. Then there exists a unique $\mu$-stationary probability measure $\lambda$ on 
$\partial F$ and the $G$-space $(\partial F,\lambda)$ is the Poisson boundary of $(G,\mu)$.
\end{theo}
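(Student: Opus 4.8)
The plan is to read Theorem \ref{direct} as the degenerate case of Theorem \ref{polynomial} in which $\theta(F')$ falls inside $\Inn{F}$ instead of injecting into $\Out{F}$, and to re-run the same three steps, changing only the geometric input. Writing the fixed monomorphism $F'\hookrightarrow\Inn{F}$ as conjugation by $\psi(w)$ for an injective homomorphism $\psi\colon F'\rightarrow F$, the action of $G$ on $\partial F$ is the one induced on the boundary by the left translations $\xi\mapsto\psi(w)\xi$ and $\xi\mapsto g\xi$; equivalently $G$ acts on the Cayley tree $T_F$ of $F$ through a homomorphism with image in $\Inn{F}\cong F$. Because every inner automorphism is at most linearly growing, all the hypotheses of Theorem \ref{polynomial} are in force except the injectivity into $\Out{F}$; the $\mr$-tree is now simply $T_F$, carrying no stretching from the $F'$-direction, and this is the first point at which the earlier argument must be adjusted.

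First I would produce the stationary measure by pushing $\mu$ forward along the homomorphism $G\rightarrow F$ realizing the action on $T_F$: non-degeneracy of $\mu$ makes the image measure non-degenerate on $F$, and uniqueness of the $\mu$-stationary probability $\lambda$ on $\partial F$ follows as in the free case \cite{ledr}. Almost sure convergence of the sample paths to an end of $T_F$, together with the finite first moment, then yields the boundary map and, by Vershik's argument \cite{Vershik}, identifies $(\partial F,\lambda)$ as a $\mu$-boundary. These two steps are formally identical to those of Theorem \ref{polynomial} and need no change.

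The heart of the matter is Kaimanovich's strip criterion. To a generic pair $(\xi_-,\xi_+)$ of distinct boundary points, ranging over the product of the $\check\mu$- and $\mu$-boundaries (both equal to $\partial F$), I would attach the $G$-equivariant strip $S(\xi_-,\xi_+)$ of all $x\in G$ whose image in $T_F$ lies within a bounded neighbourhood of the bi-infinite geodesic $(\xi_-,\xi_+)$; this is the verbatim analogue of the strip used for free groups, and the criterion reduces the theorem to the sub-exponential estimate
\[
\frac{1}{n}\log\#\bigl(S(\xi_-,\xi_+)\cap B_{|X_n|}\bigr)\longrightarrow 0\quad\text{a.s.},
\]
where $B_r$ is the $r$-ball for the gauge with respect to which $\mu$ has a finite first moment.

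The hard part, and the place where the direct-product case genuinely departs from Theorem \ref{polynomial}, is this estimate. In the polynomially growing case the transverse direction contributes only polynomially because a finitely generated group of polynomially growing outer automorphisms has polynomial growth; here the transverse group $F'$ is free, so a strip naively carrying a whole ball of the $F'$-coordinate would grow exponentially and the criterion would fail. The modification I would pursue exploits that $F'$ acts by \emph{inner} automorphisms: on $\partial F$ its action is by translation along $T_F$ through the quasi-convex subgroup $\psi(F')$, so that the position of $x$ in the $F'$-direction should not be free but tied, via $\psi$ and the pair $(\xi_-,\xi_+)$, to the single geodesic defining the strip. Making this precise — showing that the asymptotics of the $F'$-coordinate of the walk are governed by the boundary data in $\partial F$, so that $S(\xi_-,\xi_+)$ meets balls only with the linear growth of a geodesic neighbourhood — is the main obstacle; once it is established the strip criterion identifies $(\partial F,\lambda)$ with the full Poisson boundary of $(G,\mu)$.
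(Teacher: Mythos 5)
Your first two steps (uniqueness of $\lambda$ via the pushforward of $\mu$ to $F$, and the $\mu$-boundary property via Vershik--Malyutin) are fine and match the paper. The gap is exactly where you locate it, but it is worse than an unfinished estimate: the strip criterion \emph{cannot} be run with strips assigned $G$-equivariantly to pairs of points of $\partial F$. The action of $G=F\times F'$ on $\partial F$ factors through a homomorphism onto $F$ whose kernel $K$ is a copy of $F'$ (the "anti-diagonal" $\{(\psi(w)^{-1},w)\}$ in the semi-direct product picture). Since $K$ fixes every point of $\partial F$, equivariance forces $S(\xi_-,\xi_+)=kS(\xi_-,\xi_+)$ for every $k\in K$, so any nonempty strip is a union of sets $Kx$, and $\#(Kx\cap \Jj_k)$ grows exponentially in $k$ as soon as $F'$ is nonabelian. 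Hence no choice of equivariant strips in $G$ attached to pairs in $\partial F\times\partial F$ can satisfy the polynomial growth hypothesis of Theorem \ref{theoreme 2.4-6.5}; your hoped-for "tying of the $F'$-coordinate to the geodesic" is incompatible with equivariance. (A related problem: $G\cup\partial F$ with your topology is not a compactification of $G$ --- the sequence $(\psi(w_n)^{-1},w_n)$ escapes to infinity in $G$ while fixing the basepoint --- which the paper explicitly warns about after Corollary \ref{jackpot2}, so Theorem \ref{theoreme 2.4-6.5} does not apply to $\partial F$ directly.)

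The paper's resolution (Section \ref{noncyclic}, Theorems \ref{dernier resultat} and \ref{dernier resultat interessant}, specialized to the direct product in Theorem \ref{extrait3}) is to run Kaimanovich's criterion on a strictly larger $\mu$-boundary on which $K$ acts nontrivially: one builds a simplicial $\alpha$-invariant $F$-tree $\mathcal T$ for an inner $\alpha$ (Section \ref{une introduction introductive}), and lets $G$ act on $\mathcal T\times\mathcal T$ by a $(\theta_0,\theta_1)$-action using two different automorphisms in the same outer class, chosen so that every element acting elliptically on one factor acts as a hyperbolic isometry on the other (Lemma \ref{intermediaire} and its analogue in Theorem \ref{dernier resultat}). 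After closing the singular axes and collapsing the singular boundary-trees, generic pairs of boundary points of the orbit closure have trivial or hyperbolic-cyclic stabilizers, the strips built from exit-points along geodesics grow polynomially (Proposition \ref{Francois6.5bis}), and (CP)/(CS) hold. Only then is the result transported back to $\partial F$ by projecting to the first factor and composing with the LL-map $\mathcal Q$, the fibers being shown to have measure zero via Lemma \ref{fibre}. Without this enlargement-then-projection step your argument does not close.
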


Theorems \ref{polynomial} and \ref{direct} describe the Poisson boundary of all the groups 
$G = F \rtimes_\theta \mathcal P$ such that $\theta({\mathcal P})$  consists entirely of 
polynomially growing automorphisms. Below we provide examples where $\mathcal P = \mz^2$ or
$\mathcal P$ is a free group of rank $2$, see Section \ref{Examples}. We do not know how to 
deal with such groups if $\theta({\mathcal P})$ contains exponentially growing automorphisms.

\medskip

There is another way to extend Theorem \ref{short} above. Consider a group $G$ which is a cyclic 
extension of a hyperbolic group $\Gamma$. There exists an automorphism $\alpha$ of $\Gamma$ such
that the group $G$ is isomorphic to the semi-direct product $\Gamma \rtimes_{\alpha} \mz$. Again, 
the action of $G$ on $\Gamma$ by inner automorphisms extends to an action on the hyperbolic boundary 
$\partial \Gamma$ of $\Gamma$. We have:

\begin{theo}\label{hyperbolic}
Let $\Gamma$ be a torsion free, hyperbolic group with infinitely many ends.
Let $G = \Gamma \rtimes_{\alpha} \mz$ be a cyclic extension of $\Gamma$. 
Let $\mu$ be a non-degenerate probability measure on $G$ with a finite first moment. 
Then there exists a unique $\mu$-stationary probability 
measure $\lambda$ on $\partial \Gamma$ and the $G$-space $(\partial \Gamma,\lambda)$ is the Poisson 
boundary of $(G,\mu)$.
\par
The same conclusion holds if $\Gamma$ is the fundamental group $\pi_{1}(S)$ of a compact surface $S$
of genus $\geq 2$ and $\alpha$ is an exponentially growing automorphism.
\end{theo}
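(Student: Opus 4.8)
The plan is to identify $(\partial\Gamma,\lambda)$ first as a $\mu$-boundary and then, via Kaimanovich's strip criterion, as the full Poisson boundary, following the scheme already used for Theorem \ref{short}; the one genuinely new ingredient is the construction, in each of the two cases, of a sufficiently complicated action of $G$ on an $\mr$-tree from which the strips are built. I would begin with the existence and uniqueness of $\lambda$. Existence is the standard Furstenberg averaging argument, since $\partial\Gamma$ is compact and $G$ acts on it by homeomorphisms. For uniqueness and for the $\mu$-boundary property I would use that the action of the non-elementary hyperbolic group $\Gamma$ on $\partial\Gamma$ is a minimal, strongly proximal convergence action: the $\Gamma$-component of almost every sample path contracts $\partial\Gamma$ towards a single point $\mathrm{bnd}(\omega)\in\partial\Gamma$, the law of $\mathrm{bnd}$ is the unique stationary measure $\lambda$, and $\mathrm{bnd}$ is a $G$-equivariant measurable image of the path space. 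Hence $(\partial\Gamma,\lambda)$ is a $G$-equivariant quotient of the Poisson boundary, as recalled in the introduction \cite{Vershik}. Since $\mu$ has finite first moment, its entropy is finite, so Kaimanovich's criterion will be applicable.

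For the core step I take $(B_-,\nu_-)=(\partial\Gamma,\check\lambda)$ and $(B_+,\nu_+)=(\partial\Gamma,\lambda)$ as $\check\mu$- and $\mu$-boundaries, so that it suffices to assign to $\nu_-\otimes\nu_+$-a.e.\ pair $(\xi_-,\xi_+)$ a non-empty $G$-equivariant strip $S(\xi_-,\xi_+)\subseteq G$ with $\frac1n\log\#\bigl(S(\xi_-,\xi_+)\cap B_{|x_n|}\bigr)\to 0$ in probability; the criterion then yields that $(\partial\Gamma,\lambda)$ is the Poisson boundary. To build the strips I would, in each case, exhibit an $\mr$-tree $T$ carrying a sufficiently complicated isometric $\Gamma$-action which $\alpha$ rescales, so that $G=\Gamma\rtimes_\alpha\mz$ acts on $T$ with $\Gamma$ by isometries and the stable letter by a homothety of ratio $\kappa\geq 1$. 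In the infinitely-many-ends case $T$ is attached to the Grushko free-product decomposition of $\Gamma$: this free factor system is canonical, hence $\alpha$-invariant, which is what feeds the $G$-action; $\lambda$-almost every boundary point is an end of $T$ (the walk changes free factors infinitely often), while $\bigcup\partial\Gamma_i$ is $\lambda$-null. In the surface case with $\alpha$ exponentially growing, $T$ is the attracting $\mr$-tree from the relative train-track/Rips machinery \cite{BestvinaHandel,Levitt} (a Bestvina--Paulin limit of the rescaled action of $\Gamma$ on $\mh^2$), on which $\alpha$ acts as a homothety of ratio the exponential growth rate $\kappa>1$. In both cases I set $S(\xi_-,\xi_+)$ to be the set of $g\in G$ moving a fixed basepoint to within bounded distance of the bi-infinite $T$-geodesic joining the images of $\xi_-$ and $\xi_+$.

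The hard part is the growth estimate $\frac1n\log\#(S\cap B_{|x_n|})\to 0$, and this is exactly where the passage to the $\mr$-tree is indispensable. In the word metric of $G$ the subgroup $\Gamma$ is exponentially distorted whenever $\alpha$ grows exponentially, so a strip built from a geodesic of $\Gamma$ itself would meet $G$-balls in exponentially many points and the criterion would fail; the homothety of ratio $\kappa>1$ linearizes this distortion, since the $T$-geodesic has linear growth and tracking it costs only linearly many stable letters. Concretely I would control the two directions of the strip separately: the finite-first-moment hypothesis bounds the drift of the $\mz$-projection $s_n$ of the walk (whether that drift is zero or non-zero), hence the number of homothety levels the strip reaches inside $B_{|x_n|}$, while the linear parametrization of the $T$-geodesic bounds its extent along the fibre; multiplying gives a polynomial count. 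Relative to $|x_n|\sim\ell n$ this is $e^{o(n)}$, as required, and I expect this coupling of the tree geometry with the $\mz$-drift to be the main technical obstacle.

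Feeding this estimate into Kaimanovich's strip criterion \cite{Kaim4} identifies $(\partial\Gamma,\lambda)$ with the Poisson boundary of $(G,\mu)$, proving both assertions. As a consistency check for the surface case with $\alpha$ pseudo-Anosov, $G$ is the (hyperbolic) fundamental group of a fibred $3$-manifold, so its Poisson boundary is $\partial G$; the Cannon--Thurston map $\partial\Gamma\to\partial G$ is surjective and injective off the countable set of leaf-endpoints of the invariant laminations, hence induces a $\lambda$-a.e.\ isomorphism $(\partial\Gamma,\lambda)\cong(\partial G,\nu)$, in agreement with the conclusion obtained from the $\mr$-tree.
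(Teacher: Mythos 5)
Your overall strategy (a $\mu$-boundary statement plus a Kaimanovich-type entropy/strip argument built from an action on an $\mr$-tree) is the right one, and you correctly locate the difficulty in the growth of the strips versus the exponential distortion of $\Gamma$ in $G$. But the step that fails is your definition of the strip itself. You take $S(\xi_-,\xi_+)$ to be the set of $g\in G$ moving a basepoint into a bounded \emph{tree-metric} neighbourhood of the $T$-geodesic joining the images of $\xi_-$ and $\xi_+$. The tree metric is not proper, and this set contains entire point stabilizers: if $P$ is a branch point of $T$ fixed by a subgroup $H$ (a one-ended Grushko factor in the infinitely-many-ends case; a subsurface group or a cyclic group of a reduction curve in the surface case), then $d_T(hO,P)=d_T(O,P)$ for every $h\in H$, so every coset of every conjugate of $H$ whose fixed point lies near the geodesic is swallowed by the strip. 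Since branch points are dense when the $\Gamma$-action has dense orbits, and since such $H$ grow exponentially in the word metric of $G$, one gets $\#\bigl(S\cap B_{|x_n|}\bigr)\geq e^{c|x_n|}$ with $|x_n|\sim\ell n$, and the condition $\frac1n\log\#(S\cap B_{|x_n|})\to 0$ is violated. The paper has to work much harder precisely here: in the exponential case the strips are unions of geodesics for the \emph{relative} metric coming from the strong relative hyperbolicity of $G_\alpha$ (Theorem \ref{lustigmoi}), running between carefully chosen ``exit points'' of the stabilizers and preimages under the map $\mathcal Q$ (Definition \ref{ca avance}), and polynomial growth is then proved in Proposition \ref{Francois6.5}; in the simplicial case the strips are the discrete sets of exit points along the geodesic (Proposition \ref{Francois6.5bis}), not metric neighbourhoods.

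A second, related gap is your choice of tree in the infinitely-many-ends case. The Grushko/Bass--Serre tree carries only an isometric (simplicial) extension of the $\Gamma$-action, so your homothety ratio is $\kappa=1$ and the tree does not ``linearize'' anything: when $\alpha$ has essential exponential growth the distortion lives in the induced automorphism of the quotient free group, which the unrescaled Grushko tree does not see. The paper accordingly splits this case in two (Theorem \ref{le cas hyperbolique}): for essential exponential growth it uses the rescaled limit $\mr$-tree with a strict homothety and dense orbits (Theorem \ref{resultat 3}), and for essential polynomial growth it uses a simplicial $\alpha$-invariant tree but must then handle singular elements $vt^k$ fixing more than one vertex via the product of two trees, the closing of singular axes, and a passage to a finite-index subgroup $G_{\alpha^k}$ recovered through Theorem \ref{indice fini} (Theorem \ref{resultat 2bis}); your first-moment hypothesis is exactly what that finite-index reduction consumes. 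Neither of these mechanisms is present in your argument, so the ``polynomial count'' at the end of your third paragraph is not established.
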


It is well known that automorphisms of the fundamental group of a compact surface $S$
of genus $\geq 2$ have either linear or exponential growth. Unfortunately, we are unable to
treat the case $G = \pi_{1}(S) \rtimes_{\alpha} \mz$ if $\alpha$ has linear (and non-zero) growth.

\medskip

Beyond the cases of abelian, nilpotent and free groups, there are many other 
classes of groups for which the Poisson boundary is already known. For instance, 
if $\mu$ is a finitely supported probability measure on a group $G$ with 
subexponential growth, then its Poisson boundary is trivial
\cite{Av74}. Discrete subgroups of $SL(d,\mr)$ are
treated in \cite{Fu71} where the Poisson boundary is related to
the space of flags. For random walks on Lie groups, the study
and the description of the Poisson boundary started in \cite{Fu63}
and was extensively developed in the 70's and the 80's by many
authors including Furstenberg \cite{Fu71,Fu73}, Azencott
\cite{Azencott}, Raugi and Guivarc'h \cite{Raugi77,Guiv-Raugi,Raugi88}, Ledrappier
\cite{Ledrappier,Ballm-Ledr}. Some analogies between
discrete groups and their continuous counterparts are enlightened,
see \cite{Fu71,Ledrappier,Ballm-Ledr}, but some
contrast can appear, compare  \cite{Raugi88} with \cite{KV,Kaim2}. The
Poisson boundary of some Fuchsian groups has also been described by
Series as being the limit set of the group \cite{Series}.
Kaimanovich and Masur \cite{KM} proved that the Poisson boundary
of the mapping class group is the boundary of Thurston's compactification
of the Teichm\"uller space. Their description also runs for the Poisson boundary 
of the braid group, see \cite{FaMas}.

\medskip

The Poisson boundary is closely related to some other well-known
compactifications or boundaries. The Martin boundary is
involved in the description of {\em positive} harmonic functions
(see \cite{Anc}). Considered as a measure space with the representing
measure of the constant harmonic function ${\bf 1}$, it is isomorphic to the
Poisson boundary \cite{Kaim96}. If the group $G$ has infinitely many ends (and if
$\mu$ is non-degenerate) then the Poisson boundary can be identified
with the space of ends (with the hitting measure), see
\cite{Woess,Cartwr-Soardi,Kaim1}. Furthermore, Kaimanovich
obtained an identification of the Poisson boundary for hyperbolic
groups with the geometric boundary, see \cite{Kaim1}. Concerning the
Floyd boundary \cite{Floyd}, Karlsson recently proved that if it is
not trivial, then it is isomorphic to the Poisson boundary, see
\cite{Karlsson}. The identification of these various
compactifications with the Poisson boundary strongly relies on a
general entropic criterion developed by Kaimanovich in \cite{Kaim1}
(and already used in \cite{Kaim2,KV,KM}). This
criterion will be our central tool in the present paper.

\medskip

Theorem \ref{short} reveals a certain rigidity property of the Poisson
boundary of the free group when one takes a cyclic extension. This phenomenon
does not occur for cyclic extension of free abelian groups. More precisely,
let $G\:=\:\mz^{d} \rtimes_{\alpha} \mz$ be the cyclic extension of $\mz^{d}$ 
by means of an automorphism $\alpha \in GL(d,\mz)$. If $\mu$ is any probability
measure on $G$ which projects to a recurrent random walk on $\mz$, then there exists 
a probability measure $\mu^0$ on $\mz^{d}$ such that the Poisson boundaries of $(G,\mu)$ 
and $(\mz^{d},\mu^0)$ are isomorphic \cite {Fu71} , hence trivial. 
On the other hand, if the induced random walk on $\mz$ is transient, then Kaimanovich 
proved that the Poisson boundary of $(G,\mu)$ is never trivial, see \cite{Kaim2} for a
precise description in terms of the contracting space of $\alpha$ acting on $\mc^d$.

\medskip

We now briefly outline the structure of the paper. In the next section, we describe 
the method we use to prove the theorems stated above. Some more detailed versions of 
these theorems are given. Sections \ref{Frederic} and \ref{generalites} contain all the material 
about harmonic functions, random walks, the Poisson boundary, as well as preliminaries about groups, 
automorphisms and semi-direct products that will be needed in the paper. The $\mr$-trees and affine actions
on them are introduced in Section \ref{rtrees}, and groups admiting such actions in Section 
\ref{class}. Sections \ref{exp} and \ref{cyclicpoly} contain the proofs of Theorems \ref{short} and
\ref{hyperbolic}, whereas Theorems \ref{polynomial} and \ref{direct} are proved in Section 
\ref{noncyclic}. Section \ref{Examples} contains two examples of non-cyclic groups of automorphisms 
of the free group, all of them having polynomial growth. Poisson boundary of relatively hyperbolic groups 
are discussed Section \ref{rh}.

\section{The method}\label{methode}

Let $\mu$ be a probability on a countable group $G$, and $X$ be a {\it compact} $G$-space endowed with a 
$\mu$-stationary probability $\lambda$. In \cite{Kaim1}, Vadim Kaimanovich provided 
sufficient conditions that should satisfy $(X,\lambda)$ to be the Poisson boundary of the pair $(G,\mu)$
(see Theorem \ref{theoreme 2.4-6.5} below). His results also contain informations on the asymptotic behaviour
of the random walk on $G$ governed by $\mu$. The aim of this section is to give a first flavour of the way
we construct such a compact $G$-space $X$ leading to two detailed and somehow technical statements 
(Theorems \ref{extrait1},\ref{extrait2} and \ref{extrait3} below) of which Theorems \ref{short}, \ref{polynomial} and 
\ref{direct} will appear as corollaries.

\medskip

Our approach is inspired by the long line of
works about free group automorphims starting at the beginning of the
nineties, see \cite{BestvinaHandel, BFH, GJLL, Gaboriau-Levitt, LL1,
LL2, CHL, CHLII} to cite only a few of them. Write $\F{d}$ for the rank $d$
free group, $d \geq 2$. A common feature to the
papers \cite{GJLL, Gaboriau-Levitt, LL1, LL2, CHL, CHLII} is the
introduction of a so-called ``$\alpha$-projectively invariant
$\F{d}$-tree'', that is a $\mr$-tree (a geodesic metric space such
that any two points are connected by a unique arc, and this arc is a
geodesic for the considered metric) with an isometric action of
$\F{d}$ and a homothety $H_\alpha$ satisfying the fundamental
relation: $H_\alpha(w.P) = \alpha(w).H_\alpha(P)$ (if the homothety
$H_\alpha$ is an isometry then the $\mr$-tree is
``$\alpha$-invariant''). Thanks to this relation, we have in fact an
action of the semi-direct product $\F{d} \rtimes_\alpha \mz$ on the
$\mr$-tree. Having in mind the problem of compactifying such a
semi-direct product, a major drawback of these $\mr$-trees is that
the usual adjonction of their Gromov-boundary does not provide us
with a compact space. This point is however settled by equipping
them with the so-called ``observers topology'': the union of the
completion of the $\mr$-tree $\mathcal T$ with its Gromov boundary
$\partial \mathcal T$, equipped with the observers topology, is
denoted by $\widehat{\mathcal T}$. This weak topology is thoroughly
studied in \cite{CHL} (see also \cite{Favre} where it appears in a
very different context).

\medskip

As mentioned in the introduction, we have two different kind of results, depending on the
nature of the action of the subgroup of automorphisms involved by
the extension: polynomially growing or exponentially growing
automorphisms. In Theorems \ref{extrait1} and \ref{extrait2} stated below, 
we extracted two particular cases from the various results of the paper (Theorems
\ref{resultat 1}, \ref{resultat 3}, \ref{resultat interessant 1bis},
\ref{resultat 2bis}, \ref{dernier resultat interessant}) to stress
this dichotomy.

\begin{theorem}
\label{extrait1}

Let $\F{d}$ be the rank $d$ free group, $d \geq 2$.
Let $\mathcal P$ be a finitely generated subgroup of
polynomially growing outer automorphisms of $\F{d}$. Let $\mu$ be a
probability measure on $\F{d} \rtimes \mathcal P$ the support of which
generates $\F{d} \rtimes \mathcal P$ as a semi-group. Then there exists a 
finite index subgroup $\mathcal U$ in $\mathcal P$ and a simplicial
$\mathcal U$-invariant $\F{d}$-tree $\mathcal T$ such that, if  ${\tau_{k}}$ denotes 
the (random) sequence of the times at which the path ${\bf x}=\{x_n\}$ visits 
$\F{d} \rtimes \mathcal U$, then:

\begin{itemize}
  \item Almost every sub-path $\{x_{\tau_{k}}\}$ of the random walk on 
$(\F{d} \rtimes \mathcal P,\mu)$ converges to some (random) limit $x_{\infty} \in \partial \mathcal T$.
  \item The distribution of $x_{\infty}$ is a non-atomic $\mu$-stationary measure $\lambda$ on
$\partial \mathcal T$; it is the unique $\mu$-stationary measure on $\partial \mathcal T$.
  \item If $\mu$ has finite first moment, then the measured space $(\partial {\mathcal T},\lambda)$ is the
Poisson boundary of $(\F{d} \rtimes \mathcal P,\mu)$.
\end{itemize}

\end{theorem}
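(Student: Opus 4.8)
The plan is to verify the three hypotheses of Kaimanovich's entropy criterion (Theorem \ref{theoreme 2.4-6.5}) for the compact $\F{d} \rtimes \mathcal P$-space $\widehat{\mathcal T}$ equipped with the observers topology. The overall structure is: first construct the simplicial $\mathcal U$-invariant $\F{d}$-tree and the associated compactification; second prove convergence of the random walk to the boundary along the return times $\tau_k$; third identify the limit distribution as the unique stationary measure; and finally invoke the ray/strip criterion to upgrade this $\mu$-boundary to the full Poisson boundary.

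Let me sketch the steps in order. Since $\mathcal P$ consists of polynomially growing outer automorphisms, I would first pass to a finite-index subgroup $\mathcal U \leq \mathcal P$ on which the automorphisms become, after lifting, simultaneously representable by simplicial maps on a common $\F{d}$-tree $\mathcal T$ (this is where the Bestvina--Handel/relative train track machinery and the results of Section \ref{rtrees}--\ref{class} enter; one wants $\mathcal U$ to act by simplicial isometries so that $H_\alpha$ is genuinely an isometry and $\mathcal T$ is $\mathcal U$-invariant rather than merely projectively invariant). The induced random walk on the subgroup $\F{d} \rtimes \mathcal U$, sampled at the return times $\tau_k$, is governed by the induced measure $\mu_{\mathcal U}$; because $\mathcal U$ has finite index, standard induction results guarantee $\mu_{\mathcal U}$ retains non-degeneracy and finiteness of the first moment (via \cite{Kaim2}). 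I would then show that the sequence $\{x_{\tau_k}\}$, viewed in $\widehat{\mathcal T}$, almost surely converges to a boundary point $x_\infty \in \partial \mathcal T$: the key is that the isometric $\F{d}$-action on $\mathcal T$ has the right ``sufficiently complicated'' (e.g. proximal, non-elementary, with independent hyperbolic isometries) structure so that typical increments push toward $\partial \mathcal T$, and the finite first moment controls the escape rate. Non-atomicity of the resulting stationary measure $\lambda$ and its uniqueness follow from a Furstenberg-type contraction/proximality argument on $\partial \mathcal T$ together with the ergodicity of the stationary measure.

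The final and decisive step is the Poisson-boundary identification under the finite-first-moment hypothesis. Here I would apply Kaimanovich's criterion, which requires exhibiting a gauge and showing that the random walk is, in an entropic sense, asymptotically confined to a subexponentially growing neighborhood determined by the boundary point $x_\infty$ (the ``strips'' or ``rays'' construction). Concretely: for $\mu_{\mathcal U}$-almost every pair of boundary points one builds a strip in $\F{d} \rtimes \mathcal U$ (geodesic-like bundle in $\mathcal T$ connecting the two limits), and one checks that the intersection of this strip with a ball of radius $n$ in the word metric grows subexponentially compared to the entropy of the walk. Because the $\mathcal U$-action is \emph{simplicial} and isometric, the tree geometry is essentially that of a genuine (simplicial) tree, where strips are thin and the geometric estimates are clean. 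Having verified the criterion for $(\F{d} \rtimes \mathcal U, \mu_{\mathcal U})$ on $(\partial \mathcal T, \lambda)$, I would transfer the conclusion back to $(\F{d} \rtimes \mathcal P, \mu)$ using that the Poisson boundary of a finite-index subgroup with the induced measure coincides, up to the natural identifications, with that of the ambient group \cite{Fu71, Kaim2}.

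The hard part will be the second step combined with the strip estimate in the third: namely, guaranteeing that after passing to $\mathcal U$ the tree $\mathcal T$ can be taken \emph{simplicial and genuinely invariant} (not just an $\RM$-tree with a homothety), and then controlling the geometry of the induced walk finely enough—via the observers topology on $\widehat{\mathcal T}$—to make Kaimanovich's entropy inequality hold. The subtlety is that polynomial growth does not by itself linearize the action, so the reduction to a simplicial invariant tree on a finite-index subgroup is exactly the geometric input that makes the criterion applicable, and verifying that the strips grow subexponentially in the induced word metric is where the bulk of the technical work lies.
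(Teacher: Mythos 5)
Your overall scaffolding (pass to the unipotent finite-index subgroup $\mathcal U$ via Theorem \ref{Bestvina2}, compactify with the observers topology, check (CP) and (CS), estimate strip growth, and transfer back through the first-return measure as in Theorem \ref{indice fini}) matches the paper's architecture. But there is a genuine gap at the decisive step: you propose to verify Kaimanovich's conditions for the single-tree compactification $\widehat{\mathcal T}$, and this is exactly what fails in the polynomially growing case. The simplicial $\mathcal U$-invariant tree produced by Theorem \ref{Bestvina2} has all the isometries $H_{\tilde{\theta}(u)}$ fixing a common vertex and fixing each $\mathrm{Stab}(O)$-orbit of directions; consequently there exist \emph{singular elements} $vt^j$ of the extension that fix whole nontrivial subtrees of $\mathcal T$. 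For such an element the sequences $\{(vt^j)^k\}$ and $\{(vt^j)^k w\}$ need not have the same limit in $\widehat{\mathcal T}$, so condition (CP) is violated; and one cannot simply collapse the fixed subtrees, since (as the paper stresses in Section \ref{methode}) the union of these subtrees over all singular elements can exhaust $\mathcal T$. Your appeal to proximality and ``independent hyperbolic isometries'' does not apply here: the $\mathcal U$-part of the group acts elliptically on $\mathcal T$, not proximally, which is precisely why polynomial growth is the hard case.

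The paper's resolution, absent from your proposal, is the two-tree trick of Section \ref{noncyclic}: choose two lifts $\theta_0,\theta_1$ of $\theta$ to $\Aut{\F{d}}$ in the same outer class and let $\F{d}\rtimes\mathcal U$ act on $\mathcal T\times\mathcal T$ via the $(\theta_0,\theta_1)$-action (Lemma \ref{intermediaire}), arranged so that every singular element for one factor acts as a hyperbolic isometry on the other, with controlled axis. One then closes the singular axes, collapses each singular boundary-tree of $\widetilde{\mathcal T}\times\widetilde{\mathcal T}$ to a point to form $\widetilde{\mathcal T}^2$, takes the closure of the orbit of the base point as the compactification (Propositions \ref{open doors}, \ref{CPbis}, \ref{verificationCSbis}, \ref{Francois6.5bis}), and only at the very end projects onto the first factor, using Lemma \ref{il faudra le prouver} and Lemma \ref{fibre} to see that the singular points and the fibers over vertices have measure zero. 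A secondary inaccuracy: even after this, the limit lives in the boundary of the quotient $\widetilde{\mathcal T}$ obtained by closing certain geodesics, not literally in $\partial\mathcal T$. Without the intermediate product construction the (CP) verification, and hence the convergence and the Poisson-boundary identification, do not go through.
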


\medskip

\begin{theorem}\label{extrait2}
Let $\F{d}$ be the rank $d$ free group, $d \geq 2$.
Let $G = \F{d} \rtimes_{\alpha} \Z$ be a cyclic extension of the free group $\F{d}$ over an
exponentially growing automorphism $\alpha$ of $\F{d}$. Let $\mu$ be a probability measure on $G$
the support of which generates $G$ as a semi-group. Then there exists an $\alpha$-projectively 
invariant $\F{d}$-tree $\mathcal T$ such that:

\begin{itemize}
  \item Almost every path $\{x_n\}$ of the random walk on $(G,\mu)$ 
converges to some limit $x_{\infty} \in \widehat{\mathcal T}$ (where 
$\widehat{\mathcal T}$ is the union of the completion of $\mathcal T$ with its Gromov boundary $\partial \mathcal T$, equipped with the observers topology).
  \item The distribution of $x_{\infty}$ is a non-atomic $\mu$-stationary measure $\lambda$ on
$\widehat{\mathcal T}$; it is the unique $\mu$-stationary measure on $\widehat{\mathcal T}$.
  \item If $\mu$ has finite first logarithmic moment and finite entropy, then the measured space
$(\widehat{\mathcal T},\lambda)$ is the Poisson boundary of $(G,\mu)$. The measure $\lambda$ is not concentrated on $\partial \mathcal T$. 
\end{itemize}

\end{theorem}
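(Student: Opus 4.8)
The plan is to realize $G=\F{d}\rtimes_{\alpha}\Z$ as a group of homeomorphisms of the compact space $\widehat{\mathcal T}$ and then to invoke Kaimanovich's entropic criterion (Theorem~\ref{theoreme 2.4-6.5}), with ``strips'' furnished by the geodesics of the $\mr$-tree. First I would fix the geometric data. Since $\alpha$ is exponentially growing, the constructions of \cite{GJLL,Gaboriau-Levitt,LL1,LL2} produce an $\alpha$-projectively invariant $\F{d}$-tree $\mathcal T$: the free group acts by isometries, a generator $t$ of $\Z$ acts by a homothety $H_\alpha$ of ratio $\Lambda>1$ satisfying $H_\alpha(w\cdot P)=\alpha(w)\cdot H_\alpha(P)$, and these assemble into a $G$-action on $\widehat{\mathcal T}=\overline{\mathcal T}\cup\partial\mathcal T$. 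Equipped with the observers topology of \cite{CHL}, $\widehat{\mathcal T}$ is compact and metrizable, so $G$ acts on a compact $G$-space. I fix a basepoint $P_0\in\mathcal T$ and write each increment $x_n=w_n\,t^{k_n}$, so that $x_nP_0=w_n\,H_\alpha^{k_n}(P_0)$ and $x_n$ acts on $\mathcal T$ as a homothety of ratio $\Lambda^{k_n}$, where $\{k_n\}$ is the image random walk on $\Z$.

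The second step is convergence together with the stationary measure. Compactness of $\widehat{\mathcal T}$ yields a $\mu$-stationary measure $\lambda$, and the measure-valued sequence $x_n\lambda$ is a martingale, hence converges weak-$\ast$ almost surely. That the limit is a Dirac mass $\delta_{x_\infty}$, so that $x_nP_0\to x_\infty$ pointwise and the distribution of $x_\infty$ equals $\lambda$, I would obtain from the geometry of the action: the strong expansion or contraction encoded by $\Lambda^{k_n}$, combined with the genuine spreading of the isometric factor $w_n$, collapses the conditional measures onto a single point. Non-atomicity and uniqueness of $\lambda$ should then follow from minimality and proximality of the $G$-action, the free factor $\F{d}$ already acting with dense orbits. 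The final assertion that $\lambda$ is not concentrated on $\partial\mathcal T$ is special to the observers topology: because $\mathcal T$ is the densely branching tree attached to an exponentially growing automorphism, the limit $x_\infty$ typically lies, \emph{in the observers topology}, at a finite point of $\overline{\mathcal T}$ rather than at a Gromov-boundary point, which forces $\lambda(\overline{\mathcal T}\setminus\partial\mathcal T)>0$.

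The heart of the argument is identifying $(\widehat{\mathcal T},\lambda)$ with the whole Poisson boundary via the strip version of Kaimanovich's criterion. For $\check\mu$-almost every $\xi^-$ and $\mu$-almost every $\xi^+$ I would take as strip $S(\xi^-,\xi^+)$ the set of $g\in G$ whose orbit point $gP_0$ lies in a fixed bounded neighbourhood of the geodesic of $\mathcal T$ joining the two limit points; this assignment is $G$-equivariant and nonempty. Since $\F{d}$ acts freely and the geodesic is a one-dimensional (thin) subset of the tree, one expects the number of $g\in S(\xi^-,\xi^+)$ inside a word-ball of radius $n$ in $G$ to grow subexponentially. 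Feeding this subexponential strip bound into Theorem~\ref{theoreme 2.4-6.5}, together with the hypotheses of finite first logarithmic moment and finite entropy, gives the required vanishing of the relevant entropy defect and hence maximality of the boundary.

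The main obstacle I anticipate is that $G$ does \emph{not} act on $\mathcal T$ by isometries: only $\F{d}$ preserves the metric, while $t$ rescales it by $\Lambda$. Consequently the tree metric is not a gauge for $G$, and two difficulties interact. First, the convergence and, above all, the exact location of $x_\infty$ must be analysed in the observers topology, where ``escape to infinity'' need not mean escape to $\partial\mathcal T$; controlling this is precisely what yields the non-concentration statement. Second, the subexponential strip estimate must be carried out against the word metric of $G$ while tracking how the homothety stretches or contracts the excursions recorded by $\{k_n\}$. It is exactly this exponential rescaling that forces the weaker logarithmic-moment hypothesis, in contrast with the finite first moment that suffices in the polynomially growing case, and making the subexponential bound survive the rescaling is the technical crux of the proof.
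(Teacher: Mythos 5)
Your overall strategy --- compactify $G$ via the projectively invariant tree with the observers topology and then run Kaimanovich's strip criterion --- is indeed the paper's strategy, but you miss the one idea that makes the exponential case work, and the strips you propose would not satisfy the criterion.

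First, the compactification $G\cup\widehat{\mathcal T}$ you work with does \emph{not} satisfy condition (CP), and almost sure convergence in $\widehat{\mathcal T}$ itself fails. Take the base point $O$ to be the fixed point of the homothety. For $v\in\F{d}$ one has $t^{k}vO=\alpha^{k}(v)O$, and $d(O,\alpha^{k}(v)O)=\lambda_0^{k}\,d(O,vO)$ for the dilation factor $\lambda_0$ of the relevant power of the homothety; in the observers topology this sequence accumulates both on $O$ and on the endpoints of the eigenrays, while $t^{k}O=O$ for all $k$. So along a path whose $\Z$-coordinate drifts, $x_nO$ and $x_n(vO)$ need not have the same limit, and (CP) fails. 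The paper's essential move (Definition \ref{fermer les eigenrays}, Proposition \ref{importante}) is to \emph{close every eigenray}, i.e.\ identify its endpoint in $\partial\mathcal T$ with its origin, prove that the quotient $\widetilde{\mathcal T}$ is still Hausdorff and compact, and only then verify (CP) (Proposition \ref{verification de la condition CP}). The Poisson boundary is $(\widetilde{\mathcal T},\lambda)$, a quotient of $\widehat{\mathcal T}$; Theorem \ref{extrait2} is the loose headline of the precise Theorems \ref{resultat 1} and \ref{resultat 3}. Your proposal never confronts this. (Your heuristic for non-concentration on $\partial\mathcal T$ is also off: the correct reason is that the LL-map $\mathcal Q$ sends $\partial G$ onto the tree with singleton fibers over $\partial\mathcal T$, so the pushed-forward, non-atomic hitting measure must charge the interior.)

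Second, your strips. A fixed-width metric neighbourhood of a tree geodesic is not $G$-equivariant, since $t$ acts as a strict homothety and does not preserve widths. Worse, the polynomial-growth bound fails: branch points of $\mathcal T$ have infinite stabilizers (conjugates of the polynomially $\alpha$-growing subgroups of $\F{d}$), which are finitely generated groups of exponential growth; every element of such a stabilizer sends the base point into a bounded set, so whenever the geodesic meets such a branch point your strip contains a whole coset and grows exponentially in the word metric of $G$. The paper instead defines strips as unions of $G_\alpha$-geodesics for the \emph{relative} metric coming from the strong relative hyperbolicity of $G_\alpha$ with respect to the mapping tori of the polynomially growing subgroups (Theorem \ref{lustigmoi}), joining exit points of stabilizers and $\mathcal Q$-preimages (Definition \ref{ca avance}); polynomial growth is then Proposition \ref{Francois6.5}, and nonemptiness of the strips again requires the eigenrays to have been closed (Lemma \ref{vide}). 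Finally, your closing remark inverts the logic of the moment hypotheses: finite first logarithmic moment plus finite entropy is exactly what Kaimanovich's criterion requires once the strips grow polynomially, while the stronger finite first moment appears only in the polynomially growing theorems because of the passage to a finite-index subgroup and the first-return measure.
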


We describe the Poisson boundaries with more precision in the full
statements given farther in the paper. Note that, in the case of an
extension by polynomially growing automorphisms which are not trivial in $\Out{\F{d}}$, 
we need to pass to a finite index subgroup. That is the reason why we require the measure $\mu$ to
have a finite first moment, and why we obtain the convergence only for a subsequence
of a.e. path. See Theorem \ref{indice fini} below for more details. 
On the other hand, no such move to a finite index subgroup is needed in the
case of extensions by {\em inner} automorphisms, {\em i.e.} the case of a direct product.
We have:

\begin{theorem}\label{extrait3}
Let $G = \F{d} \times \F{k}$ be a direct product of two finitely generated free groups and $\beta$ a 
monomorphism $\beta \colon \F{k} \hookrightarrow \Inn{\F{d}}$. 
Let $\mu$ be a probability measure on $G$ the support of which generates $G$ as a semi-group. 
Then there exists a $\beta$-invariant $\F{d}$-tree $\mathcal T$ such that:

\begin{itemize}
  \item Almost every path $\{x_n\}$ of the random walk on $(G,\mu)$ 
converges to some limit $x_{\infty} \in \partial \mathcal T$.
  \item The distribution of $x_{\infty}$ is a non-atomic $\mu$-stationary measure $\lambda$ on
$\partial \mathcal T$; it is the unique $\mu$-stationary measure on $\partial \mathcal T$.
  \item If $\mu$ has finite first logarithmic moment and finite entropy, then the measured space
$(\partial \mathcal T,\lambda)$ is the Poisson boundary of $(G,\mu)$. 
\end{itemize}

\end{theorem}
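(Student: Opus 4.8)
The plan is to deduce Theorem \ref{extrait3} from Kaimanovich's criterion (Theorem \ref{theoreme 2.4-6.5}), running the same scheme as for Theorem \ref{extrait1} but exploiting that $\beta$ takes values in $\Inn{\F{d}}$ to avoid the passage to a finite-index subgroup. First I would produce the $\beta$-invariant $\F{d}$-tree $\mathcal T$. Writing $\Phi\colon\F{k}\hookrightarrow\F{d}$ for the monomorphism obtained by composing $\beta$ with the canonical isomorphism $\Inn{\F{d}}\cong\F{d}$ (recall $Z(\F{d})=\{1\}$ for $d\ge 2$), each inner automorphism $\beta(b)=\mathrm{conj}_{\Phi(b)}$ is implemented on $\mathcal T$ by an honest isometry $H_{\beta(b)}$ satisfying $H_{\beta(b)}(w\cdot P)=\beta(b)(w)\cdot H_{\beta(b)}(P)$. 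Since all homothety factors equal $1$, the tree is genuinely $\beta$-invariant and $G$ acts on $\mathcal T$ by isometries, so $\partial\mathcal T$ is a compact metrisable $G$-space. This is where the hypothesis $\beta(\F{k})\subset\Inn{\F{d}}$ makes the argument strictly simpler than in Theorem \ref{extrait1}: there is no homothety to quotient out and no need to restrict to a finite-index subgroup $\mathcal U$, so I expect convergence of the \emph{whole} trajectory rather than of a subsequence, and convergence to the genuine boundary $\partial\mathcal T$ rather than to the completion $\widehat{\mathcal T}$ forced by the observers topology in the exponential case of Theorem \ref{extrait2}.

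Next I would settle the first two bullet points. Fix a base vertex $o\in\mathcal T$ and follow the orbit $\{x_n\cdot o\}$. Because $G$ acts by isometries and the displacement performs a transient, non-elementary walk on the tree, a Furstenberg-type argument gives that almost surely $x_n\cdot o$ converges to a boundary point $x_\infty\in\partial\mathcal T$, whose distribution $\lambda$ is a non-atomic $\mu$-stationary measure (non-atomicity follows from the absence of a finite $G$-orbit on $\partial\mathcal T$). For uniqueness I would use the proximality of the $G$-action on $\partial\mathcal T$: the semigroup generated by $\mathrm{supp}\,\mu$ fixes neither an end nor a pair of ends, so every $\mu$-stationary measure is carried by the limit set and, by the usual martingale-convergence argument, must coincide with the hitting measure $\lambda$, exactly as for the free group.

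For the third bullet I would verify the strip criterion. By the convergence above, $(\partial\mathcal T,\lambda)$ is a $\mu$-boundary, and the reflected walk yields a $\check\mu$-boundary $(\partial\mathcal T,\check\lambda)$. To a $\lambda\times\check\lambda$-generic pair of distinct ends $(\xi_-,\xi_+)$ I would assign the strip
\[
S(\xi_-,\xi_+)=\{\,g\in G:\ d\bigl(g\cdot o,\,[\xi_-,\xi_+]\bigr)\le R\,\},
\]
the group elements sending $o$ into a fixed $R$-neighbourhood of the bi-infinite geodesic $[\xi_-,\xi_+]$ in $\mathcal T$. Under the finite-entropy and finite-logarithmic-moment hypotheses the criterion requires
\[
\tfrac1n\log\bigl|S(\xi_-,\xi_+)\cap B_n\bigr|\ \longrightarrow\ 0\qquad(n\to\infty),
\]
where $B_n$ is the ball of radius comparable to the displacement $d(o,x_n\cdot o)$; by Theorem \ref{theoreme 2.4-6.5} this identifies $(\partial\mathcal T,\lambda)$ with the full Poisson boundary, and Theorem \ref{extrait3} follows (compare the finite-index detour recorded in Theorem \ref{indice fini}, which is avoided here).

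The main obstacle will be precisely this subexponential strip estimate. In a tree the geodesic $[\xi_-,\xi_+]$ and its $R$-neighbourhood meet $B_n$ in only linearly many vertices, so everything reduces to controlling the \emph{fibres} of the orbit map $g\mapsto g\cdot o$, that is the vertex stabilisers of $\mathcal T$ in $G$. The danger specific to a direct product is that the $\F{k}$-factor might fatten the strip: elements differing by (the image of) $\F{k}$ can send $o$ to nearby points of the geodesic, so a priori these fibres could grow exponentially and the criterion would fail. Ruling this out is the heart of the matter, and it is here that the inner nature of $\beta$ is decisive: because $\beta(\F{k})\subset\Inn{\F{d}}$ the whole $G$-action stays inside the geometry of $\F{d}$ and of the embedded free subgroup $\Phi(\F{k})\le\F{d}$, which is what should let one bound both the fibre-count and the neighbourhood-count by $o(n)$ on the exponential scale, using finiteness of the entropy. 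I expect this counting estimate, rather than the convergence or the uniqueness of $\lambda$, to be the genuine bottleneck; once it is in place, Theorem \ref{theoreme 2.4-6.5} applies and yields the stated description of the Poisson boundary.
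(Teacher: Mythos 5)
There is a genuine gap, and it sits exactly at the point you yourself flag as ``the heart of the matter''. Working on a single $\beta$-invariant tree $\mathcal T$ cannot yield the strip estimate, because the obstruction is structural, not quantitative. Since $\beta$ takes values in $\Inn{\F{d}}$, each generator of $\F{k}$ acts \emph{elliptically} on $\mathcal T$ (it realizes a conjugation, hence is a translate of an elliptic isometry), and suitable translates of these generators fix a common edge $E$; the subgroup of $G$ they generate is a free group of rank $k$ of exponential growth fixing $E$ pointwise (these are the ``singular elements'' of the paper, cf.\ Lemma \ref{cest connu} and Lemma \ref{intermediaire}). Consequently any strip of the form $\{g\in G:\ d(g\cdot o,[\xi_-,\xi_+])\le R\}$ contains entire cosets of an exponentially growing subgroup, so $|S\cap B_n|$ grows exponentially in the word metric of $G$ and Theorem \ref{theoreme 2.4-6.5} simply does not apply. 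Your hope that ``the inner nature of $\beta$ is decisive'' in bounding the fibres is exactly backwards: innerness is what creates the fat fibres. Two further problems: $\mathcal T$ is locally infinite (vertex stabilizers are nontrivial), so $\partial\mathcal T$ is \emph{not} a compact $G$-space in the metric topology --- one must pass to the observers compactification $\widehat{\mathcal T}$ --- and the same elliptic elements defeat the (CP)/proximality arguments you invoke, since an isometry fixing a nondegenerate subtree fixes infinitely many boundary points.

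The paper's actual proof (Section \ref{noncyclic}, Theorems \ref{dernier resultat} and \ref{dernier resultat interessant}) circumvents this by letting $G$ act on a \emph{product} of two copies of $\mathcal T$ via two monomorphisms $\theta_0,\theta_1$ in the same outer class, chosen so that every element which is singular (fixes a subtree) on one factor acts as a hyperbolic isometry on the other. One then closes the singular axes, collapses the singular boundary-trees of $\widetilde{\mathcal T}\times\widetilde{\mathcal T}$ to points, verifies (CP) and (CS) for the compactification of $G$ by the closure of the orbit of a base point in $\widetilde{\mathcal T}^2$ --- where the strips now genuinely grow polynomially (Proposition \ref{Francois6.5bis}) --- applies Theorem \ref{theoreme 2.4-6.5}, and finally projects back to the first factor, using Lemma \ref{fibre} to show that the fibres of this projection have measure zero. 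What innerness does buy, and all it buys, is that no passage to a finite-index subgroup is needed, whence convergence of the whole path and the relaxed moment hypothesis; it does not remove the need for the second tree.
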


We also deal with more
general hyperbolic groups than the free group. In the case of an
exponentially growing automorphism, the reader will notice that the
Poisson boundary is (a quotient of) the whole $\mr$-tree, and not
only the boundary of the tree. At least in the hyperbolic case, this will not seem
too surprising for geometric group theorists aware both of the
$\mr$-tree theory developed for surface and free group
automorphisms, and of the existence of the Cannon-Thurston map.
Indeed the whole $\mr$-tree is homeomorphic to a quotient of
$\partial \F{d}$ obtained by identifying the points which are the
endpoints of a same leaf of a certain ``stable lamination''
\cite{CHL}. The quotient we take amounts to further identifying the
points of $\partial \F{d}$ which are the endpoints of a same leaf of
a certain ``unstable lamination''. That this gives the geometric
boundary is known in the case of the suspension of a closed
hyperbolic surface by a pseudo-Anosov homeomorphism. In the more
general setting we work here we know no reference. It could perhaps
be alternatively obtained by combining the Cannon-Thurston map
defined in \cite{etudiantdeMitra} for the relatively hyperbolic
setting with the last section of the present paper and a work (yet
to be written) of Coulbois-Hilion-Lustig.

\medskip

As  was already written, we work really on the $\mr$-trees on
which the considered groups act, not on the groups themselves. This
leads us to extract the properties that we really need for these
actions to provide us with a compactification satisfying all the
Kaimanovich properties. In particular the map $\mathcal Q$
introduced in \cite{LL1, LL2} appears to play a crucial r\^ole. In
fact this map $\mathcal Q$ also allows us to get the following

\begin{corollary}\label{jackpot1}

Let $\F{d}$ be the rank $d$ free group, $d \geq 2$.
Let $G$ be an extension of the free group $\F{d}$ by a
finitely generated subgroup of polynomially growing outer
automorphisms. Let $\mu$ be a probability measure on $G$ the support of which
generates $G$ as a semi-group. Then there exists a finite index subgroup $H$
in $G$ such that, if  ${\tau_{k}}$ denotes the (random)
sequence of the times at which the path ${\bf x}=\{x_n\}$ visits $H$, then:

\begin{itemize}
   \item There exists a topology on $G \cup \partial \F{d}$ such
 that almost every sub-path ${x_{\tau_{k}}}$ converges to some $x_\infty \in \partial \F{d}$.
   \item The distribution of $x_{\infty}$ is a non-atomic $\mu$-stationary measure $\lambda$ on $\partial \F{d}$;
 it is the unique $\mu$-stationary measure on $\partial \F{d}$.
   \item If $\mu$ has finite first moment, then the measured space $(\partial \F{d},\lambda)$ is the
 Poisson boundary of $(G,\mu)$.
\end{itemize}

\end{corollary}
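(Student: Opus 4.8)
The plan is to deduce Corollary~\ref{jackpot1} from Theorem~\ref{extrait1} by transporting its conclusions from the boundary $\partial\mathcal T$ of the auxiliary tree to the boundary $\partial\F{d}$ of the free group, the bridge being the Levitt--Lustig map $\mathcal Q$ of \cite{LL1,LL2}. Since $G$ is an extension of $\F{d}$ by a finitely generated subgroup $\mathcal P$ of polynomially growing outer automorphisms, it is isomorphic to a semi-direct product $\F{d}\rtimes\mathcal P$, so Theorem~\ref{extrait1} applies directly. It furnishes a finite index subgroup $\mathcal U\leq\mathcal P$, hence the finite index subgroup $H=\F{d}\rtimes\mathcal U\leq G$, a simplicial $\mathcal U$-invariant $\F{d}$-tree $\mathcal T$, and, for the stopping times $\tau_k$ at which $\{x_n\}$ visits $H$: almost sure convergence of $\{x_{\tau_k}\}$ to a limit $x^{\mathcal T}_\infty\in\partial\mathcal T$, a unique non-atomic $\mu$-stationary measure $\lambda_{\mathcal T}$ on $\partial\mathcal T$, and, when $\mu$ has finite first moment, the identification of $(\partial\mathcal T,\lambda_{\mathcal T})$ with the Poisson boundary of $(G,\mu)$.

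The central step is to turn $\mathcal Q$ into an isomorphism of measured $G$-spaces. Recall that $\mathcal Q$ is a continuous, $\F{d}$-equivariant map $\partial\F{d}\to\widehat{\mathcal T}$. First I would note that, since the walk converges to a genuine end of $\mathcal T$, the relevant mass sits on $\partial\mathcal T$, so that $\mathcal Q$ restricts to a surjection $\partial\F{d}\to\partial\mathcal T$. Then I would exploit the simplicial (polynomially growing) nature of $\mathcal T$: in contrast with the exponential case of Theorem~\ref{extrait2}, there is no non-trivial stable lamination whose leaves $\mathcal Q$ is forced to collapse, so the fibres of $\mathcal Q$ over $\partial\mathcal T$ are singletons outside a countable set of branching directions. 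Hence $\mathcal Q$ is injective off a negligible set, and, defining $\lambda:=(\mathcal Q^{-1})_*\lambda_{\mathcal T}$ by means of this \emph{essential injectivity}, the map $\mathcal Q$ becomes a $G$-equivariant isomorphism of measured spaces $(\partial\F{d},\lambda)\cong(\partial\mathcal T,\lambda_{\mathcal T})$.

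With this isomorphism the three bullets follow. Pulling back the observers topology on $\mathcal T\cup\partial\mathcal T$ through the orbit map $H\to\mathcal T\cup\partial\mathcal T$ and through $\mathcal Q$ produces the required topology on $G\cup\partial\F{d}$, in which $\{x_{\tau_k}\}$ converges almost surely to $x_\infty:=\mathcal Q^{-1}(x^{\mathcal T}_\infty)\in\partial\F{d}$. Non-atomicity of $\lambda$ is immediate from that of $\lambda_{\mathcal T}$; for uniqueness, any $\mu$-stationary measure $\nu$ on $\partial\F{d}$ pushes forward under the equivariant $\mathcal Q$ to a $\mu$-stationary measure on $\partial\mathcal T$, which by Theorem~\ref{extrait1} equals $\lambda_{\mathcal T}$, whence $\nu=\lambda$ by essential injectivity. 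Finally, when $\mu$ has a finite first moment, the isomorphism $(\partial\F{d},\lambda)\cong(\partial\mathcal T,\lambda_{\mathcal T})$ and the Poisson-boundary clause of Theorem~\ref{extrait1} together identify $(\partial\F{d},\lambda)$ with the Poisson boundary of $(G,\mu)$.

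The main obstacle will be the essential injectivity of $\mathcal Q$ on $\partial\F{d}$, that is, the negligibility of its fibres over $\partial\mathcal T$. This is precisely the feature separating the polynomial case treated here from the exponential case, where $\mathcal Q$ genuinely collapses leaves of a lamination and $\partial\mathcal T$ is only a proper quotient of $\partial\F{d}$; one must argue, from the simplicial structure of $\mathcal T$ and the control on laminations provided by the polynomial-growth hypothesis, that no collapsing occurs on a set of positive $\lambda$-measure. A secondary point is that the countable branching set, being $G$-invariant with no finite orbits, is null for every $\mu$-stationary measure, which legitimates the comparison $\nu=\lambda$ in the uniqueness argument.
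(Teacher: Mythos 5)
Your proposal follows the paper's own route: the paper deduces this corollary (in its body form, Corollary \ref{le dernier corollaire interessant}) from Theorem \ref{dernier resultat interessant} precisely by transporting the statement on the tree back through the LL-map $\mathcal Q$, using that $\mathcal Q$ is continuous, surjective and equivariant, that its fibres over points of $\partial \mathcal T$ are genuine singletons, and that the non-injective locus is negligible for every $\mu$-stationary measure by the Kaimanovich--Masur fibre lemma (Lemma \ref{fibre}). The one inaccuracy is your description of the exceptional set as ``a countable set of branching directions over $\partial\mathcal T$'': the non-singleton fibres lie over the (countably many) branch points of $\mathcal T$ itself, not over $\partial\mathcal T$, and each such fibre is the boundary of a vertex stabilizer, hence generally an uncountable compactum --- but since your negligibility argument ($G$-invariance plus infinite orbits plus the fibre lemma) is exactly the correct one, the proof goes through as in the paper.
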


\medskip

\begin{corollary}\label{jackpot2}
Let $\F{d}$ be the rank $d$ free group, $d \geq 2$.
Let $G$ be either a cyclic extension $\F{d} \rtimes_{\alpha} \Z$ of the free group $\F{d}$ over an
exponentially growing automorphism $\alpha$ of $\F{d}$, or a direct product
$\F{d} \times \F{k}$ acting on $\partial \F{d}$ via some fixed monomorphism $\F{k} \hookrightarrow \Inn{\F{d}}$.
Let $\mu$ be a probability measure on $G$ the support of which generates $G$ as a semi-group. Then:

\begin{itemize}
   \item There exists a topology on $G \cup \partial \F{d}$ such
 that almost every path $\{x_n\}$ of the random walk converges to some $x_\infty \in \partial \F{d}$.
   \item The distribution of $x_{\infty}$ is a non-atomic $\mu$-stationary measure $\lambda$ on $\partial \F{d}$;
 it is the unique $\mu$-stationary measure on $\partial \F{d}$.
   \item If $\mu$ has finite first logarithmic moment and finite entropy, then the measured space
 $(\partial \F{d},\lambda)$ is the Poisson boundary of $(G,\mu)$.
\end{itemize}

\end{corollary}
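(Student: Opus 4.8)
The plan is to derive Corollary \ref{jackpot2} from Theorems \ref{extrait2} and \ref{extrait3}, which already identify the Poisson boundary of $(G,\mu)$ with $(\widehat{\mathcal T},\lambda_{\mathcal T})$ in the exponentially growing case, and with $(\partial \mathcal T,\lambda_{\mathcal T})$ in the direct-product case. It remains to transport this identification from the $\mr$-tree onto $\partial \F{d}$, and the instrument for this is the Levitt--Lustig map $\mathcal Q\colon \partial \F{d}\to\widehat{\mathcal T}$ recalled in the introduction: it is continuous, surjective, and $\F{d}$-equivariant (indeed $G$-equivariant for the twisted action), and it exhibits $\widehat{\mathcal T}$, resp.\ $\partial \mathcal T$, as the quotient of $\partial \F{d}$ obtained by collapsing the two endpoints of each leaf of the stable lamination. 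Write $L\subset\partial\F{d}$ for the set of all such endpoints, i.e.\ the locus where $\mathcal Q$ fails to be injective; the pairs it identifies constitute the dual lamination $L^{2}(\mathcal T)\subset\partial^{2}\F{d}$.

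Since $\partial\F{d}$ is a compact $G$-space it carries at least one $\mu$-stationary probability $\lambda$, and its image $\mathcal Q_{*}\lambda$ is a $\mu$-stationary probability on $\widehat{\mathcal T}$, hence equals $\lambda_{\mathcal T}$ by the uniqueness clause of Theorem \ref{extrait2} (resp.\ \ref{extrait3}). The heart of the argument is the claim that $\mathcal Q$ is essentially injective for $\lambda$, that is $\lambda(L)=0$. Granting this, $\mathcal Q$ restricts to a Borel isomorphism of the full-measure sets $\partial\F{d}\setminus L$ and $\widehat{\mathcal T}\setminus\mathcal Q(L)$ intertwining the $G$-actions, so $(\partial\F{d},\lambda)$ and $(\widehat{\mathcal T},\lambda_{\mathcal T})$ are isomorphic as measured $G$-spaces; the latter being the Poisson boundary, so is the former. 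Uniqueness and non-atomicity of $\lambda$ then follow at once: any stationary $\lambda'$ has $\mathcal Q_{*}\lambda'=\lambda_{\mathcal T}=\mathcal Q_{*}\lambda$, whence $\lambda'=\lambda$ by essential injectivity, and an atom of $\lambda$ would push forward to an atom of the non-atomic $\lambda_{\mathcal T}$.

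For the convergence statement I would build the topology on $G\cup\partial\F{d}$ by transporting through $\mathcal Q$ the topology on $G\cup\widehat{\mathcal T}$, declaring $g_{n}\to\xi$ when $g_{n}\to\mathcal Q(\xi)$ in $G\cup\widehat{\mathcal T}$ together with a condition on the free-group components that selects $\xi$ within the fibre $\mathcal Q^{-1}(\mathcal Q(\xi))$. Because $\mathcal Q$ is $\lambda$-essentially injective, it admits a Borel inverse defined $\lambda_{\mathcal T}$-almost everywhere; composing the a.e.\ convergence $x_{n}\to x_{\infty}\in\widehat{\mathcal T}$ of Theorem \ref{extrait2} (resp.\ \ref{extrait3}) with this inverse yields a.e.\ convergence $x_{n}\to\xi_{\infty}\in\partial\F{d}$ with $\mathcal Q(\xi_{\infty})=x_{\infty}$, and the distribution of $\xi_{\infty}$ is exactly $\lambda$.

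I expect the essential-injectivity claim $\lambda(L)=0$ to be the main obstacle; the rest is the largely formal transport carried out above. The set $L$ is $G$-invariant and ``thin'', being cut out by the dual lamination $L^{2}(\mathcal T)$, and one must preclude the hitting measure from concentrating on it. The natural route is to view the forward and backward limits of the bilateral random walk as a $\lambda\otimes\check\lambda$-distributed pair in $\partial^{2}\F{d}$ and to show, using the description of the stable lamination in \cite{CHL,LL1,LL2} together with the positivity of drift guaranteed by the moment and entropy hypotheses, that this pair almost surely avoids $L^{2}(\mathcal T)$. Turning this heuristic into a proof---ruling out that a diffuse stationary measure can live on the lamination---is where the real difficulty lies.
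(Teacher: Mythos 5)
Your overall strategy --- transporting the identification of the Poisson boundary from the $\mr$-tree back to $\partial \F{d}$ through the LL-map $\mathcal Q$, using the uniqueness of the stationary measure on the tree and an ``essential injectivity'' of $\mathcal Q$ --- is exactly the strategy of the paper (see Corollaries \ref{corollaire pour GL} and \ref{le corollaire interessant}). But the step you yourself single out as the heart of the matter, namely $\lambda(L)=0$ for the non-injectivity locus $L$, is left unproven, and the route you sketch for it (realising the forward/backward limits of the bilateral walk as a $\lambda\otimes\check\lambda$-pair and showing it avoids the dual lamination via drift and entropy) is not carried out and is not the argument that makes the corollary work. As it stands this is a genuine gap: everything else in your note is, as you say, formal transport, and it all hinges on this one unproved claim.

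The paper closes this gap by a much softer, structural observation, and you should note two points. First, in the exponentially growing case the relevant map is not $\mathcal Q\colon\partial\F{d}\to\widehat{\mathcal T}$ but $q\circ\mathcal Q\colon\partial\F{d}\to\widetilde{\mathcal T}$, where $q$ closes the eigenrays (equivalently, further collapses the unstable lamination); the stationary measure is \emph{not} concentrated on $\partial\mathcal T$, so you cannot avoid dealing with fibres over interior points of the tree, and it is only after closing the eigenrays that the compactification satisfies (CP) and the fibre structure becomes tractable. Second, by Theorem \ref{le cas libre}(4) the non-singleton fibres of $q\circ\mathcal Q$ are pairwise disjoint compact subsets of $\partial\F{d}$ which form finitely many $G_\alpha$-orbits; in particular there are only countably many of them, so their images constitute a countable $G_\alpha$-invariant subset of $\widetilde{\mathcal T}$, which is null for the non-atomic stationary measure (alternatively, each fibre is null by Lemma \ref{fibre}, the Kaimanovich--Masur argument). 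Essential injectivity --- for the hitting measure and for \emph{every} stationary measure, which you need for your uniqueness step --- is therefore automatic; no analysis of whether a diffuse stationary measure can charge the lamination is required. The same countability argument (fibres over the countably many vertices of the simplicial tree) handles the direct-product case. Replacing your heuristic by this observation would complete the proof along the lines you intend.
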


Beware that $G \cup \partial \F{d}$ is {\it not}
a compactification of $G$. We invite the reader to compare the above
result with \cite[Theorems 1 and 2]{Vershik}. A perhaps more intuitive
interpretation of the above result in the case of a random walk
on a semi-direct product $\F{d} \rtimes_\alpha \mz$ is as follows: let $\F{d}$ be
the rank $d$ free group together with a basis $\mathcal B$
and let $\alpha$ be an automorphism of $\F{d}$. Consider a set of transformations $\mathcal S$
which consist either of a right-translation by an element in $\mathcal B$ or of the substitution
of an element $g$ by its image $\alpha(g)$ or $\alpha^{-1}(g)$. Then iterating randomly
chosen transformations among the set $\mathcal S$ amounts to performing a nearest neighbor
random walk on $\F{d} \rtimes_\alpha \mz$. The above corollary means that the boundary behavior
of this random process is entirely described in terms of the Gromov boundary of $\F{d}$.

\medskip

There is a difficulty, when dealing with direct products or
extensions over polynomially growing automorphisms, which does not
appear in the exponentially growing case and is hidden in the
statements given here. Indeed, whereas in the exponentially growing
case we only need to make act the considered group on a $\mr$-tree,
in the polynomially growing case we have to make act the group on
the product of two (simplicial) $\mr$-trees. The reason is that, in
order to check the Kaimanovich properties (more precisely the (CP)
condition), we need that a single element do not fix more than one
or two points, and if it fixes two then it acts as a hyperbolic
translation along the axis joining the two points. This is not so
difficult in the exponentially growing case because each element fixes at most
one point in the (completion of the) tree (exactly one for those acting as strict homotheties), the other ones lying in
the Gromov boundary. In the polynomially growing case, some elements
fix non-trivial subtrees of the considered $\mr$-tree. Collapsing
these subtrees is not possible because it might happen that
eventually everything gets identified.

\medskip

The trick is then
to make appear, as an intermediate step, a product of trees instead
of a single one. Since a semi-direct product structure only depends
on the outer-class of the automorphisms, we make the extension
considered act in different ways on two copies of the given
$\mr$-tree. In this way, the fixed points of the action which are an
obstruction to some of the Kaimanovich conditions become ``small'',
in some sense, in the ambient space. We mean that there is still a
tree to be identified to a single point for some elements but these
trees (in $\partial (\widehat{\mathcal T} \times \widehat{\mathcal
T})$) are disjoint for two distinct elements. We come back to a
single tree by projection on the first factor.

\medskip

Before concluding this section we would like to comment on possible 
generalizations of the results exposed here. When considering a group 
$G \rtimes {\mathcal U}$, the $\mr$-trees we work with are only a reminiscence 
of certain invariant laminations for the action of $\mathcal U$ on $G$. Here
the word ``invariant lamination'' has to be understood in the sense
of a geometric lamination or in the more general sense of an
algebraic lamination as in \cite{CHLI, CHLII, CHLIII}. Even in the
restricted geometric setting, such invariant laminations might exist
even if the manifolds considered are not suspensions. Thus one can
expect to be able to find more general classes of groups than those
considered here, like for instance in a first step the fundamental
groups of compact $3$-manifolds which do not fiber over $\SI$ but
nevertheless admit pseudo-Anosov flows (in the decomposition $G
\rtimes {\mathcal U}$ we do not need a priori that $G$ and $\mathcal
U$ be finitely generated, but only that $G \rtimes {\mathcal U}$
is).

\section{Harmonic functions and random walks on a discrete group}
\label{Frederic}

In this section, we recall the construction given by V.Kaimanovich of
the Poisson boundary of a countable group endowed with a probability
measure, together with a geometric characterization of this
boundary. Apart from subsections \ref{DS} and \ref{RM}, most of the material 
of this section is taken from \cite{Kaim1}. We end with a discussion on the 
first return measure on a recurrent subgroup.

\subsection{Stationary measures and random walks}

We write $\mn$ for the set of nonnegative integers. A (measurable)
$G$-space is any measurable space $(X,\Ff)$ measurably acted upon by
a countable group $G$. If $\mu$ and $\lambda$ are measures
respectively on $G$ and $X$, we denote by $\mu * \lambda$ the
measure on $X$ which is the the image of the product measure $\mu
\otimes \lambda$ by the action $G \times X \rightarrow X$. The
measure $\lambda$ is said to be {\em $\mu$-stationary} if one has:
\begin{equation}
\lambda\:=\:\mu * \lambda\:=\:\sum_{g}\mu(g)g \lambda\:\: .
\end{equation}

\medskip

Let $G$ be a countable group, and $\mu$ a probability measure on
$G$. The (right) {\em random walk} on $G$ determined by the measure
$\mu$ is the Markov chain on $G$ with the transition probabilities
$p(x,y) = \mu(x^{-1}y)$ invariant with respect to the left action of
the group $G$ on itself. Thus, the position $x_n$ of the random walk
at time $n$ is obtained from its position $x_0$ at time $0$ by
multiplying by independant $\mu$-distributed right increments $h_i$:
\begin{equation}\label{increment}
x_n\:=\:x_0 h_1 h_2 \cdots h_n.
\end{equation}

\medskip

Denote by $G^{\mn}$ the space of sample paths ${\bf x}=\{x_n\},
n\in\mn$ endowed with the $\sigma$-algebra $\Aa$ generated by the
cylinders $\{{\bf x}\in G^{\mn}\: |\: x_i = g\}$. The group $G$ acts
coordinate-wisely on the space $G^{\mn}$. An initial distribution
$\theta$ on $G$ determines the Markov measure $\PP_\theta$ on the
path space $G^{\mn}$ which is the image of the measure $\theta
\otimes \bigotimes^{\infty}_{n=1}\mu$ under the map
(\ref{increment}). The one-dimensional distribution of $\PP_\theta$
at time $n$, i.e. the distribution of $x_n$, is $\theta * \mu^{*n}$.

\medskip

In the sequel, we will be mainly interested in random walks starting
at the group identity $e$ which correspond to the initial
distribution $\theta = \delta_e$. We denote by $\PP$ the associated
Markov measure. For any initial distribution $\theta$, one easily
checks that the probability measure $\PP_{\theta}$ is equal to
$\theta * \PP$ and is absolutely continuous with respect to the
$\sigma$-finite measure $\PP_m$,
 where $m$ is the counting measure on $G$.

\subsection{The Poisson boundary}

The measure $\PP_m$ on the path space $G^{\mn}$ is invariant by the
time shift $T\: : \: \{x_n\} \mapsto \{x_{n+1}\}$. The {\em Poisson
boundary} $\Gamma$ of the random walk $(G,\mu)$ is defined as being
the space of ergodic components of the shift $T$ acting on the
Lebesgue space $(G^{\mn},\Aa,\PP_m)$ (see \cite{Rohlin}).

\medskip

Let us give some details. Denote by $\sim$ the orbit equivalence
relation of the shift $T$ on the path space $G^{\mn}$ :
\begin{equation}
{\bf x} \sim {\bf x'} \: \iff \: \exists \: n,n' \geq 0,\: T^{n}{\bf
x}=T^{n'}{\bf x'}\: .
\end{equation}
Let $\Aa_T$ be the $\sigma$-algebra of all the measurable unions of
$\sim$-classes, i.e. the $\sigma$-algebra of all $T$-invariant
measurable sets. Denote by $\overline{\Aa}_T$ the completion of
$\Aa_T$ with respect to the measure $\PP_m$. Since
$(G^{\mn},\Aa,\PP_m)$ is a Lebesgue space, the Rohlin correspondence
assigns to the complete $\sigma$-algebra $\overline{\Aa}_T$ a
measurable partition $\eta$ of $G^{\mn}$ called the {\em Poisson
partition}, which is well defined mod $0$. An atom of $\eta$
is an ergodic component of the shift $T$, that is, up to a set of
$\PP_m$-measure $0$, closed under $\sim$, $\Aa$-measurable, and minimal
with respect to these propoerties. (Note that the $\sigma$-algebra
$\Aa_T$ is not {\em a priori} generated by the atoms of $\eta$, see
\cite{Rohlin,Babillot}). The Poisson boundary $\Gamma$ is
the quotient space $G^{\mn}/\eta$. The coordinate-wise action of $G$
on the path space $G^{\mn}$ commutes with the shift $T$ and therefore projects
to an action on $\Gamma$. Denote by bnd the canonical map bnd $ : \:
G^{\mn}\: \rightarrow \: \Gamma$. The space $\Gamma$ endowed with
the bnd-image of the $\sigma$-algebra $\overline{\Aa}_T$ and the
measure $\nu_{m}\:=\:$bnd$(\PP_m)$ is a Lebesgue space.

\medskip

For any initial distribution $\theta$, we set $\nu_{\theta}\: =
\:$bnd$(\PP_{\theta})$. The Poisson boundary $\Gamma$, which depends
only on $G$ and $\mu$, carries all the probability measures
$\nu_{\theta}$. The measure $\nu = $bnd$(\PP)$ is called the {\em
harmonic measure}. One easily checks that $\nu_{\theta}\: =
\:$bnd$(\theta * \PP)\: = \: \theta * \nu$. It implies that the
measure $\nu$ is $\mu$-stationary, i.e. $\nu  \:=\: \mu * \nu$
(whereas the other measures $\nu_{\theta}$ are not).

\subsection{Harmonic functions}

As mentioned in the introduction, the space $(\Gamma,\nu)$ enables
to retrieve both all the bounded $\mu$-harmonic functions on $G$ and
(part of) the asymptotic behaviour of the paths of the random walk.
Let us now make this precise.

\medskip

The Markov operator $P\:=\:P_{\mu}$ of averaging with respect of the
transition probability of the random walk $(G,\mu)$ is defined by
\begin{equation}
P_{\mu}f(x)\:=\:\sum_{y}p(x,y)f(y)\:=\:\sum_{h}\mu(h)f(xh)\:.
\end{equation}
A function $f:G\rightarrow \mr$ is called $\mu$-harmonic if
$Pf\:=\:f$. Denote by $H^{\infty}(G,\mu)$ the Banach space of
bounded $\mu$-harmonic functions on $G$ equipped with the sup-norm.

\medskip

There is a simple way to build bounded $\mu$-harmonic functions on
$G$. Assume that one is given a probability $G$-space
$(X,\Ff,\lambda)$ such that the measure $\lambda$ is
$\mu$-stationary. To any function $F\in L^{\infty}(X,\lambda)$ one
can assign a function $f$ on $G$ defined, for $g\in G$, by the {\em
Poisson formula} $f(g)\:=\:\langle F,g\lambda
\rangle\:=\:\int_{X}F(gx)d\lambda(x)$. Since
$\lambda\:=\:\mu*\lambda$, the function $f$ is $\mu$-harmonic.

\medskip

In the case of the Poisson boundary $(\Gamma,\nu)$, the Poisson
formula is indeed an isometric isomorphism from
$L^{\infty}(\Gamma,\nu)$ to $H^{\infty}(G,\mu)$. Actually, one can
prove (see \cite[theorem 6.1]{Kaim1bis}) that if $f$ is the harmonic
function provided by the Poisson formula applied to a function $F
\in L^{\infty}(\Gamma,\nu)$, then for $\PP$-almost every path ${\bf
x}=\{x_n\}$, one has $F($bnd ${\bf x})\:=\:\lim f(x_n)$. Conversely,
if $f$ is any bounded $\mu$-harmonic functions on $G$, then the
sequence of its values along sample paths ${\bf x}=\{x_n\}$ of the
random walk is a martingale (with respect to the increasing
filtration of the coordinate $\sigma$-algebras in $G^{\mn}$).
Therefore, by the Martingale Convergence Theorem, for $\PP$-almost
every path ${\bf x}=\{x_n\}$, there exists a limit ${\hat F}({\bf
x})\:=\:\lim f(x_n)$. This function ${\hat F}$ is $T$-invariant and
$\Aa_T$-measurable. Since the Poisson boundary $\Gamma$ is the
quotient of the path space determined by $\Aa_T$, there exists $F\in
L^{\infty}(\Gamma,\nu)$ such that $F($bnd ${\bf x})\:=\:\lim
f(x_n)$.

\subsection{$\mu$-Boundaries}

As far as the behaviour of sample paths at infinity is concerned, we
first recall the notion of {\em $\mu$-boundary}. This notion was
first introduced by Furstenberg, see \cite{Fu71,Fu73}.
The following definition is due to Kaimanovich.
 A $\mu$-boundary is a $G$-equivariant quotient
$(B,\lambda)$ of the Poisson boundary $(\Gamma,\nu)$, i.e. the
quotient of $\Gamma$ with respect to some $G$-equivariant measurable
partition. The Poisson boundary is itself a $\mu$-boundary, and this
space is maximal with respect to this property. Particularly, if
$\pi$ is any $T$-invariant $G$-equivariant map from the path space
$(G^{\mn},\PP)$ to some $G$-space $B$, then, by definition of the
Poisson boundary, such a map factors through $\Gamma$:
$\pi:G^{\mn}\:\rightarrow\: \Gamma \:\rightarrow B$ and
$(B,\pi(\PP))$ is a $\mu$-boundary.

\medskip

Furstenberg's construction of $\mu$-boundaries runs as follows.
Assume that $B$ is a separable compact $G$-space on which $G$ acts
continuously. By compactness, there exists a $\mu$-stationary
measure $\lambda$ on $B$ (see \cite{KM}). Then the Martingale
Convergence Theorem implies that for $\PP$-almost every path ${\bf
x}=\{x_n\}$, the sequence of translations $x_{n}\lambda$ converges
weakly to some measure $\lambda({\bf x})$. Therefore the map ${\bf
x}\rightarrow \lambda({\bf x})$ allows to consider the space of
probability measures on $B$ as a $\mu$-boundary. If, in addition,
the limit measures $\lambda({\bf x})$ are Dirac measures, then the
space $B$ is itself a $\mu$-boundary (see \cite{Fu71,Fu73}).

\medskip

For instance, assume that the group $G$ contains a finitely generated free group $F$
as a normal subgroup. Then the action of $G$ on $F$ by inner
automorphisms extends to a continuous action on the boundary $\partial F$
of $F$. Vershik and Malyutin \cite{Vershik} proved that if $\mu$
is any nondegenerate measure on $G$ (i.e. the support of $\mu$
generates $G$ as a semigroup) then there exists a unique $\mu$-stationary
measure $\lambda$ on the $G$-space $\partial F$ and $(\partial F,\lambda)$
is a $\mu$-boundary of the pair $(G,\mu)$. Their proof heavily relies on the
description of some contracting properties of the action of $G$ on $\partial$. 
The aim of this paper is to provide various situations for which we can prove 
that $(\partial F,\lambda)$ is indeed the Poisson boundary of $(G,\mu)$.

\subsection{Dynkin spaces}\label{DS}

The concept of {\em Dynkin space} was introduced by Furstenberg in \cite{Fu67} in order to prove that a given $G$-space is
a $\mu$-boundary. The purpose of this subsection is to discuss this notion in the case of cyclic extensions of the free group.
A Dynkin space for a finitely generated group $G$ is a compact metric $G$-space $X$ with the following property :
\[
\forall \varepsilon > 0, \exists \:N\: \mbox{s.t.} \forall g\in G, |g| \geq N, \exists U_{g},V_{g} \subset X,\:
\mbox{diam}\:U_{g}\:\mbox{and}\:V_{g}\:< \varepsilon \:\:\mbox{s.t.}\: g(U_{g}^{c}) \subset V_{g} \:\: .
\]
One can easily check that the boundary $\partial F$ of a free group $F$ is a Dynkin space for $F$ (for the usual
ultra-metric on $\partial F$). If $G = F \rtimes_{\alpha} \Z$ is a cyclic extension of a free group $F$ by means of
some automorphism $\alpha$ of $F$, then, up to finite index, $\partial F$ is also a Dynkin space for $G$. For, if 
$\partial \alpha$ denotes the continuation of $\alpha$ on  $\partial F$, then $\partial \alpha$ has finitely many periodic
orbits in $\partial F$, and the orbit of any element accumulates on a periodic orbit \cite{LevLus08}. Therefore, up to taking
some power of $\alpha$ (which amounts to considering a finite index subgroup of $G$), one may consider that the periodic orbit 
of $\alpha$ are fixed points, and the orbit of any non-fixed point is attracted to a fixed point.

\medskip

However, for a $G$-space $X$, being a Dynkin space for $G$ is not sufficient to ensure that $X$ is a $\mu$-boundary.
Actually, in \cite{Fu67}, Furstenberg strongly relies on the fact that $\partial F$ is a {\em compactification} of the free group $F$
to prove both the fact that $\partial F$ is a $\mu$-boundary and also the convergence of the random walk in $F$ to $\partial F$.
In the case of a cyclic extension $G = F \rtimes_{\alpha} \Z$ of $F$, we do not know {\em a priori} how to compactify $G$ with 
$\partial F$ in such a way to prove that $\partial F$ is a $\mu$-boundary of $G$, and eventually its Poisson boundary.
Indeed, the main achievement of this paper is to construct a compactification $\overline{G}$ of $G$ containing $\partial F$ as a dense 
subset of full measure in such a way that $G \cup \partial F$ has all the properties listed in Corollaries \ref{jackpot1} and \ref{jackpot2}.

\subsection{Compactifications}

The situation we are mainly concerned with is the following. Let
$\overline{G}$ be a {\em compactification} of the group $G$, that is
a topological compact space which contains $G$ as an open dense
subset and such that the left-action of $G$ on itself extends to a
continuous action on $\overline{G}$. Assume that $\PP$-almost every
path ${\bf x}=\{x_n\}$ converges to a limit $x_{\infty}\:=\:\lim
x_{n}\:=\:\pi({\bf x})\:\in \overline{G} $. Then the map $\pi$ is
obviously $T$-invariant and $G$-equivariant, so that the space
$\overline{G}$ equipped with the hitting measure $\pi(\PP)$ is a
$\mu$-boundary. Moreover, in this case, the Poisson formula yields
an isometric embedding of  $L^{\infty}(\overline{G},\pi(\PP))$ into
$H^{\infty}(G,\mu)$.

\medskip

A compactification $\overline{G}$ is {\em $\mu$-maximal} if $\PP$-almost
every sample path ${\bf x}=\{x_n\}$ of the random walk $(G,\mu)$
converges in this compactification to a (random) limit $x_{\infty} \: =\: \pi({\bf x})$, 
and if $(\overline{G},\lambda)$ is isomorphic to the
Poisson boundary of $(G,\mu)$, where $\lambda\: = \: \pi(\PP)$ is the hitting measure .

\medskip

In \cite{Kaim1}, Kaimanovich proves a theorem which provides
compactifications $\overline{G}$ of $G$ containing the limit of
$\PP$-almost every path (theorem 2.4), and another one which is a
geometric criterion of maximality of $\mu$-boundaries (theorem 6.5).
The combination of these two results yields $\mu$-maximal
compactifications of $G$, see Theorem \ref{theoreme 2.4-6.5} below.
This statement will be our central tool.

\medskip

A compactification $\overline{G}$ is {\em compatible} if the
left-action of $G$ on itself extends to an action on $\overline{G}$
by homeomorphisms. A compactification is {\em separable} if, when
writing $\overline{G} = G \cup \partial G$, then $\partial G$ is
separable (i.e contains a countable dense subset). In the remaining
of this section, we shall always assume that $\overline{G}$ is a
compatible and separable compactification of the group $G$.

\medskip

We first state a uniqueness criterion. Its proof is mostly inspired
from the one of Theorem 2.4 in \cite{Kaim1}.
\begin{lemma}\label{unique}
Let $\overline{G} = G \cup \partial G$ be a compactification of a finitely
generated group $G$ satisfying the following {\em proximality property:} whenever
$G \ni g_{n} \rightarrow \xi \in \partial G$, one has
$g_{n}\eta \rightarrow \xi$ for all but at most one $\eta \in \partial G $.
Let $\mu$ be a probability measure on $G$ such that the subgroup
$gr(\mu)$ generated by its support fixes no finite subset of
$\partial G$. Assume that $\PP$-almost every path ${\bf x}=\{x_n\}$ converges
to a (random) limit $x_{\infty} \: =\: \pi({\bf x})\:\in \partial G$.
Then the hitting measure $\lambda \: = \: \pi(\PP)$, which is the
distribution of $x_{\infty}$, is non-atomic and is the unique $\mu$-stationary
probability measure on $\partial G$.
\end{lemma}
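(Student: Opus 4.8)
**The plan is to prove the two assertions—non-atomicity and uniqueness—together, exploiting the proximality property to show that any stationary measure must be a Dirac point-mass seen from the limit, and then combining this with stationarity to pin down the measure.**

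First I would recall the general convergence machinery already available: since $\PP$-almost every path converges to $x_\infty = \pi(\mathbf{x}) \in \partial G$, the distribution $\lambda = \pi(\PP)$ is $\mu$-stationary (this is the standard fact that the hitting measure of a convergent random walk on a compactification is stationary, already invoked in the Compactifications subsection). The core of the argument is to analyze an \emph{arbitrary} $\mu$-stationary measure $\lambda'$ on $\partial G$ and show it equals $\lambda$, while simultaneously showing $\lambda$ is non-atomic.

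\textbf{Non-atomicity.} Suppose $\lambda$ had an atom. Since $\lambda$ is $\mu$-stationary, the set of atoms of maximal mass is finite and $gr(\mu)$-invariant (a classical argument: stationarity forces $\lambda = \sum_g \mu(g) g\lambda$, and the maximal atomic mass can only be preserved if the support of $\mu$ permutes the maximal atoms). This would produce a finite $gr(\mu)$-invariant subset of $\partial G$, contradicting the hypothesis that $gr(\mu)$ fixes no finite subset. Hence $\lambda$ is non-atomic. I would carry out the routine verification that the maximal-mass atoms form a finite set permuted by $\mathrm{supp}(\mu)$, and invoke the fixed-set hypothesis to close this off.

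\textbf{Uniqueness.} Let $\lambda'$ be any $\mu$-stationary probability measure on $\partial G$. By the Martingale Convergence Theorem (as recalled in the $\mu$-Boundaries subsection), for $\PP$-almost every path the translates $x_n \lambda'$ converge weakly to a limit measure $\lambda'(\mathbf{x})$, and $\lambda' = \int \lambda'(\mathbf{x}) \, d\PP(\mathbf{x})$. The key geometric input is the proximality property: since $x_n \to \xi := x_\infty(\mathbf{x}) \in \partial G$ along almost every path, we have $x_n \eta \to \xi$ for all $\eta \in \partial G$ except at most one exceptional point $\eta_0 = \eta_0(\mathbf{x})$. Because $\lambda'$ is non-atomic—which follows by applying the atom argument above to $\lambda'$ as well—the single possibly-exceptional point $\eta_0$ carries no $\lambda'$-mass, so $x_n \lambda'$ converges weakly to the Dirac mass $\delta_{x_\infty(\mathbf{x})}$. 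Therefore $\lambda'(\mathbf{x}) = \delta_{\pi(\mathbf{x})}$ for $\PP$-almost every $\mathbf{x}$, and integrating gives $\lambda' = \int \delta_{\pi(\mathbf{x})} \, d\PP(\mathbf{x}) = \pi(\PP) = \lambda$. Since $\lambda'$ was arbitrary, $\lambda$ is the unique $\mu$-stationary measure.

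\textbf{The main obstacle} I anticipate is the justification that the exceptional point $\eta_0(\mathbf{x})$ in the proximality condition contributes nothing: one must ensure that non-atomicity of $\lambda'$ is established \emph{before} it is used to discard $\eta_0$, so the logical order matters—non-atomicity of \emph{every} stationary measure must be proven first (using only the no-finite-fixed-set hypothesis), and only then may one invoke it to collapse $x_n\lambda'$ onto a Dirac mass. A secondary technical point is verifying the weak-convergence-to-Dirac step rigorously: one tests against a continuous function $\varphi$ on the compact space $\partial G$ and uses dominated convergence together with the pointwise convergence $\varphi(x_n \eta) \to \varphi(\xi)$ holding $\lambda'$-almost everywhere in $\eta$, which is exactly where separability and compactness of $\partial G$ enter to guarantee the measurability and integrability needed.
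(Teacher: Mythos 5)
Your proposal is correct and follows essentially the same route as the paper: establish non-atomicity of an arbitrary stationary measure from the no-finite-invariant-set hypothesis (the maximal-atom argument you sketch is precisely the content of the Kaimanovich argument the paper cites), use proximality plus non-atomicity to get $x_n\lambda' \to \delta_{x_\infty}$ weakly, and conclude from $\lambda' = \mu^{*n}*\lambda' = \int x_n\lambda'\, d\PP$ by passing to the limit. The only cosmetic difference is your invocation of the Martingale Convergence Theorem, which is redundant here since proximality already identifies the weak limit of $x_n\lambda'$ directly.
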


\begin{proof}
Let $\nu$ be any $\mu$-stationary measure on $\partial G$. The hypothesis on
$gr(\mu)$ ensures that $\nu$ is non-atomic (see the proof of Theorem 2.4 in \cite{Kaim1}
for details). Then the proximality property implies that the sequence
of (random) measures $x_{n}\nu$ weakly converges to the Dirac measure $\delta_{x_{\infty}}$.
Since $\nu$ is $\mu$-stationary and $\mu^{*n}$ is the distribution of $x_n$,
we have
\[
\nu \:=\:\mu^{*n}*\nu \:=\: \sum_{g}\mu^{*n}(g).g\nu \:=\: \int x_{n}\nu d\PP({\bf x}) \:.
\]
Passing to the limit on $n$ gives
\[
\nu \:=\: \int \delta_{x_{\infty}} d\PP({\bf x}) \:=\: \lambda\:.
\]
\end{proof}

\begin{remark}
\label{compacite non necessaire}
Note that there is no need of compactness in the proof: Lemma \ref{unique}
remains valid if $\overline{G}$ is not compact. This observation will be used later in the paper.
\end{remark}

\subsection{Kaimanovich's criterion of maximality}

The compactification $\overline{G}$ satisfies {\em condition (CP)}
if, for any $x\in G$ and for every sequence $g_n \in G$ converging
to a point from $\partial G$ in the compactification $\overline{G}$,
the sequence $g_{n}x$ converges to the same limit.

\medskip

The compactification $\overline{G}$ satisfies {\em condition (CS)}
if the following holds. The boundary $\partial G$ consists of at
least 3 points, and there is a $G$-equivariant Borel map $S$
assigning to pairs of distinct points $(b_{1},b_{2})$ from $\partial
G$ nonempty subsets ({\em strips}) $S(b_{1},b_{2}) \subset G$ such
that for any 3 pairwise distinct points $\overline{b_{i}} \in
\partial G$, $i\: =\: 0,1,2$, there exist neighbourhoods
$\overline{b_{0}} \in \Oo_{0} \subset \overline{G}$ and
$\overline{b_{i}} \in \Oo_{i}
 \subset \partial G$, $i\: =\: 1,2$ with the property that
\[
S(b_{1},b_{2}) \: \cap \: \Oo_{0}\: = \: \emptyset \quad\quad
\text{for all}\quad b_{i} \in \Oo_{i},\: i\: =\: 1,2 \: .
\]
This condition (CS) means that points from $\partial G$ are
separated by the strips $S(b_{1},b_{2})$.

\medskip

A {\em gauge} on a countable group $G$ is any increasing sequence
$\Jj \: = \: (\Jj_{k})_{k\geq 1}$ of sets exhausting $G$. The
corresponding {\em gauge function} is defined by $|g| \: = \:
|g|_{\Jj} \: = \: \text{min}\{k\: , \: g\in \Jj_{k} \}$. The gauge
$\Jj$ is {\em finite} if all gauge sets are finite. An important
class of gauges consists of {\em word gauges}, i.e. gauges
$(\Jj_{k})$ such that $\Jj_{1}$ is a set generating $G$ as a
semigroup, and $\Jj_{k} \: = \: (\Jj_{1})^{k}$ is the set of word of
length $\leq \: k$ in the alphabet $\Jj_{1}$. It is finite if and
only if $\Jj_{1}$ is finite. A set $S \subset G$ {\em grows
polynomially} with respect to some gauge $\Jj$ if
\[
\mbox{there exist}\: A,B,d \:\:\mbox{such that}\:\: card\: [S \cap
\Jj_{k}] \leq A\: + \: Bk^d \:.
\]

\medskip

\begin{theorem}\cite[Theorems 2.4 and 6.5]{Kaim1}
\label{theoreme 2.4-6.5} Let $G$ be a finitely generated group. Let
$\overline{G} = G \cup \partial G$ be a compatible and separable
compactification of $G$ satisfying conditions (CP) and (CS). Let
$\mu$ be a probability measure on $G$ such that the subgroup
$gr(\mu)$ generated by its support fixes no finite subset of
$\partial G$.

Then $\PP$-almost every path ${\bf x}=\{x_n\}$ converges to a
(random) limit $x_{\infty} \: =\: \pi({\bf x})\:\in \partial G$. The
hitting measure $\lambda \: = \: \pi(\PP)$ is non-atomic, the measure
space $(\partial G,\lambda)$ is a $\mu$-boundary and $\lambda$ is the
unique $\mu$-stationary probability measure on $\partial G$.

Moreover, if $\Jj$ is a finite word gauge on $G$ such that the
measure $\mu$ has a finite first logarithmic moment $\sum \log
|g|\mu(g)$, a finite entropy $H(\mu)\: = \: -\sum \mu(g) \log
\mu(g)$, and if each strip $S(b_{1},b_{2})$ grows polynomially, then
the space $(\partial G,\lambda)$ is isomorphic to the Poisson
boundary $(\Gamma,\nu)$ of $(G,\mu)$ and is therefore $\mu$-maximal.
\end{theorem}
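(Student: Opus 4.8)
The statement bundles two of Kaimanovich's results, so the plan is to prove it in two stages matched to the two structural hypotheses: first the convergence, non-atomicity, uniqueness and $\mu$-boundary assertions, which use only (CP); then the maximality assertion, which uses (CS) together with the entropy and moment hypotheses.

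\textbf{Stage one.} Since $\overline{G}$ is compact and $G$ acts by homeomorphisms, a weak-$*$ compactness argument produces a $\mu$-stationary probability measure $\lambda$ on $\overline{G}$; the hypothesis that $gr(\mu)$ fixes no finite subset of $\partial G$ forces $\lambda$ to be carried by $\partial G$ and to be non-atomic (this is the point recalled in the proof of Lemma \ref{unique}). First I would apply the Martingale Convergence Theorem to the harmonic functions $g \mapsto \langle f, g\lambda\rangle$ for $f \in C(\overline{G})$, obtaining that the translates $x_n\lambda$ converge weakly $\PP$-almost surely to a random limit measure. The role of (CP) is to upgrade this to genuine path convergence: since the walk grows by right increments, $x_n = x_{n-1}h_n$, and (CP) says a right multiplication does not move boundary limits, I would use it to show simultaneously that $x_n$ converges in $\overline{G}$ to a boundary point $x_\infty = \pi(\mathbf x)$ and that the limit measure is the Dirac mass $\delta_{x_\infty}$. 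Convergence then makes $\pi$ a $T$-invariant, $G$-equivariant map, so $(\partial G, \pi(\PP))$ is a $\mu$-boundary by Furstenberg's criterion. Finally, approximating boundary points by interior points shows that (CP) implies the proximality hypothesis of Lemma \ref{unique}, which hands me non-atomicity and uniqueness of $\lambda = \pi(\PP)$ directly.

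\textbf{Stage two.} For maximality I would run Kaimanovich's entropy machine. The Poisson boundary is the maximal $\mu$-boundary, and a $\mu$-boundary is already the whole Poisson boundary once the conditional (differential) entropy rate $\frac1n H(x_n \mid \mathrm{bnd})$ tends to $0$: finiteness of $H(\mu)$ makes the Avez entropy $h(\mu)$ finite and legitimizes the entropy calculus, while the finite first logarithmic moment (for the word gauge) gives a linear bound $|x_n| = O(n)$ $\PP$-a.s.\ via Kingman's subadditive theorem. The strips from (CS) enter through the reflected walk: its backward trajectory converges to a point $b_- \in \partial G$, and the separation property of (CS) traps the bilateral position $x_n$ inside the strip $S(b_-,b_+)$ with $b_+ = x_\infty$. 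Intersecting with the ball $\Jj_{Cn}$ and invoking polynomial growth gives $\#\bigl(S(b_-,b_+)\cap \Jj_{Cn}\bigr) = O(n^d)$, hence $\frac1n \log \#(\cdots) \to 0$; feeding this into the strip criterion bounds the conditional entropy rate by the strip growth rate and yields $\frac1n H(x_n\mid \mathrm{bnd}) \to 0$. Therefore $(\partial G,\lambda) \cong (\Gamma,\nu)$ and the compactification is $\mu$-maximal.

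The hard part is the entropy inequality of stage two, the strip criterion proper: one must bound the conditional entropy given only the forward boundary point $b_+$ by a quantity that a priori controls the entropy given both $b_-$ and $b_+$. The delicate issue is that $b_-$ is read off the past while $x_n$ lives in the future, so the extra conditioning on $b_-$ must be shown to be asymptotically free; this is where I expect to spend most of the effort, exploiting the conditional independence of past and future given the present together with the Shannon--McMillan--Breiman theorem for the random walk. Stage one, by contrast, is comparatively soft once (CP) is correctly translated into the proximality hypothesis feeding Lemma \ref{unique}.
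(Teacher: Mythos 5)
This theorem is not proved in the paper: it is imported verbatim from Kaimanovich, as the citation \cite[Theorems 2.4 and 6.5]{Kaim1} indicates, so there is no in-paper argument to compare yours against. Judged against Kaimanovich's actual proofs, your two-stage outline has the right architecture (stationary measure by weak-$*$ compactness, non-atomicity from the absence of finite invariant sets, martingale convergence of the translates $x_n\lambda$, condition (CP) to upgrade to path convergence, then the entropy/strip machinery for maximality), but two specific steps do not go through as you state them.

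First, the claim that ``(CP) implies the proximality hypothesis of Lemma \ref{unique}'' by approximating boundary points by interior points is not a valid deduction. Condition (CP) controls $g_n x$ for $x\in G$ only; proximality concerns $g_n\eta$ for $\eta\in\partial G$, and since (CP) asserts no uniformity in $x$ you cannot pass to the limit along a sequence $x_k\to\eta$. This is precisely why the present paper states proximality as a separate standing hypothesis in Lemma \ref{unique} and verifies it independently where it is needed (e.g.\ for $\partial\widetilde{\mathcal T}$ in the proof of Theorem \ref{resultat interessant 1bis}); Kaimanovich obtains uniqueness by a different route, showing that any stationary measure is the barycentre of the limit measures $\lambda_{\bf x}$, which are Dirac masses at $x_\infty$. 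Second, in stage two you invoke Kingman to get $|x_n|=O(n)$ from the finite first \emph{logarithmic} moment; that linear bound needs a finite first moment. What the logarithmic moment gives (via $|x_n|\le\sum_{i\le n}|h_i|$ and Borel--Cantelli) is only $\frac1n\log|x_n|\to 0$, which combined with $\Card\,[S\cap\Jj_k]\le A+Bk^d$ still yields $\frac1n\log\Card\,[S(b_-,b_+)\cap\Jj_{|x_n|}]\to 0$, so the conclusion survives, but the hypothesis is being fed into the wrong estimate. Finally, the difficulty you single out --- decoupling the conditioning on $b_-$ from the future --- is in fact dispatched immediately by the independence of the negative-index increments from $(h_1,\dots,h_n)$ in the bilateral walk; the genuinely delicate point in Kaimanovich's Theorem 6.5 is rather to show that condition (CS) forces $x_n$ to lie in a controlled neighbourhood of the strip $S(b_-,b_+)$ with high probability, which is the step your sketch passes over.
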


\medskip

Note that if a measure $\mu$ has a finite first moment $\sum
|g|\mu(g)$, then the entropy $H(\mu)$ is finite, see e.g.
\cite[Lemma 12.2]{Kaim1bis}. Besides, the finitess of the first
(logarithmic) moment does not depend on the choice of the finite word
gauge.

\medskip

\subsection{The first return measure}\label{RM}

This subsection is devoted to a discussion on the stability of the Poisson
boundary when moving to a subroup which is recurrent or normal with
finite index. Let $G$ be a finitely generated group, $\mu$ a probability
measure on $G$ and $G^{0}$ a subgroup of $G$ which is a recurrent set
for the random walk $(G,\mu)$. Define a probability measure $\mu^{0}$
on $G^{0}$ as the distribution of the point where the random walk issued
from the identity of $G$ returns for the first time to $G^{0}$. We call
$\mu^{0}$ the {\it first return measure}.
Furstenberg observed (see \cite[Lemma 4.2]{Fu71}) that the Poisson
boundaries of $(G,\mu)$ and $(G^{0},\mu^{0})$ are isomorphic.

\medskip

For instance, if $G^{0}$ is a normal subgroup in $G$ such that the
random walk $(G,\mu)$ projects on the factor group $G/G^{0}$ as
a recurrent random walk, then the identity in $G/G^{0}$ is a recurrent
state, therefore $G^{0}$ is a recurrent set in $G$. The case of a normal
subgroup of finite index is of special interest:

\begin{lemma}\cite[Lemma 4.2]{Fu71},\cite[Lemma 2.3]{Kaim2}
\label{merci Kaima} Let $G$ be a finitely generated group, $\mu$ a
probability measure on $G$ and $G^{0}$ a normal subgroup of finite
index in $G$. Then $G^{0}$ is a recurrent set for the random walk
$(G,\mu)$ and the Poisson boundaries of $(G,\mu)$ and $(G^{0},\mu^{0})$
are isomorphic where $\mu^{0}$ is the first return measure.
Moreover, if $\mu$ has a finite first moment (with respect to some
finite word gauge) then so has $\mu^{0}$.
\end{lemma}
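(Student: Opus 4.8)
The plan is to dispatch the three assertions in order, the recurrence and the boundary identification being formal consequences of the finite-index hypothesis while the preservation of the first moment carries the real content. First I would check that $G^0$ is recurrent and, along the way, that the first return time has finite expectation. Let $q\colon G \to Q := G/G^0$ be the quotient onto the finite group $Q$ and put $\bar\mu := q_{*}\mu$. The projected process $\bar x_n := q(x_n)$ is the $\bar\mu$-random walk on $Q$ started at $\bar e := q(e)$, and $x_n \in G^0$ if and only if $\bar x_n = \bar e$. The transition matrix $\bar p(\bar a,\bar b) = \bar\mu(\bar a^{-1}\bar b)$ on the finite set $Q$ is doubly stochastic, so the uniform measure on $Q$ is stationary; since it charges every point, the chain has no transient states and $\bar e$ is positive recurrent. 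Hence $x_n \in G^0$ infinitely often $\PP$-almost surely, which is the recurrence of $G^0$, and the first return time $\tau := \min\{n \ge 1 : x_n \in G^0\}$ --- being the first return time of $\bar x$ to $\bar e$ in a finite positive recurrent chain --- satisfies $\mathbb{E}[\tau] < \infty$. With $G^0$ recurrent, the isomorphism of Poisson boundaries is exactly Furstenberg's observation recalled before the statement (\cite[Lemma 4.2]{Fu71}).

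For the moment estimate, fix the finite word gauge on $G$ with gauge function $|\cdot|$ for which $\mathbb{E}[|h_1|] = \sum_g |g|\,\mu(g) < \infty$, and recall $x_\tau = h_1 h_2 \cdots h_\tau$ with $h_i$ i.i.d.\ $\mu$-distributed and $\tau$ a stopping time for the filtration they generate. Subadditivity of the word length gives $|x_\tau| \le \sum_{i=1}^{\tau}|h_i|$, so, the terms being nonnegative, Tonelli together with the independence of $\{\tau \ge i\} = \{\tau \le i-1\}^{c}$ (a function of $h_1,\dots,h_{i-1}$) from $h_i$ yields
\[
\sum_{g\in G^0}|g|\,\mu^0(g) \;=\; \mathbb{E}\bigl[\,|x_\tau|\,\bigr]
\;\le\; \sum_{i\ge 1}\mathbb{E}\bigl[\,|h_i|\,\mathbf{1}_{\{\tau\ge i\}}\,\bigr]
\;=\; \mathbb{E}\bigl[\,|h_1|\,\bigr]\sum_{i\ge 1}\PP(\tau\ge i)
\;=\; \mathbb{E}\bigl[\,|h_1|\,\bigr]\,\mathbb{E}[\tau] \;<\; \infty ,
\]
which is Wald's identity in disguise, finite by the two facts established above.

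It remains to pass to a genuine finite word gauge on $G^0$. As a finite-index subgroup of a finitely generated group, $G^0$ is itself finitely generated, and the inclusion $G^0 \hookrightarrow G$ is a quasi-isometry, so for a word gauge function $|\cdot|_{G^0}$ on $G^0$ there are constants $C,D$ with $|g|_{G^0} \le C|g| + D$ for all $g \in G^0$; since $\mu^0$ is a probability measure, $\sum_g |g|_{G^0}\,\mu^0(g) \le C\sum_g |g|\,\mu^0(g) + D < \infty$, and finiteness of the first moment does not depend on the chosen finite word gauge. The main obstacle is precisely the moment bound: the estimate becomes trivial once one knows $\mathbb{E}[\tau] < \infty$, and it is exactly there --- in the finiteness of the return time to $G^0$ --- that the finite-index (equivalently, finite-quotient) hypothesis is indispensable and cannot be relaxed to mere recurrence of $G^0$, since a null-recurrent subgroup would in general give $\mathbb{E}[\tau] = \infty$.
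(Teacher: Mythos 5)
Your proof is correct. Note that the paper does not prove this lemma at all --- it is quoted from \cite{Fu71} and \cite{Kaim2} --- and your argument is precisely the standard one behind those references: positive recurrence of the induced walk on the finite quotient $G/G^{0}$ gives $\mathbb{E}[\tau]<\infty$, and then subadditivity of the word length together with Wald's identity bounds the first moment of $\mu^{0}$ by $\mathbb{E}[|h_{1}|]\,\mathbb{E}[\tau]$; the reduction to a word gauge intrinsic to $G^{0}$ via quasi-isometry of the inclusion is also handled correctly. Your closing remark is consistent with the paper's own discussion after Theorem \ref{indice fini}, where it is observed that for a merely recurrent (infinite-index) normal subgroup the first return measure may fail to have finite first moment or even finite entropy.
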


Observe that the conclusion of this lemma remains valid if the
finite index subgroup $G^{0}$ is not normal, since it contains a
subroup $G^{1}$ of finite index which is normal in $G$.

\medskip

Indeed, what is meant in this lemma is that there is an isomorphism between
the spaces of harmonic functions $H^{\infty}(G,\mu)$ and $H^{\infty}(G^{0},\mu^{0})$.
But the Poisson boundaries $(\Gamma,\nu)$ and $(\Gamma^{0},\nu^{0})$ of
$(G,\mu)$ and $(G^{0},\mu^{0})$ are distincts objects in nature since $\Gamma^{0}$
is not {\it a priori} a $G$-space - although $\Gamma$ and $\Gamma^{0}$ are both $G^{0}$-spaces. 
This is made precise in the following statement,
which is a combination of Theorem \ref{theoreme 2.4-6.5} and (the proof of) Lemma \ref{merci Kaima}:

\begin{theorem}\label{indice fini}
Let $G$ be a finitely generated group and $G^{0}$ a finite index subgroup of $G$.
Let $\overline{G^{0}} = G^{0} \cup B$ be a compatible and separable
compactification of $G^{0}$ satisfying conditions (CP) and (CS) such that
the action of $G^{0}$ on $B$ extends to a continuous action of $G$ on $B$.
Let $\mu$ be a probability measure on $G$ such that the subgroup
$gr(\mu)$ generated by its support fixes no finite subset of $B$.
Denote by $\mu^{0}$ the first return measure. Let ${\tau_{k}}$ be the (random)
sequence of the times at which the path ${\bf x}=\{x_n\}$ visits $G^{0}$.
Then :

\begin{enumerate}
\item $\PP$-almost every sub-path ${x_{\tau_{k}}}$ converges to a
(random) limit $x_{\infty} \: =\: \pi({\bf x})\:\in B$.
\item  the hitting measure $\lambda \: = \: \pi(\PP)$ is non-atomic, the measure
space $(B,\lambda)$ is a $\mu$-boundary and $\lambda$ is the
unique $\mu$-stationary probability measure on $B$.
\item if $\Jj$ is a finite word gauge on $G$ such that the measure $\mu$ has a
finite first moment and each strip grows polynomially, then
the space $(B,\lambda)$ is isomorphic to the Poisson boundary $(\Gamma,\nu)$ of $(G,\mu)$.
\end{enumerate}

\end{theorem}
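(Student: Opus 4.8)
The plan is to combine the Kaimanovich machinery of Theorem \ref{theoreme 2.4-6.5} applied to the pair $(G^{0},\mu^{0})$ with the identification of Poisson boundaries under passage to a recurrent subgroup recorded in Lemma \ref{merci Kaima}. The subtlety, as the remark before the statement emphasizes, is that $(B,\lambda)$ must be shown to be the Poisson boundary of the \emph{ambient} group $(G,\mu)$, even though the compactification is \emph{a priori} only adapted to the subgroup $G^{0}$. First I would reduce to the normal case: as observed just after Lemma \ref{merci Kaima}, the finite index subgroup $G^{0}$ contains a further finite index subgroup $G^{1}$ which is normal in $G$; since all hypotheses (compatibility and separability of the compactification, the extension of the action to $G$, conditions (CP) and (CS), and the fixed-point condition on $gr(\mu)$) pass to the restriction to $G^{1}$, and since the first return measure of $(G,\mu)$ to $G^{1}$ inherits finite first moment by Lemma \ref{merci Kaima}, it suffices to treat the case where $G^{0}$ is normal of finite index.

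Next I would apply Theorem \ref{theoreme 2.4-6.5} directly to the data $(G^{0},\mu^{0})$ on the compactification $\overline{G^{0}} = G^{0} \cup B$. For this I must check that $\mu^{0}$ satisfies the hypotheses of that theorem. The subgroup $gr(\mu^{0})$ generated by the support of $\mu^{0}$ fixes no finite subset of $B$: indeed the support of $\mu^{0}$ generates (as a semigroup) a subgroup of $G^{0}$ whose closure of orbits agrees with that of $gr(\mu)\cap G^{0}$, and since $gr(\mu)$ fixes no finite subset of $B$, neither does this subgroup. By Lemma \ref{merci Kaima}, the finite first moment of $\mu$ guarantees the finite first moment of $\mu^{0}$, hence also finite entropy and finite first logarithmic moment by the remark following Theorem \ref{theoreme 2.4-6.5}; the strips provided for $G^{0}$ grow polynomially by hypothesis. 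Theorem \ref{theoreme 2.4-6.5} then yields that $\PP^{0}$-almost every sample path of $(G^{0},\mu^{0})$ converges in $\overline{G^{0}}$ to a limit in $B$, that the hitting measure $\lambda$ is the unique non-atomic $\mu^{0}$-stationary measure on $B$ and that $(B,\lambda)$ is isomorphic to the Poisson boundary $(\Gamma^{0},\nu^{0})$ of $(G^{0},\mu^{0})$.

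It remains to transfer these three conclusions from the subgroup to $G$. For (1), the key observation is that the sub-path $\{x_{\tau_{k}}\}$ of the $G$-random walk obtained by recording the successive visits to $G^{0}$ is precisely a realization of the $(G^{0},\mu^{0})$-random walk, by the very definition of the first return measure; so its almost sure convergence in $B$ is exactly the convergence statement just obtained, and $\pi$ is $T$-invariant and $G$-equivariant because the action of $G^{0}$ on $B$ extends continuously to $G$. For (2), the $\mu$-stationarity and uniqueness of $\lambda$ on the $G$-space $B$ follow from Lemma \ref{unique} (valid here since the proximality hypothesis is a consequence of condition (CP)) together with the fixed-point hypothesis on $gr(\mu)$; that $(B,\lambda)$ is a $\mu$-boundary of $(G,\mu)$ comes from the $G$-equivariant $T$-invariant map $\pi$ factoring through $\Gamma$. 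The main obstacle is the Poisson-boundary identification in (3): I expect this to be the delicate point, because Theorem \ref{theoreme 2.4-6.5} identifies $(B,\lambda)$ with $\Gamma^{0}$, not with $\Gamma$, and these are genuinely different objects ($\Gamma^{0}$ is not \emph{a priori} a $G$-space). The way around it is to invoke Furstenberg's isomorphism $H^{\infty}(G,\mu)\cong H^{\infty}(G^{0},\mu^{0})$ from Lemma \ref{merci Kaima}: one shows that the maximality of $(B,\lambda)$ as a $\mu^{0}$-boundary, expressed as the surjectivity of the Poisson transform $L^{\infty}(B,\lambda)\to H^{\infty}(G^{0},\mu^{0})$, is carried by this isomorphism to surjectivity onto $H^{\infty}(G,\mu)$, so that the $\mu$-boundary $(B,\lambda)$ already realizes all bounded $\mu$-harmonic functions and is therefore the full Poisson boundary $(\Gamma,\nu)$ of $(G,\mu)$. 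Making this compatibility between the two Poisson transforms precise — that a bounded $\mu$-harmonic function restricts along the return times to the bounded $\mu^{0}$-harmonic function corresponding to the same boundary datum — is where the real work lies, and it is exactly the content encoded in the proof of Lemma \ref{merci Kaima} that the statement refers to.
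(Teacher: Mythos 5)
Your overall strategy coincides with the paper's: apply Theorem \ref{theoreme 2.4-6.5} to $(G^{0},\mu^{0})$ (using Lemma \ref{merci Kaima} to transfer the moment condition and the finite-index hypothesis to see that $gr(\mu^{0})$ fixes no finite subset of $B$), identify the sub-path $\{x_{\tau_k}\}$ with the $(G^{0},\mu^{0})$-walk to get Item (1), and for Item (3) use the isomorphism $\Phi\colon H^{\infty}(G,\mu)\to H^{\infty}(G^{0},\mu^{0})$ together with the compatibility $\Phi\circ\Pi_{\lambda}=\Pi^{G^{0}}_{\lambda}$ of the two Poisson transforms to promote the maximality of $(B,\lambda)$ from $(G^{0},\mu^{0})$ to $(G,\mu)$. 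This is exactly the paper's argument for Items (1) and (3), and you correctly locate the crux in the intertwining of the two Poisson formulas via restriction to $G^{0}$ (the content of the proof of \cite[Lemma 4.2]{Fu71}).

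The one step that does not go through as you state it is the uniqueness of the $\mu$-stationary measure in Item (2), which you propose to deduce from Lemma \ref{unique}. Two hypotheses of that lemma fail here. First, Lemma \ref{unique} requires that $\PP$-almost every \emph{full} path $\{x_n\}$ converge in the compactification, since its proof passes to the limit in the identity $\nu=\mu^{*n}*\nu=\int x_{n}\nu\,d\PP$; in the present setting only the sub-path $\{x_{\tau_k}\}$ converges, and the return times $\tau_k$ are random, so the analogous identity $\nu=\int x_{\tau_k}\nu\,d\PP$ is precisely the assertion that $\nu$ is $\mu^{0}$-stationary, which is what needs to be proved. Second, your parenthetical claim that the proximality property is a consequence of condition (CP) is not justified: (CP) controls the sequences $g_{n}x$ for $x\in G^{0}$, whereas proximality controls $g_{n}\eta$ for $\eta\in B$; these are different statements, and in any case the compactification at hand is one of $G^{0}$, not of $G$. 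The paper closes this gap with the same tool you already use for Item (3): for an arbitrary $\mu$-stationary $\nu$ on $B$, the composition $\Phi\circ\Pi_{\nu}$ is the $\mu^{0}$-Poisson transform of $\nu$, so $\nu$ is automatically $\mu^{0}$-stationary, and the uniqueness furnished by Theorem \ref{theoreme 2.4-6.5} for $(G^{0},\mu^{0})$ forces $\nu=\lambda$. Replacing your appeal to Lemma \ref{unique} by this observation repairs the argument.
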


\begin{proof}
Since $G^{0}$ has finite index in $G$, the subgroup $gr(\mu^{0})$ of $G^{0}$ generated by
the support of $\mu^{0}$ fixes no finite subset of $B$. Therefore Theorem \ref{theoreme 2.4-6.5}
works for the random walk $(G^{0},\mu^{0})$, which gives Item $(1)$, as well as Items $(2)$ and $(3)$
for $\mu^{0}$.

\medskip

Let us prove that $\lambda$ is the unique $\mu$-stationary probability measure on $B$. Let $\nu$ be
any $\mu$-stationary probability measure on $B$ (the compactness of $B$ implies the existence of such
a measure). Consider the Poisson formula $\Pi_{\nu} : L^{\infty}(B,\nu) \rightarrow H^{\infty}(G,\mu)$.
According to the proof of Lemma 4.2 in \cite{Fu71}, the restriction $\Phi : f \mapsto f_{|G^{0}}$ maps
$\mu$-harmonic functions to $\mu^{0}$-harmonic functions. Therefore the composition $\Phi \circ \Pi_{\nu}$
is the Poisson formula $L^{\infty}(B,\nu) \rightarrow H^{\infty}(G^{0},\mu^{0})$ so the measure $\nu$
is $\mu^{0}$-stationary, and $\nu = \lambda$.

\medskip

As far as Item $(3)$ is concerned, we prove that the Poisson formula
$\Pi_{\lambda} : L^{\infty}(B,\lambda) \rightarrow H^{\infty}(G,\mu)$
is an isomorphism. The map $\Phi : H^{\infty}(G,\mu) \rightarrow H^{\infty}(G^{0},\mu^{0})$ is an
isomorphism. Since $\mu$ has a first moment, so has $\mu^{0}$. According to Item (3) of Theorem
\ref{theoreme 2.4-6.5}, $(B,\lambda)$ is the Poisson boundary of $(G^{0},\mu^{0})$, therefore the composition
$\Phi \circ \Pi_{\lambda}$ is an isomorphism, and so is $\Pi_{\lambda}$.
\end{proof}

Observe that the way the group $G$ acts on $B$ does not play any role, provided this action
extends the action of $G^0$. Indeed, the boundary behaviour of the random walk on $(G,\mu)$
is governed by $(G^{0},\mu^{0})$.

\medskip

Assume now the following: $G^{0}$ is the free group $\F{d}$, the
factor group $G/\F{d}$ is isomorphic to $\Z$ and the
image random walk on $G/\F{d}$ is recurrent. Then the above
construction gives rise to a probability measure $\mu^{0}$ on
$\F{d}$ such that the Poisson boundaries of $(G,\mu)$ and
$(\F{d},\mu^{0})$ are isomorphic. The problem is that, in this
case, we do not know {\it a priori} what the Poisson boundary of
$(\F{d},\mu^{0})$ should be, because the measure $\mu^{0}$ is potentially too
spread out to have a finite entropy, even if the image random
walk is the symmetric nearest neighbour random walk on $\Z$.
On the other hand, according to Theorem \ref{short}, the Poisson
boundary of $(G,\mu)$ - and that of $(\F{d},\mu^{0})$ - is $\partial\F{d}$.

\medskip

Besides, if $G$ is, for instance, an extension of the free group $\F{4}$
by a $\Z^2$-subgroup of polynomially growing automorphisms in $\Aut{\F{4}}$
(see Subsection \ref{exemple2} for such examples) and if $\mu$ is any
non-degenerate probability measure with finite first moment on $G$ which leads to a recurrent random walk on
$G/\F{4} \sim \Z^2$ then the measure $\mu^{0}$ on $\F{4}$ is potentially
{\it very} spread out, and Corollary \ref{jackpot1} ensures that
the Poisson boundary of $(\F{4},\mu^{0})$ is $\partial\F{4}$.

\section{Generalities about groups, automorphisms and semi-direct
products}\label{generalites}

Let $G$ be a discrete group with generating set $S$, that we denote
by $G = \langle S \rangle$. The Cayley graph of $G$ with respect to
$S$ is denoted by $\Gamma_S(G)$. It is equipped with the standard
metric which makes each edge isometric to $(0,1)$. We denote by
$|\gamma|_S$ the word-length of an element $\gamma$ with respect to
$S$, i.e. the minimal number of elements in $S \cup S^{-1}$
necessary to write $\gamma$. If it is clear, or not important, which
generating set is used, we will simply write $|\gamma|$ for the
word-length of $\gamma \in G$ (in particular, when dealing with free
groups, $|\gamma|$ will denote the word-length with some fixed
basis).

We denote by $\Aut{G}$ the group of automorphisms of $G$, by
$\Inn{G}$ the group of inner automorphisms (i.e. automorphisms of
the form $i_g(x) = g x g^{-1}$) and by $\Out{G} = \Aut{G} / \Inn{G}$
the group of outer automorphisms. While $\Aut{G}$ acts on elements
of $G$, $\Out{G}$ acts on conjugacy-classes of elements. If
$\mathcal U < \Aut{G}$ is a subgroup, we denote by $[\mathcal U] <
\Out{G}$ its image under the canonical projection.

\begin{definition} \label{growth}
An automorphism $\alpha$ of a $G$ has {\em polynomial growth} if
there is a polynomial function $P$ such that, for any $\gamma$ in
$G$, for any $m \in \mn$, $|\alpha^m(\gamma)| \leq P(m) |\gamma|$.

We say that $\alpha$ has {\em exponential growth} if the lengths of
the iterates $\alpha^m(\gamma)$ of at least one element $\gamma$
grow at least exponentially with $m \to + \infty$.
\end{definition}

The above notions only depend on the outer-class of automorphisms
considered. Passing from $\alpha$ to $\alpha^{-1}$ neither changes the nature
of the growth.

Let $\theta \colon {\mathcal U} \rightarrow \Aut{G}$ be a
monomorphism. We denote by $G_\theta := G \rtimes_\theta {\mathcal
U}$ the semi-direct product of $G$ with $\mathcal U$ over $\theta$.
The semi-direct product only depends on the outer-class of
$\theta({\mathcal U})$: $G_\theta$ is isomorphic to
$G_{\theta^\prime}$ whenever $[\theta({\mathcal U})] =
[\theta^\prime({\mathcal U})]$ in $\Out{G}$. For this reason we will
also write a semi-direct product $G_\theta := G \rtimes_\theta
\mathcal U$ for $\theta \colon \mathcal U \rightarrow \Out{G}$.

We will denote by $\F{n} = \langle x_1,\cdots,x_n \rangle$ the rank
$n$ free group. A {\em hyperbolic group} is a finitely generated
group $G = \langle S \rangle$ such that there exists $\delta \geq 0$
for which the geodesic triangles of $\Gamma_S(G)$ are $\delta$-thin:

{\em For any triple of geodesics $[x,y], [y,z], [x,z]$ in
$\Gamma_S(G)$, $[x,z] \subset {\mathcal N}_\delta([x,y] \cup
[y,z])$, where ${\mathcal N}_\delta(X)$ denotes the set of all
points at distance smaller than $\delta$ from some point in $X$.}

We briefly recall the definition of the Gromov boundary $\partial G$
of a hyperbolic group $G$ \cite{Gromov}, which allows one to get a geodesic
compactification $G \cup \partial G$ of $G$. For more details, see
\cite{benaklikapovich}. The Gromov boundary $\partial G$ of the
hyperbolic group $G$ consists of the set of equivalence classes of geodesic rays emanating
from the base-point, two rays being equivalent if their Hausdorff
distance is finite. A {\em ray emanating from $O$} is the interior of (the image of) an
isometric embedding $\rho \colon [0,+\infty) \rightarrow
\Gamma_S(G)$ with $\rho(0) = O$. Of course the definition of a
Gromov boundary naturally extends to any Gromov hyperbolic metric
space (a geodesic metric space whose geodesic triangles are
$\delta$-thin). In the particular case of a tree $T$, which will be
frequent in this paper, two geodesic rays in $T$ define a same point
in $\partial T$ if they eventually coincide. Beware that the Gromov
boundary $\partial X$ of a geodesic metric space $X$ {\em does not}
provide us with a compactification of $X$ if $X$ is not proper (where
``proper'' here means that the closed balls are compact). For
instance, if $T$ is a locally infinite tree, then $T \cup \partial
T$ is not a compactification of $T$.

\section{About $\mr$-trees and group actions on $\mr$-trees}\label{rtrees}

The aim of this section is to gather all the vocabulary as well as
all the notions and results that we will need about $\mr$-trees and
group actions on $\mr$-trees further in the paper. The reader may
choose to skip this chapter, only coming back each time subsequent
sections refers to the material developed below. We counsel however
to have a quick look at the first part, which motivates the
introduction of these $\mr$-trees.

\subsection{How $\mr$-trees come into play}

\label{une introduction introductive}

We take a pedestrian way to show to the reader how $\mr$-trees
naturally come into play when searching for Poisson boundaries of
groups extensions. We consider here the group $\F{n} \times \mz$.
Let $\F{n} = \langle x_1,\cdots,x_n \rangle$ and $\mz = \langle t
\rangle$. We assume it is equipped with a probability measure $\mu$
and we would like to find a compactification of $\F{n} \times \mz$
suitable for applying Kaimanovich tools.

The most natural space on which $\F{n} \times \mz$ acts is of course
$T \times \mz$, where $T$ is the Cayley-graph  of $\F{n}$ with
respect to some basis. This is a simplicial, locally finite tree.
For compactifying $T \times \mz$, if $v$ is a vertex of $T$ we need
that $t^{k_i} v$ converges to some point $O_v$ for any vertex of
$T$. For the (CP) property to be satisfied, we need that the
limit-point $O_v$ be the same for all $v$. This point would then be
fixed by all elements of $\F{n} \times \mz$, so that Theorem
\ref{theoreme 2.4-6.5} would not apply. The cause of the problem
here holds in the fact that the $\mz$-action on $\F{n}$ has many
fixed points \ldots

We are now going to take advantage from the fact that a direct
product is a particular case of a semi-direct product: $\F{n} \times
\mz$ is isomorphic to $\F{n} \rtimes_\alpha \mz$ with $\alpha \in
\Inn{G}$.  We define $\alpha \in \Inn{\F{n}}$ by $\alpha(x_i) = x_1
x_i x^{-1}_1$ for $i \in \{1,\cdots,n\}$. The Cayley-graph of $\F{n}
\rtimes_\alpha \mz$ is now the $1$-skeleton of $\bigsqcup_{t \in
\mz} T \times [t,t+1] / ((x,t+1) \in T \times [t,t+1] \sim
(f(x),t+1) \in T \times [t+1,t+2])$, where $f \colon T \rightarrow
T$ is a PL-map realizing $\alpha$, i.e. the image under $f$ of an
edge of $T$ with label $x_i$ is a PL-path in $T$ with associated
edge-path $\alpha(x_i)$. We observe that, contrary to what happened
before, there are now two kinds of asymptotic behavior for the
$\alpha^k(w)$'s: for any $w \neq x_1$, $\displaystyle \lim_{k \to
\pm \infty} \frac{|\alpha^k(w)|}{|k|} = 2$ whereas $\displaystyle
\lim_{k \to \pm \infty} \frac{\alpha^k(x_1)}{|k|} = 0$. In other
words, for $w \neq x_1$, the length of the $\alpha^k(w)$ tend
linearly toward infinity with $k$ whereas the lengths of the
$\alpha^k(x_1) = x_1$ remain constant equal to $1$.

Having still in the mind the compactification of the Cayley graph,
since some of the orbits separate linearly we scale the metric on $T
\times \{i\}$ by the factor $\frac{1}{|i|+1}$. Passing to the limit
in the sequence of metric spaces

$$(T \times
\{0\},|.|) \rightarrow (T \times \{1\},\frac{|.|}{2}) \rightarrow
\cdots \rightarrow (T \times \{i\},\frac{|.|}{i+1}) \rightarrow
\cdots$$

(see \cite{Bestvina, GS1, GJLL, Paulin} for instance) we get a
so-called ``$\alpha$-invariant simplicial $\F{n}$-tree'' ${\mathcal
T}$. This is a simplicial tree, equipped with an isometric action of
$\F{n}$, and which admits moreover an isometry $H$ which
``commutes'' with $\alpha$: $$\mbox{For any } P \in {\mathcal T}
\mbox{, } H(w P) = \alpha(w) H(P).$$

There are non trivial vertex $\F{n}$-stabilizers: these are the
conjugates $w \langle x_1 \rangle w^{-1}$. Thus the tree $\mathcal
T$ is locally infinite. The edge $\F{n}$-stabilizers are trivial.
Although this last property will be found in all the exploited
cases, such a tree is only a very particular case of the $\mr$-trees
below because of its simplicial nature.

\subsection{$\mr$-trees and the observers topology}

A $\mr$-tree $({\mathcal T},d)$ is a geodesic metric space which is
$0$-hyperbolic for the associated distance-function $d$. We often
simply write $\mathcal T$, the distance-function being implicit. The
metric completion of an $\mr$-tree still is an $\mr$-tree.
We denote by
$\overline{{\mathcal T}}$ the union of the completion of ${\mathcal T}$
with its
Gromov boundary $\partial {\mathcal T}$.

\begin{definition} \cite{CHL}
Let $\mathcal T$ be a complete $\mr$-tree.

Let $P,Q$ be any two distinct points in $\overline{{\mathcal T}}$.
A {\em direction at $P$}, denoted by $D_P$,
is a connected component of $\overline{{\mathcal T}} \setminus
\{P\}$. The {\em direction of $Q$ at $P$}, denoted by $D_P(Q)$, is
the connected component of $\overline{{\mathcal T}} \setminus \{P\}$
which contains $Q$.

A {\em branch-point} is any point in $\mathcal T$ at which there are
at least three different directions.

An {\em extremal point} is a
point $P$ at which there is only one direction, i.e.
$\overline{{\mathcal T}} \setminus \{P\}$ is connected. The {\em
interior tree} of $\mathcal T$ is the tree deprived of its extremal
points.

An {\em arc} is a subset isometric to an interval of the real line.
We denote by $[P,Q]$ the geodesic arc from $P$ to $Q$ ($P$ or $Q$,
or both, may belong to $\partial {\mathcal T}$, in which case
$[P,Q]$ denotes the unique infinite, or bi-infinite, geodesic
between $P$ and $Q$).
\end{definition}

Among the $\mr$-trees we distinguish the {\em simplicial
$\mr$-trees}, defined as the $\mr$-trees in which every branch-point $v$
admits a neighborhood $N(v)$ such that each connected component of $N(v) \setminus \{v\}$ is homeomorphic to an open
interval. In these
simplicial trees, it is thus possible to speak of {\em vertices}
(the branch-points) and of {\em edges} (the connected components of
the complement of the vertices). The metric of the $\mr$-tree
defines a length on each edge. If there is a uniform lower-bound on
the length of the edges, the vertices form a discrete subset of the
simplicial $\mr$-tree for the metric topology.

\begin{definition}
Let $\mathcal T$ be a $\mr$-tree. The {\em
observers'topology} on $\overline{{\mathcal T}}$ is the topology
which admits as an open neighborhoods basis the set of all
directions $D_P$, $P \in \overline{{\mathcal T}}$,
and their finite intersections. We denote by
$\widehat{{\mathcal T}}$ the topological space obtained by equipping
$\overline{\mathcal T}$ with the observers'topology.
\end{definition}

The observers topology is weaker than the metric topology
\cite{CHL}. Any sequence of points $(Q_n)$ {\em turning around a
point $P$ of $\mathcal T$}, meaning that every direction at $P$
contains only finitely many of the $Q_n$'s, converges to $P$ in
$\widehat{{\mathcal T}}$. Such a phenomenon is of course only
possible in a non locally finite tree. In a locally finite, simplicial tree,
the metric and observers
topology agree. We summarize below what makes this topology important for
us.

\begin{proposition} \cite{CHL}
\label{merciCHL} Let $\mathcal T$ be a separable
$\mr$-tree. Then:

\begin{enumerate}

  \item $\widehat{{\mathcal T}}$ is compact.

\item The metric topology and the observers'topology agree on
  the Gromov boundary $\partial {\mathcal T}$. In particular
  $\partial {\mathcal T} \subset \widehat{{\mathcal T}}$ is separable.

\item If $(P_n)$ is a sequence of points in $\overline{{\mathcal T}}$ and
$Q$ is a fixed point in $\overline{{\mathcal T}}$, there is a unique
point $P := \underset{n \to \infty}{\liminf_Q} P_n$ defined by

$$\displaystyle [Q,P] = \overline{ \bigcup^{\infty}_{m=0} \bigcap_{n \geq m}
[Q,P_n]}$$

\item If $(P_n)$ is a sequence of point in $\widehat{{\mathcal T}}$
which converges to some point $P$ in $\widehat{{\mathcal T}}$ then
for any $Q \in \overline{{\mathcal T}}$, $P = \underset{n \to
\infty}{\liminf_{Q}} P_n$ holds.

\end{enumerate}
\end{proposition}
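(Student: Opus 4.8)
The plan is to rest everything on the median structure of the $0$-hyperbolic space $\overline{\mathcal T}$, prove items (3) and (4) first, and then deduce (1) and (2). For (3) I would fix $Q$ and use the elementary tree fact that the intersection of any family of geodesics issuing from $Q$ is again a geodesic issuing from $Q$: two geodesics out of $Q$ share a common initial segment up to the point where they branch, so $\bigcap_{n \geq m}[Q,P_n]$ is a segment $[Q,R_m]$. Enlarging $m$ only drops constraints, so these segments are nested increasingly, their union is an arc emanating from $Q$, and its metric closure in the complete space $\overline{\mathcal T}$ adds exactly one endpoint $P$ (supplied by the completion or by $\partial \mathcal T$ according as the arc has finite or infinite length). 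This $P$ is the unique point with $[Q,P]$ equal to that closure, which is the assertion of (3).

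For (4), suppose $P_n \to P$ in $\widehat{\mathcal T}$ and put $P' = \liminf_Q P_n$. I would prove $P' = P$ by a two-sided direction argument. For each interior point $X \in (Q,P)$ the direction $D_X(P)$ is an observers'-neighbourhood of $P$, so $P_n \in D_X(P)$ eventually; since $Q$ lies in the opposite direction $D_X(Q) \neq D_X(P)$, the geodesic $[Q,P_n]$ must pass through $X$ for large $n$, whence $X \in [Q,P']$. Thus $[Q,P) \subseteq [Q,P']$ and $P \in [Q,P']$. Conversely, if $P \neq P'$ pick $Z \in (P,P')$ close to $P$; by definition of $\liminf$ every point beyond $P$ on $[P,P']$ eventually lies on $[Q,P_n]$, so $P_n$ eventually sits on the far side of $Z$ from $P$, contradicting $P_n \to P$ because $D_Z(P)$ is a neighbourhood of $P$ missing those $P_n$. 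Hence $P = P'$. In particular limits are unique, and two distinct points are separated by the two directions at any interior point of the arc joining them, so $\widehat{\mathcal T}$ is Hausdorff.

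For compactness (1) I would show that every ultrafilter $\omega$ on $\widehat{\mathcal T}$ converges. Fixing $Q$, set $S = \{\, Y \in \overline{\mathcal T} : \{\, x : Y \in [Q,x]\,\} \in \omega \,\}$. Again using that a geodesic out of $Q$ cannot contain two incomparable points, $S$ is totally ordered and downward closed, hence an arc from $Q$; let $P$ be the endpoint of its closure, exactly as in (3). Then $\omega \to P$: it suffices that each direction $D_R$ containing $P$ be $\omega$-large. If $R \in [Q,P]$, choose $Y' \in (R,P) \subseteq S$; then $\{\, x : Y' \in [Q,x]\,\} \subseteq D_R(P)$ is $\omega$-large. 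If $R \notin [Q,P]$, then $\{\, x : R \notin [Q,x]\,\} \in \omega$ (otherwise $R \in S$), and $R \notin [Q,x]$ forces $x \in D_R(Q) = D_R(P)$. This case split is the technical heart, but each implication is a one-line median computation.

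For (2), on $\partial \mathcal T$ the visual neighbourhoods are the shadows $\{\, \eta : (\eta \mid \xi)_Q \geq t \,\}$, and these are precisely the traces on $\partial \mathcal T$ of the directions $D_Y(\xi)$ for $Y \in [Q,\xi)$ at distance $t$ from $Q$; as these families are cofinal in both topologies, the metric and observers' topologies agree on $\partial \mathcal T$. For separability I would produce a countable set $Z$ meeting every arc densely: starting from a countable metric-dense set $D$, adjoin a countable dense subset of each segment $[Q_i,Q_j]$ with $Q_i,Q_j \in D$; a $1$-Lipschitz projection argument shows any subsegment of any arc lies inside some $[Q_i,Q_j]$, so $Z$ is arc-dense. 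The directions based at points of $Z$ then form a countable sub-basis, $\widehat{\mathcal T}$ is second countable, and being compact Hausdorff it is metrizable, so its subspace $\partial \mathcal T$ is separable. I expect the only genuine obstacle throughout to be the non-local finiteness: points may carry infinitely many directions and sequences may ``turn around'' them, which is exactly why the limit must be produced by the order-theoretic $\liminf$/median construction rather than by naive extraction.
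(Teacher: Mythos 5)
The paper does not actually prove this proposition: it is quoted verbatim from \cite{CHL} and used as a black box, so there is no in-paper argument to compare yours against. Judged on its own, your proof is essentially correct and self-contained, and the order of attack (build the $\liminf_Q$ via the order structure of arcs issuing from $Q$, deduce (4) and Hausdorffness by separating with directions, get compactness from convergence of ultrafilters, and get (2) by matching shadows with traces of directions) is a perfectly legitimate route to all four items. Two places deserve more care before this could stand as a written proof. First, in (2) you assert that the directions based at the points of your countable arc-dense set $Z$ form a \emph{countable} sub-basis; a single point of an $\mr$-tree can a priori carry uncountably many directions, so you must insert the observation that separability forces each point to have only countably many nonempty directions (every nonempty direction contains a nondegenerate initial segment of an arc, hence a point of a countable dense subset of the completion, and distinct directions at a point are disjoint). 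Second, the claim that ``any subsegment of any arc lies inside some $[Q_i,Q_j]$'' is slightly too strong: the projection argument only gives that $[Q_i,Q_j]$ contains a nondegenerate subarc of the given arc, but that weaker statement is all you need for $Z$ to meet every nondegenerate arc, which is what the sub-basis argument uses. With these repairs (and the usual degenerate cases $P=Q$, constant sequences, principal ultrafilters handled silently), the argument goes through.
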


\subsection{Affine actions on $\mr$-trees}

\begin{definition}
Let $({\mathcal T},d)$ be a $\mr$-tree. A {\em homothety $H \colon
{\mathcal T} \rightarrow {\mathcal T}$ of dilation factor $\lambda
\in \mr^{+*}$} is a homeomorphism of $\mathcal T$ such that for any
$x,y \in {\mathcal T}$:
$$d(H(x),H(y)) = \lambda d(x,y).$$

The homothety is {\em strict} if $\lambda \neq 1$.
\end{definition}

By the fixed-point theorem, a strict homothety on a complete
$\mr$-tree admits exactly one fixed point.

Let $\mathcal T$ be a $\mr$-tree and let $H$ be a homothety of
$\mathcal T$. Then $H$ induces a permutation on the set of
directions of $\mathcal T$.

\begin{definition} \cite{Liousse}
Let $\mathcal T$ be a $\mr$-tree and let $H$ be a homothety on
$\mathcal T$. An {\em eigenray of $H$ at a point $P$} is a geodesic
ray $R$ starting at $P$ such that $H(R) = R$.
\end{definition}

In particular, if $R$ is an eigenray at $P$, then $H(P) = P$. In
\cite{Liousse} it was proven that if a strict homothety on the
completion of a $\mr$-tree $\mathcal T$ has its fixed point outside
${\mathcal T}$, then it fixes a unique eigenray starting at this
fixed point. In \cite{GJLL} (among others) it was noticed that if
such a homothety leaves invariant a direction $D_P$ ($P$ is thus a
fixed point), then it admits a unique eigenray in $D_P$.

\begin{definition}
\label{affinite} Let $G$ be a discrete group which acts by
homeomorphisms on a $\mr$-tree $({\mathcal T},d)$.

The action of $G$ on $\mathcal T$ is {\em irreducible} if there is no finite
invariant set in $\overline{{\mathcal T}}$.

The action of $G$ on $\mathcal T$ is {\em minimal} if there is no
proper invariant subtree (where a {\em proper} subset of a set is a
subset distinct from the set itself).

The action of $G$ on $\mathcal T$ is an {\em action by homotheties},
or an {\em affine action}, if there is a morphism $\lambda \colon G
\rightarrow \mr^{+*}$ with $d(hx,hy) = \lambda(h) d(x,y)$ for any $h
\in G$ and $x,y \in {\mathcal T}$.

A {\em $G$-tree} is a $\mr$-tree equipped with an isometric action
of $G$.

\end{definition}

We refer to \cite{Liousse} for a specific study of these affine
actions. By the very definition, when given an affine action of a discrete
group $G$ on a $\mr$-tree $\mathcal T$, each element is identified
to a homothety of $\mathcal T$. Thus an affine action of a group $G$
on a $\mr$-tree $\mathcal T$ induces a morphism from $G$ to the
permutations on the set of directions of $\mathcal T$.

A common way in which affine actions arise is as follows:

\begin{definition}
Let $\mathcal T$ be a $G$-tree and let $\theta
\colon {\mathcal U} \rightarrow \Aut{G}$ be a monomorphism.

The $G$-tree $\mathcal T$ is a {\em $\theta(\mathcal U)$-projectively
invariant $G$-tree} if $\theta(\mathcal U)$ acts by homotheties
on $\mathcal T$ such that for any $g \in G$, for any $u \in {\mathcal U}$ and for any point
$P \in {\mathcal T}$:

$$H_{\theta(u)}(g.P) = \theta(u)(g) H_{\theta(u)}(P)$$

If $\theta(\mathcal U)$ acts by isometries then $\mathcal T$ is a {\em
$\theta(\mathcal U)$-invariant $G$-tree}.
\end{definition}

In the expression ``$\theta(\mathcal U)$-projectively invariant
$\mr$-tree'' we will often substitute the group $\theta({\mathcal
U})$ by its generators. In particular, instead of  ``$\langle \alpha
\rangle$-projectively invariant $\mr$-tree'', we will rather write
``$\alpha$-projectively invariant $\mr$-tree''.

\begin{remark}
If $\mathcal T$ is a $\theta(\mathcal U)$-projectively invariant $G$-tree,
then the semi-direct product $G_\theta := G \rtimes_\theta {\mathcal
U}$ acts by homotheties on $\mathcal T$. Conversely, if one has an
affine action of $G_\theta$ on a $\mr$-tree $\mathcal T$ such that
the induced action of $G \lhd G_\theta$ is an isometric action, then
$\mathcal T$ is a $\theta(\mathcal U)$-projectively invariant $G$-tree.
\end{remark}

When one has a $\theta(\mathcal U)$-projectively invariant $G$-tree
$\mathcal T$, then $\mathcal T$ is in fact projectively invariant
for any subgroup $\theta^\prime(\mathcal U) < \Aut{G}$  with
$[\theta(\mathcal U)] = [\theta^\prime(\mathcal U)]$. Indeed, let
$\theta^\prime(u) \in \Aut{G}$ satisfy $\theta^\prime(u)(.) = g_0
\theta(u)(.) g^{-1}_0$ for some
    $g_0
    \in G$. Then $H_{\theta^\prime(u)} = g_0 H_{\theta(u)}$ satisfies
    $H_{\theta^\prime(u)}(g P) = \theta^\prime(u)(g)
    H_{\theta^\prime(u)}(P)$. Indeed

$$H_{\theta^\prime(u)}(g P) = g_0 H_{\theta(u)} (g.P) = g_0
    \theta(u)(g) H_{\theta(u)}(P),$$
$$g_0 \theta(u)(g) H_{\theta(u)}(P) = g_0
    \theta(u)(g) g^{-1}_0 g_0 H_{\theta(u)}(P) = \theta^\prime(u)(g)
    g_0 H_{\theta(u)}(P)$$

and  $g_0 H_{\theta(u)}(P) =  H_{\theta^\prime(u)}(P)$ eventually gives the
announced equality. This justifies to state the

\begin{definition}
Let $\mathcal T$ be a $G$-tree and let $\theta
\colon {\mathcal U} \rightarrow \Out{G}$ be a monomorphism.

The $G$-tree $\mathcal T$ is a {\em $\theta(\mathcal U)$-projectively
invariant $G$-tree} if there is $\tilde{\theta} \colon \mathcal U \rightarrow
\Aut{G}$ with $[\tilde{\theta}(\mathcal U)] = \theta(\mathcal U)$ such that
 $\mathcal T$ is a $\tilde{\theta}(\mathcal U)$-projectively invariant $G$-tree.

\end{definition}

\subsection{The LL-map $\mathcal Q$}

\label{queue}

The LL-map $\mathcal Q$ which appears below was introduced in
\cite{LL1, LL2} in the setting of free group automorphisms (the two
L's of the denomination ``LL-map'' stand for the name of the authors
Levitt-Lustig - see also \cite{CHL} for more details about this
map).

\begin{lemma}
\label{motivons} Let $G$ be a discrete group acting by homotheties
on a $\mr$-tree $\mathcal T$. Let $\overline{G} = G \cup
\partial G$ be a compatible compactification of $G$. Assume that the {\em LL-map
$${\mathcal Q} \colon \left\{
\begin{array}{ccc}
\partial G & \rightarrow & {\mathcal T}\\
X & \mapsto & \underset{n \to \infty}{\liminf_P} g_n P
\end{array} \right.$$}
is well-defined, that is independent from the point $P$ and the
sequence $(g_n)$ converging to $X$ in $\overline{G}$.

Then for any infinite, not eventually constant
     sequence $(g_n)$ of elements in $G$ such that $(g_n  P)$ tends
     to some point $Q$ in $\widehat{{\mathcal T}}$, for
     any $R \in \widehat{{\mathcal T}}$, $(g_n R)$
     tends to $Q$ in $\widehat{{\mathcal T}}$.
\end{lemma}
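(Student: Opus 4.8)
The plan is to exploit the \emph{matched-basepoint} form of the $LL$-map together with the characterization of convergence in $\widehat{\mathcal T}$ provided by Proposition \ref{merciCHL}(4). Although the displayed definition of $\mathcal Q$ uses the same point $P$ both as the centre of the $\liminf$ and as the point being translated, this is precisely the form in which Proposition \ref{merciCHL}(4) reads off a limit: if $g_m P\to Q$ in $\widehat{\mathcal T}$ then $\liminf_{P}(g_m P)=Q$. Comparing this with $\liminf_{P}(g_m P)=\mathcal Q(X)$ identifies $Q$ with $\mathcal Q(X)$, and running the identical computation with $R$ in place of $P$ forces any subsequential limit of $(g_n R)$ to coincide with $\mathcal Q(X)=Q$. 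In this way one never has to compare $\liminf$'s taken from a basepoint different from the translated point, which is the genuinely delicate tree-geometric comparison.

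First I would reduce to sequences converging in $\overline{G}$ to a boundary point. Since $\overline{G}$ is a compact Hausdorff compactification in which the countable dense subgroup $G$ is open, $G$ is discrete in $\overline{G}$ (a dense open countable subset with no isolated point would make $\overline{G}$ meagre in itself). Passing then to the meaningful case in which $(g_n)$ leaves every finite subset of $G$ --- the situation occurring in the applications, and the only one for which the conclusion can hold --- every subsequence of $(g_n)$ admits a further subsequence $(g_m)$ converging in $\overline{G}$ to some $X\in\partial G$. Using compactness of $\widehat{\mathcal T}$ (Proposition \ref{merciCHL}(1)) I may pass to a still finer subsequence along which, moreover, $g_m R\to Q'$ for some $Q'\in\widehat{\mathcal T}$; of course $g_m P\to Q$ still holds, being a subsequence of $g_n P\to Q$.

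Now comes the key identification. Applying the hypothesis that $\mathcal Q$ is well defined to the sequence $(g_m)\to X$ with the point $P$ gives $\liminf_{P}(g_m P)=\mathcal Q(X)$, while Proposition \ref{merciCHL}(4) applied to $g_m P\to Q$ with the basepoint $P$ gives $\liminf_{P}(g_m P)=Q$; hence $Q=\mathcal Q(X)$. Running the same argument with the point $R$ --- using independence of $\mathcal Q$ from the chosen point, so that $\liminf_{R}(g_m R)=\mathcal Q(X)$, together with Proposition \ref{merciCHL}(4) applied to $g_m R\to Q'$ with basepoint $R$, giving $\liminf_{R}(g_m R)=Q'$ --- yields $Q'=\mathcal Q(X)=Q$. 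Thus every subsequential limit $Q'$ of $(g_n R)$ equals $Q$.

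Finally I would conclude by a compactness argument: $\widehat{\mathcal T}$ is compact, Hausdorff and first countable for separable $\mathcal T$, so a sequence all of whose convergent subsequences have the single limit $Q$ must itself converge to $Q$; therefore $g_n R\to Q$ in $\widehat{\mathcal T}$ for every $R$, as required. The main obstacle is conceptual rather than computational: one must resist trying to prove directly that $g_n R\to Q$ via the $\liminf$ criterion, which would require controlling $\liminf_{S}(g_n R)$ for basepoints $S\neq R$ --- i.e.\ the actual geometry of the contracting homotheties $g_n$ --- and instead arrange that the basepoint always matches the translated point, reducing everything to the well-definedness of $\mathcal Q$ and to Proposition \ref{merciCHL}(4). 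A secondary point requiring care is the reduction to escaping sequences and the discreteness of $G$ in $\overline{G}$.
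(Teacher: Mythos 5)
Your proof is correct and follows essentially the same route as the paper's: extract a subsequence along which $g_{n_k}R$ converges to some $Q'$ and $g_{n_k}\to X\in\partial G$, identify both $Q$ and $Q'$ with $\mathcal Q(X)$ via the well-definedness of $\mathcal Q$ together with Proposition \ref{merciCHL}(4), and conclude by compactness of $\widehat{\mathcal T}$. The extra remarks on discreteness of $G$ in $\overline{G}$ and on matching the basepoint of the $\liminf$ with the translated point only make explicit what the paper leaves implicit.
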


\begin{proof}
Let $R$ be any point in $\widehat{{\mathcal T}}$ and consider the
sequence $(g_n R)$. By compacity of $\widehat{{\mathcal T}}$ (see
Proposition \ref{merciCHL}), it has at least one accumulation point.
Consider any convergent subsequence $(g_{n_k} R)$ and denote by
$Q^\prime$ its limit. By passing if necessary to a further
subsequence we can suppose that $(g_{n_k})$ tends to some point $X
\in
\partial G$ in $\overline{G}$. By the
assumption that $(g_n P)$ tends toward $Q$, the very definition of
the map $\mathcal Q$ implies ${\mathcal Q}(X) = Q$ . By Proposition
\ref{merciCHL}, $Q^\prime = \underset{n \to \infty}{\liminf_R}
g_{n_k} R$ and since $(g_{n_k})$ converges to $X$, we get $Q^\prime
= {\mathcal Q}(X) = Q$. We so proved that $Q$ is the only
accumulation-point of $(g_n R)$ in $\widehat{{\mathcal T}}$. Since
$\widehat{{\mathcal T}}$ is compact, this implies that $(g_n R)$
tends toward $Q$ in $\widehat{{\mathcal T}}$. The lemma follows.
\end{proof}

 In the case where $G$ is a hyperbolic group
and $\mathcal T$ is a simplicial $G$-tree with quasiconvex vertex
stabilizers and trivial edge-stabilizers, the existence of the
LL-map $\mathcal Q$ is easy to prove. Observe also that the very
existence of the map $\mathcal Q$ implies the triviality of the arc
stabilizers (however, in the cases that we will consider, this
triviality is proved in a direct way rather than by appealing to the
more complex notion that represents the map $\mathcal Q$).
Similarly, for the conclusion of Lemma \ref{motivons} to be true, we
need the triviality of the arc-stabilizers.

\begin{remark}
\label{malnormal}
Consider a $G$-tree $\mathcal T$ with trivial arc-stabilizers.
The stabilizers $H_i$ of the
branch-points are {\em malnormal}: for any $g \in G \setminus H_i$, $g^{-1} H_i g
\cap H_i = \{1\}$. Moreover, any family  $\mathcal H$ of branch-points
stabilizers which belong to distinct
$G$-orbits
form a {\em malnormal family} of subgroups: for any $g \in G$,
for any $H_i \neq H_j$ in $\mathcal H$,
$g^{-1} H_i g \cap H_j = \{1\}$.
\end{remark}

\section{Some classes of groups with affine actions on $\mr$-trees}\label{class}

We gather here various results which provide us with non-trivial
classes of groups with affine actions on $\mr$-trees. Our results
about the Poisson boundary of these groups, developed in the next
sections, will rely upon the theorems recalled here. Once again the
reader can skip for the moment the exposition of these results and
only go back here each time the theorems are referred to.

\subsection{Projectively invariant $\mr$-trees for single automorphisms}

\begin{theorem}

\label{le cas libre}

Let $G$ be the fundamental group of a compact hyperbolic surface $S$
and let $\alpha \in \Aut{G}$ in an outer-class induced by a
homeomorphism of $S$. Assume that $\alpha$ has exponential growth.
Then there exists a separable, minimal $G$-tree ${\mathcal
T}_\alpha$ which is $\alpha$-projectively invariant and satisfies
the following properties:

\begin{enumerate}
 \item The action of $G_\alpha  = G \rtimes_\alpha \mz$ is irreducible (see Definition \ref{affinite}).
 \item The $G$-action has dense orbits in $\overline{{\mathcal
    T}}_\alpha$.
 \item There are a finite number of $G$-orbits
 of branch-points. The stabilizers of branch-points are quasiconvex. For any point
 $P$ in $\mathcal T$, the number
 of $\mathrm{Stab}_G(P)$-orbits of directions at $P$ is finite.
  \item The LL-map $\mathcal Q \colon \partial G \rightarrow \widehat{\mathcal
      T}_\alpha$ is well-defined
(in particular the arc-stabilizers are trivial for the induced
$G$-action), continuous, $G_\alpha$-equivariant and such that the pre-images of
any two distinct points are disjoint compact subsets of $\partial
G$. The pre-image of a point in $\partial {\mathcal T}$ is a single
point of $\partial G$.
\end{enumerate}
\end{theorem}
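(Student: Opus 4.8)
The plan is to realize $\mathcal T_\alpha$ as the $\mr$-tree dual to an invariant measured foliation of a homeomorphism $f \colon S \to S$ inducing $\alpha$. By Nielsen--Thurston theory, $f$ is isotopic to a homeomorphism preserving a (possibly empty) system of disjoint reducing curves and restricting to a periodic or pseudo-Anosov map on each complementary piece; since $\alpha$ has exponential growth, at least one piece is pseudo-Anosov and carries a transverse measured foliation $(\mathcal F,\mu)$ with $f_{*}\mu = \lambda\mu$ for some $\lambda > 1$. Lifting $(\mathcal F,\mu)$ to the universal cover $\widetilde S$ (whose boundary at infinity is identified with $\partial G$), endowing $\widetilde S$ with the pseudo-metric given by the transverse measure of a taut path, and collapsing the leaves yields the dual tree $\mathcal T_\alpha$, on which $G=\pi_1(S)$ acts by isometries. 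The lift of $f$ descends to a homothety $H_\alpha$ of dilation factor $\lambda$ satisfying $H_\alpha(g.P)=\alpha(g)H_\alpha(P)$, so $\mathcal T_\alpha$ is $\alpha$-projectively invariant and $G_\alpha$ acts by homotheties. Separability and minimality of the $G$-action are built into this limit-of-rescaled-metrics construction (see \cite{Bestvina, GS1, GJLL, Paulin}).

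\textbf{Items (1)--(3).} Since $G \le G_\alpha$, any finite $G_\alpha$-invariant subset of $\overline{\mathcal T}_\alpha$ is already $G$-invariant; because $\alpha$ grows exponentially the construction produces a minimal and non-elementary $G$-action, which admits no finite invariant set in $\overline{\mathcal T}_\alpha$, giving irreducibility (1). Density of the leaves of $\mathcal F$ translates directly into density of the $G$-orbits in $\overline{\mathcal T}_\alpha$, which is (2). For (3), the branch-points of $\mathcal T_\alpha$ arise from the singular leaves of $\mathcal F$ and from the reducing curves; as $S$ is compact these fall into finitely many $G$-orbits. A branch-point coming from a $p$-pronged singularity has exactly $p$ directions, while a point arising from a collapsed complementary piece has one direction per boundary curve of that subsurface, permuted by its stabilizer into finitely many orbits; in all cases there are finitely many $\mathrm{Stab}_G(P)$-orbits of directions. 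The stabilizers themselves are trivial, infinite cyclic (carried by a reducing curve), or fundamental groups of proper subsurfaces, hence geometrically finite infinite-index subgroups of the surface group $G$, which are therefore quasiconvex.

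\textbf{Item (4): the LL-map.} I expect this to be the main obstacle. One sets $\mathcal Q(X) = \liminf_P g_n P$ for any $g_n \to X$ in $\overline G$, the $\liminf$ being the one of Proposition \ref{merciCHL}(3). The delicate point is independence of the base-point $P$ and of the approximating sequence: this rests on the triviality of the arc-stabilizers of the $G$-action, which holds because $\mu$ has full support, so no nondegenerate arc is collapsed from a leaf fixed by a nontrivial element. Well-definedness then gives continuity of $\mathcal Q$, while $G_\alpha$-equivariance follows at once from $H_\alpha(g.P)=\alpha(g)H_\alpha(P)$. For the fibre structure the key tool is the dual geodesic lamination $L(\mathcal T_\alpha)\subset\partial^2 G$ of \cite{CHL}: two points of $\partial G$ have equal image under $\mathcal Q$ precisely when they are the endpoints of a leaf of $L(\mathcal T_\alpha)$ or of a common complementary polygon. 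Each fibre is thus closed in the compact space $\partial G$, hence compact, and distinct fibres are disjoint. Finally, a point of $\partial \mathcal T_\alpha$ is the limit of a leaf-ray escaping to infinity in $\widetilde S$ along a single asymptotic direction, so its $\mathcal Q$-fibre reduces to one point of $\partial G$; equivalently, $\mathcal Q$ is injective over $\partial \mathcal T_\alpha$. The chief difficulty throughout is to control how the geodesics of $\widetilde S$ meet $\widetilde{\mathcal F}$ precisely enough both to identify the fibres with leaves of $L(\mathcal T_\alpha)$ and to exclude fibres of positive width over the points of $\partial \mathcal T_\alpha$.
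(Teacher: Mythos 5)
Your proposal is correct and follows essentially the same route as the paper: Nielsen--Thurston decomposition, the $\mr$-tree dual to the invariant measured foliation of a pseudo-Anosov piece with the polynomial pieces and reduction curves collapsed, the homothety induced by the lift of the homeomorphism, and a lamination/Bounded-Backtracking-type argument identifying the fibres of $\mathcal Q$. The only cosmetic difference is that the paper first builds an intermediate Gromov-hyperbolic space with the metric $\int_\gamma |d\mu_u|+|d\mu_s|$ and passes to a rescaling limit, noting afterwards that this agrees with your direct dual-tree (leaf-collapsing) construction.
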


We refer the reader to \cite{FLP} for the basis about
isotopy-classes of surface homeomorphisms. Since we know no precise
reference for the above theorem, we briefly sketch a proof.

\begin{proof}
Let us first briefly describe how one gets a $\mr$-tree $\mathcal T$
as in Theorem \ref{le cas libre}. We consider a homeomorphism $h$
inducing $[\alpha] \in \Out{G}$ as given by the Nielsen-Thurston
classification: there is a decomposition of $S$ in subsurfaces
$S_1,\cdots,S_r$ such that $h^{k_i}(S_i) = S_i$ and $h^{k_i}_{|S_i}$
is either pseudo-Anosov or has linear growth. The $S_i$'s for which
$h^{k_i}_{|S_i}$ has linear growth are maximal with respect to the
inclusion. Each boundary curve of each $S_i$ is fixed by $h^{k_i}$.
Since $\alpha$ has exponential growth, there is at least one
subsurface $S_i$ such that $h^{k_i}_{|S_i}$ is pseudo-Anosov. Let
$S_0$ be the subsurface for which $h^{k_0}_{|S_0}$ is pseudo-Anosov
with greatest dilatation factor, denoted by $\lambda_0$. We now
consider the universal covering $\tilde{S}$ of $S$ (this is a
hyperbolic Poincar\'e disc $\md^2$ in the closed case, the
complement of horoballs in $\md^2$ in the free group case) and a
lift $\tilde{h}$ of $h$. Each connected component of $\tilde{S}_i$,
$i > 0$, each reduction curve and each boundary curve is collapsed
to a point. Since the corresponding subgroups are quasi convex and
malnormal, the resulting metric space $\hat{S}$ is still a Gromov
hyperbolic space (of course not proper). The connected components of
$\tilde{S}_0$ are equipped with a pair of invariant, transverse,
transversely measured singular foliations $({\mathcal F}_u,\mu_s)$
and $({\mathcal F}_s,\mu_u)$: $\tilde{h}(({\mathcal F}_u,\mu_s)) =
({\mathcal F}_u,\frac{1}{\lambda_0} \mu_s)$ and
$\tilde{h}(({\mathcal F}_s,\mu_u)) = ({\mathcal F}_s,\lambda_0
\mu_u)$. We equip $\hat{S}$ with the length-metric associated to the
transverse measures: $|\gamma|_{\mathcal F}= \int_\gamma |d\mu_u| +
|d\mu_s|$. The resulting metric space $\hat{S}_{\mathcal F}$ is
still Gromov hyperbolic. We now consider the limit-metric space
$$(\hat{S}_{\mathcal F} \times \{0\},|.|_{\mathcal F}) \rightarrow
(\hat{S}_{\mathcal F} \times \{1\},\frac{|.|_{\mathcal
F}}{\lambda_0}) \rightarrow \cdots \rightarrow (\hat{S}_{\mathcal F}
\times \{i\},\frac{|.|_{\mathcal F}}{\lambda^i_0}) \rightarrow
\cdots$$ obtained by rescaling the metric $|.|_{\mathcal F}$ by
$\lambda^i_0$ for $i \to + \infty$. From \cite{GJLL}, the
limit-space is a $\alpha$-projectively invariant $G$-tree $\mathcal
T$ such that the $G$-action has trivial arc-stabilizers (\cite{GJLL}
deals with free group automorphisms but, as noticed there, the
triviality of the arc-stabilizers is proved in the same way when
considering a hyperbolic group). Possibly after restricting to a
minimal subtree, the $G$-action has dense orbits, see for instance
\cite{Paulin}: this is true as soon as the action of $\alpha$ is by
strict
homotheties and the $\mr$-tree is minimal.\\

{\em Claim:} Let ${\mathcal L}_s$ be the geodesic lamination
associated to ${\mathcal F}_s$. Each non-boundary leaf of ${\mathcal
L}_s$ is mapped to a point in $\mathcal T$ which is not a
branch-point. Any two points in $\hat{S}_{\mathcal F}$ on distinct
leaves of ${\mathcal L}_s$ are mapped to distinct points in
$\mathcal T$. The $\mr$-tree $\mathcal T$ is equivalently obtained
by collapsing each leaf of ${\mathcal F}_s$ to a point. A
stabilizers of a branch-point is either conjugate to a subgroup
associated to a subsurface $S_i$, $i > 0$, or is a cyclic subgroup
corresponding to a boundary curve of $S$ or to a reduction curve.

{\em Proof:} The first two assertions are
an easy consequence of the fact that:

\begin{itemize}
  \item $\tilde{h}$ is contracting by $\lambda_0$
along the leaves of ${\mathcal L}_s$,
  \item given any path $p$ transverse to ${\mathcal F}_s$, some iterate of $\tilde{h}$
  dilates the length of $p$
by $C \lambda_0$, for some uniform constant $C$.
\end{itemize}

The last two assertions of the claim are clear.\\

Let us check the fourth assertion of the theorem. First we map the
boundary of each conjugate of each subgroup associated to a
subsurface ${S}_i$ to the branch-point stabilized by this subgroup.
Second, since the BBT-property of \cite{GJLL} is satisfied, there is
a well-defined $G$-equivariant map from a subset of $\partial G$
onto $\partial \mathcal T$ which is one-to-one. Thus the LL-map
$\mathcal Q$ is well-defined on points $X \in \partial G$ which are
limits of sequences $(g_n) \subset G$ whose translation-lengths on
$\mathcal T$ tend to infinity. Choose a point $P \in
\overline{\mathcal T}$ and let us now consider a sequence $(g_n P)$
with $(g_n) \to X \in \partial G$ such that the translation-lengths
of the $g_n$'s do not tend toward infinity. Then, it is again a
consequence of the BBT property that $\displaystyle \liminf_{n \to
\infty} g_n P$ does not depend on the sequence $(g_n)$ tending
toward $X$. Let us prove that it is independent from $P$. It
suffices to check it for a sequence $(g_n)$ the translation-lengths
of which, denoted by $(t_n)$, tend to $0$. If $(t_n)$ is eventually
constant then for some $N$ all the $g_k$'s with $k \in N$ belong to
the stabilizer of some branch-point $B$. Since arc stabilizers are
trivial, the $g_k P$ turn around $B$, which is then equal to
$\displaystyle \liminf_{n \to \infty} g_n P$. The same is true if
the $g_n$'s tend to some point in the boundary of the stabilizer of
a branch-point. In the remaining case, the $g_n$'s correspond to
elements of $G$ which are realized by a sequence of paths $(p_n)$
closer and closer to a given leaf of ${\mathcal L}_s$. The terminal
points of the $p_n$'s belong to different leaves of ${\mathcal
L}_s$. Hence this is true for the paths $q_n$ associated to a
sequence $g_n q$, where $q$ is a fixed path from the base-point to a
leaf in ${\mathcal L}_s$. Since different leaves of ${\mathcal L}_s$
correspond to different points of $\mathcal T$, the conclusion
follows.

The continuity of the map $\mathcal Q$ is proved as in \cite{CHL}.
\end{proof}

\begin{remark}
\label{blabla} Assume that $\alpha$ is induced by a pseudo-Anosov
homeomorphism $h$ of a closed compact hyperbolic surface $S$. Let
$x,y$ be the endpoints in $\partial \mh^2$ of the lift of a stable
or unstable leaf of $h$. Then there are no geodesics in $G_\alpha$
from $x$ to $y$: indeed the exponential contraction of the leaf
implies that it can be exhausted by compact intervals $I_n$ which
are exponentially contracted by $\lambda^{-t}$ for $t \to \infty$;
this forbids the function $2t+\lambda^{t} |I_n|$ to attain a minimum
for $|I_n| \to \infty$. In fact, the group $G_\alpha$ is hyperbolic
and these two points of $\partial \mh^2$ get identified to a single
one in the Gromov compactification. In this case, ${\mathcal Q}(x)$
and ${\mathcal Q}(y)$ are the endpoints of a same eigenray, or the
endpoints of two eigenrays with a same origin.
\end{remark}

\begin{definition}
\label{fermer les eigenrays}
Let $G$ be a discrete group acting by homotheties on an $\mr$-tree
$\mathcal T$. If $R$ is an eigenray of some homothety $H_g$ of
$\mathcal T$, then {\em closing $R$} means identifying its two
endpoints in $\widehat{\mathcal T}$.
\end{definition}

\begin{proposition}

\label{importante} With the assumptions and notations of Theorem
\ref{le cas libre}, let $\widetilde{\mathcal T}$ be the space
obtained by closing all the eigenrays in $\widehat{\mathcal T}$ and
let $q \colon \widehat{\mathcal T} \rightarrow \widetilde{\mathcal
T}$ be the associated quotient-map. Then:

\begin{enumerate}
  \item $\widetilde{\mathcal T}$ is Hausdorff and compact.

 \item The action of $G_\alpha$ on $\widehat{\mathcal T}$ descend, via $q$, to
an irreducible action by homeomorphisms on $\widetilde{\mathcal T}$.
The induced $G$-action has dense orbits and restricts to an
isometric action on the completion of ${\mathcal T} \subset
{\widetilde{\mathcal T}}$. The $G_\alpha$-action restricts to an
affine action on the completion of $\mathcal T \subset
\widetilde{\mathcal T}$.

 \item If $X, Y$ are two distinct points in $\partial G$ which cannot be
   connected
by a bi-infinite $G_\alpha$-geodesic, then $q({\mathcal Q}(X)) = q({\mathcal Q}(Y))$.

\end{enumerate}

\end{proposition}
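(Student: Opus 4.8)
The plan is to realise $\widetilde{\mathcal T}$ as the quotient of the compact space $\widehat{\mathcal T}$ by the equivalence relation $\mathcal R$ whose non-diagonal part is generated by the pairs (origin, endpoint-at-infinity) of all eigenrays, in the sense of Definition \ref{fermer les eigenrays}. Compactness is then immediate: $\widehat{\mathcal T}$ is compact by Proposition \ref{merciCHL}(1) and a continuous image of a compact space is compact. For the Hausdorff property I would invoke the standard fact that a quotient of a compact Hausdorff space by an equivalence relation is Hausdorff if and only if that relation is closed in the product; so the only thing to check is that $\mathcal R \subset \widehat{\mathcal T} \times \widehat{\mathcal T}$ is closed.

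The heart of Part (1) is therefore a structural description of $\mathcal R$, which I would isolate as a lemma: every eigenray $R$ has its origin $O(R)$ in the metric completion of $\mathcal T$ and its other endpoint $\xi(R)$ in $\partial {\mathcal T}$, the global assignment $R \mapsto \xi(R)$ is injective, and each point carries only finitely many eigenrays. Since the metric completion of $\mathcal T$ and $\partial {\mathcal T}$ are disjoint inside $\widehat{\mathcal T}$, granting this lemma each $\mathcal R$-class is finite, consisting of a single point $O$ of the completion together with the finitely many boundary points $\xi(R)$ of the eigenrays issuing from $O$; in particular no two points of the completion are ever identified. The injectivity of $R \mapsto \xi(R)$ is where I expect to spend effort: if two eigenrays shared an endpoint $\xi$ they would share a terminal sub-ray, and since $\mathcal Q^{-1}(\xi)$ is a single point of $\partial G$ by Theorem \ref{le cas libre}(4), the two defining homotheties would both fix that boundary point; triviality of the arc-stabilizers for the $G$-action, together with the malnormality statement of Remark \ref{malnormal}, should then force the two homotheties to share a common axis and the two eigenrays to coincide. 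Finiteness of the eigenrays at a given origin follows from the uniqueness of the eigenray in each invariant direction (recalled after the definition of eigenray) combined with the finiteness of the number of $\mathrm{Stab}_G(P)$-orbits of directions in Theorem \ref{le cas libre}(3). Closedness of $\mathcal R$ is then extracted from the continuity of $\mathcal Q$ and from the fact, again in Theorem \ref{le cas libre}(3), that there are only finitely many $G$-orbits of branch-points, so the origins cannot accumulate uncontrollably: a convergent sequence of identifications $p_n \sim q_n$ is carried by eigenrays whose origins and endpoints converge to an eigenray realising the limiting identification.

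For Part (2) I would first note that $G_\alpha$ permutes the eigenrays: because the action is affine, $g' H_g g'^{-1} = H_{g' g g'^{-1}}$, so $g'$ carries an eigenray of $H_g$ to an eigenray of a conjugate homothety. Hence $\mathcal R$ is $G_\alpha$-invariant, the $G_\alpha$-action descends through $q$, and it acts by homeomorphisms since $q$ is a quotient map intertwining it with the homeomorphism action on $\widehat{\mathcal T}$. Irreducibility is inherited: a finite $G_\alpha$-invariant subset of $\widetilde{\mathcal T}$ would pull back, using finiteness of the classes, to a finite $G_\alpha$-invariant subset of $\widehat{\mathcal T}$, contradicting Theorem \ref{le cas libre}(1). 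Because the structural lemma shows $q$ is injective on the metric completion of $\mathcal T$, that restriction is a homeomorphism onto its image, along which the isometric $G$-action and the affine $G_\alpha$-action of Theorem \ref{le cas libre} can be transported verbatim; density of the $G$-orbits persists since $q$ is a continuous surjection and the completion of $\mathcal T$ is dense in $\widehat{\mathcal T}$.

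Finally, Part (3) is where the dynamics of the surface homeomorphism really enters, and I expect it to be the main obstacle. The plan is to identify the pairs $X \neq Y$ of $\partial G$ bounding no bi-infinite $G_\alpha$-geodesic with the pairs of endpoints of a common leaf of the stable or unstable laminations of the pseudo-Anosov pieces (and their finite concatenations), extending the closed pseudo-Anosov computation of Remark \ref{blabla} to the Nielsen--Thurston reducible setting used in the proof of Theorem \ref{le cas libre}. If $X,Y$ are the endpoints of a stable leaf then, since $\mathcal T$ is obtained by collapsing the leaves of ${\mathcal F}_s$ (the Claim inside the proof of Theorem \ref{le cas libre}), one has $\mathcal Q(X) = \mathcal Q(Y)$ outright. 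If instead they are the endpoints of an unstable leaf, that leaf maps under $\mathcal Q$ to a $G_\alpha$-invariant arc, so $\mathcal Q(X)$ and $\mathcal Q(Y)$ are the two endpoints of a single eigenray, or the endpoints of two eigenrays issuing from the common image of a singular leaf; in either case $q$ identifies them by the definition of closing eigenrays. The peripheral contributions coming from reduction curves and polynomially growing pieces are absorbed by the branch-point analysis of Theorem \ref{le cas libre}(3) and the malnormality of their stabilizers (Remark \ref{malnormal}), and the general statement follows by transitivity of $\mathcal R$.
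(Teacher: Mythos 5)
Your overall architecture (quotient of the compact space $\widehat{\mathcal T}$ by the relation $\mathcal R$ generated by the eigenray identifications, Hausdorffness via closedness of $\mathcal R$, equivariance because $G_\alpha$ permutes eigenrays, and part (3) via the correspondence between non-connectable pairs in $\partial G$ and leaves of the stable/unstable laminations) matches the paper's. But there is a genuine gap at the crucial step of part (1), namely the closedness of $\mathcal R$. You argue that ``the origins cannot accumulate uncontrollably'' because there are only finitely many $G$-orbits of branch-points. This gives no control at all: by Theorem \ref{le cas libre}(2) the $G$-action has \emph{dense} orbits in $\overline{\mathcal T}_\alpha$, so a single orbit of branch-points already accumulates on every point of the tree; moreover the origins of eigenrays are the fixed points of the strict homotheties $gH^{k}_\alpha$ ($k\neq 0$), which need not be branch-points and which form a dense subset as well. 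So a convergent sequence of identified pairs $(O_n,\xi_n)$ can perfectly well have its origins accumulating anywhere, and nothing in your argument shows that the limit pair is again realised by an eigenray (or by two eigenrays sharing an origin). The paper closes exactly this hole by working upstairs in $\partial G\times\partial G$: via the LL-map $\mathcal Q$, the pairs to be identified correspond to pairs of endpoints of ``bad subsets'' (leaves of the unstable lamination ${\mathcal L}_u$ and unions of separatrices of its singular leaves), the set of such pairs of endpoints is closed because the lamination is closed, and the eigenray/bad-subset dictionary then transports this closedness down to $\widehat{\mathcal T}$. Ironically, you construct precisely this dictionary in your treatment of part (3) but never feed it back into part (1), where it is indispensable.

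Two smaller points. First, your structural lemma rests on the injectivity of $R\mapsto\xi(R)$, for which you only say that trivial arc-stabilizers and malnormality ``should force'' two eigenrays with a common endpoint to coincide; this needs an actual argument (the natural one again goes through $\mathcal Q^{-1}(\xi)\in\partial G$ and the irrationality of the endpoints of unstable leaves, i.e.\ the fact that they are not fixed by hyperbolic elements of $G$), and in any case the paper's formulation of closedness only ever needs that a limit pair is realised by one eigenray or by two eigenrays with a common origin, not full injectivity. Second, for part (3) you must also account for pairs $X\neq Y$ that are not connectable for ``peripheral'' reasons (reduction curves, subsurfaces $S_i$ with $i>0$); the paper handles these inside the definition of bad subsets, whereas your appeal to ``the branch-point analysis and malnormality'' is again only a gesture. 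Parts (2) and your stable-leaf/unstable-leaf dichotomy in part (3) are essentially the paper's argument and are fine.
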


This result is certainly not a surprise to experts in the field of
surface or free group automorphisms. In the case of mapping-tori of
pseudo-Anosov surface homeomorphisms, it essentially amounts to the
knowledge of the famous Cannon-Thurston map and of what are its
fibers. We refer the reader to \cite{CT, Mitra} for this point of
view. It seems however that the above statement has not been written
under this form and under this generality, so that we give a brief
proof for completeness.

\begin{proof} The notations are those introduced in the proof of
Theorem \ref{le cas libre}. Let $l$ be a leaf of the unstable
geodesic lamination ${\mathcal L}_u$ in $\hat{S}_{\mathcal F}$.
Either it connects two points in $\partial \hat{S}_{\mathcal F}$, or
it connects a point in $\partial \hat{S}_{\mathcal F}$ to the
collapse of a lift of a reduction curve, a boundary curve or a
subsurface $S_i$, $i > 0$. We now denote by $l$ a leaf as above or
the subset of a singular leaf of ${\mathcal F}_u$, homeomorphic to
$\mr$, the endpoints of which are of the above form. Such a subset
is a {\em bad subset}. Then the endpoints of $l$ are not connected
by a $G_\alpha$-geodesic: the argument given in Remark \ref{blabla}
in the pseudo-Anosov case applies here.

Conversely, any two points in $\hat{S}_{\mathcal F} \cup \partial
\hat{S}_{\mathcal F}$ which are not the endpoints of a bad subset
$l$ as defined above are connected by a $G_\alpha$ geodesic. Indeed,
they are connected by a path $p$ which can be put transverse to the
two foliations so that both its stable and unstable lengths (i.e.
the lengths measured by integrating the stable and unstable measures
of the foliations) are positive. It follows that sufficiently long
subpaths of $p$ are shortened under iterations of $\tilde{h}$ or
$\tilde{h}^{-1}$ until reaching a positive minimal length $L$. The
computations carried on in \cite{GauteroLustigvieux} imply that a
$G_\alpha$-geodesic passes in a neighborhood of the path with length
$L$ (the size of the neighbrohood depends on $L$ which has been
chosen sufficently large enough once and for all).

An eigenray $R$, with origin $O$ and endpoint $w \in \partial
\mathcal T$ satisfies $wH(R)=R$. Since $w$ acts by isometries,
whereas $H$ is a homothety with dilatation factor $\lambda_0 > 1$,
this implies that the length of any subpath in a geodesic $g$ in
$\hat{S}_{\mathcal F}$ from ${\mathcal Q}^{-1}(w)$ to the boundary
of the convex-hull of ${\mathcal Q}^{-1}(O)$ is exponentially
contracted under iterations of $\tilde{h}^{-1}$. Therefore the
endpoints of $g$ cannot be connected by a $G_\alpha$-geodesic. It
follows that its endpoints are the endpoints of a bad subset $l$ of
$\hat{S}_{\mathcal F} \cup \partial \hat{S}_{\mathcal F}$.
Conversely it is clear that the endpoints of a bad subset $l$
project, under $\mathcal Q$, to the endpoints of an eigenray or to
the endpoints of two eigenrays with a same origin.

Now the endpoints of the bad subsets form a closed set in the
following sense: if $(X_n,Y_n) \in \partial G \times \partial G$ are
endpoints of bad subsets tending toward $(X,Y)$ then $X$ and $Y$ are
the endpoints of a bad subset. It follows that the collection of
eigenrays satisfy a similar property: if $R_n$ is a sequence of
eigenrays with origin $O_n$ and terminal point $w_n \in \partial
\mathcal T$ with $O_n$ (resp. $w_n$) tending toward some $O$ (resp.
$w$) in $\widehat{\mathcal T}$, then either $O$ and $w$ are the
endpoints of an eigenray, or they are the endpoints of two eigenrays
with same origin. It readily follows that closing the eigenrays
gives a Hausdorff space. The compacity is then easily deduced from
the compacity of $\widehat{\mathcal T}$.

We so got
the first and third items of the proposition. The second item follows from the fact
that $G_\alpha$ permutes the eigenrays.
\end{proof}

There are ``generalizations'' of these theorem and proposition
toward free groups and torsion free hyperbolic groups with
infinitely many ends. The paper \cite{GJLL} gives the conclusions of
Theorem \ref{le cas libre} in the setting of free group
automorphisms and the point (2) of Theorem \ref{le cas hyperbolique}
below dealing with polynomially growing automorphisms. The paper
\cite{LL2}[Theorems 10.4 and 10.5] allows us to generalize
\cite{GJLL} to torsion free hyperbolic groups with infinitely many
ends. Let us recall that such a hyperbolic group $G$ is the
fundamental group of a graph of groups with trivial edge stabilizers
and one-ended hyperbolic groups as vertex stabilizers. The {\em
relative length} of an element $\gamma$ of $G$ is the word-length
associated to the (infinite) generating set obtained by adding every
element of any vertex stabilizer to the given generating set. An
automorphism $\alpha$ of a torsion free hyperbolic group with
infinitely many ends $G$ has an {\em essential exponential growth}
if there exists an element $\gamma$ of $G$ such that the relative
length of $\alpha^j(\gamma)$ growths exponentially with $j \to +
\infty$. Otherwise $\alpha$ has {\em essential polynomial growth}. Indeed, any automorphism
of a torsion free hyperbolic group $G$ with infinitely many ends has either essential exponential or essential
polynomial growth: this is a straightforward consequence of the fact that any free group automorphism has either
exponential or polynomial growth, the free group involved here being the quotient of $G$ by the vertex stabilizers 
mentioned above and their conjugates.

\begin{theorem}[\cite{Gaboriau-Levitt, GJLL, LevittLustig, LL1, LL2, CHL}]
\label{le cas hyperbolique} Let $G$ be a torsion
free hyperbolic group with infinitely many ends (in particular $G$ may be a non-abelian free group).

\begin{enumerate}
  \item If $\alpha$ is an automorphism
with essential exponential growth then the conclusions of Theorem \ref{le cas libre} and Proposition
\ref{importante} are true.
  \item If $\alpha$ is an automorphism with essential polynomial growth
  then there is a minimal, simplicial $\alpha$-invariant $G$-tree
$\mathcal T$ which satisfies all the properties of Theorem \ref{le
cas libre}.
\end{enumerate}
\end{theorem}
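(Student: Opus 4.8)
The plan is to reduce both statements to the free-group case of \cite{GJLL} by means of the canonical splitting of $G$, using the relative $\mathbb{R}$-tree machinery of \cite{LL2}. Recall that a torsion-free hyperbolic group with infinitely many ends is the fundamental group of a finite graph of groups with trivial edge groups and one-ended hyperbolic vertex groups; let $\mathcal H = \{H_1,\dots,H_n\}$ be representatives of the conjugacy classes of vertex groups, and let $F = G/\langle\langle H_1,\dots,H_n\rangle\rangle$ be the associated free quotient. The subgroups in $\mathcal H$ are quasiconvex and, by Remark \ref{malnormal} applied to the Bass--Serre tree of the splitting (which has trivial edge, hence arc, stabilizers), malnormal. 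Because the one-ended factors are canonical, every $\alpha \in \Aut{G}$ permutes their conjugacy classes and descends to $\bar\alpha \in \Aut{F}$, and by definition the essential growth of $\alpha$ is the ordinary growth of $\bar\alpha$. This is exactly the relative framework of \cite{LL2}.

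For part~(1), I would first build the tree. Mimicking the surface construction of Theorem~\ref{le cas libre}, one cones off the cosets of the $H_i$ in the Cayley graph of $G$; malnormality and quasiconvexity of $\mathcal H$ keep the resulting graph $\hX$ Gromov hyperbolic. A relative train-track representative of $\bar\alpha$ equips $\hX$ with a metric recording the exponential growth rate $\lambda_0 > 1$, and passing to the limit of the rescaled spaces $(\hX \times \{i\}, \lambda_0^{-i}\,|\cdot|)$ produces an $\alpha$-projectively invariant $G$-tree $\mathcal T_\alpha$ with trivial arc-stabilizers; this is the content of \cite[Theorems~10.4 and 10.5]{LL2}, which extend \cite{GJLL} from $F$ to $G$. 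Restricting to a minimal subtree, the $G$-action has dense orbits---again because $\alpha$ acts by strict homotheties---and the $G_\alpha$-action is irreducible, giving properties~(1) and~(2). The finiteness assertions of property~(3)---finitely many $G$-orbits of branch-points, with quasiconvex stabilizers, and finitely many $\mathrm{Stab}_G(P)$-orbits of directions---follow from the finiteness of the train-track strata together with the index theory of \cite{Gaboriau-Levitt}; each branch-point stabilizer is either a conjugate of some $H_i$ or a cyclic group carried by an edge of the splitting.

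The crux is property~(4) and the analogue of Proposition~\ref{importante}, both governed by the bounded backtracking (BBT) property of the train-track map, which \cite{LL2} establishes in this relative setting. Granting BBT, the LL-map $\mathcal Q \colon \partial G \to \widehat{\mathcal T}_\alpha$ of \cite{LL1, CHL} is well-defined, and its continuity, $G_\alpha$-equivariance, and fibre structure are proved verbatim as in the proof of Theorem~\ref{le cas libre}, with the stable foliation $\mathcal F_s$ replaced by the repelling algebraic lamination $\mathcal L_s$ of \cite{CHLI, CHLII} and transverse paths by their coned-off analogues. For Proposition~\ref{importante}, the ``bad subsets'' become the leaves and singular $\mathbb{R}$-subsets of the attracting algebraic lamination $\mathcal L_u$; the endpoints of a bad subset are precisely the pairs of points of $\partial G$ not joined by a $G_\alpha$-geodesic, by the contraction estimate of \cite{GauteroLustigvieux}. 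That these endpoint pairs form a closed set---so that closing the corresponding eigenrays yields a Hausdorff compact $\widetilde{\mathcal T}$ carrying a $G_\alpha$-action---then follows exactly as before.

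Part~(2) is lighter: for $\bar\alpha$ of polynomial growth, \cite{GJLL} furnishes a minimal simplicial $\alpha$-invariant $G$-tree, where the homothety factor is $1$ so that $\alpha$ acts isometrically and $\mathcal T$ is genuinely invariant. Since $\mathcal T$ is simplicial with finitely many orbits of cells, the properties of Theorem~\ref{le cas libre} are immediate, the $\mathcal Q$-map reducing to the simplicial map of the remark following Lemma~\ref{motivons}. The main obstacle I anticipate is not conceptual but one of transcription: one must check that every geometric argument in the proofs of Theorem~\ref{le cas libre} and Proposition~\ref{importante}---phrased via the measured foliations on $\tilde S$ and the hyperbolic geometry of the universal cover---survives the passage to the algebraic laminations of \cite{CHLI, CHLII} and to the coned-off geometry relative to $\mathcal H$. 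The only analytic input needed beyond this bookkeeping is the BBT property and the resulting control on $\liminf_Q g_n P$, which are precisely what \cite[Theorems~10.4 and 10.5]{LL2} provide.
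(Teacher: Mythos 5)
Your treatment of the existence of the tree agrees with the paper's: both for part (1) and part (2) the paper simply invokes \cite{LL2} (Theorems 10.4 and 10.5) and \cite{GJLL, Gaboriau-Levitt}, exactly as you do, and your observation that the essential growth of $\alpha$ is the ordinary growth of the induced automorphism of the free quotient $F = G/\langle\langle H_1,\dots,H_n\rangle\rangle$ is precisely the remark made in the paper just before the statement. The divergence, and the gap, is in how you extend Proposition \ref{importante}. You propose to rerun the surface argument verbatim with the measured foliations $({\mathcal F}_s,{\mathcal F}_u)$ replaced by the repelling and attracting \emph{algebraic laminations} of \cite{CHLI, CHLII}, declaring the ``bad subsets'' to be leaves of ${\mathcal L}_u$. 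But the laminations of \cite{CHLI, CHLII} are constructed for automorphisms of free groups only; for a general torsion-free hyperbolic group with infinitely many ends, relative to its one-ended factors, no such lamination theory is available in the cited references, and the identification of the pairs $(X,Y)\in\partial G\times\partial G$ not joined by a $G_\alpha$-geodesic with endpoint-pairs of lamination leaves is exactly the statement that would need proving. The paper is explicit that this route is not yet written down (it says the lamination/Cannon--Thurston alternative would require ``a work (yet to be written) of Coulbois-Hilion-Lustig''), and it avoids the issue by a different, purely combinatorial device: it passes to the mapping-telescope $\Gamma^f_{\mathcal H}$ of the coned-off Cayley graph and replaces leaves by \emph{corridors}, with ``collapsing corridors'' (controlled via $K$-quasi orbits and the constant from \cite{Gautero3}) playing the role of the stable/unstable leaves; the closedness of the set of collapsing corridors (Lemma \ref{il faut le rajouter}) is what makes $\widetilde{\mathcal T}$ Hausdorff. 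Your proposal has no substitute for this step.

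A secondary discrepancy: you cone off the conjugates of the canonical one-ended vertex groups $H_i$, whereas the paper cones off the full family of branch-point stabilizers of the limit tree $\mathcal T$, which (as in the surface case, where they correspond to the linearly growing subsurfaces and the reduction curves) is in general strictly larger than the family of one-ended factors --- it must contain the maximal polynomially growing subgroups. Your assertion that every branch-point stabilizer is a conjugate of some $H_i$ or a cyclic group carried by an edge of the canonical splitting is therefore not what the construction yields, and coning off too small a family would break both the hyperbolicity bookkeeping and the correspondence between eigenrays and non-geodesic boundary pairs.
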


\begin{proof}[Proof of Proposition \ref{importante} in the setting
of Theorem \ref{le cas hyperbolique}]

We consider the Cayley graph $\Gamma$ of $G$ with respect to some
finite generating set. We consider a tree $\mathcal T$ as given by
Theorem \ref{le cas hyperbolique}. Up to conjugacy in $G$, there are
a finite number of stabilizers of branch-points ${\mathcal H} =
\{H_1,\cdots,H_r\}$. The family $\mathcal H$ is quasi convex, and
malnormal. We consider the coned-off Cayley graph $\Gamma_{\mathcal
H}$ (see Section \ref{rh}). We consider a cellular, piecewise linear
map $f \colon \Gamma_{\mathcal H} \rightarrow \Gamma_{\mathcal H}$
which realizes the given automorphism. Since the subgroups $H_i$'s
are preserved up to conjugacy, $f$ can be assumed to permutes the
vertices of the cones in $\Gamma_{\mathcal H}$. We denote by
$\Gamma^f_{\mathcal H}$ the {\em mapping-telescope} of $(\Gamma_{\mathcal
H},f)$, that is the disjoint union of the $K_i := \Gamma_{\mathcal
H} \times [0,1]$ quotiented by the equivalence relation $(x,1) \in K_i \sim (f(x),0) \in K_{i+1}$.
A {\em horizontal geodesic} in $\Gamma^f_{\mathcal H}$ is
a $\Gamma_{\mathcal H}$-geodesic contained in some {\em stratum}
$\Gamma_{\mathcal H} \times \{j\}$, $j \in \mz$. It is {\em simple}
if it contains no vertex of any cone in $\Gamma_{\mathcal H}$.

\begin{definition}
\label{corridor}
A {\em corridor} is a union of (possibly infinite or bi-infinite)
simple horizontal geodesics, exactly one in each stratum, which
connect two orbits of $f$.
\end{definition}

A {\em $K$-quasi orbit} is a sequence of points
$x_0,\cdots,x_j,\cdots$  such that there is a sequence of vertical
segments $v_1,\cdots,v_{j+1},\cdots$ of lengths at least one
satisfying that the initial point of $v_i$ is $x_{i-1}$ and the
terminal point of $v_i$ lies at horizontal distance at most $K$ from
$x_i$. By \cite{Gautero3}, there is a constant $K$ such that, if $x$
is a point in a corridor $\mathcal C$, then there is a $K$-quasi
orbit passing through $x$ and contained in $\mathcal C$.

\begin{definition}
A corridor $\mathcal C$ is {\em collapsed toward $+ \infty$} (resp.
{\em toward $- \infty$}) if, given any two points $x,y$ in $\mathcal
C \cap (\Gamma_{\mathcal H} \times \{j\})$, there exist $l \in \mn$
and two $K$-quasi orbits starting respectively at $x$ and $y$ and
ending at the same point in $\mathcal C \cap (\Gamma_{\mathcal H}
\times \{j+l\})$ (resp. $\mathcal C \cap (\Gamma_{\mathcal H} \times
\{j-l\})$).

A {\em collapsing corridor} is a corridor which collapses either
toward $+ \infty$ or toward $- \infty$.
\end{definition}

The collapsing corridors play the r\^ole of the stable and unstable
leaves in the setting of Theorem \ref{le cas libre}.

\begin{lemma}
\label{il faut le rajouter} The subset of the collapsing corridors
is closed in the following sens: if $\mathcal C$ is a non-collapsing
corridor, then there exists $L > 0$ such that the horizontal
$L$-neighborhood of $\mathcal C$ contains only non-collapsing
corridors.
\end{lemma}

\begin{corollary}
Let $(X_n,Y_n) \subset \partial \Gamma_{\mathcal H} \times \partial
\Gamma_{\mathcal H}$ be a sequence of points which are the endpoints
of a sequence of collapsing corridors. Assume that $X_n \to X$ and
$Y_n \to Y$. Then $X$ and $Y$ are the endpoints of a collapsing
corridor.
\end{corollary}

As in the setting of Theorem \ref{le cas libre}, two points in
$\partial \Gamma_{\mathcal H}$ are connected by a
$G_\alpha$-geodesic if and only if they define a non-collapsing
corridor. The correspondance between the collapsing corridors and
the eigenrays is established as was previously done. The proof of
the proposition is then completed in the same way. We leave the
reader work out the details.
\end{proof}

\subsection{Subgroups of polynomially growing free group automorphisms}

\begin{theorem} \cite{BFH}
\label{Bestvina2} Let $\mathcal P$ be a finitely generated group and
let $\theta \colon {\mathcal P} \rightarrow \Out{\F{n}}$ be a
monomorphism such that $\theta({\mathcal P})$ consists entirely of
polynomially growing automorphisms. Then there is a finite-index
subgroup $\mathcal U$ of $\mathcal P$, termed {\em unipotent
subgroup}, and a simplicial $\tilde{\theta}(\mathcal U)$-invariant
$\F{n}$-tree $\mathcal T$ with $[\tilde{\theta}(\mathcal U)] =
\theta(\mathcal U)$ which satisfies the following properties:

\begin{enumerate}
  \item A vertex of $\mathcal T$ is fixed by all the isometries
$H_{\tilde{\theta}(u)}$, $u \in {\mathcal U}$, and this is the
unique fixed point of each one.
  \item The $\F{n}$-action has trivial edge stabilizers.
  \item There is exactly one $\F{n}$-orbit of edges.
  \item For each vertex $v$ of $\mathcal T$, each
  $\mathrm{Stab}_{\F{n}}(v)$-orbit of directions at $v$ is invariant under
  the $\mathcal U$-action.

\end{enumerate}
\end{theorem}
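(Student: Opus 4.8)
The plan is to reproduce, at the level of a sketch, the two-step strategy of Bestvina--Feighn--Handel \cite{BFH}: first replace $\mathcal P$ by a finite-index subgroup on which the automorphisms become \emph{unipotent}, and then extract the invariant one-edge free splitting from the relative train track machinery available in the unipotent case.

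First I would reduce to the unipotent case. The action on $H_1(\F{n};\ZM) \cong \ZM^n$ defines a homomorphism $\mathcal P \to GL(n,\ZM)$. Polynomial growth of each $\alpha \in \theta(\mathcal P)$, together with the polynomial growth of $\alpha^{-1}$, forces every matrix in the image to have iterates of polynomially bounded norm in both directions, hence all of its eigenvalues lie on the unit circle; being an integral matrix, all its eigenvalues are then roots of unity by Kronecker's theorem. A finitely generated subgroup of $GL(n,\ZM)$ consisting of such quasi-unipotent elements is virtually unipotent, and pulling back a finite-index unipotent subgroup produces the finite-index $\mathcal U < \mathcal P$ for which $\theta(\mathcal U)$ consists of unipotent polynomially growing (UPG) automorphisms. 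A choice of lifts then gives a monomorphism $\tilde\theta \colon \mathcal U \to \Aut{\F{n}}$ with $[\tilde\theta(\mathcal U)] = \theta(\mathcal U)$.

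Next I would build the tree from the UPG structure theory. The core input is that a UPG subgroup admits a \emph{common} filtration of $\F{n}$ by $\tilde\theta(\mathcal U)$-invariant free factor systems, realised by an (improved) relative train track representative whose strata are single edges. Collapsing every stratum below the top one yields a $\tilde\theta(\mathcal U)$-invariant one-edge free splitting of $\F{n}$, and I take $\mathcal T$ to be its Bass--Serre tree. Properties (2) and (3) hold by construction, the splitting being free (trivial edge groups) with a single edge. For property (1), the vertex $v_0$ coming from the collapsed subgraph has stabiliser the invariant free factor $A$, so $\tilde\theta(u)(A) = A$ gives $H_{\tilde\theta(u)}(v_0) = v_0$ for all $u$; that $v_0$ is the \emph{unique} fixed point of each $H_{\tilde\theta(u)}$ is exactly where unipotence is used, ruling out any invariant edge --- which, having trivial stabiliser, would be fixed pointwise --- and hence any second fixed vertex. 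Property (4) is read off from the directions at $v_0$, which record the attaching data of the lower strata: since the filtration is $\mathcal U$-invariant, equivariance carries each $\mathrm{Stab}_{\F{n}}(v_0)$-orbit of directions to itself, and this propagates to every vertex because there is a single vertex orbit.

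The main obstacle is the second step, and within it the passage from a single UPG automorphism to the whole group $\mathcal U$ acting simultaneously. For one automorphism the invariant filtration and the resulting splitting come directly from its own relative train track representative; for a group one must produce a filtration invariant under \emph{all} of $\tilde\theta(\mathcal U)$ and simultaneously control the fixed-point set of every element, which is the content of the deep part of \cite{BFH}. Deriving the sharp uniqueness in property (1) and the direction-orbit invariance in property (4) from this common filtration --- rather than merely the existence of some invariant splitting --- is the delicate point, and is precisely what the unipotence secured in the first step is there to guarantee.
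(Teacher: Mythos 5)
The first thing to note is that the paper offers no proof of this statement: it is imported verbatim from \cite{BFH} (the Kolchin-type theorem for $\Out{\F{n}}$), and the only commentary following it concerns the map $\mathcal Q$. So there is no internal argument to compare yours against. As a summary of what \cite{BFH} actually do, your outline is essentially right: the reduction to the unipotent case does go through the action on $H_1(\F{n};\ZM)$ (polynomial growth forces quasi-unipotent integral matrices, and one passes to the preimage of a congruence subgroup, where roots of unity congruent to $1$ must equal $1$), and the tree is then extracted from the filtered, upper-triangular representative furnished by the Kolchin theorem for finitely generated UPG subgroups; collapsing all but the top stratum gives the one-edge free splitting, whence (2) and (3), and (4) comes from the fact that the top edge is mapped over itself preserving orientation (your phrasing of (4) is vaguer than that, but the right fact is available).

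The genuine gap is in your derivation of the uniqueness clause of property (1). You claim unipotence rules out an edge fixed pointwise by $H_{\tilde{\theta}(u)}$, hence a second fixed vertex. This is false for an arbitrary invariant one-edge free splitting and an arbitrary lift $\tilde{\theta}$. Take $\F{3} = \langle a,b,c\rangle$ and the UPG, non-inner automorphism $\Phi$ with $\Phi(a)=a$, $\Phi(b)=ba$, $\Phi(c)=c$. For the invariant splitting obtained by collapsing the subrose $\{a,c\}$ --- an HNN splitting over the trivial group with vertex group $V=\langle a,c\rangle$ and stable letter $b$ --- the isometry $H_\Phi$ fixes both $V$ and $bV$, since $b^{-1}\Phi(b)=a\in V$; the splittings $\langle a,b\rangle * \langle c\rangle$ and the collapse of $\{a,b\}$ fail in the same way. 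Trivial edge stabilizers are no obstruction here: $H_{\tilde{\theta}(u)}$ is not an element of $\F{n}$, so nothing prevents it from fixing an edge pointwise. Uniqueness can be restored in this example, but only by changing the lift within the outer class: for $\tilde{\theta}(u)=i_c\circ\Phi$ a normal-form computation in the HNN extension shows that $g^{-1}c\Phi(g)\in V$ forces $g\in V$, so $cH_\Phi$ has a unique fixed vertex. For a whole finitely generated $\mathcal U$ one must make these choices coherently, as a single homomorphism $\tilde{\theta}\colon \mathcal U\to\Aut{\F{n}}$, and verify uniqueness for every nontrivial $u$ simultaneously. That is exactly the part of the statement that does not follow formally from ``some $\mathcal U$-invariant one-edge free splitting exists,'' and it is worth being aware of because the paper's own later arguments (the singular elements of Lemma \ref{cest connu} and the product-of-two-trees construction) are built around how delicate such fixed-point statements are.
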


As was already observed, the map $\mathcal Q$ is well-defined here
since vertex stabilizers are quasiconvex and arc stabilizers are
trivial.

\section{Cyclic extensions over exponentially growing automorphisms}\label{exp}

The bulk of this section is to prove Theorem \ref{resultat 1} below
about cyclic extensions of surface groups over exponentially growing
automorphisms. Then we will briefly show how the same methods apply
to similar cyclic extensions of free groups and of torsion free
hyperbolic groups with infinitely many ends, see Theorem
\ref{resultat 3}.

\subsection{The surface case}

\begin{theorem}
\label{resultat 1} Let $G$ be the fundamental group of a compact,
hyperbolic surface $S$ (with or without boundary). Let $\alpha \in
\Aut{G}$ be an exponentially growing automorphism in an outer-class
induced by a homeomorphism of $S$. Let $\mu$ be a probability
measure on $G_\alpha := G \rtimes_\alpha \mz$ whose support
generates $G_\alpha$ as a semi-group.

Then there exists an $\alpha$-projectively invariant $G$-tree
$\widehat{\mathcal T}$ with minimal interior such that, if
$\widetilde{\mathcal T}$ denotes the space obtained by closing all
the eigenrays of $\widehat{{\mathcal T}}$, then:

\begin{enumerate}
  \item {\bf P}-almost every sample path ${\bf x} = \{x_n\}$
  converges to some $x_\infty \in \widetilde{\mathcal T}$.
  \item The hitting measure $\lambda$, which
  is the distribution of $x_\infty$, is a non-atomic measure on
  $\widetilde{{\mathcal T}}$ such that $(\widetilde{{\mathcal
  T}},\lambda)$ is a $\mu$-boundary of $(G_\alpha,\mu)$ and $\lambda$ is the unique
  $\mu$-stationary probability measure on $\widetilde{{\mathcal
  T}}$.
  \item If the measure $\mu$ has finite first
  logarithmic moment and finite entropy with respect to a word-metric
  on $G_\alpha$, then the measured space $(\widetilde{{\mathcal
  T}},\lambda)$ is the Poisson boundary of $(G_\alpha,\mu)$.
\end{enumerate}
\end{theorem}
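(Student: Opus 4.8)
The plan is to deduce all three items from Kaimanovich's criterion, Theorem \ref{theoreme 2.4-6.5}, applied to a compactification of $G_\alpha$ whose boundary is the tree $\widetilde{\mathcal T}$. All the geometric input is already available: I take the separable, minimal, $\alpha$-projectively invariant $G$-tree $\mathcal T$ furnished by Theorem \ref{le cas libre}, restrict to its minimal interior, form $\widehat{\mathcal T}$ (compact for the observers' topology by Proposition \ref{merciCHL}), and let $q \colon \widehat{\mathcal T} \to \widetilde{\mathcal T}$ be the quotient obtained by closing all eigenrays. By Proposition \ref{importante}, $\widetilde{\mathcal T}$ is Hausdorff and compact and carries an irreducible action of $G_\alpha$ by homeomorphisms, and this irreducibility is exactly what is needed so that $gr(\mu) = G_\alpha$ fixes no finite subset of the boundary.

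First I would topologize $\overline{G_\alpha} := G_\alpha \cup \widetilde{\mathcal T}$ via the orbit map $g \mapsto q(g\cdot P_0)$ for a fixed base-point $P_0 \in \mathcal T$, with the points of $G_\alpha$ isolated. Density of $G_\alpha$ comes from the density of $G$-orbits (Theorem \ref{le cas libre}(2)), compactness and separability from Proposition \ref{merciCHL} together with separability of $\mathcal T$ and the countability of the orbits of branch-points (Theorem \ref{le cas libre}(3)), and compatibility is built in. The crucial point is condition (CP): if $q(g_n P_0)\to \xi$ then $q(g_n x P_0)\to\xi$ for every $x\in G_\alpha$. This is precisely Lemma \ref{motivons} pushed through $q$: well-definedness of the LL-map $\mathcal Q$ forces the $\liminf$ to be independent of the reference point, and closing the eigenrays removes the single exceptional direction of each strict homothety (Remark \ref{blabla}, Proposition \ref{importante}(3)), so the limit is genuinely independent of $x$. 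With (CP) and ``no finite fixed subset'' in hand, Theorem \ref{theoreme 2.4-6.5} (uniqueness reinforced, if desired, by Lemma \ref{unique}) delivers items (1) and (2): $\PP$-almost every path converges to some $x_\infty \in \widetilde{\mathcal T}$, the hitting measure $\lambda$ is non-atomic, $(\widetilde{\mathcal T},\lambda)$ is a $\mu$-boundary, and $\lambda$ is the unique $\mu$-stationary probability measure.

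For item (3) I must produce the strips of condition (CS) and prove they grow polynomially. Given distinct $b_1,b_2 \in \widetilde{\mathcal T}$, I would choose measurably and $G_\alpha$-equivariantly lifts $\hat b_i \in \widehat{\mathcal T}$ and points $X_i \in \mathcal Q^{-1}(\hat b_i) \subset \partial G$, so that $q(\mathcal Q(X_i)) = b_i$. Since $b_1 \neq b_2$, the contrapositive of Proposition \ref{importante}(3) shows that $X_1$ and $X_2$ are not the endpoints of a bad subset, hence are joined by a bi-infinite $G_\alpha$-geodesic (Remark \ref{blabla} and the analysis of bad subsets in the proof of Proposition \ref{importante}); this is exactly why the eigenrays had to be closed. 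I then set $S(b_1,b_2)$ to be a bounded neighbourhood, in $G_\alpha$, of one such geodesic. The separation required by (CS) is the usual statement that a geodesic joining two boundary points avoids a neighbourhood of any third point, the finite first logarithmic moment and finite entropy are hypotheses, and the ``moreover'' part of Theorem \ref{theoreme 2.4-6.5} then identifies $(\widetilde{\mathcal T},\lambda)$ with the Poisson boundary.

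The main obstacle is precisely the polynomial growth of these strips. A bi-infinite geodesic in a genuinely hyperbolic group meets each ball in linearly many elements, which would be more than enough; but for a general exponentially growing $\alpha$ on a surface with boundary the group $G_\alpha$ is only hyperbolic relative to the family $\mathcal H$ of branch-point stabilizers, and near those branch points the $\mathcal Q$-preimages are large compact subsets of $\partial G$ rather than single points. The delicate step is therefore to bound the word-length growth of a geodesic neighbourhood in the presence of this peripheral structure: I would measure in the coned-off graph $\Gamma_{\mathcal H}$ and use the corridor estimates together with the finiteness of the $G$-orbits of branch-points (Theorem \ref{le cas libre}(3), and the corridor machinery appearing in the proof of Proposition \ref{importante}) to show that the strip still grows polynomially with respect to a finite word gauge. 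Establishing this balance — the strips being thin enough to grow polynomially yet fat enough to separate every pair of boundary points — is the crux on which item (3) rests.
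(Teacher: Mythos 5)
Your proposal follows essentially the same route as the paper: the tree from Theorem \ref{le cas libre}, the orbit-map compactification of $G_\alpha$ by $\widehat{\mathcal T}$, the quotient $q$ closing the eigenrays to obtain $\widetilde{\mathcal T}$ (Propositions \ref{a faire} and \ref{importante}), condition (CP) via the LL-map and Lemma \ref{motivons}, strips built from $G_\alpha$-geodesics in the strongly relatively hyperbolic structure whose non-emptiness is exactly the closing of eigenrays, and polynomial growth of the strips via the relative metric and corridor estimates before invoking Theorem \ref{theoreme 2.4-6.5}. You correctly isolate the two delicate points (the $t^{n_j}$-type sequences in (CP) and the polynomial strip growth, Propositions \ref{verification de la condition CP} and \ref{Francois6.5}), so no substantive divergence from the paper's argument.
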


We consider a $\mr$-tree $\mathcal T$ as given by Theorem \ref{le
cas libre}. We denote by $t$ the generator of $\mz$ acting on the
right as $\alpha$ on $G$. Beware that $t$ acts on the left as the
homothety $H^{-1}_\alpha$ on $\widehat{\mathcal T}$.

Choose a point $O$ as base-point and
identify $G_\alpha/\mathrm{Stab}_{G_\alpha}(O)$ with a subset of the points of $\mathcal T$ by
considering the orbit $G_\alpha . O$ of the base-point. Of course, in
the case where $O$ has a non-trivial stabilizer, infinitely
many elements of $G_\alpha$ may get identified with a single point. {\em For the clarity and briefness of some arguments we choose as base-point the fixed-point of the homothety $H$}.
The following lemma is obvious:

\begin{lemma}
\label{obvious}

If $P$ is any point in $\widehat{{\mathcal T}}$, let ${\mathcal
N}_{\widehat{\mathcal T}}(P)$ consist of a neighborhood ${\mathcal
N}^{obs}(P)$ of $P$ in $\widehat{\mathcal T}$ together with all the
elements $g u \in G_\alpha$ such that $gu O \in {\mathcal
N}^{obs}(P)$ and $|gu|
> N$ for some chosen positive $N$.

Then $G_\alpha \cup \widehat{\mathcal T}$, equipped with the above basis of
neighborhoods ${\mathcal N}_{\widehat{\mathcal T}}(.)$, is a separable, compatible
compactification of $G_\alpha$.

The action of $G \lhd G_\alpha$ on $\widehat{\mathcal T}$ is the
given isometric left action.

The action of $\mz < G_\alpha$ on $\widehat{{\mathcal T}}$
is given by $u . P = H_{u^{-1}}(P)$.
\end{lemma}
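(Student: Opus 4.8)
The plan is to check, one by one, the three adjectives in ``separable compatible compactification'': that the prescribed sets $\mathcal N_{\widehat{\mathcal T}}(P)$ form a base for a topology in which $G_\alpha$ is open and discrete and $\widehat{\mathcal T}$ carries its observers' topology; that $G_\alpha$ is dense and the total space compact and Hausdorff with $\widehat{\mathcal T}$ separable; and finally that left translations extend to homeomorphisms. First I would declare every point of $G_\alpha$ isolated and take the $\mathcal N_{\widehat{\mathcal T}}(P)$, $P\in\widehat{\mathcal T}$, as a neighborhood base at the boundary points. To see these satisfy the neighborhood axiom, fix such a set and a point $Q$ in it: if $Q\in G_\alpha$ then $\{Q\}$ works, and if $Q\in\mathcal N^{obs}(P)$ one shrinks the observers' part to some $\mathcal N^{obs}(Q)\subseteq\mathcal N^{obs}(P)$ (possible as $\mathcal N^{obs}(P)$ is observers'-open) and raises the threshold, getting $\mathcal N_{\widehat{\mathcal T}}(Q)\subseteq\mathcal N_{\widehat{\mathcal T}}(P)$. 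Intersecting $\mathcal N_{\widehat{\mathcal T}}(P)$ with $\widehat{\mathcal T}$ returns $\mathcal N^{obs}(P)$, so the subspace topology on $\widehat{\mathcal T}$ is the observers' topology and $G_\alpha$ is open and discrete.

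The crux is density of $G_\alpha$. Since $\widehat{\mathcal T}$ and $\overline{\mathcal T}$ have the same underlying set and the observers' topology is coarser than the metric one, Theorem \ref{le cas libre}(2) (the $G$-orbits are metrically dense in $\overline{\mathcal T}$) yields that $G\cdot O$ is dense in $\widehat{\mathcal T}$, so every $\mathcal N^{obs}(P)$ contains some $gO$ with $g\in G$. Here the choice of base-point pays off: $O$ is the fixed point of $H_\alpha$ and $t$ acts as $H_\alpha^{-1}$, so $t\cdot O = O$, whence $g t^{k}O = gO\in\mathcal N^{obs}(P)$ for all $k$ while $|g t^{k}|\to\infty$ as $|k|\to\infty$. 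Thus $\mathcal N_{\widehat{\mathcal T}}(P)$ contains group elements for every threshold $N$, and $G_\alpha$ is dense. This use of the fixed-point base-point is the one genuine point of the lemma, and is the step I expect to be the main obstacle to state cleanly.

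Compactness follows from Proposition \ref{merciCHL}(1): given an open cover, extract a finite subfamily $\mathcal N_{\widehat{\mathcal T}}(P_1),\dots,\mathcal N_{\widehat{\mathcal T}}(P_r)$ covering the compact $\widehat{\mathcal T}$, with thresholds $N_1,\dots,N_r$. Every $gu$ with $|gu|>\max_i N_i$ satisfies $guO\in\widehat{\mathcal T}$, hence $guO\in\mathcal N^{obs}(P_i)$ for some $i$ and so $gu\in\mathcal N_{\widehat{\mathcal T}}(P_i)$; the finitely many remaining elements of the ball $\{|gu|\le\max_i N_i\}$ (finite because $G_\alpha$ is finitely generated) are covered individually, giving a finite subcover. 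The space is Hausdorff — two points of $\widehat{\mathcal T}$ are separated by directions, while a boundary point $P$ and a group element $gu$ are separated by $\{gu\}$ and any $\mathcal N_{\widehat{\mathcal T}}(P)$ with threshold $>|gu|$ — and $\widehat{\mathcal T}$ is separable by Proposition \ref{merciCHL}(2) together with the separability of $\mathcal T$ from Theorem \ref{le cas libre}.

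For compatibility, each $\gamma\in G_\alpha$ acts on $\widehat{\mathcal T}$ as a homeomorphism (homotheties and $H_\alpha^{-1}$ preserve directions, hence are observers'-homeomorphisms) and on $G_\alpha$ by left translation. Continuity at a boundary point $P$ is checked on the base: given $\mathcal N_{\widehat{\mathcal T}}(\gamma P)$ with observers' part $\mathcal N^{obs}(\gamma P)$ and threshold $M$, pick $\mathcal N^{obs}(P)$ with $\gamma(\mathcal N^{obs}(P))\subseteq\mathcal N^{obs}(\gamma P)$ and set $N=M+|\gamma|$; then $\gamma\bigl(\mathcal N_{\widehat{\mathcal T}}(P)\bigr)\subseteq\mathcal N_{\widehat{\mathcal T}}(\gamma P)$, using $(\gamma hu)O=\gamma(huO)$ and $|\gamma hu|\ge|hu|-|\gamma|>M$. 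Applying this to $\gamma^{-1}$ shows $\gamma$ is a homeomorphism, so the compactification is compatible. That the induced left action restricts to the given isometric action of $G$ and to $u\cdot P=H_{u^{-1}}(P)$ on $\mz$ is simply the definition of the affine $G_\alpha$-action; one has only to record that this is a genuine left action, which is the projective-invariance identity $H_\alpha(g\cdot P)=\alpha(g)H_\alpha(P)$ rewritten as $H_\alpha^{-1}\,g\,H_\alpha=\alpha^{-1}(g)$, matching the defining relation of the semi-direct product $G_\alpha$.
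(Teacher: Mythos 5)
Your proof is correct; the paper itself offers no argument here (it introduces the statement with ``The following lemma is obvious''), and what you write is exactly the routine verification that is being left to the reader: basis axioms with $G_\alpha$ discrete, density of the orbit via Theorem \ref{le cas libre}(2) and the weakness of the observers' topology, compactness from Proposition \ref{merciCHL} plus finiteness of balls in the word gauge, and compatibility from the fact that homotheties are observers'-homeomorphisms together with $\bigl||\gamma h u|-|hu|\bigr|\le|\gamma|$. One small remark: the fixed-point choice of base-point is convenient but not strictly needed for density, since any observers'-open set already contains infinitely many distinct orbit points $gO$ with $g\in G$, hence elements of arbitrarily large word length; the paper reserves that choice of $O$ mainly for the (CP) verification in Proposition \ref{verification de la condition CP}.
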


\begin{remark}
If there exists a point $O$ in $\mathcal T$ with trivial $G_\alpha$-stabilizer we have an identification of $G_\alpha$ with the orbit $G_\alpha.O$. In the cases considered in this paper it is always possible to choose such a point $O$ when dealing with exponentially growing automorphisms. This comes from the fact that there are a finite number of orbits of branch-points and of course a countable number of points in a given orbit. Thus there are a countable number of points with non-trivial stabilizer whereas the tree has an uncountable number of points. Thus, carefully choosing the base-point would allow us to get rid of the ``size'' given by the integer $N$ in the definition of the compactification of Lemma \ref{obvious}.

When dealing later with polynomially growing automorphisms, the trees are simplicial and, since edges have trivial $G$-stabilizers, it suffices to choose the middle of an edge as base-point to get a point with trivial $G$-stabilizer. However in this simplicial case it might happen that, whatever point is chosen, it has a non-trivial stabilizer in $G_\alpha$. Nevertheless, as the reader shall see, in this case we consider a product of two trees and once again for the given $G_\alpha$-action it is possible to choose a point with trivial $G_\alpha$-stabilizer. 
\end{remark}

Unfortunately, the
compactification above will not satisfy the (CP) condition. We now need to close
the eigenrays as in Theorem \ref{resultat 1}.

\begin{proposition}
\label{a faire} In the compactification of $G_\alpha$ by
$\widehat{\mathcal T}$ given in Lemma \ref{obvious}, extend the
quotient-map $q \colon \widehat{\mathcal T} \rightarrow
\widetilde{\mathcal T}$ by the identity on $G_\alpha$.

Define ${\mathcal N}_{\widetilde{\mathcal T}}(P) =
q({\mathcal N}_{\widehat{\mathcal T}}(q^{-1}(P)))$.

Then $G_\alpha \cup \widetilde{\mathcal T}$, equipped with the basis of neighborhoods
${\mathcal N}_{\widetilde{\mathcal T}}(.)$
is a separable, compatible compactification of $G_\alpha$. The action of
$G_\alpha$ on $\widetilde{\mathcal T}$ is irreducible.
\end{proposition}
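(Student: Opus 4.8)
The plan is to realise $G_\alpha \cup \widetilde{\mathcal T}$ as the quotient of the compactification $G_\alpha \cup \widehat{\mathcal T}$ furnished by Lemma \ref{obvious}, by a single \emph{closed} equivalence relation, and then to transport every required property through this quotient. Concretely, I would introduce the map $\bar q \colon G_\alpha \cup \widehat{\mathcal T} \rightarrow G_\alpha \cup \widetilde{\mathcal T}$ which is the identity on $G_\alpha$ and agrees with $q$ on $\widehat{\mathcal T}$, together with the induced equivalence relation $\mathcal R$: it is the diagonal over $G_\alpha$ and identifies the two endpoints of each eigenray over $\widehat{\mathcal T}$. By Lemma \ref{obvious} the source $G_\alpha \cup \widehat{\mathcal T}$ is a compact Hausdorff space in which $G_\alpha$ sits as an open dense discrete subset, and by their very definition the stated neighbourhoods ${\mathcal N}_{\widetilde{\mathcal T}}(P) = q({\mathcal N}_{\widehat{\mathcal T}}(q^{-1}(P)))$ are exactly the $\bar q$-images of saturated neighbourhoods of the fibres $q^{-1}(P)$; hence they generate precisely the quotient topology determined by $\bar q$.

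The crucial step, which I expect to be the main obstacle, is to verify that $\mathcal R$ is closed in the product $(G_\alpha \cup \widehat{\mathcal T})^2$. Over $G_\alpha$ there is nothing to check, and a group element is never identified with a boundary point, so since $G_\alpha$ is discrete and $\widehat{\mathcal T}$ is closed it suffices to show that the set of pairs $(O,w) \in \widehat{\mathcal T} \times \widehat{\mathcal T}$ which are the endpoints of a common eigenray, or of two eigenrays sharing an origin, is closed. This is exactly the closedness statement established inside the proof of Proposition \ref{importante}: if eigenrays $R_n$ have origins $O_n \to O$ and terminal points $w_n \to w$ in $\widehat{\mathcal T}$, then $O$ and $w$ are again the endpoints of an eigenray, or of two eigenrays with the same origin. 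Granting this, $\mathcal R$ is a closed equivalence relation on a compact Hausdorff space, so the quotient $G_\alpha \cup \widetilde{\mathcal T} = (G_\alpha \cup \widehat{\mathcal T})/\mathcal R$ is compact and Hausdorff, with $\bar q$ a closed quotient map. This is the content that genuinely goes beyond Proposition \ref{importante} (which only treated $\widetilde{\mathcal T}$ on its own); everything else is a routine reading-off of the axioms.

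Next I would check the compactification axioms one by one. Since $\bar q$ is the identity on $G_\alpha$ and no boundary point is glued to a group element, each singleton $\{g\}$ with $g \in G_\alpha$ is $\mathcal R$-saturated and open, so $G_\alpha$ is open (and discrete) in the quotient; it is dense because every ${\mathcal N}_{\widetilde{\mathcal T}}(P)$ is the $\bar q$-image of a neighbourhood already meeting $G_\alpha$ by Lemma \ref{obvious} (this uses that the attached group elements of large word-length accumulate onto the boundary). Separability of $\widetilde{\mathcal T}$ follows from that of $\widehat{\mathcal T}$ (Proposition \ref{merciCHL}), as $\widetilde{\mathcal T} = q(\widehat{\mathcal T})$ is a continuous image of a separable space. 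For compatibility, I would observe that each $h \in G_\alpha$ permutes the eigenrays (Proposition \ref{importante}), so the left translation $L_h$ of $G_\alpha \cup \widehat{\mathcal T}$ preserves $\mathcal R$; being a homeomorphism commuting with the quotient map $\bar q$, it descends to a homeomorphism of $G_\alpha \cup \widetilde{\mathcal T}$, and these assemble into a continuous $G_\alpha$-action extending left translation.

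Finally, irreducibility of the $G_\alpha$-action on $\widetilde{\mathcal T}$ is immediate, being precisely item (2) of Proposition \ref{importante}. Thus the only real work is the closedness of $\mathcal R$, i.e. ensuring that ``closing the eigenrays'' neither collapses the space uncontrollably nor destroys the Hausdorff property; once that is secured from Proposition \ref{importante}, the passage from $\widehat{\mathcal T}$ to $\widetilde{\mathcal T}$ and the verification of the remaining axioms are standard quotient-topology arguments.
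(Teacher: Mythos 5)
Your argument is correct and follows essentially the same route as the paper's proof: both rest on Proposition \ref{importante} for the Hausdorffness, compactness and irreducibility (via the closedness of the set of eigenray-endpoint pairs), observe that $q$ is the identity on $G_\alpha$ so that group-element neighbourhoods are unaffected, and treat the remaining compactification axioms as routine. You merely make explicit (via the closed equivalence relation $\mathcal R$ on the whole space $G_\alpha \cup \widehat{\mathcal T}$ and the resulting closed quotient map) what the paper dismisses as ``obvious''.
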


\begin{proof}
By Proposition \ref{importante}, $\widetilde{\mathcal T}$ is
Hausdorff and compact. Since the quotient-map $q$ restricts to the
identity on $\mathcal T$, the neighborhood in $G_\alpha$ of a point
of $\widetilde{\mathcal T}$ is the same as the neighborhood of this
point in $\widehat{\mathcal T}$. It is then obvious that Proposition
\ref{a faire} gives a compatible compactification of $G_\alpha$ by
$\widetilde{\mathcal T}$. The irreducibility of the action comes from
Proposition \ref{importante}.
\end{proof}

Let us see what happens with respect to the (CP) and (CS)
conditions when considering this compactification.

\begin{proposition}
\label{verification de la condition CP} The compactification of
$G_\alpha$ with $\widetilde{\mathcal T}$ given by Proposition
\ref{a faire} satisfies Kaimanovich (CP) condition.

\end{proposition}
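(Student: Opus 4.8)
The (CP) condition requires that for every $x \in G_\alpha$ and every sequence $(g_n)$ in $G_\alpha$ converging in $G_\alpha \cup \widetilde{\mathcal T}$ to a point $\xi \in \widetilde{\mathcal T}$, the translated sequence $(g_n x)$ converges to the same $\xi$. Since $|g_n x| \to \infty$, by the shape of the neighborhoods $\mathcal N_{\widetilde{\mathcal T}}$ in Proposition \ref{a faire} this amounts to proving $q(g_n x O) \to \xi$ in $\widetilde{\mathcal T}$, where $O$ is the chosen base-point, namely the fixed point of the homothety $H_\alpha$. As $\widetilde{\mathcal T}$ is compact and Hausdorff (Proposition \ref{importante}), it suffices to show that every subsequence of $(g_n)$ admits a further subsequence along which $q(g_n x O) \to \xi$. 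So I fix a subsequence and, using the compactness of $\widehat{\mathcal T}$ (Proposition \ref{merciCHL}), extract from it a further subsequence with $g_n O \to Q$ in $\widehat{\mathcal T}$; continuity of $q$ forces $q(Q) = \xi$. Writing $g_n = h_n t^{k_n}$ with $h_n \in G$, recalling that $t$ acts on $\widehat{\mathcal T}$ as $H_\alpha^{-1}$ and that $H_\alpha(O)=O$, I get $g_n O = h_n O \to Q$ and $(g_n x)O = g_n(xO) = h_n\bigl(H_\alpha^{-k_n}(xO)\bigr)$. Set $R_n := H_\alpha^{-k_n}(xO)$; after one more extraction I may assume $k_n$ is constant or $k_n \to \pm\infty$, and that $(R_n)$ converges in $\widehat{\mathcal T}$.

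If $k_n$ is eventually constant, equal to $k$, then $R_n = R' := H_\alpha^{-k}(xO)$ is a fixed point of $\widehat{\mathcal T}$ and $(h_n)$ is an infinite, not eventually constant sequence of the surface group $G$ with $h_n O \to Q$. Since the LL-map $\mathcal Q \colon \partial G \to \widehat{\mathcal T}$ is well-defined (Theorem \ref{le cas libre}(4)), Lemma \ref{motivons}, applied to $G$, yields $h_n R' \to Q$, i.e. $g_n x O \to Q$ and $q(g_n x O) \to \xi$. If $k_n \to +\infty$, the homothety $H_\alpha^{-k_n}$ contracts towards its fixed point $O$, so $R_n \to O$ in the metric; as each $h_n$ acts by isometry, $d(h_n R_n, h_n O) = d(R_n, O) \to 0$. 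Because metrically asymptotic sequences have, for every base-point $P$, the same $\liminf_P$, and $h_n O \to Q$ means $\liminf_P h_n O = Q$, every convergent subsequence of $(h_n R_n)$ shares the limit $Q$ (Proposition \ref{merciCHL}(3)--(4)); hence $h_n R_n \to Q$ and again $q(g_n x O) \to \xi$.

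The delicate case is $k_n \to -\infty$, where $H_\alpha^{-k_n} = H_\alpha^{|k_n|}$ \emph{expands}. Here $R_n$ escapes to infinity and (along the subsequence) converges to a point $w \in \partial \mathcal T$ which, by the dynamics of the homothety $H_\alpha$ fixing $O$, is the terminal point of an eigenray $\rho$ issued from $O$; thus $q(w) = q(O)$ by the very definition of $\widetilde{\mathcal T}$. In $\widehat{\mathcal T}$ the points $g_n x O = h_n R_n$ run off along the translated rays $h_n\rho$ and need \emph{not} converge to $Q$: this is precisely the failure of (CP) on $\widehat{\mathcal T}$ that forced us to close the eigenrays. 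On $\widetilde{\mathcal T}$ the situation is repaired. Since $G_\alpha$ permutes the eigenrays (this is what makes the action descend to $\widetilde{\mathcal T}$ in Proposition \ref{importante}(2)), each $h_n\rho$ is again an eigenray, with endpoints $h_n O$ and $h_n w$, so that $q(h_n w) = q(h_n O) = q(g_n O) \to \xi$. It then remains to transfer the convergence from $h_n w$ to $h_n R_n$: as $R_n$ lies arbitrarily far out along $\rho$ towards $w$, the point $q(h_n R_n)$ is driven into every $\widetilde{\mathcal T}$-neighborhood of $q(h_n w) = q(h_n O)$ furnished by the basis $\mathcal N_{\widetilde{\mathcal T}}$ of Proposition \ref{a faire}, whence $q(h_n R_n) \to \xi$. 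This transfer, carried out through the explicit neighborhood basis and the $\liminf$-characterization of the observers' topology, is the technical heart of the argument and the one place where the passage to $\widetilde{\mathcal T}$ is genuinely needed.

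In each case the extracted subsequence satisfies $q(g_n x O) \to \xi$; by the subsequence principle in the compact Hausdorff space $\widetilde{\mathcal T}$, the whole sequence converges, $g_n x \to \xi$, and (CP) holds. The expected main obstacle is exactly the expanding subcase: controlling $h_n R_n$ when the group elements $h_n$ and the distance of $R_n$ out along the eigenray both tend to infinity at once. The bounded case is a direct appeal to Lemma \ref{motivons}, and the contracting case is a one-line isometry estimate; it is only the expanding case that exploits the closing of the eigenrays, and it should be isolated as the key lemma of the proof.
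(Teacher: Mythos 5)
Your overall strategy is the same as the paper's: reduce everything to the orbit of the base-point $O$ (chosen as the fixed point of $H_\alpha$), invoke the LL-map through Lemma \ref{motivons} to handle the $G$-direction, and use the closing of the eigenrays to absorb the expansion coming from the $t$-direction. Your bounded and contracting subcases are fine. The problem is the expanding subcase, which you yourself single out as ``the technical heart'' and then do not prove: the assertion that $q(h_nR_n)$ is ``driven into every neighborhood of $q(h_nw)$'' because ``$R_n$ lies arbitrarily far out along $\rho$'' is both unproved and rests on an incorrect picture. The point $R_n=H_\alpha^{|k_n|}(xO)$ does \emph{not} stay near the eigenray $\rho$: its distance to $\rho$ equals $\lambda_0^{|k_n|}\,d(xO,\rho)$, which tends to infinity unless $xO$ happens to lie on $\rho$. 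What is true is only that $R_n\to w$ in the weak observers topology (or possibly $R_n\to O$ itself, a subcase you omit, e.g.\ when $H_\alpha$ does not eventually fix the direction $D_O(xO)$), and converting that weak convergence into convergence of $q(h_nR_n)$ while $h_n$ simultaneously escapes to infinity is exactly the double-limit difficulty; nothing in your text addresses it. That is a genuine gap at the decisive step.

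For comparison, the paper's own proof is shorter and sidesteps this: it treats only the two pure cases --- sequences $(w_j)\subset G$ right-multiplied by $v\in G$ (Lemma \ref{motivons}) or by $t^k$ (trivial since $O$ is fixed by $t$), and sequences $t^{n_j}$ right-multiplied by $v\in G$, where $h_n\equiv e$ and $t^{n_j}vO=H_\alpha^{\mp n_j}(vO)$ accumulates only on $O$ and on eigenray endpoints, all identified to $q(O)$ in $\widetilde{\mathcal T}$ --- and then declares that the general statement readily follows. So you have correctly located where the remaining work lies; but to close your argument you would need an actual lemma of the form: if $h_nO\to Q$ in $\widehat{\mathcal T}$ and $R_n\to w$ in $\widehat{\mathcal T}$ with $w$ the endpoint of an eigenray at $O$ (or $w=O$), then every accumulation point of $(h_nR_n)$ is identified with $Q$ in $\widetilde{\mathcal T}$. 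Proving that requires the $\liminf_Q$ machinery of Proposition \ref{merciCHL} applied to the medians of $h_n^{-1}O$, $O$ and $R_n$, not the neighborhood basis of Proposition \ref{a faire} alone.
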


\begin{proof}
Consider any sequence $(w_j) \in G$ which tends to some point $P \in
\widehat{\mathcal T}$, which means that $(w_jO)$ tends to $P$ in
$\widehat{\mathcal T}$. Let $v \in G \lhd G_\alpha$. By Theorem
\ref{le cas libre}, the LL-map $\mathcal Q$ is well-defined. By
Lemma \ref{motivons}, $(w_j.(vO)) = (w_jvO)$ tends to the same point
$P \in \widehat{{\mathcal T}}$. The condition (CP) is thus satisfied
for $\widehat{\mathcal T}$, and so for $\widetilde{\mathcal T}$, in
this case. If $v = t^k$ for some integer $k$ the same conclusion
holds trivially from the very definition of the compactification:
the elements $w$ and $wt^k$ lie in a neighborhood of the same point
in the tree since the base-point $O$ has been chosen as the
fixed-point of $H$, that is of the action of $t$ on
$\widehat{\mathcal T}$.

Let us now consider a sequence $t^{n_j}$
with $n_j \to + \infty$. Of course $t^{n_j} O$
tends to $O$ in $\widehat{\mathcal T}$ so that
$t^{n_j}$ tends to $O$ in the compactification
with $\widetilde{\mathcal T}$. Let $v \in G_\alpha$.
If $v = t^k$ for some integer $k$ then $t^{n_j}v = t^{n_j+k}$
tends to $O$ in $\widehat{\mathcal T}$ so that the (CP) condition still holds.
If $v \in G$, $t^k v O$ admits $O$ and the endpoints of the
eigenrays of $H$
as accumulation points.
Thus it converges in $\widetilde{\mathcal T}$
to the point $O$, since the eigenrays have been closed. Therefore the (CP) condition holds in
$\widetilde{\mathcal T}$ in this case.

The lemma readily follows.
\end{proof}

\begin{remark}
In the last case treated in the proof of Proposition \ref{verification de la
  condition CP}, we really need to be in $\widetilde{\mathcal T}$, and not
only in $\widehat{\mathcal T}$.
\end{remark}

We are now going to deal with the (CS) condition. We need a preliminary definition:

\begin{definition}
Assume that an identification of $\mathrm{Stab}_{G}(O)$ with the directions at the base-point $O$ has been chosen once and for all.
Let $P \in \widehat{\mathcal T}$ be a point with non-trivial $G$-stabilizer $H < G$. If $D_P(Q)$
is the direction of $Q$ at $P$, the {\em exit-point of $H$ in $D_P(Q)$} is the element $g \in H$ such that $D_P(g.O) = D_P(Q)$.
\end{definition}

Before giving the definition of the strips, we recall that the mapping-torus group $G_\alpha$ of an exponentially
growing automorphism of a surface or free group $G$ (or of an essentially exponentially growing automorphism of a torsion free hyperbolic group with infinitely many ends) admits a non-trivial structure of
strongly relatively hyperbolic group (see Section \ref{rh}): the $G_\alpha$-geodesics we evoke below are geodesics for the corresponding so-called ``relative'' metric.

\begin{definition}
\label{ca avance} We identify $G$ and $\partial G$ with $G \times \{0\}$ and $\partial G \times
\{0\}$.  If $b_1 \neq b_2$ are two distinct points in $\widehat{\mathcal T}$, we denote by
${\mathcal G}(b_1,b_2)$ the union of all the $G_\alpha$-geodesics
between

\begin{itemize}
  \item the exit-point of $\mathrm{Stab}_G(b_1)$ in $D_{b_1}(b_2)$ and the exit-point of $\mathrm{Stab}_G(b_2)$ in $D_{b_2}(b_1)$ if both $G$-stabilizers are non-trivial;
  \item the exit-point of $\mathrm{Stab}_G(b_1)$ (resp. of $\mathrm{Stab}_G(b_2)$) in $D_{b_1}(b_2)$ (resp. in $D_{b_2}(b_1)$)
  and the point ${\mathcal Q}^{-1}(b_2) \in \partial G$ (resp. and the point ${\mathcal Q}^{-1}(b_1)) \in \partial G$ if the $G$-stabilizer of $b_1$ (resp. of $b_2$) is non-trivial whereas the $G$-stabilizer of
  $b_2$ (resp. of $b_1$) is;
  \item the point ${\mathcal Q}^{-1}(b_1) \in \partial G$ and the point ${\mathcal Q}^{-1}(b_2) \in \partial G$ if both $G$-stabilizers are trivial,
 \end{itemize}
 
 where $\mathcal Q$ is the LL-map defined in Lemma \ref{motivons}.

Let $\Delta$ be the diagonal of $\widehat{\mathcal T} \times
\widehat{\mathcal T}$. Let $\mathcal R$ be a set of representatives
of $G_\alpha$-orbits in $(\widehat{\mathcal T} \times
\widehat{\mathcal T} \setminus \Delta) / ((x,y) \sim (y,x))$.

If $(b^\prime_1,b^\prime_2) \in \widehat{\mathcal T} \times
\widehat{\mathcal T} \setminus \Delta$, the {\em elementary strip
$ES(b^\prime_1,b^\prime_2)$ between $b^\prime_1$ and $b^\prime_2$}
is the union of all the $gu.{\mathcal G}(b^{\prime \prime}_1,b^{\prime
\prime}_2)$ with $(b^{\prime \prime}_1,b^{\prime \prime}_2) \in {\mathcal R}$ and $gu \{b^{\prime \prime}_1,b^{\prime \prime}_2\} =
\{b^\prime_1,b^\prime_2\}$.

Let $b_1 \neq b_2$ be two distinct points in $\widetilde{\mathcal T} \times \widetilde{\mathcal
T} \setminus \Delta$. The {\em strip} $S(b_1,b_2)$ is defined by

$S(b_1,b_2) = ES(b^\prime_1,b^\prime_2)$ with $b^\prime_i = b_i$ if
 $q^{-1}(b_i) = b_i$ and $b^\prime_i$ is the unique origin of
 eigenray in $q^{-1}(b_i)$ otherwise, where $q \colon \widehat{\mathcal T} \rightarrow \widetilde{\mathcal
T}$ is the quotient-map which ``closes the eigenrays'' (see Definition \ref{fermer les eigenrays} and Proposition \ref{importante}).
\end{definition}

\begin{lemma}
\label{vide}
No strip is empty.
\end{lemma}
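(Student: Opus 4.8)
The plan is to reduce the non-emptiness of an arbitrary strip to the non-emptiness of a single set $\mathcal G(b'_1,b'_2)$, and then to recognise the latter as equivalent to the existence of a $G_\alpha$-geodesic between the prescribed endpoints. First I would exploit the $G_\alpha$-equivariance built into Definition \ref{ca avance}. Since $\mathcal R$ is a set of representatives of the $G_\alpha$-orbits on unordered pairs, the pair $\{b'_1,b'_2\}$ equals $gu\{b''_1,b''_2\}$ for some representative $(b''_1,b''_2)\in\mathcal R$ and some $gu\in G_\alpha$; as $gu$ is a bijection, $ES(b'_1,b'_2)\supseteq gu\cdot\mathcal G(b''_1,b''_2)$ is non-empty if and only if $\mathcal G(b''_1,b''_2)$ is. Now the property ``being joined by a $G_\alpha$-geodesic'' is $G_\alpha$-invariant, and $\mathcal Q$ is $G_\alpha$-equivariant (Theorem \ref{le cas libre}), so the endpoints defining $\mathcal G(b''_1,b''_2)$ are joined by a geodesic exactly when those defining $\mathcal G(b'_1,b'_2)$ are (the stabiliser type, and hence which clause of Definition \ref{ca avance} applies, is constant along $G_\alpha$-orbits). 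Thus it suffices to prove that for $b_1\neq b_2$ the set $\mathcal G(b'_1,b'_2)$ is non-empty, where $b'_i\in q^{-1}(b_i)$.

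Next I would invoke the characterisation established in the proof of Proposition \ref{importante}: two points of $\hat{S}_{\mathcal F}\cup\partial\hat{S}_{\mathcal F}$ are joined by a $G_\alpha$-geodesic precisely when they are \emph{not} the endpoints of a bad subset, and the endpoints of a bad subset always lie in $\partial\hat{S}_{\mathcal F}$ or at a collapsed subgroup. An exit-point of $\mathrm{Stab}_G(b'_i)$ is an honest element of $G$, i.e.\ an interior vertex of the relative Cayley graph, so it can never be an endpoint of a bad subset. Consequently, whenever at least one of $b_1,b_2$ has non-trivial $G$-stabiliser — so that at least one endpoint of $\mathcal G(b'_1,b'_2)$ is an exit-point — the prescribed pair cannot be the endpoints of a bad subset, the corresponding $G_\alpha$-geodesics exist, and $\mathcal G(b'_1,b'_2)\neq\emptyset$. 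This settles the first two clauses of Definition \ref{ca avance} at once.

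The remaining, and main, case is when both $b'_1,b'_2$ have trivial $G$-stabiliser, so that $\mathcal G(b'_1,b'_2)$ is built from $G_\alpha$-geodesics between points of $\mathcal Q^{-1}(b'_1)$ and $\mathcal Q^{-1}(b'_2)$ in $\partial G$. Here I would argue by contradiction: if $\mathcal G(b'_1,b'_2)$ were empty, then some $X\in\mathcal Q^{-1}(b'_1)$ and $Y\in\mathcal Q^{-1}(b'_2)$ would be two distinct points of $\partial G$ not joined by a bi-infinite $G_\alpha$-geodesic, hence the endpoints of a bad subset. Proposition \ref{importante}(3) would then give $q(\mathcal Q(X))=q(\mathcal Q(Y))$, that is $q(b'_1)=q(b'_2)$, i.e.\ $b_1=b_2$, contradicting $b_1\neq b_2$. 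The hard part is exactly this last step, and it is where the passage from $\widehat{\mathcal T}$ to $\widetilde{\mathcal T}$ is indispensable: closing the eigenrays is precisely what collapses every bad-subset pair to a single point of $\widetilde{\mathcal T}$, so two genuinely distinct points of $\widetilde{\mathcal T}$ can never be the $q$-images of the endpoints of a bad subset. Having disposed of all cases, one concludes $\mathcal G(b'_1,b'_2)\neq\emptyset$, whence $S(b_1,b_2)=ES(b'_1,b'_2)\neq\emptyset$.
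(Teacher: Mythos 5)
Your argument is correct and follows essentially the same route as the paper's proof: the key step in both is that, by Proposition \ref{importante}, the set $\mathcal G(b'_1,b'_2)$ is empty exactly when $b'_1$ and $b'_2$ are the endpoints of an eigenray (equivalently, when their $\mathcal Q$-preimages bound a bad subset), which is impossible for $b_1\neq b_2$ precisely because the eigenrays have been closed in $\widetilde{\mathcal T}$. Your additional bookkeeping (the reduction via orbit representatives and the separate treatment of the non-trivial-stabiliser clauses of Definition \ref{ca avance}) only makes explicit what the paper leaves implicit.
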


\begin{proof}
Let $b_1, b_2$ be two distinct points in $\widetilde{\mathcal T}$.
We denote by $b^\prime_i$ the points in $\widehat{\mathcal T}$ with
$q(b^\prime_i) = b_i$ given in Definition \ref{ca avance} for
defining $S(b_1,b_2)$. By Proposition \ref{importante} $\mathcal G(b^\prime_1,b^\prime_2)$ is empty if and only if $b^\prime_1$
and $b^\prime_2$ are the endpoints of an eigenray.  Since the eigenrays
have been closed in $\widetilde{\mathcal T}$, we would get $b_1 =
b_2$ which is a contradiction with the assumption. Hence the lemma.
\end{proof}

\begin{lemma}
\label{yes we can} The map $$\left\{
\begin{array}{ccccc} \widetilde{\mathcal T} &
\times & \widetilde{\mathcal T} & \rightarrow & G_\alpha \\
(b_1 & , & b_2) & \mapsto & S(b_1,b_2) \end{array} \right.$$ is
$G_\alpha$-equivariant and Borel.
\end{lemma}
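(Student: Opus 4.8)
The plan is to treat the two assertions separately, deriving $G_\alpha$-equivariance directly from the way $S$ is built, and then reducing the Borel property to the measurable dependence of a few concrete ingredients.

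For equivariance, first I would record that the passage $b_i \mapsto b'_i$ commutes with the action. Indeed $q \colon \widehat{\mathcal T} \to \widetilde{\mathcal T}$ is $G_\alpha$-equivariant and, by Proposition \ref{importante}, $G_\alpha$ permutes the eigenrays (sending origins to origins); hence for $h \in G_\alpha$ one has $q^{-1}(h b_i) = h\, q^{-1}(b_i)$ and therefore $(h b_i)' = h\, b'_i$. It then suffices to check $ES(h b'_1, h b'_2) = h\, ES(b'_1,b'_2)$, and this is immediate from the definition: writing the elementary strip as a union over the elements $\gamma \in G_\alpha$ with $\gamma\{b''_1,b''_2\} = \{b'_1,b'_2\}$ for the unique representative $(b''_1,b''_2) \in \mathcal R$ of the orbit of $\{b'_1,b'_2\}$, the substitution $\gamma \mapsto h\gamma$ sets up a bijection between the index set for $(b'_1,b'_2)$ and that for $(hb'_1,hb'_2)$ and carries $\gamma\,\mathcal G(b''_1,b''_2)$ to $h\gamma\,\mathcal G(b''_1,b''_2)$. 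Combining the two facts gives $S(hb_1,hb_2) = h\,S(b_1,b_2)$.

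For the Borel property I would fix the measurable framework: since $G_\alpha$ is countable, the subsets of $G_\alpha$ carry the standard Borel structure generated by the cylinders $\{A : \gamma \in A\}$, so $S$ is Borel iff for each $\gamma \in G_\alpha$ the set $B_\gamma = \{(b_1,b_2) : \gamma \in S(b_1,b_2)\}$ is Borel in $\widetilde{\mathcal T} \times \widetilde{\mathcal T}$. By the equivariance just proved, $B_\gamma = (\gamma \times \gamma)(B_e)$ for the diagonal action, which is a homeomorphism, so it is enough to treat $B_e$. Next I would arrange $\mathcal R$ to be Borel: the orbit equivalence relation on $\widehat{\mathcal T}^2 \setminus \Delta$ (modulo the flip) is a countable Borel equivalence relation, so by the Lusin--Novikov selection theorem it admits a Borel transversal $\mathcal R$ together with a Borel selection of representatives. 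Unwinding Definition \ref{ca avance}, and using that $\mathcal G$ is symmetric in its two arguments, membership $e \in ES(b'_1,b'_2)$ becomes a countable disjunction over $\delta \in G_\alpha$, giving
$$B_e \;=\; \bigcup_{\delta \in G_\alpha} \bigl\{(b_1,b_2) : \{\delta^{-1}b'_1,\delta^{-1}b'_2\} \in \mathcal R \ \text{and}\ \delta^{-1} \in \mathcal G(\delta^{-1}b'_1,\delta^{-1}b'_2)\bigr\}.$$
It then remains to check that each set in this countable union is Borel.

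This reduces everything to three measurable-dependence statements, the last of which is the crux. First, $(b_1,b_2) \mapsto (b'_1,b'_2)$ is Borel: the locus where $q^{-1}$ is not a singleton is the set of endpoints of the (countably many) eigenrays, hence countable and Borel, and on it the selection of the eigenray origin is a Borel choice via Proposition \ref{importante}. Second, the transversal condition $\{\delta^{-1}b'_1,\delta^{-1}b'_2\} \in \mathcal R$ is Borel because $\mathcal R$ was chosen Borel and the diagonal action is continuous. Third, and this is the main obstacle, for a fixed $\eta \in G_\alpha$ the set of pairs $(c_1,c_2)$ with $\eta \in \mathcal G(c_1,c_2)$ must be shown Borel. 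Here one checks that the two endpoint-data maps of Definition \ref{ca avance} are Borel --- the exit-point map is locally constant in the direction $D_{c_i}(c_j)$ on the countable set of points with non-trivial $G$-stabiliser (these form finitely many $G$-orbits of branch-points by Theorem \ref{le cas libre}, hence a countable Borel set), while $\mathcal Q^{-1}$ is continuous on $\partial \mathcal T$ by the homeomorphism property in Theorem \ref{le cas libre}(4) --- and then that, in $G_\alpha$, the condition that $\eta$ lie on some $G_\alpha$-geodesic joining the two endpoint data depends Borel-measurably on those endpoints. The delicate point is that $G_\alpha$ with its relative metric is hyperbolic but not proper, so I would establish this last measurability from the stability and closedness of relative geodesics under perturbation of their ideal endpoints, as encoded by the corridors in the proof of Proposition \ref{importante} and by \cite{Gautero3}, which makes passing through the fixed vertex $\eta$ a Borel condition on the endpoint pair. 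Granting these three points, $B_e$ is a countable union of Borel sets and the lemma follows.
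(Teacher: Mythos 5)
Your equivariance argument is fine and simply expands what the paper dismisses as ``clear by construction''. The problem is in the Borel part, and it is concentrated in one step: you claim that the orbit equivalence relation of $G_\alpha$ on $(\widehat{\mathcal T}\times\widehat{\mathcal T})\setminus\Delta$ (modulo the flip), being a countable Borel equivalence relation, admits a Borel transversal by Lusin--Novikov. That is not what Lusin--Novikov gives: it uniformizes Borel sets with countable sections, but a countable Borel equivalence relation admits a Borel transversal if and only if it is \emph{smooth}, and the relation here is not. By Theorem \ref{le cas libre} the $G$-action has dense orbits in $\overline{\mathcal T}_\alpha$, so the $G_\alpha$-orbits in $\widetilde{\mathcal T}\times\widetilde{\mathcal T}\setminus\Delta$ are dense and meager in a perfect Polish space; the action is therefore generically ergodic, which rules out any Borel transversal. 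Since your formula for $B_e$ tests the condition $\{\delta^{-1}b_1',\delta^{-1}b_2'\}\in\mathcal R$, and $\mathcal R$ cannot be taken Borel, the decomposition of $B_e$ into a countable union of Borel sets collapses at exactly this point. (Your third ingredient, the Borel-ness of $\{(c_1,c_2):\eta\in\mathcal G(c_1,c_2)\}$, is also only gestured at, but that is incompleteness rather than an incorrect assertion.)

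For comparison, the paper's own very short proof takes a different route that never touches the transversal: it observes that the strip assignment is \emph{injective} --- two distinct points of $\widetilde{\mathcal T}\times\widetilde{\mathcal T}\setminus\Delta$ have distinct image sets --- so that the preimage of any fixed subset of $G_\alpha$ is at most a single point, and then concludes that the preimage of each cylinder $\{A\subset G_\alpha:\gamma\in A\}$ is a countable union of points, hence Borel. Whether or not one finds that countability step fully convincing as written, it shows that the intended mechanism is injectivity of $(b_1,b_2)\mapsto S(b_1,b_2)$ together with the countability of $G_\alpha$, not a measurable-selection argument. To repair your route you would need either to exhibit a Borel (or measure-theoretically admissible) choice of $\mathcal R$ on a conull set, or to reformulate the strips so that they do not depend on a choice of orbit representatives at all.
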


\begin{proof}
The $G_\alpha$-equivariance is clear by construction. We only have
to check that the map which assigns the elementary sets is Borel.
This is straightforward since two distinct points in
$\widetilde{\mathcal T} \times \widetilde{\mathcal T} \setminus
\Delta$ have distinct image sets. Therefore
any set in $G_\alpha$, which is countable, is a countable union of
points in $\widetilde{\mathcal T} \times \widetilde{\mathcal T}
\setminus \Delta_{\widetilde{\mathcal T}}$, hence is Borel.
\end{proof}

\begin{proposition}
\label{verification de la condition CS}

The $G_\alpha$-compactification $\widetilde{\mathcal T}$, equipped with the collection
of strips $S(b_1,b_2)$, satisfies
the (CS) condition.

\end{proposition}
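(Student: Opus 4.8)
Plan. The three structural requirements of condition (CS) other than separation are already in hand: by Lemma \ref{yes we can} the assignment $(b_1,b_2)\mapsto S(b_1,b_2)$ is $G_\alpha$-equivariant and Borel, by Lemma \ref{vide} no strip is empty, and the irreducibility of the action (Proposition \ref{a faire}) forces $\widetilde{\mathcal T}$ to be infinite, hence to contain at least three points. So the real content is the separation property: given three pairwise distinct $b_0,b_1,b_2\in\widetilde{\mathcal T}$, I must produce a neighbourhood $\mathcal O_0$ of $b_0$ in $G_\alpha\cup\widetilde{\mathcal T}$ and observers' neighbourhoods $\mathcal O_1,\mathcal O_2$ of $b_1,b_2$ in $\widetilde{\mathcal T}$ with $S(b_1',b_2')\cap\mathcal O_0=\emptyset$ whenever $b_i'\in\mathcal O_i$. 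The guiding idea is that by Definition \ref{ca avance} a strip is a union of $G_\alpha$-geodesics whose two ends are pinned to $b_1$ and $b_2$, so its orbit-trace in the tree should stay squeezed along the bridge $[b_1,b_2]$ and thus away from a genuinely distinct third point.

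Step 1 (separating directions via the median). Working in $\widehat{\mathcal T}$ through the lifts of the $b_i$ used in Definition \ref{ca avance}, I would take the median $c$ of the three lifts in $\overline{\mathcal T}$ and distinguish two regimes. If $b_0$ is off the bridge $[b_1,b_2]$, the directions $D_c(b_0),D_c(b_1),D_c(b_2)$ are pairwise distinct; I pick $P_i$ interior to $[c,b_i]$ close to $b_i$ and set $\mathcal O_i=D_{P_i}(b_i)$ and $\mathcal O_0^{obs}=D_{P_0}(b_0)$, which are pairwise disjoint. If instead $b_0$ is an interior point of $(b_1,b_2)$ (necessarily a point of the interior tree), then $c=b_0$ and I instead cut exactly the two bridge directions, taking $\mathcal O_0^{obs}=D_{Q_1}(b_0)\cap D_{Q_2}(b_0)$ with $Q_1\in(b_0,b_1)$, $Q_2\in(b_0,b_2)$ close to $b_0$, while keeping $\mathcal O_i=D_{P_i}(b_i)\subset D_{b_0}(b_i)$. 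In both regimes, for $b_i'\in\mathcal O_i$ one has $D_c(b_i')=D_c(b_i)$, so the tree bridge $[b_1',b_2']$ stays inside $\overline{D_c(b_1)}\cup\overline{D_c(b_2)}$ and meets $\mathcal O_0^{obs}$ at most in a bounded stub next to $c$.

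Step 2 (the defining geodesics track the bridge). The crux is to show that every $\gamma\in S(b_1',b_2')$ has $\gamma O\in\overline{D_c(b_1)}\cup\overline{D_c(b_2)}$, up to a bounded exceptional set. Since $O$ is the fixed point of $H$ and $t^kO=O$, one has $(g t^k)O=gO$, so the orbit-trace only records the $G$-part of the geodesic. Using Theorem \ref{le cas libre}, the LL-map $\mathcal Q$ and Lemma \ref{motivons}, the two ends of such a relative $G_\alpha$-geodesic — running to the exit-points, resp. $\mathcal Q$-preimages, attached to $b_1'$ and $b_2'$ — have orbit-traces that sit near $b_1'$ and $b_2'$, deep inside $D_c(b_1)$ and $D_c(b_2)$. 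Between the ends the geodesic crosses the cones over the branch-point stabilisers, and here I would invoke the description of $G_\alpha$-geodesics through bad subsets and collapsing corridors from Proposition \ref{importante} (together with the estimates of \cite{GauteroLustigvieux}) to argue that each crossing enters and leaves through the exit-points lying in the two bridge directions, so the trace never enters $D_c(b_0)$ (resp. never reaches past $Q_1,Q_2$ in the two cut directions). In the first regime this already gives $S(b_1',b_2')\cap\mathcal O_0^{obs}=\emptyset$; in the second, the only trace-points that might land in $\mathcal O_0^{obs}$ are finitely many exit-points inside a bounded ball around $b_0$, hence of bounded word length, and they are discarded by fixing the threshold $N$ defining $\mathcal O_0=\mathcal N_{\widetilde{\mathcal T}}(b_0)$ above that bound.

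Step 3 (the main obstacle). The genuine difficulty is exactly the tracking claim at branch-points carrying an infinite stabiliser, where the tree is not locally finite and the observers' topology lets sequences ``turn around'' the branch-point; a careless routing of the geodesics would let the trace spill into every direction at $c$, including $D_c(b_0)$. This is precisely what the exit-point formulation of Definition \ref{ca avance} is meant to control: by forcing the defining geodesics through the prescribed exit-points rather than into the stabiliser cosets, I keep the orbit-trace in the bridge directions, and the integer $N$ absorbs the finitely many short exit-points near $b_0$. I would isolate this tracking statement as a lemma, proved with the relatively hyperbolic geometry recalled in Proposition \ref{importante}, after which $G_\alpha$-equivariance reduces a general elementary strip to this model situation and closes the argument.
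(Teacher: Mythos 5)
Your reduction of (CS) to the separation property, and your treatment of the routine parts (non-emptiness via Lemma \ref{vide}, equivariance and Borel-ness via Lemma \ref{yes we can}, at least three boundary points via irreducibility) match the paper. But the heart of your argument --- the ``tracking claim'' of Steps 2--3, asserting that every $G_\alpha$-geodesic constituting the strip keeps its orbit-trace inside the two bridge directions at the median --- is precisely the statement that has to be proved, and you do not prove it: you explicitly defer it to an unstated lemma to be ``proved with the relatively hyperbolic geometry recalled in Proposition \ref{importante}''. That proposition establishes properties of $\widetilde{\mathcal T}$ (Hausdorffness, compactness, which pairs of points of $\partial G$ are joined by $G_\alpha$-geodesics); it contains no fellow-travelling or tracking statement comparing relative geodesics with the tree, so the appeal is not a substitute for an argument. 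The gap is genuine rather than presentational because the dangerous case is exactly when the strip is infinite, i.e.\ when $\{b_1,b_2\}$ is stabilized by a non-trivial element of $G_\alpha$: there the defining geodesics are unbounded and cross arbitrarily many cones over branch-point stabilizers, and nothing in your sketch controls where they accumulate.

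The paper resolves exactly this case by a dichotomy you never make. If no element of $G_\alpha$ fixes both $b_1$ and $b_2$, the strip is a $\delta$-thin pencil of geodesics between two fixed points of a Gromov hyperbolic (relative) space, so its only accumulation points in the compactification are $b_1$ and $b_2$. If some $wt^j$ with $j\neq 0$ fixed both, then $b_1$ and $b_2$ would be the endpoints of an eigenray, which is impossible in $\widetilde{\mathcal T}$ precisely because the eigenrays have been closed there; hence any common stabilizer is an element $w\in G$ acting as a hyperbolic isometry whose axis has endpoints $b_1,b_2$, and again the strip accumulates only at those two points. This use of the eigenray-closing is the structural input your plan omits, and without it (or the unproven tracking lemma) the separation step does not go through. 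A secondary omission: a point of $\widetilde{\mathcal T}$ may have two preimages in $\widehat{\mathcal T}$ (the two ends of a closed eigenray), so a neighbourhood of $b_0$ in the quotient pulls back to a neighbourhood of both; your median construction only separates the strip from one chosen lift of $b_0$.
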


\begin{proof}
From Lemma \ref{vide}, no strip is empty. From Lemma \ref{yes we
can}, the assignment of the strips is Borel and
$G_\alpha$-equivariant. It only remains to check that any strip
$S(b_1,b_2)$ avoids a neighborhood of any third point $b_0 \in
\widetilde{\mathcal T}$ distinct from both $b_1$ and $b_2$. If no
element of $G_\alpha$ fixes both $b_1$ and $b_2$, then $S(b_1,b_2)$
consists of a $\delta$-thin pencil of $G_\alpha$-geodesics between two points in a Gromov hyperbolic space. The
only accumulation-points of these geodesics are by definition the
boundary-points. If some element $wt^j$ ($j \neq 0$) would fix both $b_1$ and
$b_2$, they would be the endpoints of an eigenray. Since eigenrays
have been closed in $\widetilde{\mathcal T}$, if $b_1$ and $b_2$ are
fixed by a same element of $G_\alpha$ this is an element $w$ in $G$
which acts as a hyperbolic isometry whose axis admits $b_1$ and
$b_2$ as endpoint. It readily follows that these are the only
accumulation-points.
\end{proof}

\begin{proposition}
\label{Francois6.5} Let ${\mathcal J}$ be a finite word gauge for
$G_\alpha$. Then the strips $S(b_1,b_2)$ given by Proposition
\ref{verification de la condition CS} grow polynomially with respect
to $\mathcal J$.
\end{proposition}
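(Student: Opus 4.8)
The plan is to read off the polynomial growth from the strongly relatively hyperbolic structure of $G_\alpha$ (hyperbolic relative to the family $\mathcal H$ of polynomially growing subgroups, see Section \ref{rh} and \cite{Gautero1}), which is precisely what the strips were built for: each strip is a thin pencil of \emph{relative} geodesics. Write $d$ for the word metric attached to $\mathcal J$ and $\hat d$ for the relative (coned-off) metric on $G_\alpha$; since coning off only shortens paths one has $\hat d \le d$, and the coned-off Cayley graph $\Gamma_{\mathcal H}$ is $\delta$-hyperbolic and fine. The first step is to reduce to a single pencil. The exit-points and the map $\mathcal Q$ are $G_\alpha$-equivariant, so $\mathcal G$ is equivariant and symmetric: $hu\cdot\mathcal G(x,y)=\mathcal G(hu\,x,hu\,y)$. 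Hence, for a fixed pair $(b'_1,b'_2)$, every $gu$ with $gu\{b''_1,b''_2\}=\{b'_1,b'_2\}$ carries $\mathcal G(b''_1,b''_2)$ to the \emph{same} set, so the elementary strip, and therefore $S(b_1,b_2)$, is one left translate $g_0u_0\cdot\mathcal G(c_1,c_2)$ of the pencil attached to the representative $(c_1,c_2)\in\mathcal R$. As left translation is a $d$-isometry, it changes a gauge count only by the additive shift $|g_0u_0|$; thus it suffices to bound $\#\bigl(\mathcal G(c_1,c_2)\cap\Jj_k\bigr)$ by a polynomial in $k$ whose degree is independent of $(c_1,c_2)$.

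Next I would confine the pencil using hyperbolicity. The endpoints $c_1,c_2$ are either exit-points in $G$ or points $\mathcal Q^{-1}(b_i)\in\partial G$, and in all cases (by Lemma \ref{vide} and Proposition \ref{importante}) they span a non-empty family of relative geodesics; by $\delta$-thinness of geodesic pencils in the hyperbolic space $\Gamma_{\mathcal H}$ this family lies in the relative $2\delta$-neighbourhood of a fixed reference geodesic $\gamma$. If $w\in\mathcal G(c_1,c_2)$ satisfies $d(e,w)\le k$ then $\hat d(e,w)\le k$, so the point of $\gamma$ nearest to $w$ lies within relative distance $k+2\delta$ of $e$. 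Consequently $w$ lies within relative distance $2\delta$ of the subsegment $\gamma_k$ of $\gamma$ made of points at relative distance $\le k+2\delta$ from $e$; since $\gamma$ is a geodesic of the fine hyperbolic graph $\Gamma_{\mathcal H}$, the subsegment $\gamma_k$ carries only $O(k)$ relative vertices (cone-points and ordinary vertices).

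Finally I would count vertex by vertex. It is enough to bound, for each relative vertex $p$ of $\gamma_k$, the number of $w$ with $d(e,w)\le k$ and $\hat d(w,p)\le 2\delta+1$. By fineness the relative ball of radius $2\delta+1$ about $p$ meets only boundedly many cone-cosets $g_jH_{i_j}$, and every element of that relative ball lies within a bounded $d$-distance of $p$ or of one of these cosets. For the cosets that meet $\Jj_k$ one has $|g_j|=O(k)$, whence $\#\bigl(g_jH_{i_j}\cap\Jj_k\bigr)\le\#\bigl(H_{i_j}\cap\Jj_{O(k)}\bigr)=O(k^{D})$, where $D$ is the maximal degree of polynomial growth among the finitely many conjugacy classes of peripheral subgroups in $\mathcal H$; the bounded word-ball about $p$ contributes $O(1)$. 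Summing the $O(k^{D})$ contribution over the $O(k)$ relative vertices of $\gamma_k$ yields $\#\bigl(\mathcal G(c_1,c_2)\cap\Jj_k\bigr)=O(k^{D+1})$, a polynomial bound of degree $D+1$ independent of $(c_1,c_2)$. Together with the translation reduction of the first step, this shows that every strip $S(b_1,b_2)$ grows polynomially.

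The step I expect to be delicate is the last one: controlling, \emph{in the word metric}, the mass the pencil deposits inside the peripheral cosets it crosses. This is exactly where the defining feature of the relatively hyperbolic structure is indispensable, namely that the subgroups in $\mathcal H$ grow only polynomially; were they to grow exponentially the strips would be exponentially large and Kaimanovich's criterion would break down. Everything else (thinness of pencils, the inequality $\hat d\le d$, and fineness providing $O(k)$ relative vertices together with boundedly many cosets per relative ball) is standard relatively hyperbolic geometry, and the triviality of arc-stabilizers recorded in Theorem \ref{le cas libre} guarantees that the geodesics genuinely run between the prescribed endpoints.
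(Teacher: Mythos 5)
Your overall strategy (a linear count outside the peripheral cosets coming from hyperbolicity of the coned-off graph, plus a polynomial count inside them) is the same as the paper's, and the reduction to a single pencil by equivariance, the use of $\hat d\le d$, and the $O(k)$ bound on the relative vertices met by the pencil are all fine. The last step, however, contains a genuine error. You bound the contribution of a peripheral coset by $\#\bigl(H_{i_j}\cap \Jj_{O(k)}\bigr)=O(k^{D})$, ``where $D$ is the maximal degree of polynomial growth among the \dots peripheral subgroups'', i.e.\ you assume the peripheral subgroups have polynomial \emph{volume} growth in the word metric. They do not. ``Polynomially growing'' in this paper (Definition \ref{growth}, Proposition \ref{polynomialfamily}) refers to the growth of $n\mapsto|\alpha^{n}(g)|$, not to the growth of balls in the subgroup: the $H_i$ are finitely generated subgroups of a free or surface group, typically non-abelian free groups --- in Subsection \ref{exemple1}, for instance, $\langle a,b\rangle$ has $0$-growth under the automorphisms but exponential volume growth --- and the peripheral subgroups of $G_\alpha$ are their mapping tori $H_i\rtimes\mz$, which likewise have exponential volume growth. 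Hence $\#\bigl(g_jH_{i_j}\cap\Jj_k\bigr)$ is in general exponential in $k$ and your final bound $O(k^{D+1})$ collapses. Your closing remark, that the key feature is that ``the subgroups in $\mathcal H$ grow only polynomially,'' makes this confusion of the two notions of growth explicit.

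What saves the statement --- and what the paper's proof actually uses --- is that the strip does not fill up a peripheral coset: inside a mapping torus $H_i\rtimes\mz$ the $G_\alpha$-geodesics of the pencil stay in a bounded neighborhood of a \emph{corridor} between the $\alpha$-orbits of their entrance and exit points (Definition \ref{corridor}, \cite{Gautero3}). Within the ball $\Jj_k$ such a corridor meets only $O(k)$ strata, and in each stratum it consists of a single horizontal geodesic whose length is controlled by the $\alpha$-growth of the entrance point, hence is polynomially bounded in $k$ because $\alpha$ restricted to $H_i$ grows polynomially. The product of the linear factor (number of strata) with this polynomial factor gives the polynomial bound per coset, and summing over the $O(k)$ cosets met by the pencil yields the proposition. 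So the polynomiality you need is that of the automorphism on the peripheral subgroups, exploited through the corridor structure, not a polynomial volume growth of those subgroups; as written, your step 4 would have to be replaced by this corridor argument.
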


\begin{proof}
The group $G_\alpha$ is strongly hyperbolic relative to the mapping-tori
of the maximal subgroups where the automorphism $\alpha$ has linear growth
(see Section \ref{rh}, Theorem \ref{lustigmoi} - this is a polynomial growth in the free group case and an essential polynomial growth in the
case of a torsion free hyperbolic group with infinitely many ends). Outside the incompressible tori bounding these
submanifolds, the $G_\alpha$-geodesics behave like in a hyperbolic group
\cite{Farb, Osin}. Thus the number of intersection-points of the strip with a ball of
region $k$ growths at most linearly with $k$ outside the mapping-tori of the
$\alpha$-polynomial growth subgroups. Inside these mapping-tori, it is easily
proved that this same number is bounded above by a polynomial of degree
$3$. Indeed
on the one hand the $\alpha$-growth of an element is bounded above by a polynomial of
degree $2$. On the other hand the $G_\alpha$-geodesics remain in a bounded neighborhood of a
corridor between the orbits of their entrance- and exit-points. Thus the bound
is given by the product of a linear map with a degree $2$ polynomial (in general - free group case or torsion free hyperbolic group with infinitely many ends case -
this is the product of a linear map with a degree $d$ polynomial).
\end{proof}

\begin{proof}[Proof of Theorem \ref{resultat 1}]
We consider a $G$-tree $\mathcal T$ as given by Theorem \ref{le cas
libre}. By Propositions \ref{a faire}, \ref{verification de la
condition CS}, \ref{verification de la condition CP}, Theorem
\ref{theoreme 2.4-6.5} applies and gives the first point of Theorem
\ref{resultat 1}. Proposition \ref{Francois6.5} and Theorem
\ref{theoreme 2.4-6.5} give the Poisson boundary.
\end{proof}

\begin{corollary}
\label{corollaire pour GL} With the assumptions and notations of
Theorem \ref{resultat 1}:

\begin{enumerate}
  \item There is a topology on $G_\alpha \cup \partial G$ such
  that {\bf P}-almost every sample path ${\bf x} = \{x_n\}$ of the random walk
  converges to some $x_\infty \in \partial G$.
  \item The hitting measure $\lambda$ (i.e. the distribution of $x_\infty$) is a non-atomic
measure on $\partial G$ such that $(\partial G,\lambda)$ is a $\mu$-boundary of $(G_\alpha,\mu)$,
and this is the unique $\mu$-stationary measure on $\partial G$.
  \item If $\mu$ has finite first logarithmic moment and finite entropy with respect to some finite word-metric
on $G_\alpha$, then the measured space $(\partial G,\lambda)$ is the Poisson
boundary of $(G_\alpha,\mu)$.
\end{enumerate}
\end{corollary}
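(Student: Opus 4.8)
The plan is to transfer the three conclusions of Theorem~\ref{resultat 1}, which are stated on the closed tree $\widetilde{\mathcal T}$, down to $\partial G$, using the LL-map as a dictionary between the two spaces. Write $\Phi := q \circ \mathcal Q \colon \partial G \to \widetilde{\mathcal T}$, where $\mathcal Q \colon \partial G \to \widehat{\mathcal T}$ is the map of Theorem~\ref{le cas libre} and $q \colon \widehat{\mathcal T} \to \widetilde{\mathcal T}$ closes the eigenrays. By Theorem~\ref{le cas libre}(4) and Proposition~\ref{importante}, $\Phi$ is continuous and $G_\alpha$-equivariant. To prove item $(1)$ I would first topologize $G_\alpha \cup \partial G$ by pulling back along $\mathcal Q$ the neighbourhood basis ${\mathcal N}_{\widehat{\mathcal T}}(\cdot)$ of Lemma~\ref{obvious}: a basic neighbourhood of $\xi \in \partial G$ is the set of $\eta \in \partial G$ with $\mathcal Q(\eta)$ in a prescribed observers'-neighbourhood of $\mathcal Q(\xi)$, together with the elements $gu \in G_\alpha$ with $gu\,O$ in that neighbourhood and $|gu|$ large. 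Since $\partial G$ is a separable compact $G_\alpha$-space, Furstenberg's construction yields a $\mu$-stationary measure $\lambda$ on $\partial G$, and Lemma~\ref{motivons} (applied to the trajectory $x_nO \to x_\infty$ in $\widehat{\mathcal T}$) shows that the action of such $x_n$ on $\partial G$ is contracting, so the limit measures $\lim_n x_n\lambda$ are Dirac masses $\delta_{x_\infty^{\partial G}}$. The resulting boundary point $x_\infty^{\partial G} \in \partial G$ is the \PP-almost sure limit of the path in the topology above, and using the $G_\alpha$-equivariance of $\mathcal Q$ one checks $\Phi(x_\infty^{\partial G}) = x_\infty$; this is item $(1)$.

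Item $(2)$ then follows from Lemma~\ref{unique}, which by Remark~\ref{compacite non necessaire} applies even though $G_\alpha \cup \partial G$ is \emph{not} compact. Two hypotheses have to be verified. The proximality property --- whenever $g_n \to \xi \in \partial G$ one has $g_n\eta \to \xi$ for all but at most one $\eta$ --- is exactly the content of Lemma~\ref{motivons} read through $\mathcal Q$, the single possible exception being the eigenray partner collapsed by $q$. And $gr(\mu) = G_\alpha$ fixes no finite subset of $\partial G$ because it acts as a non-elementary group on the boundary of the hyperbolic surface group $G$. Lemma~\ref{unique} then gives that $\lambda$ is non-atomic and is the unique $\mu$-stationary measure on $\partial G$, while the existence of the equivariant, $T$-invariant limit map $\mathbf x \mapsto x_\infty^{\partial G}$ from item $(1)$ shows that $(\partial G, \lambda)$ is a $\mu$-boundary.

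For item $(3)$, the moment hypotheses make $(\widetilde{\mathcal T}, \lambda')$ the Poisson boundary of $(G_\alpha,\mu)$ by Theorem~\ref{resultat 1}(3), where I write $\lambda'$ for the hitting measure on $\widetilde{\mathcal T}$ to distinguish it from $\lambda$. It therefore suffices to show that $\Phi \colon (\partial G, \lambda) \to (\widetilde{\mathcal T}, \lambda')$ is an isomorphism of measured $G_\alpha$-spaces. Equivariance forces $\Phi_*\lambda$ to be $\mu$-stationary on $\widetilde{\mathcal T}$, hence equal to $\lambda'$ by the uniqueness in Theorem~\ref{resultat 1}(2); so $\Phi$ is measure-preserving, and as both are Lebesgue spaces it remains to prove that $\Phi$ is injective modulo $\lambda$-null sets. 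By Theorem~\ref{le cas libre}(4) and Proposition~\ref{importante}(3) the non-singleton fibres of $\Phi$ are precisely the sets of endpoints in $\partial G$ of leaves of the stable and unstable laminations, equivalently the pairs $(X,Y)$ that no bi-infinite $G_\alpha$-geodesic joins; so injectivity modulo $0$ amounts to showing that the set $L \subset \partial G$ of all such leaf-endpoints is $\lambda$-negligible. \textbf{I expect this to be the main obstacle:} one must prove that the harmonic measure $\lambda$ charges neither lamination. I would establish this from the fact that a $\lambda$-generic boundary point is the escape direction of a genuine $G_\alpha$-geodesic followed by almost every sample path, whereas the leaf-endpoints form a nowhere-dense exceptional set that almost no path tracks; granting $\lambda(L)=0$, the map $\Phi$ becomes a measure-space isomorphism and $(\partial G,\lambda)$ is the Poisson boundary, which is item $(3)$.
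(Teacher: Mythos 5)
Your overall strategy --- using $\Phi := q \circ {\mathcal Q}$ as a dictionary between $\partial G$ and $\widetilde{\mathcal T}$ --- is exactly the paper's, and your treatment of items (1) and (2) is a legitimate, slightly more self-contained variant of what the paper does: the paper simply transfers convergence, non-atomicity, stationarity and uniqueness through $\Phi$ from Theorem \ref{resultat 1}, whereas you re-derive item (2) directly on $\partial G$ via Lemma \ref{unique} and Remark \ref{compacite non necessaire}. Up to the usual care needed because the pulled-back topology is not Hausdorff on the fibres of $\Phi$, both routes work.

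The problem is item (3). You reduce it to the claim $\lambda(L)=0$, where $L$ is the union of the non-singleton fibres of $\Phi$, you flag this as ``the main obstacle,'' and the only justification you offer is that $L$ is a nowhere-dense exceptional set that ``almost no path tracks.'' That is not a proof: topological smallness says nothing about harmonic measure (hitting measures routinely charge nowhere-dense sets), so as written there is a genuine gap. The gap is, however, an unnecessary detour, because injectivity of $\Phi$ mod $0$ is not needed as an \emph{input}: it is a \emph{consequence} of maximality. By your items (1)--(2) the limit map ${\bf x} \mapsto x_\infty^{\partial G}$ exhibits $(\partial G,\lambda)$ as a $\mu$-boundary, i.e.\ as an equivariant measure-preserving quotient $p$ of the Poisson boundary $(\Gamma,\nu)$; and since you checked $\Phi(x_\infty^{\partial G}) = x_\infty$ almost surely, the composite $\Phi\circ p$ is precisely the canonical map $\Gamma \rightarrow (\widetilde{\mathcal T},\lambda')$, which under the moment hypotheses is an isomorphism mod $0$ by Theorem \ref{resultat 1}(3). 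A composition of equivariant measure-preserving quotient maps of Lebesgue spaces which is an isomorphism mod $0$ forces each factor to be one; hence $p$ is an isomorphism, $(\partial G,\lambda)$ \emph{is} the Poisson boundary, and $\lambda(L)=0$ drops out a posteriori rather than being an ingredient. This sandwich argument is what the paper's terse ``Item (3) is a straightforward consequence of Item (3) of Theorem \ref{resultat 1}'' amounts to; for item (2) the paper instead leans on the observation that the nontrivial identifications performed by $q\circ{\mathcal Q}$ are disjoint $G_\alpha$-translates of finitely many disjoint compact subsets of $\partial G$, which is how it justifies lifting the hitting measure from $\widetilde{\mathcal T}$ back to $\partial G$.
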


\begin{proof}
The map $q \circ \mathcal Q \colon \partial G \rightarrow
\widetilde{\mathcal T}$ (see Proposition \ref{a faire} for the definition of $q$ and Lemma \ref{motivons} for the definition of the LL-map $\mathcal Q$) is continuous, surjective and $G_\alpha$-equivariant. Item (1)
of Theorem \ref{resultat 1} then gives Item (1) of the current corollary. The map
$q \circ \mathcal Q$ is a continuous projection such that the disjoint sets
that get identified to a single point are
the disjoint translates of a finite number of disjoint compact subsets of
$\partial G$. We can thus define a hitting measure $\lambda$ on $\partial G$
 such that $(q \circ \mathcal Q)_* \lambda$ is the
$\mu$-stationary measure on $\widetilde{\mathcal T}$ given by
Theorem \ref{resultat 1} (the pre-image of a point has $\lambda$-measure $0$). The non-atomicity,
$\mu$-stationarity and unicity of $\lambda$ follow from the non-atomicity, $\mu$-stationarity
and the unicity of the measure on $\widetilde{\mathcal T}$ given by Theorem \ref{resultat 1}. We
so got Item (2), and Item (3) is a straightforward consequence of Item (3) of Theorem \ref{resultat 1}.
\end{proof}

\subsection{Generalizations to free and hyperbolic groups}

The following result is now given by Theorem \ref{le cas hyperbolique}.

\begin{theorem}

\label{resultat 3} Theorem \ref{resultat 1} and Corollary
\ref{corollaire pour GL} remain true

 \begin{itemize}
   \item either if one substitutes a
non-abelian free group to the fundamental group of a compact hyperbolic surface,
  \item or more generally if one substitutes a torsion free hyperbolic group with infinitely many
ends $G$ to the fundamental group of a compact hyperbolic surface and the automorphism
has essential exponential growth.
\end{itemize}
\end{theorem}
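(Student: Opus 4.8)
The plan is to observe that nothing in the proof of Theorem \ref{resultat 1} and Corollary \ref{corollaire pour GL} actually used the surface structure as such: every step invoked only abstract properties of the $\mr$-tree $\mathcal T$ and of the group $G_\alpha$. Concretely, the construction of the compactification (Lemma \ref{obvious} and Proposition \ref{a faire}), the verification of (CP) (Proposition \ref{verification de la condition CP}), the verification of (CS) (Proposition \ref{verification de la condition CS}), and the polynomial growth of the strips (Proposition \ref{Francois6.5}) rely exclusively on: (i) the conclusions of Theorem \ref{le cas libre}, (ii) the conclusion of Proposition \ref{importante} on closing eigenrays, and (iii) the relative hyperbolic structure of $G_\alpha$ recalled in Section \ref{rh}. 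So the strategy is simply to feed, in place of Theorem \ref{le cas libre}, the corresponding output of Theorem \ref{le cas hyperbolique} and then rerun the argument verbatim.

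First I would invoke Theorem \ref{le cas hyperbolique}(1): for $G$ a non-abelian free group, or more generally a torsion free hyperbolic group with infinitely many ends, and for $\alpha$ of essential exponential growth, it provides an $\alpha$-projectively invariant $G$-tree $\mathcal T$ satisfying all the conclusions of Theorem \ref{le cas libre} (irreducibility, dense $G$-orbits, finitely many orbits of branch-points with quasiconvex stabilizers, and a well-defined, continuous, $G_\alpha$-equivariant LL-map $\mathcal Q$ with disjoint compact fibers and trivial arc-stabilizers), and for which Proposition \ref{importante} holds. The proof that closing the eigenrays yields a Hausdorff compact $\widetilde{\mathcal T}$ carrying an irreducible $G_\alpha$-action by homeomorphisms was already supplied in this setting through the collapsing corridors, so this ingredient is in place.

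Next I would copy the surface argument line by line. The base-point is chosen as the fixed point of the homothety $H$, so Lemma \ref{obvious} and Proposition \ref{a faire} produce a separable compatible compactification $G_\alpha \cup \widetilde{\mathcal T}$ unchanged. Condition (CP) follows exactly as in Proposition \ref{verification de la condition CP}, since its proof only used the well-definedness of $\mathcal Q$ together with Lemma \ref{motivons} and the fact that the eigenrays have been closed. Condition (CS) follows as in Proposition \ref{verification de la condition CS}: the strips are defined through $G_\alpha$-geodesics in the relatively hyperbolic metric, and the dichotomy (a pair of distinct boundary points is either joined by a $\delta$-thin pencil of geodesics or fixed by a hyperbolic $w \in G$ whose axis has them as endpoints) is a property of $G_\alpha$, not of the surface. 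With (CP), (CS) and the no-finite-fixed-set hypothesis in hand, Theorem \ref{theoreme 2.4-6.5} yields the convergence, the uniqueness of the stationary measure, and the identification of the Poisson boundary; the Corollary then follows through the continuous surjection $q \circ \mathcal Q \colon \partial G \rightarrow \widetilde{\mathcal T}$ exactly as in Corollary \ref{corollaire pour GL}.

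The one place where the surface case genuinely used geometry is the polynomial growth of the strips (Proposition \ref{Francois6.5}), and this is the step I expect to require the most care. Here one needs that $G_\alpha$ is strongly hyperbolic relative to the mapping-tori of the maximal subgroups on which $\alpha$ has (essential) polynomial growth, so that outside these pieces geodesics behave as in a hyperbolic group and inside them the number of intersection points with a ball grows at most like a linear map times a degree-$d$ polynomial. This relative hyperbolicity holds in the free and in the torsion free hyperbolic with infinitely many ends settings by the results of Section \ref{rh}, and the parenthetical bounds already recorded in the proof of Proposition \ref{Francois6.5} show the strip growth stays polynomial; once this is granted, the finite-word-gauge hypothesis of Theorem \ref{theoreme 2.4-6.5} is met and the proof is complete.
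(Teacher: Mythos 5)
Your proposal is correct and follows essentially the same route as the paper: the paper's entire proof of this theorem is the observation that Theorem \ref{le cas hyperbolique}(1) supplies a tree satisfying all the conclusions of Theorem \ref{le cas libre} and Proposition \ref{importante} (the latter proved in that setting via collapsing corridors), after which the argument of the surface case applies unchanged. You have merely made explicit the dependency check (compactification, (CP), (CS), strip growth via the relative hyperbolic structure of Section \ref{rh}) that the paper leaves implicit.
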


Observe that, although $G$ has infinitely many ends, the cyclic
extension $G \rtimes_\alpha \mz$ is one-ended.

\section{Cyclic extensions over polynomially growing automorphisms}\label{cyclicpoly}

The core of this section is to prove Theorem \ref{resultat
interessant 1bis} below. Applications to particular classes of
groups follow, see Theorem \ref{resultat 2bis}.

\begin{definition}
Let $\mathcal T$ be a $\mr$-tree. {\em Closing a bi-infinite geodesic}
in $\mathcal T$ means identifying its endpoints in $\partial \mathcal T$.
\end{definition}

\begin{theorem}
\label{resultat interessant 1bis} Let $G$ be a hyperbolic group. Let
$\alpha \in \Aut{G}$ be such that there exists a simplicial $\alpha$-invariant $G$-tree $\mathcal
T$ satisfying the following properties:

\begin{itemize}
  \item The $G$-action has quasi convex vertex stabilizers,
  trivial edge stabilizers and only one orbit of
edges.
  \item The $\alpha$-action fixes exactly one vertex $O$
  and fixes each $\mathrm{Stab}_G(O)$-orbit of directions at
  $O$.
\end{itemize}

Let $\mu$ be a probability measure on $G_\alpha := G
\rtimes_\alpha \mz$ whose support generates $G_\alpha$
as a semi-group. Then, if
$\widetilde{\mathcal T}$ is obtained from $\widehat{\mathcal T}$ by
closing each geodesic in the $G$-orbit of some bi-infinite geodesic:

\begin{enumerate}
  \item There is a topology on $G_\alpha \cup \partial \widetilde{\mathcal T}$ such
  that {\bf P}-almost every sample path ${\bf x} = \{x_n\}$ of the random walk
  converges to some $x_\infty \in \partial \widetilde{\mathcal T}$.
  \item The hitting measure $\lambda$ is a non-atomic measure on $\partial \widetilde{\mathcal T}$
such that $(\partial \widetilde{\mathcal T},\lambda)$ is a $\mu$-boundary of $(G_\alpha,\mu)$,
and this is the unique $\mu$-stationary measure on $\partial \widetilde{\mathcal T}$.
  \item If $\mu$ has finite first logarithmic moment and finite entropy with respect to
some finite word-metric on $G_\alpha$ then the measured space $(\partial \widetilde{\mathcal T},\lambda)$
is the Poisson boundary of $(G_\alpha,\mu)$.
\end{enumerate}
\end{theorem}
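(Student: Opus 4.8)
The plan is to put ourselves in position to invoke Kaimanovich's criterion (Theorem~\ref{theoreme 2.4-6.5}): I will exhibit a compatible, separable compactification of $G_\alpha$ with boundary $\widetilde{\mathcal T}$, verify conditions (CP) and (CS), and show that the separating strips grow polynomially. Let $H=H_\alpha$ be the isometry of $\mathcal T$ realizing $\alpha$, so that the generator $t$ of $\mz$ acts on the left as $H^{-1}$. Because the hypotheses grant quasi convex vertex stabilizers and trivial edge (hence arc) stabilizers, the LL-map $\mathcal Q\colon \partial G\rightarrow\widehat{\mathcal T}$ of Lemma~\ref{motivons} is available (see the remark after Theorem~\ref{Bestvina2} and Theorem~\ref{le cas hyperbolique}). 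Choosing the base-point $O$ to be the unique $H$-fixed vertex, I topologize $G_\alpha\cup\widehat{\mathcal T}$ exactly as in Lemma~\ref{obvious}, pass to $\widetilde{\mathcal T}$ by closing the $G$-orbit of $\gamma_0$, extend the quotient map $q$ by the identity on $G_\alpha$, and set $\mathcal N_{\widetilde{\mathcal T}}(P)=q(\mathcal N_{\widehat{\mathcal T}}(q^{-1}(P)))$, as in Proposition~\ref{a faire}. The first point to establish — the exact analogue of Proposition~\ref{importante} — is that closing the $G$-orbit of $\gamma_0$ produces a Hausdorff, compact $G_\alpha$-space on which the action is irreducible; this follows once one checks that the family of closed geodesics is itself closed, i.e. that a limit (in the observers' topology, Proposition~\ref{merciCHL}) of $G$-translates of $\gamma_0$ is again such a translate.

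The heart of the proof, and its main obstacle, is the verification of (CP) and (CS), and this is precisely where closing the bi-infinite geodesics is indispensable. Writing an element as $wt^{k}$ one has $wt^{k}.O=w.O$, and for $v\in G$, $wt^{k}.(v.O)=w\,\alpha^{k}(v).O=w\,H^{k}(v.O)$; when $k$ is bounded this is a bounded perturbation of $w.O$ and causes no trouble. The difficulty comes from the sequences $t^{k}$ with $k\to\pm\infty$: here $H$ is a genuine isometry, so no eigenray mechanism as in Proposition~\ref{verification de la condition CP} is available. Instead the relevant feature is that $\gamma_0$ is chosen to join the two points of $\widehat{\mathcal T}$ fixed by some element $wt^{j}$ with $j\neq 0$ (equivalently, the axis of such a hyperbolic element). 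Closing the $G$-orbit of $\gamma_0$ guarantees that in $\widetilde{\mathcal T}$ no element outside $G$ fixes two distinct points, so that the only elements fixing two points are those of $G$ acting as honest hyperbolic translations along the axis joining them — exactly the configuration needed to collapse the several accumulation points of $t^{k}v.O$ to a single point of $\widetilde{\mathcal T}$ and thereby obtain (CP), and to guarantee that no strip accumulates on a spurious third point. The strips themselves are defined as in Definition~\ref{ca avance}, replacing eigenray origins by the endpoints recorded by $\gamma_0$, and the statements that no strip is empty, and that the assignment is Borel and $G_\alpha$-equivariant, are proved verbatim as in Lemmas~\ref{vide} and~\ref{yes we can}, whence (CS) as in Proposition~\ref{verification de la condition CS}.

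Next I would prove that each strip grows polynomially with respect to a finite word gauge, the analogue of Proposition~\ref{Francois6.5}. Since $\alpha$ has polynomial growth, the number of group elements whose orbit point falls in a ball of radius $k$ along a $G$-translate of $\gamma_0$ grows polynomially: the tree (horizontal) direction contributes linearly, while the $\mz$ (vertical) direction contributes the polynomial coming from the growth of $\alpha$, and the product is again polynomial. With (CP), (CS) and the polynomial growth of the strips in hand, Theorem~\ref{theoreme 2.4-6.5} applies to the compactification $G_\alpha\cup\widetilde{\mathcal T}$ and yields convergence of almost every path, a unique non-atomic $\mu$-stationary measure $\lambda$, the $\mu$-boundary property, and — under the finite logarithmic moment and finite entropy hypotheses — the identification of $(\widetilde{\mathcal T},\lambda)$ with the Poisson boundary.

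It remains to see that $\lambda$ is carried by $\partial\widetilde{\mathcal T}$ and to restate the conclusions on the (non-compact) space $G_\alpha\cup\partial\widetilde{\mathcal T}$. Since $G_\alpha$ acts on $\mathcal T$ by isometries, almost every path escapes to the ends of the tree rather than accumulating on its interior, so the hitting measure gives full mass to $\partial\widetilde{\mathcal T}$; uniqueness and non-atomicity of $\lambda$ on $\partial\widetilde{\mathcal T}$ then follow from the proximality built into (CP) together with Lemma~\ref{unique} (whose proof, by Remark~\ref{compacite non necessaire}, needs no compactness). This gives items (1)--(3) and completes the proof along the lines of Theorem~\ref{resultat 1}. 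The single step I expect to be genuinely delicate is the one singled out above: showing that $\gamma_0$ can be chosen so that closing its $G$-orbit simultaneously repairs (CP) and keeps the strips from accumulating on a third boundary point, i.e. that its $G$-orbit captures exactly the pairs of points fixed by the elements of $G_\alpha\setminus G$.
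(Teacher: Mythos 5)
Your proposal follows the template of the exponentially growing case (Theorem \ref{resultat 1}): build a compactification of $G_\alpha$ by the single tree $\widetilde{\mathcal T}$, check (CP) and (CS) there, and apply Theorem \ref{theoreme 2.4-6.5}. This is not what the paper does, and the step you yourself flag as ``genuinely delicate'' is in fact the point where the single-tree approach breaks down. Under the hypotheses of the theorem a \emph{singular} element $vt^{j}\in G_\alpha\setminus G$ does not merely fix two boundary points joined by an axis: being elliptic on a tree, it fixes a whole subtree of $\mathcal T$ pointwise, and this subtree can be very large. In the extreme but typical case of a direct product $\F{n}\times\mz$ realized as $\F{n}\rtimes_{i_{x_1}}\mz$, the central element acts as the identity on \emph{all} of $\mathcal T$, so $(vt)^{k}O=O$ while $(vt)^{k}wO=wO$ for every $k$; no closing of a single $G$-orbit of bi-infinite geodesics can make these two sequences converge to the same point, so (CP) fails for $G_\alpha\cup\widetilde{\mathcal T}$. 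Your assertion that ``closing the $G$-orbit of $\gamma_0$ guarantees that in $\widetilde{\mathcal T}$ no element outside $G$ fixes two distinct points'' is therefore false, and collapsing the whole fixed subtree instead is explicitly ruled out in Section \ref{methode} (``it might happen that eventually everything gets identified'').

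The paper's actual route is the intermediate Theorem \ref{resultat 1bis}: one chooses a second automorphism $\beta$ in the outer class of $\alpha$ (Lemma \ref{intermediaire}) so that every element which is singular on one factor acts as a hyperbolic isometry on the other, makes $G_\Phi$ act diagonally-but-differently on $\mathcal T\times\mathcal T$, closes the singular axes and collapses each \emph{singular boundary-tree} $T\times\partial A$ or $\partial A\times T$ to a point to obtain $\widetilde{\mathcal T}^{2}$ (Lemma \ref{cest la lutte finale}), and verifies (CP), (CS) and polynomial growth of strips for the compactification of $G_\alpha$ by $\overline{G_\Phi.O}\subset\widetilde{\mathcal T}^{2}$ (Propositions \ref{CPbis}, \ref{verificationCSbis}, \ref{Francois6.5bis}). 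Theorem \ref{resultat interessant 1bis} is then obtained by pushing the hitting measure forward under the projection to the first factor, using Lemma \ref{il faudra le prouver} (the fibers over non-singular boundary points are singletons) and Lemma \ref{fibre} (fibers over vertices have measure zero), with uniqueness of the stationary measure on the non-compact $\partial\widetilde{\mathcal T}$ recovered from Lemma \ref{unique} and Remark \ref{compacite non necessaire}. Your proposal is missing this entire two-tree mechanism, and without it items (1)--(3) cannot be derived from Kaimanovich's criterion applied to $\widetilde{\mathcal T}$ alone.
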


An important intermediate step is to first prove Theorem
\ref{resultat 1bis} below. However we need an additional notion
before its statement:

\begin{definition}

Let $\Phi \in \Out{G}$ and let $\mathcal T$ be a $\Phi$-invariant
$G$-tree. Let $\alpha, \beta \in \Aut{G}$ with $[\alpha] = [\beta] =
\Phi$. We denote by $H_\alpha$ (resp. $H_\beta$) the corresponding
isometries of $\mathcal T$, i.e. for any $w \in G$ and $P\in
\mathcal T$, $H_\alpha(wP) = \alpha(w) H_\alpha(P)$ and $H_\beta(wP)
= \beta(w) H_\beta(P)$.

The {\em $(\alpha,\beta)$-action of $G_\Phi  := G \times_\Phi
\mz$ on $\mathcal T \times \mathcal T$} is the action given by
$\Theta \colon G_\Phi \rightarrow \mathrm{Isom}({\mathcal T}
\times {\mathcal T})$ with:

$$\Theta(wt) \left\{ \begin{array}{ccccccc}
{\mathcal T} & \times & {\mathcal T} & \rightarrow & {\mathcal T} &
\times & {\mathcal T} \\
(P & , & Q) & \mapsto & (wH^{-1}_\alpha(P) & , & wH^{-1}_\beta(Q)) \\
\end{array} \right.$$

\end{definition}

\begin{theorem}
\label{resultat 1bis} Let $G$ be a hyperbolic group. Let $\alpha \in
\Aut{G}$ be such that there
exists a simplicial $\alpha$-invariant $G$-tree $\mathcal T$
satisfying the following properties:

\begin{itemize}
  \item The $G$-action has quasi convex vertex stabilizers,
  trivial edge stabilizers and only one orbit of
edges.
  \item The $\alpha$-action fixes exactly one vertex $O$
  and fixes each $\mathrm{Stab}_G(O)$-orbit of directions at
  $O$.
\end{itemize}

Let $\mu$ be a probability measure on $G_\alpha := G
\rtimes_\alpha \mz$. Then there exist $\beta \in \Aut{G}$ with
$[\alpha] = [\beta] := \Phi$ in $\Out{G}$, and a bi-infinite geodesic $A$
such that, if $\widetilde{\mathcal T}$ is obtained from
$\widehat{\mathcal T}$ by closing each geodesic in the $G$-orbit of
$A$ and $\partial \overline{G_\Phi.O}$ is the boundary  of the
closure of the $(\alpha,\beta)$-orbit of $O$
  in $\partial (\widetilde{{\mathcal T}} \times \widetilde{\mathcal T})$
then:

\begin{enumerate}
\item {\bf P}-almost every sample path ${\bf x} = \{x_n\}$
  converges to some $x_\infty \in \partial \overline{G_\Phi.O}$.
\item The hitting measure $\lambda$,
which is the distribution of $x_\infty$, is a non-atomic measure on
   $\partial \overline{G_\Phi.O}$ such that
  $(\partial \overline{G_\Phi.O},\lambda)$
  is a $\mu$-boundary of $(G_\alpha,\mu)$
  and $\lambda$ is the unique
  $\mu$-stationary probability measure on
  $\partial \overline{G_\Phi.O}$.
  \item If the measure $\mu$ has finite first
  logarithmic moment and finite entropy with respect to a word-metric
  on $G_\alpha$, then the measured space
  $(\partial \overline{G_\Phi.O},\lambda)$
  is the Poisson boundary of $(G_\alpha,\mu)$.
\end{enumerate}
\end{theorem}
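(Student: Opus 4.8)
The plan is to run exactly the argument used for Theorem~\ref{resultat 1} in the exponentially growing case, but with the single tree replaced by the product $\widetilde{\mathcal T}\times\widetilde{\mathcal T}$ carrying the $(\alpha,\beta)$-action, and with $\partial G$ replaced by $\partial\overline{G_\Phi.O}$. The reason one cannot work on a single copy is precisely the one flagged in Section~\ref{methode}: in the polynomially growing case the isometry $H_\alpha$ realizing $\alpha$ fixes the vertex $O$ but may fix a non-trivial subtree, so $t$ (acting as $H_\alpha^{-1}$) and the elements $wt^j$ may fix many points of $\partial\widehat{\mathcal T}$, which is fatal for conditions (CP) and (CS). First I would choose $\beta=i_{g_0}\circ\alpha$ in the outer class $\Phi$, with $g_0\in G$ selected so that the fixed set of $H_\beta$ is transverse to that of $H_\alpha$; the bi-infinite geodesic $A$ to be closed is then chosen so that, after closing every geodesic in its $G$-orbit to form $\widetilde{\mathcal T}$, the only way an element of $G_\Phi$ can fix two distinct points of $\partial\overline{G_\Phi.O}$ is by acting as a hyperbolic translation along the corresponding axis. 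Being a quotient of the compact space $\widehat{\mathcal T}$ (Proposition~\ref{merciCHL}) by a closed identification, $\widetilde{\mathcal T}$ stays compact, so $\overline{G_\Phi.O}\subset\widetilde{\mathcal T}\times\widetilde{\mathcal T}$ is compact and $\partial\overline{G_\Phi.O}$ is a compact, separable $G_\alpha$-space.

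Next I would build the compactification of $G_\alpha$ by $\partial\overline{G_\Phi.O}$ verbatim as in Lemma~\ref{obvious} and Proposition~\ref{a faire}, identifying $\gamma\in G_\alpha$ with $\Theta(\gamma)(O,O)$ and equipping each factor with the observers' topology. Condition (CP) is then checked as in Proposition~\ref{verification de la condition CP}: by hypothesis the tree $\mathcal T$ has quasiconvex vertex stabilizers and trivial edge, hence arc, stabilizers, so the LL-map $\mathcal Q$ is well defined on each factor (see the discussion following Lemma~\ref{motivons}); Lemma~\ref{motivons} then gives, coordinate-wise, that $w_j\cdot(vO)$ and $w_jO$ have the same limit for every $v\in G$. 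The $t$-direction is absorbed exactly as in the exponential case — the residual accumulation along $A$ is killed because the $G$-orbit of $A$ has been closed in $\widetilde{\mathcal T}$, which is the whole point of passing from $\widehat{\mathcal T}$ to $\widetilde{\mathcal T}$.

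The heart of the proof, and the step I expect to be the main obstacle, is condition (CS). Following Definition~\ref{ca avance}, I would assign to a pair of distinct points of $\partial\overline{G_\Phi.O}$ the strip consisting of the $G_\alpha$-geodesics joining the $\mathcal Q$-preimages of their two coordinates. The separation property then reduces to the dichotomy announced in Section~\ref{methode}: \emph{each element of $G_\alpha$ fixes at most two points of $\partial\overline{G_\Phi.O}$, and if it fixes two then it is hyperbolic with those points as axis endpoints.} This is exactly what the passage to the product buys us: a fixed point in $\widetilde{\mathcal T}\times\widetilde{\mathcal T}$ forces a simultaneous fixed point of $wH_\alpha^{-j}$ in the first factor and of $wH_\beta^{-j}$ in the second, and the transversality arranged by the choice of $\beta$ makes the corresponding subtrees meet in ``small'' sets that are disjoint over distinct elements, while closing the $G$-orbit of $A$ collapses the surviving two-endpoint configurations to genuine hyperbolic axes. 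Making ``transverse'' precise, and checking that the bad fixed subtrees of distinct elements become disjoint in $\partial(\widetilde{\mathcal T}\times\widetilde{\mathcal T})$, is the delicate point; it is the product-of-trees analogue of the eigenray analysis of Proposition~\ref{importante}. Once this is established, each strip is a $\delta$-thin pencil of $G_\alpha$-geodesics whose only accumulation points are boundary points, and (CS) follows as in Proposition~\ref{verification de la condition CS}.

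Finally I would verify that the strips grow polynomially, copying Proposition~\ref{Francois6.5}: since $\alpha$ has polynomial growth, a $G_\alpha$-geodesic stays in a bounded neighbourhood of a corridor between the orbits of its endpoints, so the number of its points in a gauge-ball is bounded by a polynomial whose degree is controlled by the growth degree of $\alpha$. With (CP), (CS), the polynomial growth of strips, and the condition that $gr(\mu)=G_\alpha$ fixes no finite subset of $\partial\overline{G_\Phi.O}$ (which follows from the fixed-point dichotomy above), Theorem~\ref{theoreme 2.4-6.5} applies and yields items (1) and (2) directly; the finite first logarithmic moment and finite entropy assumption then gives item (3), namely that $(\partial\overline{G_\Phi.O},\lambda)$ is the Poisson boundary of $(G_\alpha,\mu)$.
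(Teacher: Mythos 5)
Your overall architecture --- two actions of $G_\Phi$ in the same outer class on the two factors of a product of trees, closing axes, then feeding the resulting compactification into Theorem \ref{theoreme 2.4-6.5} --- is indeed the paper's architecture, but two of your steps would fail as written, and they are precisely the steps where this polynomial-type setting genuinely differs from Theorem \ref{resultat 1}. The main one is your verification of (CS): you import Definition \ref{ca avance} and take as strips the unions of ``$G_\alpha$-geodesics joining the $\mathcal Q$-preimages'', to be separated because they form ``a $\delta$-thin pencil of $G_\alpha$-geodesics in a Gromov hyperbolic space'', and you estimate their growth by copying Proposition \ref{Francois6.5}. Both of these rest on the strong relative hyperbolicity of $G_\alpha$ (Theorem \ref{lustigmoi}), which is available only for (essentially) exponentially growing $\alpha$; in the present setting of an $\alpha$-invariant simplicial tree (e.g.\ $G_\alpha=\F{n}\times\mz$) there is no such hyperbolic structure and no ``$G_\alpha$-geodesic'' to speak of. The paper's strips are instead purely combinatorial: the basic strip $BS(b_1,b_2)$ is the set of \emph{exit-points} read off along the $\widehat{\mathcal T}$-geodesic from $b_1$ to $b_2$, (CS) is checked directly from the structure of stabilizers of pairs (Proposition \ref{verificationCSbis}), and the growth is then linear because the strip's elements are lined up along a tree geodesic (Proposition \ref{Francois6.5bis}).

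The second gap is that you explicitly leave unresolved the point on which the theorem turns: making ``transverse'' precise and showing that the fixed sets of distinct elements become disjoint in the product. The paper's resolution is Lemma \ref{cest connu}: since edge stabilizers are trivial and there is a single orbit of edges, \emph{all} singular elements (those fixing at least two vertices) are, up to powers, conjugate to a single $vt$; hence one choice of a hyperbolic $a\in G$, with axis disjoint from the fixed tree of $vt$, makes every element that is singular for one factor act hyperbolically on the other (Lemma \ref{intermediaire}), and the uniqueness statement of Lemma \ref{uneetuneseule} is what makes the bad sets disjoint over distinct orbits. Relatedly, (CP) is not ``absorbed exactly as in the exponential case'': its proof (Proposition \ref{CPbis}) needs the rectangle analysis of Lemmas \ref{ca bosse} and \ref{uneetuneseule} together with the additional identification of each singular boundary-tree $\partial A\times T$ to a single point when forming $\widetilde{\mathcal T}^2$; closing the $G$-orbit of $A$ inside each factor separately, which is all your outline does, is not sufficient there.
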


\begin{remark}
Of course $G_\Phi$, $G_\alpha$ and $G_\beta$ are isomorphic groups. We adopt these different notations in the above statement to insist on the different actions: the generator of $\mz < G_\alpha$ acts as $\alpha^{-1}$ on $\widehat{\mathcal T}$, the generator of $\mz < G_\beta$ as $\beta^{-1}$ on $\widehat{\mathcal T}$ and the generator of $\mz < G_\Phi$ as $\alpha^{-1}$ on the first factor of $\widehat{\mathcal T} \times \widehat{\mathcal T}$ and as $\beta^{-1}$ on the second factor.
\end{remark}

We set $G = \langle x_1,\cdots,x_n \rangle$. The generator $t$ of
$\mz$ acts on the left as the isometry $H^{-1}$ with $H$ the
isometry of $\mathcal T$ satisfying $H(wP) = \alpha(w) H(P)$. Let us
observe that the simplicial nature of $\mathcal T$ gives us an easy
identification of $G$ with the orbit of some vertex.

\begin{lemma}
\label{greve} Choose the point $O$ given in Theorem \ref{resultat
1bis} as base-point and choose an identification of $\mathrm{Stab}_G(O)$ with the directions at $O$. If there are two orbits of vertices, choose
also a base-point $O^\prime$ in the other orbit adjacent to $O$. Then any vertex $x$
of $\mathcal T$ is associated to a unique left-class $w H_i$
($i=1,2$) as follows:

\begin{itemize}
  \item $w$ is the element of $G$ associated to the unique
  geodesic from $O$ to $x$,
  \item $H_i$ is the stabilizer either of $O$ or of $O^\prime$,
\end{itemize}
and each direction at each vertex is associated to a unique element of $G$.

\end{lemma}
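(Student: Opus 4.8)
The plan is to derive everything from Bass--Serre theory, using only three structural facts supplied by the hypotheses of Theorem \ref{resultat 1bis}: $\mathcal{T}$ is a tree, hence uniquely geodesic; the $G$-action on the edges is free (trivial edge stabilizers) and transitive (a single orbit of edges). First I would fix the base edge $e_0$ issuing from $O$. Freeness and transitivity then make $g \mapsto g\cdot e_0$ a bijection from $G$ onto the set of edges of $\mathcal{T}$. In a simplicial $\mathbb{R}$-tree every direction at a vertex $v$ contains exactly one edge incident to $v$; thus a direction determines that edge and, through the bijection just described, a well-defined element of $G$. This already yields the last assertion of the lemma, once one notes that the chosen identification of $\mathrm{Stab}_G(O)$ with the directions at $O$ (and of $\mathrm{Stab}_G(O')$ with those at $O'$, when there is a second orbit) fixes the labelling at the base vertices and the $G$-action propagates it equivariantly. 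Since the quotient graph $\mathcal{T}/G$ has a single edge, there are either one or two orbits of vertices, so $G$ is either an HNN extension $H_1 *_{\{1\}}$ or an amalgam $H_1 *_{\{1\}} H_2$, with $H_1 = \mathrm{Stab}_G(O)$ and, in the two-orbit case, $H_2 = \mathrm{Stab}_G(O')$.

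Next I would parametrize the vertices. Because $G$ acts transitively on each vertex orbit with stabilizer $H_i$, the orbit map $g \mapsto g\cdot O$ (resp. $g \mapsto g\cdot O'$) descends to a bijection between the orbit of $O$ (resp. of $O'$) and the coset space $G/H_1$ (resp. $G/H_2$); hence every vertex $x$ carries a canonical label by a unique left coset $wH_i$, with $\mathrm{Stab}_G(x) = wH_iw^{-1}$. It then remains to check that this coset is the one produced by the geodesic $[O,x]$. I would argue by induction on the number $k$ of edges of $[O,x]$. For $k=0$ one has $x=O$ and $w=1$. For the inductive step, let $x'$ be the penultimate vertex of $[O,x]$, labelled $w'H_j$ by induction, and let $d$ be the direction at $x'$ pointing toward $x$. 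By the direction-labelling of the first paragraph, $d$ corresponds to an element $w'h$ with $h \in H_j$ nontrivial (the no-backtracking condition), and the next vertex is $x = w'h\cdot O$ (or $w'h\cdot O'$); so $w = w'h$ and the geodesic reads off precisely an alternating reduced word, i.e. the Bass--Serre normal form of $w$.

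Finally I would verify well-definedness. The essential point is uniqueness of geodesics in a tree: the reduced edge-path $[O,x]$ is forced, so any two elements read off along it differ only by the residual freedom in choosing a representative of $\mathrm{Stab}_G(x)$, which is exactly the statement that $wO = w'O$ forces $w^{-1}w' \in H_i$. This matches the coset label with the orbit-map label and gives uniqueness of $wH_i$. The main obstacle, such as it is, is not conceptual but organizational: one must treat the HNN and the amalgam cases uniformly, and carry the orientation convention consistently so that ``the element associated to a geodesic'' agrees with the direction-labelling at every intermediate vertex. Once the identifications at $O$ (and $O'$) are pinned down as in the statement, the verification is routine, which is why the lemma may fairly be called obvious.
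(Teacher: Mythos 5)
Your argument is correct; the paper gives no proof of this lemma at all (it is introduced only by the remark that the simplicial nature of $\mathcal T$ yields an easy identification of $G$ with a vertex orbit), and your Bass--Serre reading --- edges in bijection with $G$ via the free, transitive action on the single edge orbit, vertices labelled by cosets of the vertex stabilizers, and the label of $x$ recovered as the normal form read off along the unique geodesic $[O,x]$ --- is precisely the standard argument the authors are implicitly invoking. The one point worth noting is that in the single-vertex-orbit (HNN) case the directions at $O$ are not literally in bijection with $\mathrm{Stab}_G(O)$, a looseness already present in the paper's statement, which your edge-based labelling of directions in fact repairs.
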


Equipping ${\mathcal T}$ with the observers topology, and after
identifying $G_\alpha$ with the orbit of $O$, we get a compatible
compactification of $G_\alpha$ in a way similar to Lemma
\ref{obvious}. This compactification however does not necessarily
satisfy the (CP) property: indeed it might happen that some isometry
$vt^j$, $v \in G$, fixes more than one point. This would imply that,
for some $w \in G$, the sequences $\{(vt^j)^k\}_{k =
1,\cdots,+\infty}$ and $\{(vt^j)^kw\}_{k=1,\cdots,+\infty}$ would
not have the same limit-point.

By assumption, $t$ fixes each $\mathrm{Stab}_G(x)$-orbit of
direction at $O$. Thus, there exists an isometry of $G_\alpha$
fixing more than one vertex of $\mathcal T$, it has the form $vt$.

The important observation holds in the following well-known
observation:

\begin{lemma}
\label{cest connu} If both $vt$ and $wt^k$ fix more than one vertex
in ${\mathcal T}$ then there is $g \in G$ with $(vt)^k =g^{-1} wt^k
g$. If $vt$ and $wt^k$ fix the same edge, then $w = v \alpha^{-1}(v)
\cdots \alpha^{1-k}(v)$.
\end{lemma}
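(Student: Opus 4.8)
The plan is to reduce everything to two elementary identities in $G_\alpha$ together with the two hypotheses on the $G$-action (trivial edge stabilizers and a single orbit of edges). First I would record the algebra. Since $t$ acts on the left as $H^{-1}$ and $H$ satisfies $H(wP)=\alpha(w)H(P)$, conjugation by $t$ realizes $\alpha^{-1}$, that is $t g t^{-1}=\alpha^{-1}(g)$ for $g\in G$; hence $t^{k}g t^{-k}=\alpha^{-k}(g)$, and expanding the power gives $(vt)^{k}=v\,\alpha^{-1}(v)\cdots\alpha^{1-k}(v)\,t^{k}$. In particular the element $w':=v\,\alpha^{-1}(v)\cdots\alpha^{1-k}(v)$ of $G$ is exactly the $G$-part of $(vt)^{k}$, so the target equality $w=w'$ of the second assertion is equivalent to $wt^{k}=(vt)^{k}$.

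Next I would bring in the tree geometry. Because $G_\alpha$ acts on $\mathcal T$ by isometries, the fixed-point set of any element is a subtree; thus an element fixing two distinct vertices fixes the geodesic joining them pointwise, and in particular fixes, \emph{without inversion}, each edge on that geodesic. This is what turns the hypothesis ``fixes more than one vertex'' into ``fixes an edge'', so that trivial edge stabilizers can be used. I would then prove the second assertion: if $vt$ and $wt^{k}$ fix the same edge $e$, then $(vt)^{k}=w't^{k}$ also fixes $e$, so $(vt)^{k}(wt^{k})^{-1}=w'w^{-1}$ lies in $G$ and fixes $e$; since the $G$-action has trivial edge stabilizers, $w'w^{-1}=1$, i.e. $w=v\,\alpha^{-1}(v)\cdots\alpha^{1-k}(v)$.

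For the first assertion, the subtree remark shows that $vt$ and $wt^{k}$ fix edges $e_{1}$ and $e_{2}$ respectively. Because there is a single $G$-orbit of edges, I can choose $g\in G$ with $g\,e_{1}=e_{2}$; conjugating and using the identities above, $g^{-1}wt^{k}g=\bigl(g^{-1}w\,\alpha^{-k}(g)\bigr)t^{k}$ fixes $g^{-1}e_{2}=e_{1}$, the same edge as $vt$. Applying the second assertion to the pair $vt$ and $g^{-1}wt^{k}g$ gives $g^{-1}w\,\alpha^{-k}(g)=v\,\alpha^{-1}(v)\cdots\alpha^{1-k}(v)$, whence $g^{-1}wt^{k}g=(vt)^{k}$, as required.

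The only genuinely delicate point, and the one I would treat with care, is the bookkeeping. I must verify that each relevant isometry fixes its edge \emph{pointwise} rather than inverting it, so that the trivial-edge-stabilizer hypothesis legitimately applies, and I must keep the $\alpha$ versus $\alpha^{-1}$ and the left-versus-right conventions straight in the power and conjugation identities (a sign error there would falsify the exact form $v\,\alpha^{-1}(v)\cdots\alpha^{1-k}(v)$). The single-orbit-of-edges hypothesis enters exactly once, to align the two fixed edges by an element of $G$, and the rest is formal.
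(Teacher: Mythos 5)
Your proposal is correct and follows essentially the same route as the paper: use the single $G$-orbit of edges to conjugate so that both elements fix a common edge, then invoke the triviality of edge stabilizers to force the $G$-parts to agree. The paper only writes out the case $k=1$ and declares the generalization straightforward, whereas you make the power identity $(vt)^k=v\,\alpha^{-1}(v)\cdots\alpha^{1-k}(v)\,t^k$ and the pointwise-fixing of edges explicit, which is a welcome tightening rather than a different argument.
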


\begin{proof}
Assume that $vt$ and $wt$ both fix at least two vertices. Since
there is only one $G$-orbit of edges, without loss of generality we
can assume that $vt$ and a $G$-conjugate of $wt$, denoted by $g^{-1}
wt g$, both fix the same edge $E$. Then $vtg^{-1} t^{-1} w^{-1} g =
w \alpha^{-1}(g^{-1}) w^{-1} g$ fixes $E$. By the triviality of the
edge stabilizers, $vtg^{-1} t^{-1} w^{-1} g$ is trivial. We so get
the lemma in the case $k=1$, the generalization is straightforward.
\end{proof}

\begin{definition}

A {\em singular element} for an action of $G_\alpha$ on $\mathcal T$
is an element of $G_\alpha$ which fixes at least two vertices of
$\mathcal T$.

\end{definition}

If $vt$ is a singular element, the trick now is to consider another
action of $G_\alpha$ on ${\mathcal T}$ by making $t$ act on
${\mathcal T}$ by another automorphism in the same outer-class so
that $v t$ acts on this copy of ${\mathcal T}$ as a hyperbolic
element.

\begin{lemma}
\label{intermediaire} With the notations above: there is an
$(\alpha,\beta)$-action of $G_\Phi$ on $\mathcal T \times
\mathcal T$ such that any element of $G_\Phi$ which is a
singular element for the action induced on one of the two factors
(${\mathcal T} \times \{*\}$ or $\{*\} \times {\mathcal T}$) acts as
a hyperbolic isometry on the other factor and the axis of this hyperbolic
isometry
either is disjoint from the fixed set of the singular element, or is
an axis in this fixed set.

The axis of this hyperbolic isometry is called a {\em singular axis}.
\end{lemma}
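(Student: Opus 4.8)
The plan is to reduce the statement to one element per orbit of edges, then to manufacture $\beta$ as a well-chosen inner perturbation of $\alpha$ and to read off hyperbolicity from the standard calculus of isometries of $\mr$-trees. I would choose $\beta = i_{g_0}\circ\alpha$ for an element $g_0\in G$ to be fixed later, so that $[\alpha]=[\beta]=\Phi$ and, exactly as in the computation for projectively invariant $G$-trees in Section \ref{rtrees}, $H_\beta = g_0 H_\alpha$. Writing $L_w$ for the isometry of $\mathcal T$ given by $w\in G$, an element $\gamma = w t^k\in G_\Phi$ with $k\neq 0$ then acts on the two factors by $\phi^{(1)}_\gamma = L_w\circ H_\alpha^{-k}$ and $\phi^{(2)}_\gamma = L_w\circ H_\beta^{-k}$. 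Since powers and $G$-conjugates of a hyperbolic tree isometry are again hyperbolic (with, respectively, the same axis and the $G$-translate of the axis), and since Lemma \ref{cest connu} shows that every singular element of the $\alpha$-action is a $G$-conjugate of a power of a single edge-stabilizer generator $v_0 t$ (there being only one orbit of edges, the edge-stabilizers in $G_\alpha$ being infinite cyclic), it suffices to arrange that $\phi^{(2)}_{v_0 t}$ be hyperbolic, and symmetrically for the $\beta$-action.

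The engine is the standard fact that the product of two isometries of an $\mr$-tree whose characteristic sets (the fixed set of an elliptic, the axis of a hyperbolic) are disjoint is hyperbolic, with axis running along the bridge between those characteristic sets and their translates (see \cite{Paulin}). I would take $g_0$ to be a hyperbolic element of $G$ whose axis avoids $O$ and whose translation length is large; then $H_\beta = g_0 H_\alpha$, being the product of the elliptic $H_\alpha$ (which fixes only the vertex $O$) with a hyperbolic isometry of axis off $O$, is itself hyperbolic with axis $A_\beta$ disjoint from $O$ and translation length comparable to $\ell(g_0)$. For the forward direction this already suffices: $\phi^{(2)}_{v_0 t} = L_{v_0}\circ H_\beta^{-1}$ is the product of the elliptic $L_{v_0}$, whose fixed set is the single vertex $O$ by triviality of edge stabilizers (the case $v_0=1$ giving $\phi^{(2)}_t=H_\beta^{-1}$ hyperbolic at once), with the hyperbolic $H_\beta^{-1}$ of axis $A_\beta\not\ni O$; as these characteristic sets are disjoint, $\phi^{(2)}_{v_0 t}$ is hyperbolic. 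The product formula moreover describes its axis as running along a translate of $A_\beta$ together with the bridge to $O$, so that relative to $F:=\mathrm{Fix}(\phi^{(1)}_{v_0 t})$ it is either disjoint from $F$ or, in the aligned degenerate case, a line contained in $F$ — precisely the announced dichotomy; this is then inherited by powers (same axis) and by conjugates (translated axis and translated fixed set).

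The hard part will be the symmetric reverse direction: an element singular for the $\beta$-action must be hyperbolic for the $\alpha$-action. Here the $\alpha$-action of $t$ is elliptic (it fixes only $O$), so for a $\beta$-singular generator $v' t$ the first-factor action $\phi^{(1)}_{v' t}=L_{v'}\circ H_\alpha^{-1}$ is a product of two elliptics, which is hyperbolic exactly when $\mathrm{Fix}(L_{v'})$ and $\mathrm{Fix}(H_\alpha^{-1})=\{O\}$ are disjoint, i.e.\ when $v'$ does not fix $O$. The obstacle is thus to prevent any element from being simultaneously $\beta$-singular and $\alpha$-elliptic, equivalently to rule out $\beta$-edge-stabilizer generators lying in $\mathrm{Stab}_G(O)$. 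I expect this to be forced by the large translation length of $H_\beta$: a $\beta$-edge can be fixed by $v' t$ only if the displacement of $v'$ compensates that of $H_\beta$, which confines the $\beta$-singular data to a neighbourhood of $A_\beta$, far from $O$; hence such $v'$ cannot fix $O$, and $\phi^{(1)}_{v' t}$ is hyperbolic with axis disjoint from (or, degenerately, contained in) the $\beta$-fixed set. Making this separation precise — choosing $g_0$ in sufficiently general position with respect to $O$ and to the relevant countable family of characteristic sets, so that both directions hold for one and the same $\beta$ — is the genuine technical content of the lemma, and is where I would concentrate the careful estimates.
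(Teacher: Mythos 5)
Your forward direction is essentially the right idea (perturb $\alpha$ by an inner automorphism so that the one conjugacy class of singular elements given by Lemma \ref{cest connu} becomes hyperbolic on the second copy, using the standard fact that a product of isometries with disjoint characteristic sets is hyperbolic), but even there one assertion is unjustified: triviality of edge stabilizers gives that an \emph{elliptic} element of $G$ fixes a single vertex, not that this vertex is $O$; and nothing forces $v_0$ to be elliptic in the first place, nor its characteristic set to miss the axis of $H_\beta$. More seriously, the reverse direction --- that an element singular for the $\beta$-factor is hyperbolic on the $\alpha$-factor --- is exactly what you do not prove: you posit a ``generic'' $g_0$ of large translation length and defer the separation argument to ``careful estimates.'' As written this is a statement of intent, not a proof, and it is the crux of the lemma, since a single $\beta$ must work simultaneously for both factors.

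The paper closes this gap with no estimates at all, by a more careful choice of the inner perturbation. Fix one singular element $vt$ with fixed tree $T$, choose a hyperbolic $a \in G$ whose axis is disjoint from $T$ (when $T \neq \mathcal T$), and let $t$ act on the second factor as $v^{-1}avH^{-1}$ rather than as $g_0H^{-1}$ for a generic $g_0$. The point of conjugating $a$ by $v$ is that $vt$ then acts on the second factor \emph{exactly} as $avt$ acts on the first, which is hyperbolic by disjointness of the axis of $a$ from $T$; by Lemma \ref{cest connu} every first-factor singular element is a conjugate of a power of $vt$, so it acts hyperbolically on the second factor. Symmetrically, the second-factor singular element is identified explicitly: $a^{-1}vt$ acts on the second factor as $vt$ does on the first, so by Lemma \ref{cest connu} again every second-factor singular element is a conjugate of a power of $a^{-1}vt$, whose first-factor action is the product of the hyperbolic $a^{-1}$ with the elliptic $vt$ --- again hyperbolic by the same disjointness. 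Both directions thus follow from one explicit choice of $a$ relative to $T$, which is the idea missing from your proposal; the statement about the axis being disjoint from, or contained in, the fixed set is also arranged by this choice rather than by genericity.
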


\begin{proof}

Assume that $vt$ is a singular element. Let $T$ be the tree fixed by $vt$.
We choose a hyperbolic
element $a$ of $G$ such that $avt$ acts as an hyperbolic isometry on
$\mathcal T$, the axis of which is disjoint from $T$ if $T \neq {\mathcal T}$.
We consider the $(\alpha,\beta)$-action of $G_\Phi$ given by:

$$\Theta(wt) \left\{ \begin{array}{ccccccc}
{\mathcal T} & \times & {\mathcal T} & \rightarrow & {\mathcal T} &
\times & {\mathcal T} \\
(P & , & Q) & \mapsto & (wH^{-1}(P) & , & wv^{-1}avH^{-1}(Q)) \\
\end{array} \right.$$

If $wt$ is a singular element for the first factor, then by Lemma
\ref{cest connu}, there exists $g \in G$ with $wt = g^{-1}vtg$.
Since $vt$ acts on the second factor as the hyperbolic isometry
$avt$, $wt$ acts on this second factor as $g^{-1}avtg$, which is
also an hyperbolic isometry.

Assume now that $wt$ acts as a singular element on the second
factor. Since $vt$ acts on the second factor like $vv^{-1}avt=avt$
then $a^{-1}vt$ is a singular element for the action on the second
factor. Thus, by Lemma \ref{cest connu}, $wt$ acts on the first factor as a conjugate to
$a^{-1}vt$. Since $avt$ is a hyperbolic isometry of $\mathcal T$, so
is $a^{-1}vt$ and so is any of its conjugates.
\end{proof}

Of course any $(\alpha,\beta)$-action extends to an action on
$\widehat{\mathcal T} \times \widehat{\mathcal T}$.

\begin{definition}

A {\em singular boundary-tree} of ${\mathcal T} \times {\mathcal T}$
is a product $\partial A \times T$ or $T \times \partial A$ where:

\begin{itemize}
  \item $A$ is a singular axis.
  \item $T$ is the closure in $\mathcal T \cup \partial \mathcal T$
  of a maximal subtree of $\mathcal T$
  which is fixed by the singular element of singular axis $A$.
\end{itemize}
\end{definition}

\begin{definition}
We denote by:

\begin{itemize}
  \item $\widetilde{\mathcal T}$ the space obtained from
$\widehat{\mathcal T}$ by closing all the singular axis.
  \item $\widetilde{\mathcal T}^2$ the space obtained from
$\widetilde{\mathcal T} \times \widetilde{\mathcal T}$ by
identifying each singular boundary-tree to a point.
\end{itemize}
\end{definition}

\begin{lemma}
\label{cest la lutte finale} The space $\widetilde{\mathcal T}^2$ is
Hausdorff and compact. The $(\alpha,\beta)$-action of $G_\Phi$
on $\mathcal T \times \mathcal T$ induces an irreducible action on
$\widetilde{\mathcal T}^2$.
\end{lemma}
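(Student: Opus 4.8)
The plan is to separate the two assertions: compactness and the Hausdorff property first, then irreducibility. Compactness is essentially formal. By Proposition \ref{merciCHL} the space $\widehat{\mathcal T}$ is compact, hence so is the product $\widehat{\mathcal T} \times \widehat{\mathcal T}$; since $\widetilde{\mathcal T}$ is a continuous image of $\widehat{\mathcal T}$ under the map closing the singular axes, and $\widetilde{\mathcal T}^2$ is a continuous image of $\widetilde{\mathcal T} \times \widetilde{\mathcal T}$ under the map collapsing the singular boundary-trees, both quotients are compact. For the Hausdorff property I would invoke the standard criterion that the quotient of a compact Hausdorff space by an equivalence relation is Hausdorff exactly when the graph of the relation is closed in the square.

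First I would establish that $\widetilde{\mathcal T}$ is compact Hausdorff, transcribing the argument of Proposition \ref{importante} with \emph{eigenrays} replaced by \emph{singular axes}. The relation on $\widehat{\mathcal T}$ closing the singular axes identifies the two endpoints of each geodesic in the $G$-orbit of a single bi-infinite geodesic $A$, so its graph is the union of the diagonal with the set of endpoint-pairs of singular axes. By Lemma \ref{intermediaire} each singular axis is the axis of a hyperbolic isometry, and the trivial edge-stabilizer hypothesis forces the arc-stabilizers to be trivial, so distinct singular axes carry distinct endpoint-pairs; arguing as for the closedness of the bad subsets in Proposition \ref{importante}, a limit of endpoint-pairs of singular axes is again such a pair or collapses onto the diagonal. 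Hence the relation is closed and $\widetilde{\mathcal T}$ is compact Hausdorff.

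It then suffices to show that the relation on $\widetilde{\mathcal T} \times \widetilde{\mathcal T}$ identifying each singular boundary-tree to a point has closed graph. The crucial point, and the whole reason for passing to the $(\alpha,\beta)$-action on a product, is that the singular boundary-trees attached to distinct singular elements are \emph{pairwise disjoint} in $\partial(\widetilde{\mathcal T} \times \widetilde{\mathcal T})$. Here I would use Lemma \ref{intermediaire} together with Lemma \ref{cest connu}: a singular element $vt$ fixes a subtree $T$ on one factor while acting as a hyperbolic isometry with axis $A$ on the other, and every singular element sharing a fixed vertex is conjugate to a power of $vt$; the triviality of edge stabilizers then forces the fixed subtrees $T$ and the axes $A$ of $G$-translated (non-conjugate) singular elements to be disjoint, so their products $\partial A \times T$ and $T \times \partial A$ are disjoint as well. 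Combining this disjointness with a closedness argument for the family of singular boundary-trees parallel to the one used for $\widetilde{\mathcal T}$ shows the relation is closed, whence $\widetilde{\mathcal T}^2$ is Hausdorff. I expect this disjointness-plus-closedness step to be the main obstacle; everything else is formal quotient topology or a direct transcription of Proposition \ref{importante}.

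For irreducibility I would argue by contradiction. Let $F \subset \widetilde{\mathcal T}^2$ be a finite $G_\Phi$-invariant set; it is in particular invariant under the isometric $G$-action. The points of $\widetilde{\mathcal T}^2$ arising from collapsed singular boundary-trees are indexed by the singular elements, on which $G$ acts by conjugation with infinite orbits, so $F$ can contain none of them. On the complement of these collapsed points the projection to the first factor is well-defined and $G_\alpha$-equivariant, so it sends $F$ to a finite $G$-invariant subset of $\widetilde{\mathcal T}$; this contradicts the irreducibility of the $G$-action on $\mathcal T$ (which persists after closing the $G$-invariant family of singular axes, the latter being a consequence of the trivial edge stabilizers, the single edge orbit, and the quasiconvex proper vertex stabilizers of Theorem \ref{resultat 1bis}). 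Hence no such $F$ exists and the induced $G_\Phi$-action on $\widetilde{\mathcal T}^2$ is irreducible.
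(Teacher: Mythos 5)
Your proposal is correct and follows essentially the same route as the paper: quotient of a compact Hausdorff space by a closed relation for the topological assertions (the paper packages the disjointness of the singular boundary-trees into Lemma \ref{uneetuneseule}, which you re-derive from Lemmas \ref{intermediaire} and \ref{cest connu}), and irreducibility descending from the tree because the collapsed sets form infinite invariant families. The paper's own proof is merely terser, leaving the closedness and irreducibility steps as ``the same arguments as for $\widetilde{\mathcal T}$''.
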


\begin{proof}
 The tree
$\mathcal T$ is simplicial thus separable. By Proposition
\ref{merciCHL} $\widehat{\mathcal T}$ is Hausdorff and compact. By
construction, there is an axis $A$ of $\mathcal T$ such that
$\widetilde{\mathcal T}$ is obtained from $\widehat{\mathcal T}$ by
identifying two ends in $\partial {\mathcal T}$ if and only if there
are the ends of an axis $gA$, $g \in G$. This is easily seen to be a
closed property in $\partial \mathcal T \times \partial \mathcal T$.
It readily follows that $\widetilde{\mathcal T}$ is Hausdorff and
compact. By definition of a singular axis, there is an element $w$
of $G$ such that $t . A = w.A$. It follows that the whole orbit
$G.A$ is invariant under the $G_\Phi$-action. Thus the $G_\Phi$-action induces an action on $\widetilde{\mathcal T}$. This action
is irreducible because it was on $\widehat{\mathcal T}$. It readily
follows that $\widetilde{\mathcal T} \times \widetilde{\mathcal T}$
is Hausdorff, compact and the $(\alpha,\beta)$-action of $G_\Phi$ is an irreducible action. From Lemma \ref{uneetuneseule}, to pass
to $\widetilde{\mathcal T}^2$, one identifies to a point each
singular boundary-tree in a single $G$-orbit. The arguments for
completing the proof are then the same as the arguments to pass from
$\widehat{\mathcal T}$ to $\widetilde{\mathcal T}$.
\end{proof}

Although important, the following proposition is however obvious
from Lemma \ref{cest la lutte finale}:

\begin{proposition}
\label{open doors} Let $O \in \widetilde{\mathcal T}^2$ be the point
whose coordinates are the fixed-point of $\alpha$.

If $P$ is any point in $\widetilde{\mathcal T}^2$, let ${\mathcal
N}(P)$ consist of a neighborhood ${\mathcal N}^{obs}(P)$ of $P$ in
$\widetilde{\mathcal T}^2$ equipped with the product topology,
together with all the elements $w t^k \in G_\alpha$ such that
$\Theta(w t^k)(O) \in {\mathcal N}^{obs}(P)$ and $|wt^k|
> N$ for some chosen positive $N$.

Then  $G_\alpha \cup \widetilde{\mathcal T}^2$, equipped with the above basis of
neighborhoods ${\mathcal N}(.)$, is a separable, compatible
compactification of the group $G_\alpha$.
\end{proposition}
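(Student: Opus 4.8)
The plan is to follow verbatim the scheme used for the single-tree compactification of Lemma \ref{obvious}, feeding in Lemma \ref{cest la lutte finale} wherever a topological property of the boundary is invoked. Since the base-point $O$ can be chosen with trivial $G_\alpha$-stabilizer for the $(\alpha,\beta)$-action (as explained in the remark following Lemma \ref{obvious}), the map $g \mapsto \Theta(g)(O)$ embeds $G_\alpha$ as a discrete orbit inside $\widetilde{\mathcal T}^2$, and the sets $\mathcal N(P)$ declare that a group element $w t^k$ lies close to a boundary point $P$ exactly when $\Theta(w t^k)(O)$ is $\mathcal N^{obs}$-close to $P$ and $|w t^k|$ is large. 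First I would check the purely formal points: that the $\mathcal N(P)$ form a genuine neighbourhood basis (immediate, since $\mathcal N^{obs}(P)$ ranges over a basis for the product-quotient topology of $\widetilde{\mathcal T}^2$ and the length-threshold is monotone), that every singleton $\{g\}$, $g \in G_\alpha$, is open, and that the space is Hausdorff (group points are isolated, distinct boundary points are separated since $\widetilde{\mathcal T}^2$ is Hausdorff by Lemma \ref{cest la lutte finale}, and a group point is separated from a boundary point by taking the threshold $N$ larger than its length). Separability is equally direct: $\mathcal T$ is simplicial, so $\overline{\mathcal T}$ and $\widehat{\mathcal T}$ are separable (countably many edges, together with $\partial\mathcal T$ separable by Proposition \ref{merciCHL}); separability passes to the finite product $\widetilde{\mathcal T} \times \widetilde{\mathcal T}$ and to its continuous quotient, so $\partial(G_\alpha \cup \widetilde{\mathcal T}^2) = \widetilde{\mathcal T}^2$ contains a countable dense set.

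Next I would establish that $G_\alpha$ is dense, which is what makes the boundary the \emph{whole} of $\widetilde{\mathcal T}^2$; this amounts to showing that the orbit $\Theta(G_\alpha)(O)$ accumulates, with unbounded word-length, at every point of $\widetilde{\mathcal T}^2$. Here I would use the simplicial structure together with the characterisation of convergence in the observers' topology from Proposition \ref{merciCHL}: because there is a single orbit of edges with trivial edge-stabilisers, the orbit of $O$ meets every direction at every vertex, so sequences in the orbit turning around a vertex converge to it while sequences running out along branches reach $\partial\widetilde{\mathcal T}$; irreducibility of the $(\alpha,\beta)$-action (Lemma \ref{cest la lutte finale}) then spreads this density over both factors and survives the collapsing defining $\widetilde{\mathcal T}^2$. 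As $G_\alpha$ is infinite and the word-length proper, the approximating elements can be taken of arbitrarily large length, so every $\mathcal N(P)$ meets $G_\alpha$.

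For compactness I would argue as for tree-compactifications. Given an open cover, choose for each $P \in \widetilde{\mathcal T}^2$ a basic neighbourhood $\mathcal N(P)$, with observer-part $\mathcal N^{obs}(P)$ and threshold $N_P$, contained in a cover member; by compactness of $\widetilde{\mathcal T}^2$ (Lemma \ref{cest la lutte finale}) finitely many observer-parts $\mathcal N^{obs}(P_1),\dots,\mathcal N^{obs}(P_m)$ already cover $\widetilde{\mathcal T}^2$. Setting $N^\ast = \max_i N_{P_i}$, every $g = w t^k$ with $|g| > N^\ast$ has $\Theta(g)(O)$ in some $\mathcal N^{obs}(P_i)$ and hence lies in $\mathcal N(P_i)$; the remaining group elements, those of length at most $N^\ast$, are finite in number because $G_\alpha$ is finitely generated with finite word-metric balls, and each is covered by one member of the original cover. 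Adjoining these finitely many members yields a finite subcover.

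Finally, for compatibility I would verify that left translation by a fixed $g_0 \in G_\alpha$ permutes the basic neighbourhoods. Using $\Theta(g_0 w t^k)(O) = \Theta(g_0)\big(\Theta(w t^k)(O)\big)$ and the fact that $\Theta(g_0)$ is a homeomorphism of $\widetilde{\mathcal T}^2$, the observer-part $\mathcal N^{obs}(P)$ is carried to a neighbourhood of $\Theta(g_0)(P)$; and since $\big||g_0 w t^k| - |w t^k|\big| \le |g_0|$, the length-threshold condition is preserved after shifting $N$ by the constant $|g_0|$. Hence $g_0 \cdot \mathcal N(P)$ is a basic neighbourhood of $\Theta(g_0)(P)$, so $g_0$ and (by the same argument) $g_0^{-1}$ are continuous, i.e.\ $g_0$ acts as a homeomorphism extending left multiplication on $G_\alpha$. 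I expect the one step requiring genuine care to be the density claim of the second paragraph: verifying that the single orbit $\Theta(G_\alpha)(O)$ is dense in the collapsed product $\widetilde{\mathcal T}^2$ (rather than merely in one factor) is where the structural hypotheses on the tree and the irreducibility from Lemma \ref{cest la lutte finale} are really used, everything else being routine bookkeeping inherited from Lemma \ref{obvious}.
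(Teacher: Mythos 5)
The overall architecture of your proof (neighbourhood basis, Hausdorffness, separability, compactness via a finite subcover of the boundary plus the finitely many group elements of length at most $N^\ast$, compatibility via $\Theta(g_0)$ being a homeomorphism and the length shift by $|g_0|$) is sound; for what it is worth, the paper offers no proof at all and simply declares the proposition ``obvious from Lemma \ref{cest la lutte finale}''. But the step you yourself single out as the one requiring genuine care --- density of the orbit $\Theta(G_\alpha)(O)$ in $\widetilde{\mathcal T}^2$ --- is in fact false, and your argument for it does not work. The tree $\mathcal T$ here is simplicial with edge lengths bounded below and $O$ is a vertex, so $\Theta(G_\alpha)(O)$ is a countable set of pairs of vertices. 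In the observers' topology a sequence of vertices can only accumulate on vertices or on points of $\partial \mathcal T$: the points $R_m$ arising in the $\liminf_Q$ of Proposition \ref{merciCHL} are themselves vertices (geodesics in a simplicial tree diverge at vertices), and the vertex set is metrically discrete, so no interior point of an edge is a limit of vertices. Already in one factor the closure of the orbit misses every open edge. In the product the situation is worse: the two coordinates of $\Theta(wt^k)(O)$ are strongly correlated (the first is just $w\cdot O$, since $H_\alpha$ fixes $O$), so the closure of the orbit is a thin subset of $\widetilde{\mathcal T}\times\widetilde{\mathcal T}$, nothing like the whole space. Irreducibility from Lemma \ref{cest la lutte finale} means only that there is no finite invariant set; it does not give minimality or dense orbits and cannot ``spread the density over both factors''.

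The repair is not to prove density but to shrink the boundary: the compactification is $G_\alpha\cup\overline{G_\Phi.O}$, where $\overline{G_\Phi.O}$ denotes the closure of the orbit in $\widetilde{\mathcal T}^2$ --- a closed, hence compact and Hausdorff, $G_\Phi$-invariant subset --- and density of $G_\alpha$ is then automatic. This is exactly how the paper itself uses the proposition: Propositions \ref{CPbis} and \ref{verificationCSbis} and Theorem \ref{resultat 1bis} all speak of ``the compactification of $G_\alpha$ by $\overline{G_\Phi.O}\subset\widetilde{\mathcal T}^2$'', and the limits of sample paths land in $\partial\overline{G_\Phi.O}$, not in all of $\widetilde{\mathcal T}^2$. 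Your first, third and fourth paragraphs go through verbatim with $\overline{G_\Phi.O}$ in place of $\widetilde{\mathcal T}^2$ (a closed subset inherits compactness, Hausdorffness and separability, and is preserved by each $\Theta(g_0)$); the second paragraph should be deleted, since the assertion it tries to establish is precisely the point at which the proposition, read literally, overstates what is true.
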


\begin{proposition}
\label{CPbis} The compactification of $G_\alpha$ by
$\overline{G_\Phi.O} \subset \widetilde{\mathcal T}^2$
satisfies the (CP) condition.
\end{proposition}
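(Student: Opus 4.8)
The plan is to verify (CP) by transposing the argument of Proposition \ref{verification de la condition CP} to the product $\widetilde{\mathcal T}\times\widetilde{\mathcal T}$, the closing of the singular axes and the collapsing of the singular boundary-trees now playing the r\^ole that the closing of the eigenrays played there. Recall that (CP) requires, for each fixed $x\in G_\alpha$ and each sequence $(g_n)$ in $G_\alpha$ with $\Theta(g_n)(O)\to P\in\partial\overline{G_\Phi.O}$, that $\Theta(g_nx)(O)\to P$ as well. Since $\Theta(g_nx)(O)=\Theta(g_n)(R)$ with $R:=\Theta(x)(O)$, it suffices to show $\Theta(g_n)(R)\to P$ for every $R$ in the orbit $G_\Phi.O$. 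Writing $g_n=w_nt^{k_n}$ with $w_n\in G$ and recalling that $\Theta(g_n)$ acts on $\widetilde{\mathcal T}\times\widetilde{\mathcal T}$ as $\bigl(w_nH_\alpha^{-k_n},\,w_nH_\beta^{-k_n}\bigr)$, the subgroup $G\lhd G_\alpha$ acts diagonally by the same isometric action on the two factors, while the two factors are distinguished only by the generator $t$.

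I would first treat the case in which $(k_n)$ stays bounded, so that after extraction $k_n\equiv k$ and the variation is carried entirely by $(w_n)$ in $G$. Because the vertex stabilizers are quasiconvex and the arc stabilizers are trivial, the LL-map $\mathcal Q\colon\partial G\to\widehat{\mathcal T}$ is well-defined (remark after Theorem \ref{Bestvina2}), and it is the \emph{same} map on both factors since $G$ acts identically on each. Applying Lemma \ref{motivons} to the isometric $G$-action on each factor, the convergence of the coordinates of $\Theta(g_n)(O)=(w_nO,\,w_nH_\beta^{-k}O)$ forces the coordinates of $\Theta(g_n)(R)=(w_nH_\alpha^{-k}R_1,\,w_nH_\beta^{-k}R_2)$ to the same limits, whence $\Theta(g_n)(R)\to P$. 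When the coordinates converge in $\widehat{\mathcal T}$ only after passing to subsequences with differing targets, those targets nonetheless have the same image $P$ in $\widetilde{\mathcal T}^2$; since Lemma \ref{motivons} pairs each subsequential target of $\Theta(g_n)(O)$ with the corresponding target of $\Theta(g_n)(R)$, the latter also map to $P$. This is precisely where the closing of the singular axes is used.

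There remains the case $k_n\to\pm\infty$, which is the genuine difficulty and the reason the whole passage to $\widetilde{\mathcal T}^2$ was made. Here the $t$-dynamics dominate and the relevant elements are the singular ones, whose conjugacy structure is controlled by Lemma \ref{cest connu}; by Lemma \ref{intermediaire} such an element is elliptic on one factor, with fixed subtree $T$, and hyperbolic on the other, with singular axis $A$. On the hyperbolic factor the north--south dynamics send every point toward the attracting endpoint of $A$, independently of the starting coordinate $R_i$, while on the elliptic factor the images remain in $T$. Consequently the accumulation set of $\{\Theta(g_n)(R)\}$ is contained in a single singular boundary-tree $\partial A\times T$ or $T\times\partial A$, and collapsing this boundary-tree to a point in $\widetilde{\mathcal T}^2$ makes the limit equal to $P$ and independent of $R$. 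For the $t$-powers themselves, the base-point $O$ being the fixed point of $H_\alpha$ pins the first coordinate at $O$, exactly as in Proposition \ref{verification de la condition CP}.

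I expect the crux to be this last case for an \emph{arbitrary} sequence $(g_n)$ with $k_n\to\infty$, not merely for the powers $(vt)^n$ of a single singular element: one must show that the accumulation set of $\{\Theta(g_n)(R)\}$ still lands in one collapsed singular boundary-tree. I would establish this by extracting a subsequence along which both coordinates of $\Theta(g_n)(O)$ converge in the compact space $\widehat{\mathcal T}$ (Proposition \ref{merciCHL}), computing these limits through the LL-map, and invoking the closedness of the family of singular axes proved inside Lemma \ref{cest la lutte finale} to conclude that any two subsequential limits of $\{\Theta(g_n)(R)\}$ become identified once the singular axes are closed and the singular boundary-trees collapsed. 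Compactness of $\widetilde{\mathcal T}^2$ (Lemma \ref{cest la lutte finale}) then promotes the uniqueness of the accumulation point to genuine convergence, and (CP) follows.
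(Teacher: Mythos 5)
Your reduction of (CP) to the convergence of $\Theta(g_n)(R)$ for $R$ in the orbit of $O$ is correct, and your first case ($k_n$ bounded, so that after extraction everything is carried by a sequence $(w_n)$ in $G$ applied to fixed points in each factor) is handled correctly by Lemma \ref{motivons} applied factor by factor, together with the passage to the quotient; this matches the paper's opening remark that ``the action of $G$ alone is not a problem''. Note, however, that your dichotomy is not the one the paper uses: the paper splits according to whether the points $\Theta(v_it^{n_i}).O$ meet infinitely many rectangles or eventually stay in a single one, and in the latter case invokes Lemma \ref{uneetuneseule} on the accumulation sets of cyclic rectangle stabilizers. That decomposition is governed by where the orbit points sit, not by the size of the $t$-exponent.

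The gap is in your second case. The north--south argument you give is valid only for the powers of a single singular element $vt$; a general sequence $g_n=w_nt^{k_n}$ with $k_n\to\infty$ need not consist of singular elements, nor of powers of anything, and its accumulation set has no reason to lie in a single singular boundary-tree. Your proposed repair --- extract convergent subsequences of both coordinates in $\widehat{\mathcal T}$ and invoke the closedness of the family of singular axes --- does not close it: the identifications performed in passing to $\widetilde{\mathcal T}^2$ are confined to the $G$-orbit of one singular axis and of the singular boundary-trees, and nothing in your argument forces the subsequential limits of $\Theta(g_n)(R)$ and of $\Theta(g_n)(O)$ to lie in, let alone be matched by, these identified sets. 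Concretely, the first coordinate of $\Theta(g_n)(R)=\Theta(g_nx)(O)$ is $u_nO_1$ with $u_n=w_n\alpha^{\pm k_n}(v)$, i.e. $w_n$ applied to the point $H^{-k_n}(R_1)$ which varies with $n$, so Lemma \ref{motivons} is not applicable. The mechanism that actually closes this case in the paper is the rectangle analysis: since $\Theta(g_n)(O)$ and $\Theta(g_n)(R)$ differ by the fixed element $x$, either the points visit infinitely many rectangles and convergence in the observers topology (``turning around $P$'') transfers from one sequence to the other, or they eventually stay in one rectangle and Lemma \ref{uneetuneseule} shows the accumulation points are either two opposite corners or contained in a single singular boundary-tree, which has been collapsed to a point. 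You should replace your second case by this argument.
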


Before proving Proposition \ref{CPbis}, we study with more details
the action of $G_\Phi$ on $\widetilde{\mathcal T}^2$. Even if we
will not need the full strength of the two lemmas below, they might be
useful to help the reader having a better grasp on what happens
here.

\begin{definition}

A {\em rectangle} in ${\mathcal T} \times {\mathcal T}$ is a product
of two geodesics.

A {\em singular rectangle} is a rectangle which is the product of a
singular axis with a geodesic.

A {\em corner} of a rectangle $R$ is a point in $\partial R \cap
(\partial {\mathcal T} \times \partial {\mathcal T})$, where
$\partial R = \overline{R} \setminus R$ in $\widehat{\mathcal T}
\times \widehat{\mathcal T}$.
\end{definition}

\begin{lemma}
\label{ca bosse} We consider the $(\alpha,\beta)$-action of
$G_\Phi$ on ${\mathcal T} \times {\mathcal T}$ given in Lemma
\ref{intermediaire}.

Let $R = g_1 \times g_2$ be a singular rectangle the stabilizer of
which is neither trivial nor cyclic. Then $g_1 = g_2$ and, up to
taking powers, there are a unique $v \in G$ and $wt \in G_\Phi$
such that $v$ admits $g_1$ as hyperbolic axis and $g_1$ is a
singular axis for the action of
  $wt$. In particular, the actions of $v$ and $wt$ commute and
  the stabilizer of $R$ is a $\mz \oplus \mz$-subgroup.

\end{lemma}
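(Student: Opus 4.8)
The plan is to analyze $\mathrm{Stab}(R)$ through the two homomorphisms that any product action makes available: the homomorphism $\pi\colon G_\Phi \to \mz$ recording the $t$-exponent (so that $\ker\pi = G$), and the two restriction maps $\tau_1,\tau_2\colon \mathrm{Stab}(R)\to\mathrm{Isom}(\mr)$ obtained by letting an element act on the invariant lines $g_1$ and $g_2$. Say $g_1$ is the singular axis of $R$, the other case being symmetric under exchanging the two factors; I will show the hypotheses force $g_2=g_1$ as well. Writing $N:=\mathrm{Stab}(R)\cap G=\ker\!\bigl(\pi|_{\mathrm{Stab}(R)}\bigr)$, one has the exact sequence
\[
1 \longrightarrow N \longrightarrow \mathrm{Stab}(R) \stackrel{\pi}{\longrightarrow} \pi\bigl(\mathrm{Stab}(R)\bigr) \longrightarrow 1 ,
\]
in which $\pi(\mathrm{Stab}(R))\le\mz$ is trivial or infinite cyclic. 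The whole argument consists in reading off, from the non-cyclicity of $\mathrm{Stab}(R)$, that both ends of this sequence are infinite cyclic and that a suitable lift commutes with $N$.

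First I would treat $N$. Since $G$ acts diagonally on $\mathcal T\times\mathcal T$, an element of $N$ preserves $g_1$ and $g_2$ for one and the same isometric $G$-action on $\mathcal T$. Because $\mathcal T$ is simplicial with trivial edge stabilizers (so that line stabilizers in $G$ are small, cf. Remark \ref{malnormal}), the stabilizer in $G$ of a bi-infinite geodesic is virtually cyclic, and up to the powers the statement allows it is infinite cyclic, generated by a hyperbolic element $v$ whose axis is that geodesic. If $N$ were finite, $\mathrm{Stab}(R)$ would be virtually cyclic and hence, after those power reductions, cyclic, contradicting the hypothesis; so $N$ is infinite and contains such a $v$. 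The uniqueness of the axis of a hyperbolic tree-isometry then forces $g_1=\mathrm{Axis}(v)=g_2$, which is exactly the equality $g_1=g_2$ of the statement and pins down $v$ up to powers. Similarly $\pi(\mathrm{Stab}(R))\neq 0$, for otherwise $\mathrm{Stab}(R)=N$ would be cyclic. Thus $N\cong\mz$ and $\pi(\mathrm{Stab}(R))\cong\mz$.

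Next I would manufacture the singular element $wt$. Choosing $\gamma_0\in\mathrm{Stab}(R)$ with $\pi(\gamma_0)$ generating $\pi(\mathrm{Stab}(R))$ and combining it with powers of $v$, I can solve $\tau_2(\gamma_0^{\,p} v^{-q})=\mathrm{id}$ (the translation lengths on a simplicial tree being commensurable, after passing to orientation-preserving powers), producing $wt:=\gamma_0^{\,p} v^{-q}$ with $\pi(wt)\neq 0$ that fixes $g_2=g_1$ pointwise on the second factor. By definition $wt$ is then a singular element, so Lemma \ref{intermediaire} applies: $wt$ acts on the first factor as a genuine hyperbolic isometry, and since it preserves $g_1$ there its axis is precisely $g_1$; hence $g_1$ is a singular axis for $wt$, and $wt$ is unique up to powers as the generator of the rank-one subgroup $\ker\tau_2\cap\mathrm{Stab}(R)$ with nonzero $\pi$. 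The commutation is now immediate: $[v,wt]\in\ker\pi=G$, and on the common axis $g_1$ of the first factor both $\tau_1(v)$ and $\tau_1(wt)$ are honest translations (Lemma \ref{intermediaire} forbids an inversion there), which commute in $\mathrm{Isom}(\mr)$; therefore $[v,wt]$ fixes $g_1$ pointwise on the first factor and, lying in $G$, is trivial by triviality of the edge stabilizers. Consequently $\langle v,wt\rangle\cong\mz\oplus\mz$, and the exact sequence above shows it equals $\mathrm{Stab}(R)$ after the same power reductions.

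The main obstacle I anticipate is not the commutation but the bookkeeping that forces $\mathrm{Stab}(R)$ to be genuinely $\mz\oplus\mz$ rather than a virtually-$\mz^2$ relative such as the Klein-bottle group; this amounts to excluding that $\gamma_0$ reverses the orientation of the axis $g_1$, which would give $\gamma_0 v\gamma_0^{-1}=v^{-1}$. I would rule this out using that $\mathcal T$ is simplicial with a single orbit of edges and trivial edge stabilizers, so that an orientation-reversing element would invert an edge and hence, after squaring, stabilize one nontrivially, which is impossible, together with the latitude of replacing elements by powers that the statement grants. Lemma \ref{cest connu}, controlling the conjugacy of singular elements fixing a common edge, is the tool that keeps this case analysis finite and simultaneously yields the asserted uniqueness of $v$ and $wt$.
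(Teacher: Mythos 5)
Your route is genuinely different from the paper's. The paper proves the two uniqueness assertions directly: two distinct elements of $G$ cannot share an axis (whence $g_1=g_2$ by diagonality), and two elements $vt^k,wt^k\in G_\Phi\setminus G$ stabilizing $R$ differ by an element $vw^{-1}\in G$ that is killed by a twisted-conjugacy computation ($\alpha^k(vw^{-1})=w^{-1}v\,\alpha^k(vw^{-1})\,v^{-1}w$, plus hyperbolicity of $G$ and triviality of edge stabilizers); the commutation of $v$ and $wt$ is then only asserted. You instead analyse $\mathrm{Stab}(R)$ through the exact sequence $1\to N\to\mathrm{Stab}(R)\to\mz$ and the restrictions to the invariant lines, and you build the singular element $wt$ explicitly by cancelling the second-factor translation. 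That is a legitimate alternative, and your commutation argument ($[v,wt]\in G$ acts trivially on $g_1$ in the first factor, hence is trivial by triviality of arc stabilizers) is actually more complete than what the paper writes.

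There is, however, a genuine flaw in the step you yourself flag as the main obstacle: excluding an orientation-reversing $\gamma_0$, i.e.\ the Klein-bottle alternative $\gamma_0v\gamma_0^{-1}=v^{-1}$. Your proposed exclusion --- such a $\gamma_0$ ``would invert an edge and hence, after squaring, stabilize one nontrivially, which is impossible'' --- does not work. First, an isometry reversing a line of a simplicial tree need not invert an edge; it may fix a vertex of $g_1$ and exchange the two adjacent edges of $g_1$. Second, and decisively, the element whose square would stabilize an edge is $\gamma_0\in G_\Phi\setminus G$ (it has nonzero $t$-exponent), whereas ``trivial edge stabilizers'' is a property of the $G$-action only: elements of $G_\Phi\setminus G$ fixing edges are not forbidden, they are precisely the singular elements the whole construction is built around, so no contradiction appears. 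The case can be closed, but with a different tool, namely Lemma \ref{intermediaire} itself: since $v$ has axis $g_1$ in \emph{each} factor, the relation $\gamma_0v\gamma_0^{-1}=v^{-1}$ would force $\gamma_0$ to reverse $g_1$ in both factors, so $\gamma_0^2$ would fix $g_1$ pointwise in both factors; but an element that is singular for one factor must act as a hyperbolic isometry on the other, and $\gamma_0^2\neq 1$ because its $t$-exponent is nonzero --- contradiction. With that repair your argument goes through; note that the paper's own proof is also silent on this point, so you have at least correctly located where the real work lies.
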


\begin{proof}
Since two distinct $G$-elements cannot share the same axis in
$\mathcal T$, there is at most one $v \in G$, up to taking powers,
admitting both $g_1$ and $g_2$ as hyperbolic axis. Since the action
of $G$ on $\mathcal T \times \mathcal T$ is just the diagonal
action, if there is one $v$ fixing $R$ then $g_1 = g_2$.

Let us now prove that there is at most one element in $G_\Phi$,
which does not belong to $G$ and admit both $g_1$ and $g_2$ as
hyperbolic axis. If there were two then, up to taking powers, we can
assume that they have the form $vt^k$ and $wt^k$. Then $vw^{-1} =
vt^k t^{-k} w^{-1}$ fixes the same axis. This is an element of $G$
and, since $\mathcal T$ is a $\alpha$-invariant tree, $v w^{-1}$ is
fixed by the automorphisms $v \alpha^k(.) v^{-1}$ and $w \alpha^k(.)
w^{-1}$. Therefore $\alpha^k(vw^{-1}) = w^{-1} v \alpha^k(vw^{-1})
v^{-1} w$. Since $G$ is hyperbolic, $vw^{-1}$ is then either a
torsion element, or trivial. Since it fixes a point in $\partial T$,
and edge stabilizers are trivial, it cannot be a torsion element.
Thus $vw^{-1} = 1_G$. We so got that, up to taking powers, there is
at most one $vt$ fixing $R$.

From the previous two paragraphs, if $R$ is a singular rectangle
whose stabilizer is neither trivial nor cyclic, then up to taking powers there are a
unique $v \in G$ and $wt \in G_\Phi$ fixing $R$. Since $t$ does
not act in the same way on the two factors, if $wt$ fixes $R$, then
one of the two axis is a singular axis for $wt$. Thus $v$ and $wt$
commute and we get the lemma.
\end{proof}

\begin{lemma}
\label{uneetuneseule} We consider the $(\alpha,\beta)$-action of
$G_\Phi$ on ${\mathcal T} \times {\mathcal T}$ given in Lemma
\ref{intermediaire}.

\begin{enumerate}

  \item \label{h1} There is at most one $G$-orbit of singular rectangles the stabilizers of which
  are neither trivial nor cyclic.
  \item Let $R$ be a rectangle with cyclic
  stabilizer $\langle w_1 t^{n_1} \rangle$. For any point $P \in \widehat{\mathcal T}
  \times \widehat{\mathcal T}$, the accumulation points of
  $\Theta(\langle w_1 t^{n_1} \rangle)(P)$ either are two opposite corners of
  $R$, or belong to a singular boundary-tree.
\end{enumerate}
\end{lemma}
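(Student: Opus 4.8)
The plan is to treat the two items separately. Item (1) follows quickly from Lemma \ref{ca bosse} together with the fact (Lemma \ref{cest connu}) that all ``level one'' singular elements are conjugate in $G$; item (2) is a dynamical statement that I would obtain by analysing, case by case, how the generator $\gamma = w_1 t^{n_1}$ of the cyclic stabilizer acts on each of the two factors of $\widehat{\mathcal T}\times\widehat{\mathcal T}$, using north--south dynamics for the hyperbolic isometries and the ``turning around a point'' convergence of the observers' topology for the elliptic ones.

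For item (1), let $R$ be a singular rectangle whose stabilizer is neither trivial nor cyclic. By Lemma \ref{ca bosse} we may write $R = A\times A$ (the diagonal case $g_1=g_2=A$), where, up to powers, $A$ is simultaneously the hyperbolic axis of a unique $v\in G$ and a singular axis for a unique element $wt\in G_\Phi$ with $t$-exponent one. Let $v_0 t$ be the reference singular element fixed once and for all in the construction of the $(\alpha,\beta)$-action (Lemma \ref{intermediaire}), so that $v_0 t$ acts on the second factor as a hyperbolic isometry with some axis $A_0$. Since both $wt$ and $v_0 t$ fix an edge, Lemma \ref{cest connu} (case $k=1$) furnishes $g\in G$ with $g(wt)g^{-1} = v_0 t$. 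As $g$ acts diagonally, $gR = gA\times gA$ is stabilized by $v_0 t$; but $v_0 t$ acts hyperbolically on the second factor with unique axis $A_0$, so the second coordinate $gA$ of $gR$ must equal $A_0$, whence $gR = A_0\times A_0$. Thus every such rectangle lies in the single $G$-orbit of $A_0\times A_0$, which proves item (1).

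For item (2) I examine the accumulation set of $\{\Theta(\gamma^k)P\}_{k\in\mz}$ in the compact space $\widehat{\mathcal T}\times\widehat{\mathcal T}$ (Proposition \ref{merciCHL}). If $\gamma$ acts hyperbolically on both factors---in particular when $n_1=0$, where $\gamma\in G$ is hyperbolic and acts diagonally along its axis $A$, so $R=A\times A$---then north--south dynamics on each factor, valid in the observers' topology because the axis endpoints lie in $\partial\mathcal T$ where this topology agrees with the metric one (Proposition \ref{merciCHL}(2)), forces the forward and backward limits to be the two opposite corners $(g_1^{+},g_2^{+})$ and $(g_1^{-},g_2^{-})$ of $R=g_1\times g_2$. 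Otherwise $\gamma$ is elliptic on one factor; since by Lemma \ref{intermediaire} a singular element is hyperbolic on the complementary factor, $\gamma$ is hyperbolic on (say) the second factor with axis $A$ and fixes a maximal subtree $T_0$ of the first factor, with $\overline{T_0}=T$. The second coordinate of every accumulation point then lies in $\partial A$, and I must show the first coordinate lies in $T$, so that the accumulation set sits inside the singular boundary-tree $T\times\partial A$.

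The delicate point, on which I would spend most of the effort, is exactly this first-coordinate analysis in the mixed case. Here I would use that $\gamma|_1$ fixes the nearest-point projection $c$ of $P_1$ onto $T_0$, so the whole orbit lies on the sphere of radius $d(P_1,c)$ about $c$, in directions pointing out of $T_0$; by Proposition \ref{merciCHL}(4) an accumulation point $Q$ satisfies $[c,Q]=\liminf_c \gamma|_1^{k_j}P_1$, and if $[c,Q]$ left $T_0$ through an edge $e_0$ then $e_0$ would be fixed by $\gamma|_1^{d}$ for some $d\neq 0$, i.e.\ a power of $\gamma$ would fix a strictly larger subtree. The orbit therefore ``turns around'' $c$ and converges into $T$, unless such a power enlarges the fixed subtree---in which case one replaces $T_0$ by the (still $A$-singular) maximal fixed subtree of that power, which is precisely the subtree entering the definition of the singular boundary-tree. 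Controlling this interplay between powers of $\gamma$ and their fixed subtrees, via the triviality of the edge stabilizers (Remark \ref{malnormal}) and the conjugacy of singular elements (Lemma \ref{cest connu}), is where the real work lies.
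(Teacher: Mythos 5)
Your proof follows the same route as the paper's: item (1) via the uniqueness statements of Lemmas \ref{cest connu} and \ref{ca bosse}, and item (2) via the dichotomy between a cyclic stabilizer generated by a singular element (accumulation in a singular boundary-tree) and one that is not (north--south convergence to two opposite corners). The paper's own proof is only a two-sentence sketch of exactly this case analysis, so your more detailed treatment --- including the honest working-out of the mixed elliptic/hyperbolic case via the observers' topology --- is a faithful expansion of it.
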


\begin{proof}
Item (\ref{h1}) comes directly from Lemmas \ref{cest connu} (unicity
of the singular element) and \ref{ca bosse}. Consider now a
rectangle $R$ with cyclic stabilizer in $G$. If this cyclic
stabilizer is not generated by a singular element, we are in the
case where the orbit of $O$ accumulates on two opposite corners. If
the cyclic stabilizer is generated by a singular element, we are in
the second case.
\end{proof}

\begin{proof}[Proof of Proposition \ref{CPbis}]
We first recall that the map $\mathcal Q$ is well-defined for
the actions of $G$ on $\mathcal T$ (triviality of the edge-stabilizers) so that the action of $G$ alone is not a problem.
Consider a sequence $v_i t^{n_i}$ such that $\Theta(v_i t^{n_i}).O$
converges to some point $P$. If the points of the sequence belong to
infinitely many rectangles then this is also true for any
sequence $\Theta(v_i t^{n_i}w).O$, $w \in G$. The convergence of
$\Theta(v_i t^{n_i}).O$ to $P$ means that $\Theta(v_i t^{n_i}).O$
turns around $P$, and so does $\Theta(v_i t^{n_i}w).O$. Let us
assume that there exists $N \geq 0$ such that for all $i \geq N$,
all the $\Theta(v_i t^{n_i}).O$ belong to a same rectangle. Then
again either the points turn around $P$ and the conclusion for
$\Theta(v_i t^{n_i}w).O$ is straightforward. Or the rectangle has a
non-trivial stabilizer and the conclusion for $\Theta(v_i
t^{n_i}w).O$ is deduced from Lemma \ref{uneetuneseule} since each
singular boundary-tree has been collapsed to a point. We conclude by
noticing that, since $t$ fixes $O$, the above arguments readily
imply the (CP) condition.
\end{proof}

\begin{definition}
With the assumptions and notations of Lemma \ref{greve}:

The {\em exit-point at a vertex $b \in \mathcal T$ in the direction $D_b(P)$} is the element of $G$ associated to this direction by Lemma \ref{greve}. If $w$ is a geodesic in $\widehat{\mathcal T}$ between $P \in \widehat{\mathcal T}$ and $Q \in \widehat{\mathcal T}$, the {\em exit-points in $w$} are the exit-points at the vertices $v_i \in w$ of $\widehat{\mathcal T}$ in the directions $D_{v_i}(P)$ and $D_{v_i}(Q)$.

We identify $G$ with $G \times \{0\}$.  Let $b_1, b_2$ be any two distinct points in $\widehat{\mathcal T}$.
The {\em basic strip $BS(b_1,b_2)$} consists of all the exit-points in the unique $\widehat{\mathcal T}$-geodesic between $b_1$ and $b_2$.

\end{definition}

\begin{definition}

Let $\mathcal R$ be a set of representatives of $G_\Phi$-orbits
in $(\overline{G_\Phi. O} \times \overline{G_\Phi . O}
\setminus \Delta) / ((x,y) \sim (y,x))$ (we recall that $G_\Phi$ acts via the map $\Theta$
given in Lemma \ref{intermediaire}).

If $(b^\prime_1,c^\prime_1)$ and $(b^\prime_2,c^\prime_2)$ are two
distinct elements in $\mathcal R$, we define the {\em elementary strip
$ES((b^\prime_1,c^\prime_1),(b^\prime_2,c^\prime_2))$} as the union of
$BS(b^\prime_1,b^\prime_2)$ and $BS(c^\prime_1,c^\prime_2)$.

Let $(b_1,c_1)$ and $(b_2,c_2)$ be two distinct elements in
$\overline{G_\Phi . O}$. The {\em strip
$S(b_1,c_1,b_2,c_2)$} is the union of all the $w_i t^{k_i}
ES(b^i_1,c^i_1,b^i_2,c^i_2)$ such that $w_i t^{k_i} .
\{(b^i_1,c^i_1),(b^i_2,c^i_2)\} = \{(b_1,c_1),(b_2,c_2) \}$.

\end{definition}

\begin{proposition}
\label{verificationCSbis}

The compactification of $G_\alpha$ by $\overline{G_\Phi.O}
\subset \widetilde{\mathcal T}^2$, equipped with the collection of
strips $S(b_1,b_2)$, satisfies the (CS) condition.

\end{proposition}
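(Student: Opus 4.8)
The plan is to follow the scheme of the proof of Proposition \ref{verification de la condition CS}, adapted to the product space $\widetilde{\mathcal T}^2$. As there, three things have to be checked: that no strip is empty, that the assignment $(P_1,P_2)\mapsto S(b_1,c_1,b_2,c_2)$ is Borel and $G_\Phi$-equivariant (recall $G_\Phi\cong G_\alpha$), and that each strip avoids a neighbourhood of any third point. The first point is immediate: since $P_1=(b_1,c_1)$ and $P_2=(b_2,c_2)$ are distinct, at least one of the two $\widehat{\mathcal T}$-geodesics $[b_1,b_2]$, $[c_1,c_2]$ is non-degenerate and carries a vertex with a well-defined exit-point (Lemma \ref{greve}), so one of $BS(b_1,b_2)$, $BS(c_1,c_2)$ is non-empty. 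The Borel and equivariance properties are obtained exactly as in Lemma \ref{yes we can}: distinct unordered pairs in $\overline{G_\Phi.O}$ yield distinct elementary strips, each countable subset of $G_\alpha$ is then a countable union of such assignments, and $G_\Phi$-equivariance is built into the definition through $\Theta$.

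The heart of the matter is the separation property. Let $P_0=(b_0,c_0)$, $P_1=(b_1,c_1)$, $P_2=(b_2,c_2)$ be pairwise distinct points of $\overline{G_\Phi.O}$. Using $G_\Phi$-equivariance I would reduce to controlling the accumulation set of the elementary strip and of its $\Theta(w_it^{k_i})$-translates in $\widetilde{\mathcal T}^2$. For $g\in G$ one has $\Theta(g)(O)=(gO,gO)$, so the untranslated basic strips lie on the diagonal of $\widetilde{\mathcal T}\times\widetilde{\mathcal T}$: the exit-points of $BS(b_1,b_2)$ sit near the diagonal geodesic over $[b_1,b_2]$ and, because every exit-point points toward $b_1$ or $b_2$ while a third tree-point lies in a different direction at the branching vertex, they accumulate only at the corners $(b_1,b_1)$ and $(b_2,b_2)$; the same holds for $BS(c_1,c_2)$. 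After applying a translate $\Theta(w_it^{k_i})$ carrying $\{(b_i',c_i')\}$ onto $\{(b_1,c_1),(b_2,c_2)\}$, the first coordinates of the limit points coming from the $BS(b_1',b_2')$-part lie in $\{b_1,b_2\}$, while the second coordinates of those coming from the $BS(c_1',c_2')$-part lie in $\{c_1,c_2\}$. Hence if $b_0\notin\{b_1,b_2\}$ and $c_0\notin\{c_1,c_2\}$, then $P_0$ is separated from the first part in the first factor and from the second part in the second factor, and a product neighbourhood from Proposition \ref{open doors} misses the whole strip; by openness this persists for $b_i,c_i$ in small neighbourhoods, giving (CS) in this case.

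The main obstacle is the \emph{mixed} case, where $b_0\in\{b_1,b_2\}$ \emph{and} $c_0\in\{c_1,c_2\}$ while $P_0\neq P_1,P_2$, say $P_0=(b_1,c_2)$. Then the limit point of the translated $BS(b_1',b_2')$-part with first coordinate $b_1$ has second coordinate $w_iH_\beta^{-k_i}(b_1')$, and this equals $c_2=w_iH_\beta^{-k_i}(c_2')$ precisely when $b_1'=c_2'$; symmetrically the $BS(c_1',c_2')$-part reaches $P_0$ only when $c_2'=b_1'$. Thus $P_0$ can be an accumulation point of the strip only through a coincidence among the four coordinates, which is exactly the data of a singular rectangle. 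Here I would invoke Lemmas \ref{ca bosse} and \ref{uneetuneseule}: such a configuration forces the relevant stabilizer to be at most a $\mz\oplus\mz$ acting along a singular axis, and the corresponding limit occurs inside a singular boundary-tree. Since in $\widetilde{\mathcal T}^2$ each singular boundary-tree has been collapsed to a single point (Lemma \ref{cest la lutte finale}), these offending limits are identified to one point, and the hypothesis $P_0\neq P_1,P_2$ in $\overline{G_\Phi.O}$ guarantees $P_0$ is not that point. Combined with the control of rectangles with non-trivial cyclic stabilizer provided by Lemma \ref{uneetuneseule}(2), this shows that $P_0$ is never an accumulation point of $S(b_1,c_1,b_2,c_2)$, so a suitable product neighbourhood separates it, completing the verification of (CS).
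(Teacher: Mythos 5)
Your overall scheme (non-emptiness, Borel $G_\Phi$-equivariance, separation from a third point) is the same as the paper's, and your explicit computation of the accumulation set of a translated elementary strip --- first coordinates in $\{b_1,b_2\}$ for the $BS(b'_1,b'_2)$-part, second coordinates in $\{c_1,c_2\}$ for the $BS(c'_1,c'_2)$-part --- is correct and in fact sharper than what the paper writes. The paper organizes the separation step differently: it splits according to whether the unordered pair $\{(b_1,c_1),(b_2,c_2)\}$ has a non-trivial stabilizer in $G_\Phi$. If it does, the closing of the singular axes and the collapsing of the singular boundary-trees force this stabilizer to be generated by a hyperbolic $w\in G$, so that $P_1,P_2$ are the diagonal endpoints of its axis and the (infinite union of translates constituting the) strip accumulates only there; if it does not, the strip is a single translate of the elementary strip and one argues on its boundary points. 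Your case division (does $P_0$ share a coordinate with $P_1$ or $P_2$?) is a legitimate alternative for the second branch, but you never isolate the first branch: when the stabilizer is infinite the strip is an infinite union of translates of $ES(\cdots)$ and one must control the accumulation of the whole family, not translate by translate.

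The genuine gap is in your ``mixed case''. You correctly reduce to the coincidence $b'_1=c'_2$ (or its symmetric variants), but the assertion that such a coincidence ``is exactly the data of a singular rectangle'' is unjustified: a singular rectangle is a product of a singular axis with a geodesic, and its relevance (Lemmas \ref{ca bosse} and \ref{uneetuneseule}) is tied to pairs fixed by elements $vt^k$ via Lemma \ref{cest connu}, whereas $b'_1=c'_2$ is merely an equality between a first and a second coordinate of two points of $\overline{G_\Phi.O}$ and produces no fixing element. Consequently the appeal to the collapsing of singular boundary-trees does not close this case. What actually disposes of the offending accumulation point $(b_1,w_iH_\beta^{-k_i}(b'_1))$ is the rigidity of $\overline{G_\Phi.O}$ itself: this point lies in $\overline{G_\Phi.O}$ and has the same first coordinate $b_1\in\partial\widetilde{\mathcal T}$ as $P_1=(b_1,c_1)$, and a point of $\partial\overline{G_\Phi.O}$ which is not a singular point is determined by its first coordinate (this is Item (1) of Lemma \ref{il faudra le prouver}, whose proof is independent of the present proposition); hence that accumulation point either coincides with $P_1$ or is a collapsed singular point, and only at that stage do the singular boundary-trees legitimately enter. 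You should either import that uniqueness argument into the mixed case or follow the paper's stabilizer dichotomy.
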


\begin{proof}
By definition no basic strip is empty so that no strip is empty. The $G_\Phi$-equivariance is clear by
construction. It remains to check that the strip
$S(b_1,c_1,b_2,c_2)$ does not accumulate on a third boundary point
$b_0$.

There is of course no problem if the strip is finite. Let us thus assume that it is infinite. Then either $\{(b_1,c_1),(b_2,c_2)\}$ is stabilized by a non-trivial element of $G_\Phi$. Since singular axis have been closed and singular trees in the boundary have been collapsed to a point, this element is an element of $w \in G \lhd G_\Phi$. Then $(b_1,c_1)$ and $(b_2,c_2)$ are in $\partial \widehat{\mathcal T} \times \partial \widehat{\mathcal T}$ and $b_1=c_1$, $b_2=c_2$ are the two boundary points of the axis of the hyperbolic isometry $w$. It readily follows that $(b_1,c_1)$ and $(b_2,c_2)$ are the only accumulation-points. Or no non-trivial element of $G_\Phi$ stabilizes $\{(b_1,c_1),(b_2,c_2)\}$ and then the basic strip itself is infinite: this might only occur if at least one of the $b_i$ or $c_i$ is in $\partial \mathcal T$. Again in this case the only accumulation-points are by definition the points in the boundary of the strip.

\end{proof}

\begin{proposition}
\label{Francois6.5bis} Let ${\mathcal J}$ be a finite word gauge for
$G_\alpha$. Then the strips $S(b_1,b_2)$ given by Proposition
\ref{verificationCSbis} grow polynomially with respect to $\mathcal
J$.
\end{proposition}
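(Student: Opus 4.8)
The plan is to mimic the structure of the proof of Proposition \ref{Francois6.5}, but to replace the relative--hyperbolicity input (which is unavailable here, since $G_\alpha$ is now a polynomially growing extension) by a direct distortion estimate for $G$ inside $G_\alpha$. First I would reduce the statement to a single orbit computation. By the very definition of $S(b_1,c_1,b_2,c_2)$ as the union of the $w_i t^{k_i} ES(b^i_1,c^i_1,b^i_2,c^i_2)$ over the elements carrying a representative in $\mathcal R$ to the given pair, the strip equals $g_0 \cdot \big(\mathrm{Stab}_{G_\Phi}(\text{rep}) \cdot ES(\text{rep})\big)$ for a single fixed $g_0$ and a fixed representative in $\mathcal R$. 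Since $\mathcal J$ is a finite word gauge, left translation by the fixed element $g_0$ changes the gauge function by at most the constant $|g_0|$, and a finite union of polynomially growing sets grows polynomially; hence it suffices to bound the growth of $\mathrm{Stab}\cdot ES(\text{rep})$, where $ES(\text{rep}) = BS(b'_1,b'_2)\cup BS(c'_1,c'_2)\subset G\times\{0\}$ is a union of two basic strips.

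The core estimate is for a single basic strip. By Lemma \ref{greve} the exit--points along a $\widehat{\mathcal T}$--geodesic are elements of $G$ associated to directions at the vertices $v_j$ traversed by the geodesic; since $\mathcal T$ is simplicial with one orbit of edges, an exit--point at graph--distance $j$ from $O$ has $G$--word--length $O(j)$, and there are boundedly many exit--points at each such distance. Thus a basic strip contains $O(k)$ elements of $G$--word--length at most $k$. I then convert this into a count for the gauge on $G_\alpha$ using the polynomial distortion of $G$: writing $g\in G$ of $G_\alpha$--length $n$ as a word $s_1\cdots s_n$ in the generators and pushing the $t^{\pm1}$ to the right, each $G$--generator is conjugated by a power $t^{\pm e}$ with $|e|\le n$, so by Definition \ref{growth} (polynomial growth of $\alpha$ and $\alpha^{-1}$, of some degree $d$) it contributes an element of $G$--length $\le P(n)=O(n^{d})$. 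Hence $|g|_G \le n\cdot O(n^d)=O(n^{d+1})$, i.e. an exit--point with $|g|_{G_\alpha}\le k$ satisfies $|g|_G\le C\,k^{d+1}$. Combining with the linear count in the $G$--metric, a basic strip, and therefore $ES(\text{rep})$, contains $O(k^{d+1})$ elements of $\mathcal J$--length at most $k$.

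It remains to absorb the stabilizer. By the analysis in the proof of Proposition \ref{verificationCSbis} (using that the singular axes have been closed and the singular boundary--trees collapsed), the only non--trivial stabilizers of the relevant boundary pairs in $\overline{G_\Phi.O}$ are infinite cyclic, generated by an element $w\in G\lhd G_\Phi$ acting as a hyperbolic isometry whose axes are exactly the two geodesics defining $ES(\text{rep})$. Since such a $w$ translates along these axes, $\langle w\rangle$ preserves the set of exit--points, so $\mathrm{Stab}\cdot ES(\text{rep})=ES(\text{rep})$ up to a bounded set and no extra growth is created; the remaining $\mathbb{Z}\oplus\mathbb{Z}$ stabilizers of Lemma \ref{ca bosse} and Lemma \ref{uneetuneseule} do not occur for these pairs because they came precisely from the collapsed singular data. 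Putting the three steps together yields $\mathrm{card}\,[S\cap\mathcal J_k]\le A+Bk^{d+1}$, which is the desired polynomial growth.

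The hard part will be pinning down the distortion estimate in the middle paragraph cleanly: one must check that pushing the $t$--letters to the right genuinely bounds the $G$--length by a sum of terms $|\alpha^{\pm e}(x)|_G$ with $|e|\le n$, and that the polynomial $P$ furnished by Definition \ref{growth} controls all of these uniformly over the finite generating set. Everything else is bookkeeping about exit--points and the reduction to a single orbit, both of which are routine given Lemma \ref{greve} and the description of stabilizers already obtained in the proof of Proposition \ref{verificationCSbis}.
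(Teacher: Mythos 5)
Your overall architecture matches the paper's: split into the case of a stabilized pair (cyclic stabilizer generated by some $w\in G$, strip supported on the exit-points of the geodesic joining the endpoints of its axis) and the unstabilized case (union of two basic strips), and reduce everything to counting exit-points lined up along a single $\widehat{\mathcal T}$-geodesic. The reduction to one orbit and the stabilizer absorption are fine. But the central counting step, as you have written it, does not work. You assert that ``an exit-point at graph-distance $j$ from $O$ has $G$-word-length $O(j)$'' and deduce that a basic strip contains $O(k)$ elements of $G$-word-length at most $k$. First, the asserted upper bound is false in general: the vertex stabilizers of $\mathcal T$ are infinite here, so an element whose associated vertex is at tree-distance $2$ from $O$ can have arbitrarily large word-length (think of $a^{N}b$ in $\langle a\rangle *\langle b\rangle$ acting on its Bass--Serre tree). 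Second, and more importantly, even if it were true it is the wrong direction of inequality for the count: to bound the number of exit-points of word-length at most $k$ you need $j\leq C\,|w_j|$, i.e.\ a \emph{lower} bound on word-length in terms of tree-distance, not an upper bound. The correct (and true) statement is exactly that reverse inequality: the exit-point $w_j$ at the $j$-th vertex moves the base edge to the $j$-th edge of the geodesic, so $d_{\mathcal T}(O,w_jO)\geq j-1$, while the orbit map is Lipschitz, so $d_{\mathcal T}(O,gO)\leq C|g|$. This is what makes the count linear, and it is presumably what the paper's one-line ``exit-points lined up along a geodesic'' is silently invoking.

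Once the inequality is turned the right way round, your entire middle paragraph (the distortion estimate $|g|_G\leq O(n^{d+1})$ for $|g|_{G_\alpha}=n$) becomes unnecessary, because the Lipschitz bound $d_{\mathcal T}(O,gO)\leq C|g|_{\mathcal J}$ already holds for the word gauge of $G_\alpha$ itself: the generator $t$ \emph{fixes} $O$ (it acts as $H^{-1}$ with $H(O)=O$), so every generator of $G_\alpha$ moves $O$ a bounded tree-distance, and one gets at most $O(k)$ exit-points in $\Jj_k$ directly, with no distortion input. This matters beyond economy: your distortion argument invokes Definition \ref{growth} (polynomial growth of $\alpha$ and $\alpha^{-1}$), which is \emph{not} among the hypotheses of Theorem \ref{resultat 1bis} --- that theorem only assumes the existence of a simplicial $\alpha$-invariant $G$-tree with the stated properties, and such a tree can exist for automorphisms that are not polynomially growing (e.g.\ an automorphism preserving a free splitting but exponentially growing on a vertex group). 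So as written your proof both rests on a false/misdirected inequality and smuggles in an extra hypothesis; the fix is the Lipschitz-orbit-map count, after which the rest of your argument (orbit reduction, stabilizer absorption) goes through and recovers the paper's conclusion.
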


\begin{proof}
Observe that everything has been done so that if $\{(b_1,c_1),(b_2,c_2)\}$ is stabilized
then the stabilizer is at most a cyclic group generated by $w \in G \lhd G_\Phi$, and in this case the strip consists of the exit-points in the $\widehat{\mathcal T}$-geodesic which connects the boundary
of the axis of this stabilizer. Thus the number of intersections of such a strip with a ball of radius $k$ growths linearly with $k$. We can thus assume that $\{(b_1,c_1),(b_2,c_2)\}$ is stabilized by no element in $G_\Phi$. Then the strip is the union of two basic strips and so is infinite if and only if a basic strip is infinite. Since infinite basic strips are composed of exit-points lined up along a geodesic of $\widehat{\mathcal T}$ the conclusion follows.
\end{proof}

\begin{proof}[Proof of Theorem \ref{resultat 1bis}]

 Proposition \ref{open doors} gives that
$\overline{G_\Phi.O} \subset \widetilde{\mathcal T}^2$ is a
separable, compatible compactification of $G_\alpha$ with an
irreductible action. Propositions \ref{CPbis} and
\ref{verificationCSbis} give Kaimanovich (CP) and (CS) properties.
By Theorem \ref{theoreme 2.4-6.5}, we so get the conclusions of the
first point of Theorem \ref{resultat 1bis} for the union of
$\partial \overline{G_\Phi.O} \subset \partial
\widetilde{\mathcal T}^2$ with the vertices of $\mathcal T \times
\mathcal T$ in $\overline{G_\Phi.O} \setminus G_\Phi.O$.
Since this set of vertices is invariant, and the measure $\lambda$
non-atomic, it has $\lambda$-measure zero, which gives to us Theorem
\ref{resultat 1bis}, Item (2). Theorem \ref{theoreme 2.4-6.5} and
Proposition \ref{Francois6.5bis} give the third point.
\end{proof}

\begin{proof}[Proof of Theorem \ref{resultat interessant 1bis}]
We call {\em singular points} the points resulting
from the collapsing of the singular boundary-trees. There is a
slight abuse of terminology in the lemma below when considering the
``projection on the first factor'' $\pi \colon \partial
\widetilde{\mathcal T}^2 \setminus \{\mbox{singular points}\}
\rightarrow \widetilde{\mathcal T}$. We mean of course the map
induced by the projection on the first factor from $\partial
(\widetilde{\mathcal T} \times \widetilde{\mathcal T})$ to
$\widetilde{\mathcal T}$, where $\partial
(\widetilde{\mathcal T} \times \widetilde{\mathcal T})$ denotes $(\partial \widetilde{\mathcal T} \times \widetilde{\mathcal T}) \cup (\widetilde{\mathcal T} \times \partial \widetilde{\mathcal T}) \cup (\partial \widetilde{\mathcal T} \times \partial \widetilde{\mathcal T})$.

\begin{lemma}
\label{il faudra le prouver} Let $\pi \colon \partial
\widetilde{\mathcal T}^2 \setminus \{\mbox{singular points}\}
\rightarrow \widetilde{\mathcal T}$ be the projection on the first
factor. Then:

\begin{enumerate}
  \item For any $x \in \partial \widetilde{\mathcal T}$ lying in the image of $\pi$,
  $\pi^{-1}(x) \cap \partial \overline{G_\Phi.O}$ consists of
  exactly one point.
  \item If $V({\mathcal T})$ denotes the set
  of vertices of $\mathcal T$, then there exist either one or two
  vertices $x_0,x_1$ of $\mathcal T$ such that $\{\pi^{-1}(x), x \in V({\mathcal T}) \}
  = G_\Phi.\pi^{-1}(x_0) \sqcup G_\Phi. \pi^{-1}(x_1)$ and all the translates
  $\gamma.\pi^{-1}(x_0)$ and $\gamma.\pi^{-1}(x_1)$,$\gamma \in G_\Phi$,
  are disjoint as soon as the translating
  element $\gamma$ is not in the stabilizer.
  \item The map $\pi$ is $G_\Phi$-equivariant.
\end{enumerate}
\end{lemma}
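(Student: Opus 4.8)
The plan is to dispatch the three items in the order (3), (2), (1), since the equivariance is essentially formal and feeds into the proofs of the other two, while Item (1) carries all the real content. First I would prove Item (3). The projection $\widehat{\mathcal T} \times \widehat{\mathcal T} \to \widehat{\mathcal T}$ onto the first factor is tautologically equivariant for the diagonal $G$-action and intertwines the first-factor $t$-action $H^{-1}_\alpha$; so on $\partial(\widetilde{\mathcal T} \times \widetilde{\mathcal T})$ it is $G_\Phi$-equivariant before any collapsing is performed. The only thing to check is that the collapsings are compatible with the projection. A singular boundary-tree of the form $\partial A \times T$ projects to $\partial A$, and since the singular axis $A$ has been closed in $\widetilde{\mathcal T}$ its two endpoints are a single point there, so collapsing $\partial A \times T$ is compatible with $\pi$; a singular boundary-tree of the form $T \times \partial A$ projects onto the whole subtree $T$, which is exactly why these are excised as the singular points from the domain of $\pi$. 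Hence $\pi$ descends to a well-defined $G_\Phi$-equivariant map, giving Item (3).

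For Item (2) I would use that the tree has a single $G$-orbit of edges, so $V(\mathcal T)$ consists of one or two $G$-orbits, with representatives $x_0 = O$ and, if the edge joins two orbits, a second vertex $x_1 = O'$ adjacent to $O$ (notation of Lemma \ref{greve}). Because $t$ acts on the first factor as $H^{-1}_\alpha$, which fixes the $G$-orbit partition of the vertices and fixes $O$, every vertex is $G_\Phi$-equivalent to $x_0$ or $x_1$, and these remain in distinct $G_\Phi$-orbits. Equivariance (Item (3)) gives $\gamma \cdot \pi^{-1}(x) = \pi^{-1}(\gamma \cdot x)$ for all $\gamma \in G_\Phi$, so $\{\pi^{-1}(x) : x \in V(\mathcal T)\} = G_\Phi.\pi^{-1}(x_0) \sqcup G_\Phi.\pi^{-1}(x_1)$. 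Two fibers over distinct points are disjoint by the definition of a fiber; thus $\gamma.\pi^{-1}(x_i)$ meets $\pi^{-1}(x_i)$ only when $\gamma x_i = x_i$, i.e. when $\gamma \in \mathrm{Stab}_{G_\Phi}(x_i)$, which is precisely the asserted disjointness.

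The heart of the matter is Item (1). Here I would exploit two structural facts. Since $O$ is the unique $H_\alpha$-fixed vertex, the first coordinate of $\Theta(wt^k).O$ equals $w.O$, \emph{independently} of $k$; and since the $G$-action is diagonal, the LL-map $\mathcal Q$ of Lemma \ref{motivons} is \emph{the same} map for both factors. Now let $x \in \partial\widetilde{\mathcal T}$ lie in the image of $\pi$ and let $(x,y), (x,y') \in \partial\overline{G_\Phi.O}$ be realized as limits of orbit points $\Theta(w_n t^{k_n}).O$ and $\Theta(w'_n t^{k'_n}).O$; I must show $y = y'$ in $\widetilde{\mathcal T}^2$. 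Passing to subsequences, $w_n \to X$ and $w'_n \to X'$ in $\overline{G}$ with $\mathcal Q(X) = \mathcal Q(X') = x$. If the exponents $k_n$ stay bounded, the internal basepoints $H^{-k_n}_\beta(O)$ stay within bounded distance of $O$, so Lemma \ref{motivons} forces the second coordinate to converge to $\mathcal Q(X) = x$ as well; the same holds for the primed sequence, so $y = y' = x$.

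The remaining, and genuinely delicate, case is $|k_n| \to \infty$, where both $w_n$ and the internal point $H^{-k_n}_\beta(O)$ escape to infinity simultaneously, so that Lemma \ref{motivons} no longer applies directly. I would treat this exactly as in the proof of Proposition \ref{CPbis}, via the rectangle analysis of Lemmas \ref{ca bosse} and \ref{uneetuneseule}: the orbit points eventually lie in (the translates of) a rectangle whose relevant accumulation set is either a pair of opposite corners or a singular boundary-tree. Closing the singular axes and collapsing the singular boundary-trees — the very operations built into $\widetilde{\mathcal T}$ and $\widetilde{\mathcal T}^2$ — is arranged precisely so that any discrepancy between $y$ and $y'$ produced in this way is annihilated in $\widetilde{\mathcal T}^2$. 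The main obstacle is this divergent-exponent analysis: one must verify that the ambiguity in $\lim_n w_n H^{-k_n}_\beta(O)$ is always confined to a single collapsed singular boundary-tree, so that the construction leaves exactly one point in the fiber.
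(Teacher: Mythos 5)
Your handling of Items (2) and (3) is fine and consistent with the paper, which simply declares them clear. The problem is Item (1), which you correctly identify as carrying all the content but do not actually finish. Your bounded-exponent case works: since $O$ is the $H_\alpha$-fixed vertex, the first coordinate of $\Theta(w_nt^{k_n}).O$ is $w_n.O$, and for bounded $k_n$ the second coordinate is $w_nR$ with $R$ ranging over a finite set of points, so Lemma \ref{motivons} applies. But when $|k_n|\to\infty$ you are applying $w_n$ to a \emph{moving} point $H^{-k_n}_\beta(O)$ that escapes along a singular axis, which is exactly the situation Lemma \ref{motivons} does not cover; your closing sentence (``one must verify that the ambiguity in $\lim_n w_nH^{-k_n}_\beta(O)$ is always confined to a single collapsed singular boundary-tree'') is precisely the statement to be proved, not a proof of it. Pointing to Lemmas \ref{ca bosse} and \ref{uneetuneseule} does not discharge it as written: those lemmas control the accumulation set of a single cyclic stabilizer $\langle w_1t^{n_1}\rangle$ acting on one rectangle, whereas here you must compare two arbitrary sequences $w_nt^{k_n}$ and $w'_nt^{k'_n}$ whose first coordinates converge to the same prescribed $x$ and rule out second coordinates landing in different collapsed classes. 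So the key step of Item (1) is missing.

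The paper closes Item (1) by a different and much shorter route, through stabilizers rather than convergent sequences: if some $v\in G$ and some $wt^k\notin G$ both fix a point $P\in\partial\mathcal T$, then they commute, $P$ is an endpoint of a singular axis, the fiber over $P$ is a singular boundary-tree, and $P$ is therefore excluded from the image of $\pi$ by the very definition of the domain. Hence any $x$ in the image of $\pi$ has trivial or cyclic ($G$-generated) stabilizer, and since the $G$-action on the two factors is the same diagonal action this pins down the second coordinate of any point of $\partial \overline{G_\Phi.O}$ lying over $x$. If you want to keep your sequential approach you must actually prove the confinement claim you flagged; otherwise the stabilizer argument is the economical way out.
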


\begin{proof}
Items (2) and (3) are clear. Item (1) is a consequence of the fact
that the points in $\partial {\mathcal T}$ which have neither
trivial nor cyclic stabilizer are the endpoints of singular axis.
Indeed, if $v$ and $wt^k$ both fix $P \in \partial {\mathcal T}$
then $wt^k$ and $v$ commute so that $v^{-1} wt^k$ act as the
identity on an axis with terminal point $P$. Thus $P$ is not in the
image of $\pi$ (we defined $\pi$ on the complement of the set of
singular points).
\end{proof}

The set of singular points obviously has $\lambda$-measure zero,
where $\lambda$ is the Poisson measure. This is also true for the
sets $\pi^{-1}(x)$ with $x \in V({\mathcal T})$. Indeed this
follows from Item (2) of Lemma \ref{il
faudra le prouver} and the following

\begin{lemma}\cite[Lemma 2.2.2]{KM}
\label{fibre}
Let $G$ be a countable group, $\mu$ a probability measure on $G$,
and $B$ a $G$-space endowed with a $\mu$-stationary probability measure $\nu$.
Let $\pi : B \rightarrow C$ be any $G$-equivariant quotient map on a $G$-space $C$
on which $G$ acts with infinite orbits.
Then all the fibers $\pi^{-1}(x)$ have $\nu$-measure zero.
\end{lemma}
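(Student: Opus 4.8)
The plan is to reduce the statement to the non-atomicity of the pushforward of $\nu$ to $C$. First I would set $\bar\nu := \pi_*\nu$, the image of $\nu$ under $\pi$, so that for every $x \in C$ one has $\nu(\pi^{-1}(x)) = \bar\nu(\{x\})$; thus all fibers are $\nu$-null precisely when $\bar\nu$ has no atoms. Since $\pi$ is $G$-equivariant and $\nu$ is $\mu$-stationary, a one-line computation shows $\bar\nu$ is itself $\mu$-stationary on $C$: for measurable $E \subseteq C$ we have $\pi^{-1}(g^{-1}E) = g^{-1}\pi^{-1}(E)$, whence $(\mu * \bar\nu)(E) = \sum_g \mu(g)\,\nu(g^{-1}\pi^{-1}(E)) = (\mu * \nu)(\pi^{-1}(E)) = \nu(\pi^{-1}(E)) = \bar\nu(E)$. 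So it suffices to prove that a $\mu$-stationary probability measure on a space with infinite orbits is non-atomic.

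The heart of the argument is a standard maximal-atom computation. Suppose $\bar\nu$ had an atom. Since for each $\varepsilon > 0$ there are at most $1/\varepsilon$ points of $\bar\nu$-mass $> \varepsilon$, the number $a := \sup_{x \in C}\bar\nu(\{x\})$ is positive and is attained, and the set $A := \{x \in C : \bar\nu(\{x\}) = a\}$ of maximal atoms is finite and nonempty. Evaluating stationarity at a point $x \in A$ gives $a = \bar\nu(\{x\}) = \sum_g \mu(g)\,\bar\nu(\{g^{-1}x\})$, a convex combination of the quantities $\bar\nu(\{g^{-1}x\}) \leq a$. Equality with the maximum forces $\bar\nu(\{g^{-1}x\}) = a$, i.e. $g^{-1}x \in A$, for every $g$ in the support of $\mu$.

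Hence $g^{-1}A \subseteq A$ for each such $g$; since $g^{-1}$ is injective and $A$ is finite, this yields $g^{-1}A = A$, so $A$ is invariant under the subgroup $gr(\mu)$ generated by the support of $\mu$. Choosing $x \in A$, its $gr(\mu)$-orbit is then contained in the finite set $A$. In the setting of the paper $\mu$ is non-degenerate, so $gr(\mu) = G$, and we contradict the hypothesis that $G$ acts on $C$ with infinite orbits; therefore $\bar\nu$ is non-atomic and every fiber is $\nu$-null, as claimed. The one step requiring care is exactly this last passage: the maximal-atom set is a priori only $gr(\mu)$-invariant, so one must either invoke non-degeneracy of $\mu$ (as holds throughout the paper) or read the infinite-orbit hypothesis relative to the semigroup generated by the support of $\mu$. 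I expect this to be the only genuine obstacle, the remaining verifications being routine.
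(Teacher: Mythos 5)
Your proof is correct and is essentially the paper's own argument in different packaging: the paper fixes a fiber $X=\pi^{-1}(x)$ and applies the same summability-plus-maximum-principle reasoning to the function $f(gS)=\nu(gX)$ on the coset space $G/S$, which is exactly your $\bar\nu$ restricted to the $G$-orbit of $x$. Your closing caveat is well taken: the maximum-principle step only yields invariance of the set of maximal atoms under $gr(\mu)$, a point the paper's ``therefore $f$ must be constant'' silently elides, so both proofs ultimately rely on $\mu$ being non-degenerate (as it is wherever the lemma is applied in the paper).
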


\begin{proof}
Let $X = \pi^{-1}(x)$ be a fiber and denote by $S$ the stabilizer of $X$
in $G$. Consider the function $f$ defined on $G/S$ by $f(\gamma)=\nu(gX)$
where $\gamma = gS \in G/S$. One has $\sum_{\gamma \in G/S} f(\gamma) \leq \nu(B) < +\infty$
therefore $f$ has a maximum value. On the other hand, since $\nu$
is $\mu$-stationary, the function $f$ satisfies the following mean-value
property : $\sum_{h\in G} \mu(h) f(h^{-1}\gamma) = f(\gamma)$, therefore
$f$ must be constant. Since the set $G/S$ is infinite, $f$ must be zero.
\end{proof}

Thus, by Item (1) of Theorem \ref{resultat 1bis} almost every sample path ${\bf x} = \{x_n\}$
  converges to some $x_\infty \in \partial \widetilde{\mathcal T}$ for the measure associated to $\pi_*\lambda$. Observe however that there is again a slight abuse here since $\pi_* \lambda$ is a priori only defined
  on $\pi(\partial
\widetilde{\mathcal T}^2 \setminus \{\mbox{singular points}\})$, which is only a subset of $\partial \widetilde{\mathcal T}$ when deprived of the zero measure set $V({\widetilde{\mathcal T}})$, but it suffices to extend it by declaring the complement to be of measure zero. From which precedes $\pi_*\lambda$ is non-atomic and this is the hitting measure for the topology we have on $G_\alpha \cup \partial \widetilde{\mathcal T}$.

Observe that $\partial \widetilde{\mathcal T}$ satisfies the proximality property of Lemma \ref{unique}. By Remark \ref{compacite non necessaire}
and Lemma \ref{unique} the measure $\pi_*\lambda$ is thus the unique $\mu$-stationary measure.

By Item (3) of Theorem \ref{resultat 1bis} and Lemma
\ref{il faudra le prouver}, $(\partial \widetilde{\mathcal T},\pi_* \lambda)$ is the Poisson boundary of $(G_\alpha,\mu)$.
\end{proof}

\subsection{Consequence of Theorems \ref{resultat interessant 1bis} and \ref{resultat 1bis}}

\begin{theorem}
\label{resultat 2bis} Let $G$ be a torsion free hyperbolic group
  with infinitely many ends. Let $G_\alpha = G \rtimes_\alpha \mz$
be the semi-direct product of $G$ with $\mz$ over an essentially polynomially
growing automorphism $\alpha$.

  Let $\mu$ be a probability measure on $G_\alpha$
whose support generates $G_\alpha$ as a semi-group. Then there exist a simplicial
$\alpha$-invariant $G$-tree $\mathcal T$ and a bi-infinite geodesic $A$ in $\mathcal T$
such that, if $\widetilde{\mathcal T}$ denotes the space obtained from
$\widehat{\mathcal T}$ by closing each geodesic in $G.A$, then:

\begin{itemize}

  \item There is a topology on $G_\alpha \cup \partial \widetilde{\mathcal T}$
  such that {\bf P}-almost every sample path ${\bf x} = \{x_n\}$
  admits a subsequence which converges to some $x_\infty \in \partial \widetilde{\mathcal T}$.

 \item The hitting measure $\lambda$ is non-atomic and it is
the unique $\mu$-stationary measure on
$\partial \widetilde{\mathcal T}$.

  \item If $\mu$ has a finite first moment then the measured space
$(\partial \widetilde{\mathcal T},\lambda)$ is the Poisson boundary of $(G_\alpha,\mu)$.

\end{itemize}

\end{theorem}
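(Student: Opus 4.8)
The plan is to obtain Theorem \ref{resultat 2bis} as a consequence of Theorem \ref{resultat interessant 1bis}: I would first produce, for a suitable power of $\alpha$, a simplicial tree meeting the hypotheses of that theorem, and then pass from the finite-index subgroup this power determines back to $G_\alpha$ by the first-return mechanism of Theorem \ref{indice fini}.

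First I would produce the tree. As $\alpha$ has essential polynomial growth, Theorem \ref{le cas hyperbolique}(2) furnishes a minimal simplicial $G$-tree with quasiconvex vertex stabilizers and trivial edge stabilizers. The two remaining structural requirements of Theorem \ref{resultat interessant 1bis} --- a single $G$-orbit of edges, and a realizing isometry that fixes exactly one vertex $O$ and fixes every $\mathrm{Stab}_G(O)$-orbit of directions at $O$ --- are precisely the ``unipotent'' conclusions isolated in Theorem \ref{Bestvina2}, and in general they are satisfied only after replacing $\alpha$ by a unipotent power $\alpha^{k}$, whose realizing isometry $H_{\alpha^{k}}$ does fix the unique vertex $O$ and each orbit of directions. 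This is exactly why Theorem \ref{resultat 2bis} only asserts convergence along a subsequence. So I would fix such a $k$, take the corresponding simplicial tree $\mathcal T$, and let $A$ and $\widetilde{\mathcal T}$ be as in Theorem \ref{resultat interessant 1bis} applied to the automorphism $\alpha^{k}$.

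Next I would set $G^{0} := G \rtimes_{\alpha^{k}} \mz$, a normal subgroup of index $k$ in $G_\alpha$, and let $\mu^{0}$ be the first-return measure of $(G_\alpha,\mu)$ to $G^{0}$. Theorem \ref{resultat interessant 1bis}, proved through the product-of-trees construction of Theorem \ref{resultat 1bis}, applies to $(G^{0},\mu^{0})$ and yields the compactification by $\partial\widetilde{\mathcal T}$ together with its (CP) and (CS) properties and the polynomial growth of the strips (Proposition \ref{Francois6.5bis}). To invoke Theorem \ref{indice fini} I must check that the $G^{0}$-action on $\partial\widetilde{\mathcal T}$ extends continuously to the full group $G_\alpha$; this holds because $\partial\widetilde{\mathcal T}$ is a $G_\alpha$-equivariant quotient of $\partial G$ through the LL-map $\mathcal Q$, and $\alpha$ acts on $\partial G$ permuting the collapsed geodesics $G.A$ and the fibers of $\mathcal Q$ compatibly with $\alpha^{k}$. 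Granting this extension, Items (1) and (2) of Theorem \ref{indice fini} give the subsequence convergence of $x_{\tau_{k}}$ to a limit in $\partial\widetilde{\mathcal T}$ and the non-atomicity and uniqueness of the hitting measure $\lambda$, which are the first two bullets.

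For the third bullet I would track moments. A finite first moment for $\mu$ yields, by Lemma \ref{merci Kaima}, a finite first moment for $\mu^{0}$, hence finite entropy and finite first logarithmic moment by the remark following Theorem \ref{theoreme 2.4-6.5}; Item (3) of Theorem \ref{indice fini} then identifies $(\partial\widetilde{\mathcal T},\lambda)$ with the Poisson boundary of $(G_\alpha,\mu)$. I expect the main obstacle to be precisely the verification just flagged --- that the boundary built for the finite-index subgroup $G^{0}$ carries a genuine continuous $G_\alpha$-action extending the $G^{0}$-action, compatibly with the projection from the auxiliary product boundary $\partial\widetilde{\mathcal T}^{2}$ used inside Theorem \ref{resultat interessant 1bis} --- since everything else reduces to quoting the cited results and to routine moment bookkeeping.
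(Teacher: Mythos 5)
Your proposal is correct and follows essentially the same route as the paper: Theorem \ref{le cas hyperbolique} supplies the simplicial tree, a power $\alpha^{k}$ is taken so that the realizing isometry fixes each $\mathrm{Stab}_G(O)$-orbit of directions (the paper justifies this step by the finiteness of the number of such orbits at each vertex rather than by Theorem \ref{Bestvina2}, which is stated only for free groups), Theorem \ref{resultat interessant 1bis} is applied to the finite-index subgroup $G_{\alpha^{k}}$, and Theorem \ref{indice fini} transfers the conclusion back to $G_\alpha$. Your additional attention to the continuous extension of the $G_{\alpha^{k}}$-action on $\partial\widetilde{\mathcal T}$ to all of $G_\alpha$ and to the moment bookkeeping is exactly what the hypotheses of Theorem \ref{indice fini} implicitly require, and does not change the verdict.
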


\begin{proof}
In the case of a direct product of a free group with $\mz$, the
construction of Section \ref{une introduction introductive} gives a
tree $\mathcal T$ as required by Theorem \ref{resultat interessant
1bis}. This last theorem gives the conclusion in this case.

In the case of a non-trivial cyclic extension over an essentially polynomially growing
automorphism, Theorem \ref{le cas hyperbolique} gives a tree
$\mathcal T$ which satisfies the properties required by Theorem
\ref{resultat interessant 1bis}, at the possible exception of the
fact that $\alpha$ fixes each $\mathrm{Stab}_G(x)$-orbit of
direction. However, the former theorem gives the finiteness of
number of $\mathrm{Stab}_G(x)$-orbits of directions at each vertex
$x$. Thus, after substituting $\alpha$ by $\alpha^k$, so passing from
$G_\alpha$ to the finite index subgroup $G_{\alpha^k}$, we get a tree
$\mathcal T$ for $G_{\alpha^k}$ as required by Theorem \ref{resultat
interessant 1bis}. This last theorem gives the conclusion
for $G_{\alpha^k}$ and the conclusion for $G_\alpha$ follows from Theorem \ref{indice fini}. 
\end{proof}

As in the proof of Corollary \ref{corollaire pour GL}, the existence of the LL-map ${\mathcal Q} \colon \partial G
\rightarrow \mathcal T$ gives:

\begin{corollary}
\label{le corollaire interessant} With the assumptions and notations
of Theorem \ref{resultat 2bis}:

\begin{enumerate}
  \item There is a topology on $G_\alpha \cup \partial G$ such that {\bf P}-almost every sample path
  admits a subsequence which converges to some $x_\infty \in \partial G$.
  \item The hitting measure $\lambda$ on $\partial G$ is non-atomic and
  this is the unique $\mu$-stationary measure $\lambda$ on $\partial G$.
  \item If $\mu$ has a finite first moment, then the measured space
$(\partial G,\lambda)$ is the Poisson boundary of $(G_\alpha,\mu)$.
\end{enumerate}
\end{corollary}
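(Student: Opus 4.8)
The plan is to reproduce, almost word for word, the argument proving Corollary~\ref{corollaire pour GL}, with Theorem~\ref{resultat 1} replaced by its polynomial-growth analogue Theorem~\ref{resultat 2bis} and with the sample path replaced throughout by the subsequence $\{x_{\tau_k}\}$ forced by the passage to a finite-index subgroup (see Theorem~\ref{indice fini}). First I would recall that the simplicial $\alpha$-invariant $G$-tree $\mathcal T$ supplied by Theorem~\ref{le cas hyperbolique} has quasiconvex vertex stabilizers and trivial arc stabilizers, so that the LL-map $\mathcal Q \colon \partial G \rightarrow \widehat{\mathcal T}$ of Lemma~\ref{motivons} (see Section~\ref{queue}) is well-defined, continuous and $G_\alpha$-equivariant. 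Composing it with the quotient-map $q \colon \widehat{\mathcal T} \rightarrow \widetilde{\mathcal T}$ that closes the geodesics of $G.A$ gives a continuous, $G_\alpha$-equivariant map $q \circ \mathcal Q \colon \partial G \rightarrow \widetilde{\mathcal T}$ whose image contains $\partial \widetilde{\mathcal T}$.

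Next I would analyse the fibers of $q \circ \mathcal Q$ above $\partial \widetilde{\mathcal T}$. By the singleton property of $\mathcal Q$ inherited from Theorem~\ref{le cas libre} (the pre-image of a point of $\partial \mathcal T$ is a single point of $\partial G$), the only non-singleton fibers over $\partial \widetilde{\mathcal T}$ are those sitting above the points that arise from closing the geodesics of $G.A$; each such fiber is the pair $\{\mathcal Q^{-1}(a_+),\mathcal Q^{-1}(a_-)\}$ of the two endpoints of a translate of $A$. Hence this bad set is a single $G_\alpha$-orbit, in particular a countable invariant set, so it is $\lambda$-negligible (by non-atomicity, or equivalently by Lemma~\ref{fibre} since the orbit is infinite), and off it the map $q \circ \mathcal Q$ is injective.

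Item~(1) of Theorem~\ref{resultat 2bis} gives that $\PP$-almost every sample path admits a subsequence $\{x_{\tau_k}\}$ converging to some $x_\infty \in \partial \widetilde{\mathcal T}$; since $x_\infty$ lies almost surely off the bad set, it has a unique $(q\circ\mathcal Q)$-preimage in $\partial G$, and transporting the topology of $G_\alpha \cup \widetilde{\mathcal T}$ through $q \circ \mathcal Q$ yields a topology on $G_\alpha \cup \partial G$ in which $\{x_{\tau_k}\}$ converges to that preimage, which is Item~(1). Defining $\lambda$ on $\partial G$ as the pull-back of the $\mu$-stationary measure of Theorem~\ref{resultat 2bis} is legitimate because the relevant fibers are $\lambda$-almost-everywhere singletons, and the non-atomicity, $\mu$-stationarity and uniqueness of $\lambda$ are then inherited from the corresponding statements on $\partial \widetilde{\mathcal T}$, giving Item~(2).

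For Item~(3), when $\mu$ has finite first moment Theorem~\ref{resultat 2bis} identifies $(\partial \widetilde{\mathcal T},\lambda)$ with the Poisson boundary of $(G_\alpha,\mu)$; since $q \circ \mathcal Q$ restricts to a $G_\alpha$-equivariant measure-space isomorphism off the $\lambda$-negligible bad set, $(\partial G,\lambda)$ is isomorphic to $(\partial \widetilde{\mathcal T},\lambda)$ and is therefore the Poisson boundary as well. The main obstacle I anticipate is precisely the bookkeeping around this bad set: one must verify simultaneously that it is $\lambda$-negligible, that it is a countable union of translates of one compact fiber so that both the pulled-back measure and the transported topology are genuinely well-defined, and that the subsequence convergence of Theorem~\ref{resultat 2bis} survives the transport. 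None of this is deep, but it is the point where the triviality of the arc stabilizers, the singleton-fiber property of $\mathcal Q$ over $\partial \mathcal T$, and Lemma~\ref{fibre} must all be combined correctly.
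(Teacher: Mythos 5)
Your proposal is correct and follows essentially the same route as the paper: the paper proves this corollary by simply invoking the argument of Corollary~\ref{corollaire pour GL}, i.e.\ pushing the topology and the measure back and forth through the continuous, surjective, $G_\alpha$-equivariant map $q \circ \mathcal Q \colon \partial G \rightarrow \widetilde{\mathcal T}$ and observing that the non-injectivity locus is a countable invariant union of compact fibers, hence $\lambda$-negligible by non-atomicity (Lemma~\ref{fibre}). Your extra bookkeeping on the pair-fibers coming from closing the geodesics in $G.A$ and on the subsequence $\{x_{\tau_k}\}$ is exactly the adaptation the paper leaves implicit.
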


\begin{remark}
In the case of a direct product there is no need to pass to a finite-index subgroup so that
the conditions on the measure may be relaxed to ``finite first logarithmic moment and finite entropy''
in Theorem \ref{resultat 2bis} and Corollary \ref{le corollaire interessant}. Moreover there is no
need to pass to a subsequence in the first items of these results.
\end{remark}

\section{Extensions by non-cyclic groups}\label{noncyclic}

The plan of this section is parallel to the plan of the previous
one. As a main goal we have the following

\begin{theorem}
\label{dernier resultat interessant} Let $\mathcal G$ be any one of
the following two kinds of groups:

\begin{itemize}

\item A direct product $G \times \F{k}$ where $G$ is a torsion free hyperbolic group with
infinitely many ends.

\item A semi-direct product $G \rtimes_\theta \mathcal P$ of a free group
  $G = \F{n}$ with a finitely generated subgroup $\mathcal P$ over a
  monomorphism $\theta \colon {\mathcal P} \rightarrow \Out{\F{n}}$
  such that $\theta({\mathcal P})$ consists entirely of polynomially growing outer automorphisms.

\end{itemize}

Let $\mu$ be a probability measure on $\mathcal G$ whose support
generates $\mathcal G$ as a semi-group. Then there exists a finite index subgroup
$\mathcal U$ in $\mathcal P$, a simplicial $\theta(\mathcal
U)$-invariant $G$-tree ${\mathcal T}$, and a set $\mathcal A$ of
bi-infinite geodesics in $\mathcal T$ whose union forms a proper
subtree of $\mathcal T$ such that, if $\widetilde{\mathcal T}$
denotes the space obtained from $\widehat{\mathcal T}$ by closing
each axis in $G.\mathcal A$, then:

\begin{enumerate}
  \item There is a topology on ${\mathcal G} \cup \partial \widetilde{\mathcal T}$
  such that {\bf P}-almost every sample path admits a subsequence
which converges to $x_\infty \in \partial \widetilde{\mathcal T}$.
  \item The hitting measure $\lambda$ is non-atomic and this is the unique
$\mu$-stationary measure on $\partial \widetilde{\mathcal
T}$.
  \item If $\mu$ has finite first moment, then the measured space
$(\partial \widetilde{\mathcal T},\lambda)$ is the Poisson
boundary of $(\mathcal G,\mu)$.
\end{enumerate}

\end{theorem}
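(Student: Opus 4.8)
The plan is to run the argument of Section \ref{cyclicpoly} with the cyclic group $\mz$ replaced by the finitely generated group $\mathcal U$ produced below, and to descend from $\mathcal G$ to a finite-index subgroup only at the very end. First I would fix the tree and pass to a unipotent subgroup. In the semi-direct case $\F{n}\rtimes_\theta\mathcal P$, Theorem \ref{Bestvina2} supplies a finite-index (unipotent) subgroup $\mathcal U\le\mathcal P$ and a simplicial $\tilde\theta(\mathcal U)$-invariant $G$-tree $\mathcal T$ whose isometries $H_{\tilde\theta(u)}$ share a common fixed vertex $O$, fix each $\mathrm{Stab}_G(O)$-orbit of directions, and for which the $G$-action has trivial edge stabilizers and one orbit of edges. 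In the direct-product case $G\times\F{k}$ the subgroup $\F{k}$ maps into $\Inn{G}$, so $\theta(\F{k})$ is already unipotent ($\mathcal U=\F{k}$), and the rescaling construction of Section \ref{une introduction introductive} (which carries over from $\F{n}$ to a torsion free hyperbolic $G$ with infinitely many ends exactly as Theorem \ref{le cas hyperbolique} extends the free case) yields a tree with the same structure. In both cases this is precisely the data feeding the hypotheses of Theorem \ref{resultat 1bis}, but now for $G\rtimes\mathcal U$ in place of $G\rtimes_\alpha\mz$.

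The core is to build a product action generalizing the $(\alpha,\beta)$-action of Lemma \ref{intermediaire}. Since the semi-direct product depends only on the outer class $\Phi:=\theta(\mathcal U)$, I would seek a \emph{second} homomorphic lift $\beta\colon\mathcal U\to\Aut{G}$ with $[\beta(u)]=\theta(u)$ for all $u$, that is a lift differing from $\tilde\theta$ by a $1$-cocycle $u\mapsto c(u)\in G\cong\Inn{G}$ (here $Z(G)=1$), chosen so that every element of $G\rtimes\mathcal U$ that is singular on the first factor (fixes more than one vertex of $\mathcal T$) acts as a hyperbolic isometry on the second, with axis disjoint from, or contained in, the corresponding fixed subtree. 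This is the step I expect to be the main obstacle: in the cyclic case the cocycle was a single free choice of inner twist on the generator $t$, whereas here the cocycle identity must hold across the whole non-cyclic group $\mathcal U$ \emph{while} rendering every singular element hyperbolic. The tool I would use is Lemma \ref{cest connu}, which pins down singular elements up to conjugacy and powers and lets one check that a coherent twist exists.

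With this $(\tilde\theta,\beta)$-action of $G_\Phi$ on $\mathcal T\times\mathcal T$ in hand, I would follow Section \ref{cyclicpoly} step by step. Closing all singular axes produces the family $\mathcal A$ of bi-infinite geodesics (whose union is a proper subtree, as required), and collapsing the singular boundary-trees yields a Hausdorff, compact space $\widetilde{\mathcal T}^2$ carrying an irreducible $G_\Phi$-action; the orbit-closure $\overline{G_\Phi.O}$ then gives a separable, compatible compactification of $G\rtimes\mathcal U$. The conditions (CP) and (CS) are verified as in Propositions \ref{CPbis} and \ref{verificationCSbis}, and the polynomial growth of the strips as in Proposition \ref{Francois6.5bis}, so Theorem \ref{theoreme 2.4-6.5} identifies $\partial\overline{G_\Phi.O}$ with the Poisson boundary of $G\rtimes\mathcal U$.

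Finally I would project to the first factor exactly as in the proof of Theorem \ref{resultat interessant 1bis}: Lemma \ref{fibre} shows that the fibres of the projection $\pi$ and the set of singular points are $\lambda$-null, so $\pi_*\lambda$ is a non-atomic measure on $\partial\widetilde{\mathcal T}$; its uniqueness follows from the proximality property through Lemma \ref{unique} and Remark \ref{compacite non necessaire}, and $(\partial\widetilde{\mathcal T},\pi_*\lambda)$ is the Poisson boundary of $G\rtimes\mathcal U$. Since $\mathcal U$ has finite index in $\mathcal P$, hence $G\rtimes\mathcal U$ in $\mathcal G$, Theorem \ref{indice fini} transfers all three conclusions to $\mathcal G$; this finite-index passage is exactly why the hypothesis is a finite first moment and why convergence is only asserted along a subsequence.
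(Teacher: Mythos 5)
Your proposal follows essentially the same route as the paper: Theorem \ref{Bestvina2} (resp.\ the construction of Section \ref{une introduction introductive}) supplies the unipotent subgroup and the invariant simplicial tree, a second lift of the outer action by inner twists turns singular elements into hyperbolic isometries on a second copy of the tree (the paper realizes your ``coherent cocycle'' concretely by prescribing the twists on generators so that the resulting hyperbolic isometries generate a free subgroup whose axes and their accumulations form the set $\mathcal A$), and the rest --- closing geodesics, collapsing singular boundary-trees, verifying (CP), (CS) and polynomial strip growth, projecting to the first factor, and descending by Theorem \ref{indice fini} --- is exactly the paper's argument. The one step you flag as the main obstacle is also the step the paper treats most tersely, so your account is faithful to the actual proof.
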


As in the previous section, we will first focus on an intermediate
result involving the product of two copies of a $G$-invariant tree.

\begin{definition}
 Let
$\theta_0, \theta_1 \colon {\mathcal U} \rightarrow \Aut{\F{n}}$ be
two monomorphisms. Assume that $[\theta_0(\mathcal U)] = [\theta_1(\mathcal
U)] := \theta(\mathcal U)$ in $\Out{G}$. Let $\mathcal T$ be a
$\theta(\mathcal U)$-invariant $G$-tree.

The {\em $(\theta_0,\theta_1)$-action of $\mathcal G = G
\rtimes_\theta {\mathcal U}$ on ${\mathcal T} \times {\mathcal T}$}
is the action given by $\Theta \colon \mathcal G \rightarrow
\mathrm{Isom}({\mathcal T} \times {\mathcal T})$ with:

$$\Theta(wu) \left\{ \begin{array}{ccccccc}
{\mathcal T} & \times & {\mathcal T} & \rightarrow & {\mathcal T} &
\times & {\mathcal T} \\
(P & , & Q) & \mapsto & (w H^{-1}_{\theta_0(u)}(P) & , & w H^{-1}_{\theta_1(u)}(Q)) \\
\end{array} \right.$$

\end{definition}

\begin{theorem}
\label{dernier resultat} Let $\mathcal G$ be any one of the
following two kinds of groups:

\begin{itemize}

\item A direct product $G \times \F{k}$ where $G$ is a torsion free hyperbolic group with
infinitely many ends.

\item A semi-direct product $G \rtimes_\theta \mathcal P$ of a free group
  $G = \F{n}$ with a finitely generated subgroup $\mathcal P$ over a
  monomorphism $\theta \colon {\mathcal P} \rightarrow \Out{\F{n}}$
  such that $\theta({\mathcal P})$ consists entirely of polynomially growing outer automorphisms.

\end{itemize}

Let $\mu$ be a probability measure on $\mathcal G$ whose support
generates $\mathcal G$ as a semi-group.

There exist

\begin{itemize}
  \item two monomorphisms $\theta_0, \theta_1 \colon {\mathcal P}
\rightarrow \Aut{G}$ with $[\theta_0(\mathcal P)] =
[\theta_1(\mathcal P)] = \theta(\mathcal P)$ and ${\mathcal G} = G
\rtimes_\theta {\mathcal P}$,
  \item a finite index subgroup $\mathcal U$ in $\mathcal P$,
  \item a simplicial $\theta(\mathcal U)$-invariant $G$-tree ${\mathcal T}$,
  \item a set $\mathcal A$ of bi-infinite geodesics in $\mathcal T$ whose
  union forms a proper subtree of
  $\mathcal T$
\end{itemize}

such that, if $\widetilde{\mathcal T}$ denotes the space obtained
from $\widehat{\mathcal T}$ by closing each geodesic in $G.\mathcal
A$ and $\partial \overline{{\mathcal
    G}.O}$ denotes the boundary
  of the closure of the $(\theta_0,\theta_1)$-orbit of $O$
  in $\partial (\widetilde{{\mathcal T}} \times \widetilde{\mathcal
    T})$ then:

\begin{enumerate}
\item {\bf P}-almost every sample path ${\bf x} = \{x_n\}$
  admits a subsequence which converges to some $x_\infty \in \partial \overline{{\mathcal G}.O}$.
\item The hitting measure $\lambda$ is a non-atomic measure and this is the unique
  $\mu$-stationary probability measure on $\partial \overline{{\mathcal G}.O}$.
\item If the measure $\mu$ has finite first moment with respect to a word-metric
  on $\mathcal G$ then the measured space $(\partial \overline{{\mathcal G}.O},\lambda)$
  is the Poisson boundary of $(\mathcal G,\mu)$.
\end{enumerate}
\end{theorem}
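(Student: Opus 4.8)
The plan is to reduce to the product-of-trees construction of Theorem \ref{resultat 1bis}, now carried out over a finitely generated unipotent group in place of a single cyclic factor, and then to descend from a finite-index subgroup to $\mathcal G$ by means of Theorem \ref{indice fini}; this is why Item (1) asserts convergence only along a subsequence and why Item (3) requires a finite first moment. I would first produce the tree $\mathcal T$. In the direct-product case $G\times\F{k}$ I write it as $G\rtimes_\theta\F{k}$ with $\theta(\F{k})\subset\Inn{G}$ and build a simplicial $G$-tree $\mathcal T$ by the rescaling construction of Section \ref{une introduction introductive} (as for $\F{n}\times\mz$); being inner, $\theta(\F{k})$ acts by isometries, so $\mathcal T$ is automatically $\theta(\F{k})$-invariant. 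In the polynomially growing case I invoke Theorem \ref{Bestvina2}, which hands me a finite-index unipotent subgroup $\mathcal U\leq\mathcal P$, a lift $\tilde\theta$, and a simplicial $\tilde\theta(\mathcal U)$-invariant $G$-tree with one edge-orbit, trivial edge stabilizers, a common fixed vertex $O$, and each $\mathrm{Stab}_G(v)$-orbit of directions $\mathcal U$-invariant. In both cases the finite-index subgroup $H:=G\rtimes_\theta\mathcal U$ satisfies the hypotheses of Theorem \ref{resultat 1bis}, with $\mathcal U$ replacing $\mz$.

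The heart of the matter is the analogue of Lemma \ref{intermediaire} for the whole group $\mathcal U$ rather than for a single $\alpha$: I must choose a second lift $\theta_1$ (setting $\theta_0:=\tilde\theta$) so that the $(\theta_0,\theta_1)$-action on $\mathcal T\times\mathcal T$ turns every element that is singular on one factor into a hyperbolic isometry of the other, with axis either contained in or disjoint from the fixed subtree. As in Lemmas \ref{cest connu} and \ref{ca bosse}, the single edge-orbit and the triviality of edge stabilizers rigidify the singular elements: those fixing a common edge are determined by their $\mathcal U$-component through the twisting relation, and no two distinct elements of $G$ share an axis. I would use this rigidity to realize $\theta_1$ as a single inner deformation of $\theta_0$ that simultaneously makes all singular elements hyperbolic on the complementary factor; the resulting family of singular axes is $G$-invariant and, because $\mathcal U$ is unipotent, its union is a \emph{proper} subtree $\mathcal A$ of $\mathcal T$. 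Making one choice that works for all of $\mathcal U$ at once, while keeping $\theta_1$ a genuine homomorphism of $\mathcal P$, is the main obstacle of the proof, and it is here that unipotence is indispensable; the direct-product case is the special instance in which the singular elements are exactly the diagonal elements $\phi(u)u$ of Section \ref{une introduction introductive}.

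With $\theta_1$ fixed the construction is parallel to Section \ref{cyclicpoly}. I close the geodesics in $G.\mathcal A$ to get $\widetilde{\mathcal T}$, collapse the singular boundary-trees to points to get $\widetilde{\mathcal T}^2$, and check as in Lemma \ref{cest la lutte finale} that $\widetilde{\mathcal T}^2$ is Hausdorff, compact, and carries an irreducible $(\theta_0,\theta_1)$-action; the neighborhood basis of Proposition \ref{open doors} then exhibits $\overline{H.O}\subset\widetilde{\mathcal T}^2$ as a separable compatible compactification of $H$. Condition (CP) is verified exactly as in Proposition \ref{CPbis}, the orbit points either turning around the limit or lying in finitely many rectangles whose singular boundary-trees have been collapsed, the pure $G$-part being controlled by the well-defined LL-map $\mathcal Q$ (trivial edge stabilizers, quasiconvex vertex stabilizers). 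Condition (CS) is obtained from the basic strips $BS(b_1,b_2)$ of exit-points as in Proposition \ref{verificationCSbis}, and their polynomial growth follows as in Proposition \ref{Francois6.5bis}, infinite strips being lines of exit-points along a $\widehat{\mathcal T}$-geodesic. Theorem \ref{theoreme 2.4-6.5}, applied to $(H,\mu^0)$ with $\mu^0$ the first-return measure, then yields convergence, uniqueness of the stationary measure, and identification of the Poisson boundary of $H$ with $\partial\overline{H.O}$.

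It remains to descend to $\mathcal G$. Since $\theta_0,\theta_1$ are defined on all of $\mathcal P$, the group $\mathcal G=G\rtimes_\theta\mathcal P$ permutes the boundary data of $\widetilde{\mathcal T}\times\widetilde{\mathcal T}$ and thereby acts continuously on $B:=\partial\overline{\mathcal G.O}$, extending the $H$-action; this is precisely the hypothesis under which Theorem \ref{indice fini} transfers convergence (along the return times $\tau_k$ to $H$), uniqueness, and the Poisson boundary identification from $(H,\mu^0)$ to $(\mathcal G,\mu)$, giving Items (1)--(3). The passage to the finite-index subgroup is what forces the subsequence in Item (1) and the finite-first-moment hypothesis in Item (3); in the direct-product case $\mathcal U=\mathcal P$, no such passage is needed, so there the hypotheses relax to finite logarithmic moment and finite entropy.
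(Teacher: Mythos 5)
Your proposal follows essentially the same route as the paper: obtain the simplicial invariant tree from Theorem \ref{Bestvina2} (resp.\ from the inner-automorphism rescaling of Section \ref{une introduction introductive}), build a second action on a second copy of $\mathcal T$ turning singular elements into hyperbolic isometries, close the resulting axes and collapse the singular boundary-trees, verify (CP), (CS) and polynomial growth of strips as in Section \ref{cyclicpoly}, and descend from $G\rtimes_\theta\mathcal U$ to $\mathcal G$ via Theorem \ref{indice fini}. The one point where the paper is more concrete than your sketch is the step you flag as the main obstacle: $\theta_1$ is not a single inner deformation but is prescribed generator by generator --- for each generator $t_i$ one finds the unique $g_i\in G$ with $g_it_i$ fixing the common edge $E$ and makes $g_it_i$ act hyperbolically as $H_i$ with $\langle H_1,H_2\rangle$ free, the properness of the subtree spanned by $\mathcal A$ (which must also include accumulations of the axes, for Hausdorffness) coming from its being the tree of that free subgroup.
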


\begin{proof}
The proof follows exactly the same scheme as the proof of Theorem
\ref{resultat 1bis}. The only difference lies in the singular
elements.

Let us first consider the case where $\mathcal G$ is a direct
product $G \times \F{2}$ (there is no loss of generality in taking
$\F{2}$ instead of $\F{k}$). We first choose a non trivial $\alpha
\in \Inn{G}$ and construct a $\alpha$-invariant $G$-tree $\mathcal
T$ as in the previous section. Let $t_1,t_2$ be the two generators
of $\F{2}$ and let $x_1,\cdots,x_r$ be the generators of $G$. We
make them act on $\mathcal T$ as two elliptic isometries fixing a
unique vertex $v_i$, $v_1 \neq v_2$ and $v_1, v_2$ are the two
vertices of some edge $E$: $t_1$ acts as $\alpha$ whereas $t_2$ acts
as $x_1 \alpha x^{-1}_1$ for instance. This gives the
$\theta_0$-action. For each $t_i$ there is a unique $g_i \in G$ such
that $g_i t_i$ fixes $E$. Then we consider another copy of $\mathcal
T$ and make act the $t_i$'s in such a way that $g_i t_i$ acts as a
hyperbolic isometry $H_i$ and the subgroup $\mathcal H = \langle
H_1, H_2 \rangle$ is free. This gives the $\theta_1$-action. Now the
subtree $T \varsubsetneq \mathcal T$ announced by Theorem
\ref{dernier resultat} is the tree of this free subgroup. We
consider the collection of bi-infinite geodesics ${\mathcal
A}^\prime$ given by the collection of the axis of the elements of
$\mathcal H$ and the bi-infinite geodesics which are accumulations
of such axis. The set $\{(X,Y) \in
\partial T \times \partial T \mbox{ s.t. } \exists \mbox{   } A \in
{\mathcal A}^\prime \mbox{ with } X,Y \in \partial A \}$ is a closed
subset of $\partial \mathcal T \times \partial \mathcal T$. Thus
closing the bi-infinite geodesics in ${\mathcal A}^\prime$ yields,
as in the previous section, a Hausdorff, compact space. We now set
${\mathcal A} = G {\mathcal A}^\prime$ and we close the bi-infinite
geodesics in $\mathcal A$. The space $\widetilde{\mathcal T}$ we get
by closing the bi-infinite geodesics in $\mathcal A$ is Hausdorff
and compact by the same argument as in the previous section. The
$(\theta_0,\theta_1)$-action on $\mathcal T \times \mathcal T$ is
the action made explicit above. All the arguments until the end are
then copies of already given arguments.

The adaptation to the case of the extension of a free group by a
polynomial growth subgroup is done as follows: Theorem
\ref{Bestvina2} gives a finite-index subgroup $\mathcal U$ for which
there exists a $\theta_{|{\mathcal U}}({\mathcal U})$-invariant
$\F{n}$-tree $\mathcal T$. Assuming without loss of generality that
$\mathcal U$ is $2$-generated, there exist as before $g_i \in G$
such that $g_i t_i$ both fix the same edge $E$ since the $t_i$'s fix
each $\mathrm{Stab}_G(v)$-orbit of directions at $v$. We leave the
reader work out the details for the remaining arguments. We conclude
at the end by Theorem \ref{indice fini} which allows us to recover the
conclusion for the original group $G \rtimes_\theta \mathcal P$, of
which $G \rtimes_{\theta_{|{\mathcal U}}} \mathcal U$ is a
finite-index subgroup.
\end{proof}

\bigskip

Theorem \ref{dernier resultat interessant} is deduced from Theorem
\ref{dernier resultat} in the same way as Theorem \ref{resultat
interessant 1bis} was deduced from Theorem \ref{resultat 1bis}:
Lemma \ref{il faudra le prouver} still holds here. \hfill $\Box$ \\

As was previously done, the existence of the LL-map $\mathcal Q$
gives the

\begin{corollary}
\label{le dernier corollaire interessant} With the assumptions and
notations of Theorem \ref{dernier resultat interessant}:

\begin{enumerate}
  \item There is a topology on ${\mathcal G} \cup \partial G$
  such that {\bf P}-almost every sample path admits a subsequence which converges to $x_\infty \in \partial G$.
  \item The hitting measure $\lambda$ is non-atomic and this is the unique
$\mu$-stationary measure on $\partial G$.
  \item If $\mu$ has a finite first moment, then the measured space $(\partial G,\lambda)$ is the Poisson
boundary of $(\mathcal G,\mu)$.
\end{enumerate}
\end{corollary}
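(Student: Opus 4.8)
The plan is to deduce this corollary from Theorem \ref{dernier resultat interessant} exactly as Corollary \ref{corollaire pour GL} was deduced from Theorem \ref{resultat 1}, and Corollary \ref{le corollaire interessant} from Theorem \ref{resultat 2bis}: one transports the boundary $\partial \widetilde{\mathcal T}$ back to $\partial G$ through the LL-map $\mathcal Q$. First I would record that, by Theorems \ref{le cas hyperbolique} and \ref{Bestvina2}, the tree $\mathcal T$ furnished by Theorem \ref{dernier resultat interessant} has quasiconvex vertex stabilizers and trivial edge (hence arc) stabilizers, so that the LL-map $\mathcal Q \colon \partial G \to \widehat{\mathcal T}$ of Lemma \ref{motivons} is well-defined, continuous, surjective and equivariant for the action of the relevant finite-index subgroup $G \rtimes \mathcal U$. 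Composing with the quotient map $q \colon \widehat{\mathcal T} \to \widetilde{\mathcal T}$ that closes the geodesics in $G.\mathcal A$, I obtain a continuous, equivariant map $\Psi = q \circ \mathcal Q \colon \partial G \to \widetilde{\mathcal T}$.

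The key point to verify is the fibre structure of $\Psi$ over the boundary $\partial \widetilde{\mathcal T}$, since this is where the hitting measure lives. I would check that a fibre $\Psi^{-1}(\xi)$, for $\xi \in \partial \widetilde{\mathcal T}$, is either a single point of $\partial G$ or the pair of endpoints of a closed geodesic: this follows because the pre-image of a point of $\partial \mathcal T$ under $\mathcal Q$ is a single point (Theorem \ref{le cas libre}, item (4)), and closing geodesics identifies the two endpoints of each geodesic in $G.\mathcal A$. Using the finiteness of the number of $G$-orbits of branch-points and of closed geodesics, these fibres are the disjoint translates of finitely many disjoint compact subsets of $\partial G$; by Lemma \ref{fibre} each has $\lambda$-measure zero, so $\Psi$ is a measurable isomorphism onto $\partial \widetilde{\mathcal T}$ off a null set.

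With this in hand the three items follow mechanically. For Item (1), Theorem \ref{dernier resultat interessant}(1) gives a subsequence of $\PP$-almost every sample path converging in $\partial \widetilde{\mathcal T}$; endowing $\mathcal G \cup \partial G$ with the topology that makes $\Psi$ continuous and pulling back through $\Psi$ yields convergence of a subsequence to a point $x_\infty \in \partial G$. For Item (2), I set $\lambda$ to be the resulting hitting measure on $\partial G$, so that $\Psi_*\lambda$ is the $\mu$-stationary measure of Theorem \ref{dernier resultat interessant}(2); non-atomicity, $\mu$-stationarity and uniqueness then transfer through the measurable isomorphism. For Item (3), if $\mu$ has finite first moment, Theorem \ref{dernier resultat interessant}(3) identifies $\partial \widetilde{\mathcal T}$ with the Poisson boundary, and since $\Psi$ identifies $(\partial G,\lambda)$ with $(\partial \widetilde{\mathcal T},\Psi_*\lambda)$ up to a null set, the same holds for $(\partial G,\lambda)$.

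I expect the main obstacle to be the bookkeeping around the finite-index subgroup $\mathcal U$: the tree $\mathcal T$ is only $\theta(\mathcal U)$-invariant, so $\mathcal Q$ is a priori equivariant only for $G \rtimes \mathcal U$ and not for all of $\mathcal G$, and I would have to argue (as in Theorem \ref{indice fini}, already invoked inside Theorem \ref{dernier resultat interessant}) that this suffices, since the boundary behaviour and the Poisson boundary of $(\mathcal G,\mu)$ are governed by the first-return random walk on the finite-index subgroup. The other point requiring genuine care is the verification that the collapsed fibres are indeed disjoint translates of finitely many compact sets, together with the precise choice of topology on $\mathcal G \cup \partial G$; once these are in place, the measure-theoretic transfer is routine.
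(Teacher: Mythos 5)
Your proposal is correct and follows essentially the same route as the paper, which simply invokes the existence of the LL-map $\mathcal Q$ and refers back to the proof of Corollary \ref{corollaire pour GL} (transport via $q \circ \mathcal Q$, fibres of measure zero, transfer of non-atomicity, stationarity, uniqueness and maximality). Your more explicit treatment of the fibre structure and of the finite-index bookkeeping is exactly what the paper leaves implicit.
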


\begin{remark}
In the case where $\mathcal G$ is a direct product or is a semi-direct product over a {\em unipotent subgroup}
(see Theorem \ref{Bestvina2})
of polynomially growing automorphisms then there is no need to pass to a finite-index subgroup so that
the conditions on the measure in item $(3)$ may be relaxed to ``finite first logarithmic moment and finite entropy''
in Theorem \ref{dernier resultat interessant} and Corollary \ref{le dernier corollaire interessant}. Moreover there is
no need to pass to a subsequence in the first items of these results.
\end{remark}

\bigskip

\section{Examples}\label{Examples}

\subsection{Extension of a free group by a free group of polynomially growing automorphisms}
\label{exemple1}

Let $\F{3} = \langle a,b,c \rangle$ and let ${\mathcal U} := \F{2} =
\langle t_1,t_2 \rangle$. We define $\theta(t_1) := \alpha \in
\Aut{\F{3}}$ and $\theta(t_2) := \beta \in \Aut{\F{3}}$ by:

\begin{enumerate}
  \item $\alpha(a) = a$, $\alpha(b) = b$ and $\alpha(c) = ca$.
  \item $\beta(a) = a$, $\beta(b) = b$ and $\beta(c) = cb$.
\end{enumerate}

The morphism $\theta \colon {\mathcal U} \rightarrow \Aut{\F{n}}$ is
a monomorphism, i.e. $\langle \alpha,\beta \rangle$ is a free
subgroup of rank $2$ of $\Aut{\F{3}}$ because, for any non-trivial
reduced word $u$ in $\alpha^{\pm 1}, \beta^{\pm 1}$, the word $u(c)$
is non-trivial. Hence there are no non-trivial relation between
$\alpha^{\pm 1}$ and $\beta^{\pm 1}$.

The subgroup $\langle \alpha,\beta \rangle$ is a subgroup of
polynomially growing automorphisms. Indeed $a$ and $b$ have
$0$-growth under $\alpha$ and $\beta$, so under any $u \in {\mathcal
U}$, whereas $c$ has linear growth. Hence for any $w \in \F{3}$ the length
of $u(w)$ growths
at most linearly with respect to $|u|_{\langle a,b,c \rangle}$.

\subsection{Extension of a free group by an abelian group of polynomially growing automorphisms}
\label{exemple2}

Let $\alpha \in \Aut{\F{2}}$, $\F{2} = \langle a,b \rangle$, be
defined by $\alpha(a) = a$, $\alpha(b) = ab$. This is a polynomially
growing automorphism.

We consider the free group $\F{2k} = \langle a_1,b_1,\cdots,a_k,b_k
\rangle$ and the automorphisms $\alpha_1,\cdots,\alpha_k$ of
$\F{2k}$ defined by
$$\alpha_i(a_i) = a_i \mbox{ for any } i=1,\cdots,k$$
$$\alpha_i(b_i) = b_i a_i \mbox{ if } i=j$$
$$\alpha_i(b_j) = b_j \mbox{ if } i \neq j.$$
The aubgroup $\langle \alpha_1,\cdots,\alpha_k \rangle <
\Aut{\F{2k}}$ is a $\mz^k$-subgroup of polynomially growing
automorphisms in $\Aut{\F{2k}}$.

\section{Relative hyperbolicity and application to Poisson
boundaries}\label{rh}

The aim of this section is to give another description of the
Poisson boundary of a group $G_{\alpha} = G \rtimes_\alpha \mz$ when $G$ is
either a free group or the fundamental group of a compact hyperbolic
surface, and $\alpha$ is an exponentially growing automorphism.
There seems to be nothing really new here for geometric group
theorists, but only a compilation of folklore, well-known and more
recent results.

\subsection{Poisson boundary of strongly relatively hyperbolic
groups}

The Proposition \ref{dahmani} below describes the Poisson boundary of a strongly
relatively hyperbolic group in terms of its relative hyperbolic boundary. It can be 
easily deduced from \cite{Kaim1}. Beware however
that the hyperbolicity of the coned-off Cayley graph (see below) of Farb
\cite{Farb} is not sufficient: we really need the strong version of
the relative hyperbolicity. We briefly recall some basic facts about relative
hyperbolicity and otherwise invite the interested reader to consult \cite{Farb}, \cite{Bowditch}
for various definitions of relative hyperbolicity. If $G$ is a discrete group with generating set $S$, 
and $H$ a is a subgroup of $G$, the {\em coned-off Cayley graph $\Gamma^H_S(G)$ of $(G,H)$} is the graph 
obtained from by adding to $\Gamma_S(G)$ (the Cayley graph of $G$ with respect to $S$) a vertex $v(gH)$ 
for each left $H$-class and an edge of length $\frac{1}{2}$ between $v(gH)$ and all the vertices of 
$\Gamma_S(G)$ associated to elements in the class $gH$. The {\em weak relative hyperbolicity of $G$ 
with respect to $H$} just requires the hyperbolicity of $\Gamma^H_S(G)$. The {\em strong relative 
hyperbolicity} requires in addition that $\Gamma^H_S(G)$ satisfy the so-called {\em Bounded Coset 
Penetration} property, see \cite{Farb}. We will not recall its definition here but just say that this 
property forbids that two left $H$-classes remain parallel along arbitrarily long paths in $\Gamma_S(G)$. 
Combined with the hyperbolicity of $\Gamma^H_S(G)$, it follows that any two left $H$-classes separate 
exponentially so that, in particular, they define distinct points in the {\em relative hyperbolic boundary
$\partial^{RH}(G,{H})$}. Of course some care has to be taken in order to get a correct definition of 
this boundary, since $\Gamma^H_S(G)$ is not a proper space (closed balls are not compact) as soon as 
$H$ is infinite. We refer the reader to \cite{Bowditch} or \cite{Asli} to the definition of this relative 
hyperbolic boundary. The topology used by Bowditch is close in spirit to the observers topology used 
before in the current paper to deal with the non-properness of $\mr$-trees. Another construction of 
this relative hyperbolic boundary can be found in \cite{Groves} where the author takes more care in 
constructing a proper space associated to $(G,H)$ but then gets the relative hyperbolic boundary in 
an easier way. This last definition is closer in spirit to Gromov approach of relative hyperbolicity 
as initiated a long time ago in the seminal paper \cite{Gromov}. Of course all these notions have a 
straightforward generalization when substituting a finite family of subgroups $\mathcal H$ to a single 
subgroup $H$.

\begin{proposition}
\label{dahmani} Let $G$ be a finitely generated group which is
strongly hyperbolic relatively to a finite family of subgroups
$\mathcal H$. Let $\partial^{RH}(G,{\mathcal H})$ be the relative
hyperbolic boundary of $(G,{\mathcal H})$. Let $\mu$ be a
probability measure on $G$ whose support generates $G$ as a
semi-group. Then:

\begin{enumerate}
  \item {\bf P}-almost every sample path ${\bf x} = \{x_n\}$
  converges to some $x_\infty \in \partial^{RH}(G,{\mathcal H})$.
  \item The measure $\lambda = \pi({\bf P})$ on $\partial^{RH}(G,{\mathcal H})$ which
  is the distribution of $x_\infty$
  is $\mu$-stationary and non-atomic. It is the unique $\mu$-stationary measure on
  $\partial^{RH}(G,{\mathcal H})$. The $G$-space $(\partial^{RH}(G,{\mathcal H}),\lambda)$ is a
  $\mu$-boundary of $(G,\mu)$.
  \item If $\mu$ has finite first logarithmic moment and finite entropy then
  the measured space $(\partial^{RH}(G,{\mathcal H}),\lambda)$ is the Poisson boundary of $(G,\mu)$.

\end{enumerate}

\end{proposition}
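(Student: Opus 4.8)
The plan is to apply Kaimanovich's criterion, Theorem \ref{theoreme 2.4-6.5}, to the compactification $\overline{G} = G \cup \partial^{RH}(G,{\mathcal H})$. First I would fix a finite generating set $S$ and realize the relative boundary as the Gromov boundary of a proper, geodesic, Gromov-hyperbolic space $X$ on which $G$ acts properly by isometries (the cusped space of \cite{Groves}, or Bowditch's model \cite{Bowditch}), with $\partial X = \partial^{RH}(G,{\mathcal H})$. Choosing a basepoint $x_0$ and identifying $G$ with the orbit $G\cdot x_0$, the closure of $G\cdot x_0$ in $X \cup \partial X$ is $G\cdot x_0 \cup \partial^{RH}(G,{\mathcal H})$; since $\partial^{RH}(G,{\mathcal H})$ is compact and metrizable, this yields a compatible, separable compactification of $G$ in the sense required by Theorem \ref{theoreme 2.4-6.5}.

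Next I would check the standing hypotheses. Because the support of $\mu$ generates $G$ as a semigroup, $gr(\mu) = G$; since $G$ acts on $\partial^{RH}(G,{\mathcal H})$ as a non-elementary convergence group (it contains independent loxodromic elements, so the boundary has at least three points and no finite invariant subset), the condition on $gr(\mu)$ holds. Condition (CP) is immediate in this model: $G$ acts on $X$ by isometries, so for a fixed $x \in G$ one has $d_X(g_n x_0, g_n x x_0) = d_X(x_0, x x_0)$, a constant independent of $n$; two sequences remaining at bounded distance in a hyperbolic space have the same limit on $\partial X$, whence $g_n \to \xi$ forces $g_n x \to \xi$.

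For condition (CS) and the growth estimate I would work in the coned-off Cayley graph $\Gamma^{\mathcal H}_S(G)$. Given distinct $b_1, b_2 \in \partial^{RH}(G,{\mathcal H})$, let $S(b_1,b_2)$ be the set of ordinary (non-cone) vertices lying within bounded coned-off distance of a relative geodesic from $b_1$ to $b_2$; a measurable equivariant selection provides the required Borel $G$-map. Hyperbolicity of $\Gamma^{\mathcal H}_S(G)$ shows that a bi-infinite relative geodesic accumulates only at its two endpoints, so it avoids a neighbourhood of any third boundary point, giving (CS). For the growth, note that the word metric dominates the relative metric, so $\mathcal J_k$ is contained in the ball of radius $k$ about $e$ in $\Gamma^{\mathcal H}_S(G)$; a geodesic in the hyperbolic graph $\Gamma^{\mathcal H}_S(G)$ meets this ball in $O(k)$ vertices, hence $\mathrm{Card}[\,S(b_1,b_2) \cap \mathcal J_k\,] = O(k)$ and the strips grow linearly, a fortiori polynomially.

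The delicate point, and the reason weak relative hyperbolicity does not suffice, is the behaviour of these strips at the parabolic subgroups. The Bounded Coset Penetration property is exactly what guarantees that relative geodesics are genuinely thin: two left ${\mathcal H}$-cosets cannot stay parallel indefinitely, so distinct cosets determine distinct (bounded parabolic) points of $\partial^{RH}(G,{\mathcal H})$, the boundary is Hausdorff and metrizable, and the relative geodesic between two boundary points is unique up to bounded coned-off distance. Verifying these separation and thinness statements carefully, in order to legitimate both the selection of $S(b_1,b_2)$ and its accumulation at only $b_1,b_2$, will be the main obstacle. Once they are in place, the first half of Theorem \ref{theoreme 2.4-6.5} yields convergence of $\mathbf P$-almost every path and items (1) and (2), while the finiteness of the first logarithmic moment and of the entropy, together with the polynomial growth of the strips, yield the identification with the Poisson boundary in item (3).
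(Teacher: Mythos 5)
Your overall strategy --- realizing $\partial^{RH}(G,{\mathcal H})$ as the boundary of a proper hyperbolic model, checking the hypotheses of Theorem \ref{theoreme 2.4-6.5}, and building the strips out of relative geodesics --- is the one the authors have in mind: they give no written proof of Proposition \ref{dahmani}, only the assertion that it ``can be easily deduced from \cite{Kaim1}'' together with the warning that the Bounded Coset Penetration property is what makes it work. Your verification of (CP) via the isometric action on a proper model is fine. The genuine gap is in your strips. A set of ``ordinary vertices lying within bounded coned-off distance of a relative geodesic'' contains, for every cone vertex $v(gH)$ that the geodesic passes through, the \emph{entire} coset $gH$: each element of $gH$ lies at coned-off distance $\frac12$ from $v(gH)$. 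This breaks the argument in two places. First, condition (CS) fails: the coset $gH$ accumulates in $\overline{G}$ exactly at the parabolic point fixed by $gHg^{-1}$, so the strip does not avoid a neighbourhood of that boundary point when it is taken as the third point $b_0$. Second, the growth estimate fails: $\Card\,[S(b_1,b_2)\cap\Jj_k]$ is at least $\Card\,[gH\cap\Jj_k]$, which is exponential in $k$ whenever a parabolic subgroup has exponential growth --- and in the very application the authors make of this proposition (Theorem \ref{consequence facile}) the parabolics are mapping tori of free groups, which do have exponential growth. Your $O(k)$ count conflates the geodesic itself (a path, which indeed meets a radius-$k$ ball in $O(k)$ vertices) with its coned-off neighbourhood, which is infinite as soon as the radius is at least $\frac12$.

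The repair is to retain, inside each penetrated coset, only the entrance and exit points of the relative geodesics (the ``transition points''); by the BCP property these form a uniformly bounded set per coset, and the resulting strip is a genuinely thin object that both separates boundary points and grows polynomially (linearly, outside the parabolics). This is exactly the care the authors take in the parallel situation of Definition \ref{ca avance} and Proposition \ref{Francois6.5}, where the portion of the strip inside a parabolic mapping torus is a corridor between entrance and exit points rather than a whole coset. A secondary point: the hypothesis that the support of $\mu$ generates $G$ as a semigroup gives $gr(\mu)=G$, but the requirement of Theorem \ref{theoreme 2.4-6.5} that $gr(\mu)$ fix no finite subset of the boundary holds only when the relatively hyperbolic structure is non-elementary; you assert the existence of independent loxodromic elements as if it were automatic, whereas it is an additional (implicit, and in the applications satisfied) assumption that should be recorded.
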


\subsection{Applications to cyclic extensions of free and surface groups}

We need now some material borrowed from \cite{GauteroLustig}. Let
$\Phi \in \Out{\F{n}}$. A family of $\Phi$-polynomially growing
subgroups ${\mathcal H} = (H_1,\cdots,H_r)$ is called {\em
exhaustive} if every element $g \in \F{n}$ of polynomial growth is
conjugate to an element contained in some of the $H_{i}$. The family
$\mathcal H$ is called {\em minimal} if no $H_{i}$ is a subgroup of
any conjugate of some $H_{j}$ with $i \neq j$. The following
proposition is well-known among the experts of free group
automorphisms, see \cite{Levitt} or \cite{GauteroLustig} for a
proof.

\begin{proposition} \cite{Levitt,GauteroLustig}
\label{polynomialfamily}  Every outer automorphism $\Phi \in
\Out{\F{n}}$ possesses a {\em $\Phi$-characteristic family
${\mathcal H}(\Phi)$}, that is a family satisfying the following
properties:
 \smallskip

\noindent (a)  ${\mathcal H}(\Phi) = (H_1, \ldots , H_r)$ is a
finite, exhaustive, minimal family of finitely generated subgroups
$H_{i}$ that are of polynomial growth.

\smallskip
\noindent (b)  The family ${\mathcal H}(\Phi)$ is uniquely
determined, up to permuting the $H_{i}$ or replacing any $H_{i}$ by
a conjugate.

\smallskip
\noindent (c)  The family ${\mathcal H}(\Phi)$ is $\Phi$-invariant
(up to conjugacy).
\end{proposition}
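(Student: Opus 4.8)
The plan is to build the family from a train-track representative and then show that the output is canonical. First I would pass to the set $\mathrm{Poly}(\Phi)$ of conjugacy classes of elements $w \in \F{n}$ whose $\Phi$-growth is polynomial. Since the growth type of an element is unchanged when $\Phi$ is replaced by a nonzero power, I may work with a convenient power $\Phi^k$ and recover the statement for $\Phi$ at the end; in particular I choose $k$ so that a relative train-track map $f \colon G \to G$ representing $\Phi^k$ (Bestvina--Handel, as refined by BFH and recorded in \cite{Levitt}) fixes each stratum of its filtration, every stratum being exponentially growing, non-exponentially growing, or a zero stratum. Let $G_{\mathrm{pol}}$ be the maximal $f$-invariant subgraph whose strata are all non-exponentially growing or zero. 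Its connected components are permuted by $f$, and I take $H_1,\dots,H_r$ to be, up to conjugacy, the images in $\F{n}$ of the fundamental groups of these components (transported along paths to the basepoint). Each $H_i$ is finitely generated, the list is finite, and the train-track structure of the polynomial strata shows that every element of each $H_i$ grows polynomially; this is the content of (a) except for exhaustiveness and minimality.

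For exhaustiveness I would argue that if $w$ has polynomial growth, then the circuit representing its conjugacy class, put in train-track form, cannot cross an exponentially growing stratum essentially (doing so forces exponential growth), hence it is carried by $G_{\mathrm{pol}}$ and $w$ is conjugate into some $H_i$. Minimality is then obtained by simply deleting any $H_i$ that is conjugate into another, which only shortens a finite list, so (a) holds. The heart of the matter, and the step I expect to be the main obstacle, is the uniqueness assertion (b): a priori the $H_i$ depend on the chosen $f$. To remove this dependence I would give an intrinsic characterization, namely that $\mathcal{H}(\Phi)$ is exactly the set of conjugacy classes of \emph{maximal} finitely generated subgroups all of whose elements are $\Phi$-polynomially growing. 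Indeed, if $H_i \le K$ with $K$ such a subgroup, then by exhaustiveness $K$ is subconjugate to some $H_j$, whence $H_i$ is subconjugate to $H_j$; minimality forces $j=i$, and combining $H_i \le K$ with $K$ subconjugate to $H_i$ yields $K = H_i$. Thus each $H_i$ is maximal, and conversely every maximal such subgroup is one of the $H_i$. Since this description refers only to $\Phi$ and the ambient group, it is independent of $f$, giving (b); the train-track construction is needed only to guarantee finiteness and finite generation, which is precisely the delicate input.

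Finally, invariance (c) follows from the intrinsic characterization: because $\Phi$ is an automorphism, $w$ grows polynomially if and only if $\Phi(w)$ does, so $\Phi$ carries maximal polynomial subgroups to maximal polynomial subgroups and therefore permutes the finitely many classes $[H_i]$. Concretely $f$ permutes the components of $G_{\mathrm{pol}}$, so $\Phi^k$ fixes each $[H_i]$ and $\Phi$ permutes them in orbits of size dividing $k$, establishing $\Phi$-invariance up to conjugacy. To close the argument I would transfer everything from $\Phi^k$ back to $\Phi$: the two automorphisms have the same polynomially growing elements, hence the same maximal polynomial subgroups, so the family produced for $\Phi^k$ is the family $\mathcal{H}(\Phi)$, and (a)--(c) hold for $\Phi$ itself.
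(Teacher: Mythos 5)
The paper does not actually prove Proposition \ref{polynomialfamily}: it is quoted from \cite{Levitt} and \cite{GauteroLustig}. Measured against the argument in those sources, your route --- a relative train-track representative of a suitable power of $\Phi$, the maximal invariant subgraph $G_{\mathrm{pol}}$ carrying only non-exponential strata, its component fundamental groups as the $H_i$, and an intrinsic characterization to get uniqueness --- is the standard one, and parts (a) and (c) are in order modulo the train-track facts you explicitly defer to (including the point, worth a sentence, that polynomial growth of an element and of its conjugacy class coincide, since the train-track machinery controls circuits while exhaustiveness as defined here is about elements).

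There is, however, a genuine gap at the heart of your proof of (b). Your intrinsic characterization hinges on the deduction ``$K$ is a finitely generated subgroup all of whose elements grow polynomially, hence by exhaustiveness $K$ is subconjugate to some $H_j$.'' Exhaustiveness is a statement about single elements: each $w \in K$ is conjugate, by a conjugator depending on $w$, into some $H_{i(w)}$ depending on $w$. Nothing in your argument promotes this to one conjugator and one index valid for all of $K$ at once, and for an arbitrary finite family of finitely generated subgroups of $\F{n}$ such a promotion is false. What makes it true here is additional structure you never invoke: the components of a subgraph of a marked graph determine a \emph{free factor system} of $\F{n}$ (collapse a maximal forest of $G$ containing one of $G_{\mathrm{pol}}$), so the $H_i$ are, up to conjugacy, vertex stabilizers of an action of $\F{n}$ on a simplicial tree with trivial edge stabilizers; Serre's lemma (a finitely generated group acting on a tree with every element elliptic fixes a vertex) then upgrades element-wise ellipticity to subconjugacy of $K$ into a single $H_j$. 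Without this lemma both the maximality of the $H_i$ and the converse inclusion fail to be established, and (b) --- the only nontrivial part of the proposition --- is unproved. A smaller gap in the same step: from $H_i \le K \le gH_ig^{-1}$ you conclude $K = H_i$, which tacitly uses that a finitely generated subgroup of a free group cannot be conjugate to a proper subgroup of itself; this is true (compare Stallings cores) but should be stated and justified.
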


We need to precise a little bit more this notion of ``invariance''
for a family of subgroups, with respect to the action of an
automorphism (and not only an outer automorphism).

For any $\alpha \in \Aut{\F{n}}$, a family of subgroups $\mathcal H
= (H_1,\ldots, H_r)$ is called {\em $\alpha$-invariant up to
conjugation} if there is a permutation $\sigma$ of $\{1, \ldots,
r\}$ as well as elements $h_{1}, \ldots, h_{r} \in G$ such that
$\alpha(H_k) = h_k H_{\sigma(k)} h^{-1}_k$ for each $k \in \{1,
\ldots, r\}$. Let ${\mathcal H} = (H_1,\ldots,H_r)$ be a finite
family of subgroups of $G$ which is $\alpha$-invariant up to
conjugacy. For each $H_{i}$ in $\mathcal H$ let $m_{i} \geq 1$ be
the smallest integer such that $\alpha^{m_{i}}(H_{i})$ is conjugate
in $G$ to $H_{i}$, and let $h_{i}$ be the conjugator:
$\alpha^{m_{i}}(H_{i}) = h_{i} H_{i} h_{i}^{-1}$ (this conjugator is well-defined because
the subgroups in $\mathcal H$ are malnormal - see Remark \ref{malnormal}). We define the {\em
induced mapping torus subgroup}:
$$
H^\alpha_{i} =\, \, \langle H_{i}, h^{-1}_i t^{m_i} \rangle \, \,
\subset \, G_{\alpha}$$

\begin{definition}
\label{inducedmappingtorus} Let ${\mathcal H} = (H_1,\ldots,H_r)$ be
a finite family of subgroups of $G$ which is $\alpha$-invariant up
to conjugacy. A family of induced mapping torus subgroups
$${\mathcal H}^\alpha =
(H^\alpha_{1}, \ldots, H^\alpha_{q})$$ as above is the {\em mapping
torus of $\mathcal H$ with respect to $\alpha$} if it contains for
each conjugacy class in $G_{\alpha}$ of any $H^\alpha_{i}$, for $i =
1, \ldots, r$, precisely one representative.
\end{definition}

The following theorem is from \cite{GauteroLustig} in the case where
$G = \F{n}$ and from \cite{Gautero3} (see also
\cite{GauteroLustigvieux}) in the surface case.

\begin{theorem}
\label{lustigmoi} The group $G \rtimes_\Phi \mz$ is strongly
hyperbolic relative to the mapping-torus of a $\Phi$-characteristic
family.
\end{theorem}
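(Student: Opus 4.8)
The plan is to exhibit a model space for $G \rtimes_\Phi \mz$ on which relative hyperbolicity can be read off directly, and then to verify Farb's two requirements: hyperbolicity of the coned-off Cayley graph, together with the Bounded Coset Penetration (BCP) property. Fix a representative $\alpha \in \Aut{G}$ of $\Phi$ and let ${\mathcal H}(\Phi) = (H_1,\dots,H_r)$ be the $\Phi$-characteristic family supplied by Proposition \ref{polynomialfamily}; after conjugating $\alpha$ we may assume ${\mathcal H}(\Phi)$ is $\alpha$-invariant up to conjugacy and form the induced mapping-torus family ${\mathcal H}^\alpha = (H^\alpha_1,\dots,H^\alpha_q)$ of Definition \ref{inducedmappingtorus}. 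These are the candidate peripheral subgroups. The natural model is the mapping telescope $\Gamma^f_{\mathcal H}$ of the coned-off Cayley graph $\Gamma_{\mathcal H}$ of $(G,{\mathcal H}(\Phi))$ under a cellular map $f$ realizing $\alpha$, exactly as in the proof of Proposition \ref{importante}; coning off the cosets of ${\mathcal H}^\alpha$ in $G_\alpha$ amounts to collapsing the vertical mapping tori of the $H_i$ inside $\Gamma^f_{\mathcal H}$.

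First I would establish weak relative hyperbolicity, i.e. hyperbolicity of this coned-off telescope. The base graph $\Gamma_{\mathcal H}$ is hyperbolic because $G$ is hyperbolic relative to its quasiconvex, malnormal family ${\mathcal H}(\Phi)$ (in the free case $G$ is already hyperbolic and the $H_i$ are quasiconvex; in the surface case $\Gamma_{\mathcal H}$ is the coned-off surface). The role of passing to ${\mathcal H}(\Phi)$ is that, once the polynomial directions are coned off, the residual growth of $\Phi$ is genuinely exponential: every horizontal geodesic of $\Gamma_{\mathcal H}$ avoiding the cone vertices is stretched, resp. contracted, by a definite exponential factor under $f$, resp. $f^{-1}$. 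This is precisely the flaring condition, and it is what the analysis of corridors and collapsing corridors in the proof of Proposition \ref{importante} encodes --- a non-collapsing corridor flares in both vertical directions. Feeding this exponential flaring into a combination theorem in its relative, coned-off form yields hyperbolicity of $\Gamma^f_{\mathcal H}$, hence weak relative hyperbolicity of $(G_\alpha,{\mathcal H}^\alpha)$.

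Next I would verify BCP. Two distinct cosets of peripheral subgroups correspond to two distinct vertical mapping tori in $\Gamma^f_{\mathcal H}$, that is, to cone vertices sitting over distinct $G$-orbits of polynomial subgroups. BCP demands that a path shadowing one such coset closely and then a second one must enter and exit the coset regions in a uniformly bounded way. This follows from the same exponential separation: because the $H_i$ are malnormal (Remark \ref{malnormal}) and the non-collapsing corridors flare, two distinct peripheral cosets cannot stay parallel along arbitrarily long horizontal excursions, so their fellow-travelling length is bounded by a constant coming from the flaring rate. Quantitatively, one invokes the closedness of the collapsing corridors (Lemma \ref{il faut le rajouter}) to bound how long an electric geodesic can shadow a cone before it must diverge, which is exactly the estimate BCP requires.

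The main obstacle is the flaring step: one must show that coning off \emph{exactly} the $\Phi$-characteristic family --- and nothing more --- simultaneously destroys all non-hyperbolic behaviour yet leaves a space that still flares exponentially. In other words, ${\mathcal H}(\Phi)$ must be large enough (exhaustive, so every polynomially growing conjugacy class is coned) and small enough (minimal, so no genuinely exponential direction is collapsed); this is where the dichotomy of the two cases really lives. In the free case it rests on the Bestvina--Handel relative train-track machinery underlying \cite{GauteroLustig}, while in the surface case it rests on the Nielsen--Thurston decomposition, the pseudo-Anosov pieces supplying the flaring (their mapping tori being hyperbolic by Thurston) and the reduction annuli and linearly growing subsurfaces assembling into the peripheral mapping tori, as in \cite{Gautero3, GauteroLustigvieux}. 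Once the flaring estimate is in hand, the combination theorem and BCP are comparatively formal, so I would concentrate the technical effort there.
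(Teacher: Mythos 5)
The paper does not actually prove Theorem \ref{lustigmoi}: it is imported wholesale, the sentence preceding it attributing the free-group case to \cite{GauteroLustig} and the surface case to \cite{Gautero3} (see also \cite{GauteroLustigvieux}). So there is no in-paper argument to compare yours against; the only fair comparison is with the strategy of those references, and at that level your outline is faithful to them --- mapping telescope over the coned-off Cayley graph, exponential flaring of the directions transverse to the characteristic family, a combination theorem to obtain hyperbolicity of the coned-off telescope, and malnormality plus exponential separation for BCP. That is indeed the architecture of the cited proofs.

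As a proof, however, the proposal has a genuine gap, and you have in fact located it yourself: everything rests on the flaring estimate and on a combination theorem valid in the relative (non-proper, coned-off) setting, and both are asserted rather than established. The flaring statement --- that after coning off exactly the $\Phi$-characteristic family every non-peripheral horizontal geodesic is uniformly exponentially stretched by $f$ or by $f^{-1}$ --- is essentially the entire technical content of \cite{GauteroLustig} in the free case (via relative train tracks) and of \cite{Gautero3} in the surface case (via the Nielsen--Thurston decomposition); it does not follow from exhaustiveness and minimality of ${\mathcal H}(\Phi)$ alone, since one must rule out subexponential-but-superpolynomial cancellation and control the interaction between distinct strata of the train track. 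Likewise, the Bestvina--Feighn combination theorem does not apply off the shelf to a telescope of non-proper coned-off graphs; one needs a relative version whose hypotheses go beyond flaring. Finally, the appeal to Lemma \ref{il faut le rajouter} for BCP is misplaced: that lemma is a qualitative closedness statement about the set of collapsing corridors, whereas BCP requires uniform constants bounding how long two distinct peripheral cosets can fellow-travel, and that uniformity again comes from the quantitative flaring rate, not from closedness. None of this makes your plan wrong --- it is the right plan --- but carried out in full it reproduces the cited papers rather than shortcutting them, which is presumably why the present paper quotes the result instead of proving it.
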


We can now state the theorem of this section, which follows directly
from Proposition \ref{dahmani} and Theorem \ref{lustigmoi}:

\begin{theorem}
\label{consequence facile} Let $G$ be the fundamental group of a
compact hyperbolic surface or the free group of rank $n$. Let $\Phi
\in \Out{G}$ be an exponentially growing outer automorphism and let
$\alpha$ be any automorphism in the class $\Phi$. Let $\mu$ be a
probability measure on $G_\alpha = G \rtimes_\alpha \mz$ whose support 
generates $G_\alpha$ as a semi-group. Then:

\begin{enumerate}
  \item {\bf P}-almost every sample path ${\bf x} = \{x_n\}$
  converges to some $x_\infty \in \partial^{RH}(G_\alpha,{\mathcal H}^\alpha(\Phi))$.
  \item The hitting measure $\lambda$, which is the distribution
  of $x_\infty$, is a non-atomic measure on
  $\partial^{RH}(G_\alpha,{\mathcal H}^\alpha(\Phi))$
  such that $(\partial^{RH}(G_\alpha,{\mathcal H}^\alpha(\Phi)),\lambda)$
  is a $\mu$-boundary of $(G_\alpha,\mu)$ and $\lambda$ is the unique
  $\mu$-stationary probability measure on $\partial^{RH}(G_\alpha,{\mathcal H}^\alpha(\Phi))$.
  \item If the measure $\mu$ has finite first logarithmic
  moment and finite entropy with respect to a word-metric
  on $G_\alpha$, then the measured space $(\partial^{RH}(G_\alpha,{\mathcal H}^\alpha(\Phi)),\lambda)$
  is the Poisson boundary of $(G_\alpha,\mu)$.
\end{enumerate}
\end{theorem}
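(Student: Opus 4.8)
The plan is to derive the statement by feeding the relatively hyperbolic structure produced in this section directly into the boundary identification of Proposition \ref{dahmani}. First I would invoke Proposition \ref{polynomialfamily} to extract from $\Phi$ a $\Phi$-characteristic family ${\mathcal H}(\Phi) = (H_1,\ldots,H_r)$, namely a finite, exhaustive, minimal family of finitely generated polynomially growing subgroups that is $\Phi$-invariant up to conjugacy. Because $\Phi$ has exponential growth, this family consists of proper subgroups of $G$, so the associated mapping-torus family ${\mathcal H}^\alpha = {\mathcal H}^\alpha(\Phi)$ of Definition \ref{inducedmappingtorus} is a genuine proper family of subgroups of $G_\alpha$; this is exactly what makes the ensuing relatively hyperbolic structure nondegenerate rather than collapsing to the absolute hyperbolic boundary of a hyperbolic group.

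Next I would apply Theorem \ref{lustigmoi}, which covers precisely the two cases at hand --- $G$ the free group of rank $n$ (via \cite{GauteroLustig}) and $G$ the fundamental group of a compact hyperbolic surface (via \cite{Gautero3}) --- to conclude that $G_\alpha = G \rtimes_\alpha \mz$ is strongly hyperbolic relative to ${\mathcal H}^\alpha(\Phi)$. Here one should note that the construction of ${\mathcal H}^\alpha$ requires ${\mathcal H}(\Phi)$ to be $\alpha$-invariant up to conjugation (not merely $\Phi$-invariant as outer-class data), and that the conjugators $h_i$ entering the definition of the induced mapping-torus subgroups $H^\alpha_i = \langle H_i, h^{-1}_i t^{m_i} \rangle$ are well defined thanks to the malnormality recorded in Remark \ref{malnormal}.

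With $(G_\alpha,{\mathcal H}^\alpha(\Phi))$ now known to be strongly relatively hyperbolic, the three conclusions follow verbatim from the corresponding items of Proposition \ref{dahmani}: item $(1)$ gives convergence of $\mathbf{P}$-almost every sample path to a limit in $\partial^{RH}(G_\alpha,{\mathcal H}^\alpha(\Phi))$; item $(2)$ gives non-atomicity, $\mu$-stationarity, uniqueness, and the $\mu$-boundary property of the hitting measure $\lambda$; and item $(3)$, under the finite first logarithmic moment and finite entropy hypotheses, identifies $(\partial^{RH}(G_\alpha,{\mathcal H}^\alpha(\Phi)),\lambda)$ with the Poisson boundary. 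The hypothesis that the support of $\mu$ generates $G_\alpha$ as a semigroup is exactly the non-degeneracy needed to apply Proposition \ref{dahmani}.

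I do not expect a genuine obstacle in this argument: the theorem is designed as an immediate corollary, and all the substantive work --- the strong relative hyperbolicity in Theorem \ref{lustigmoi} and the entropic boundary identification in Proposition \ref{dahmani} --- is imported. The only point demanding a moment's care is the bookkeeping that the exponential growth of $\Phi$ guarantees a proper characteristic family, so that $\partial^{RH}$ is a meaningful boundary and not a degenerate one; this is what ties the hypothesis on $\alpha$ to the applicability of the relatively hyperbolic machinery.
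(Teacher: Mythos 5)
Your proposal is correct and follows exactly the paper's own route: the theorem is stated there as following directly from Proposition \ref{dahmani} combined with Theorem \ref{lustigmoi}, with the characteristic family supplied by Proposition \ref{polynomialfamily}. The only minor over-reading is your suggestion that exponential growth is needed to keep $\partial^{RH}$ nondegenerate --- the paper notes that in the hyperbolic case the characteristic family is trivial and the statement still holds (it merely reduces to the known hyperbolic-group result), so nothing in the argument actually depends on that point.
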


When the outer automorphism is hyperbolic, meaning that any
conjugacy-class has exponential growth under $\Phi$, the
$\Phi$-characteristic family is trivial. This is exactly the case
where $G_\alpha$ is a hyperbolic group
\cite{BestvinaFeighn,Gautero1,Gautero3} so that, in this case,
Theorem \ref{consequence facile} tells nothing new with respect to
\cite{Kaim1}.

\medskip

\noindent {\em Acknowledgements:} The first author is glad to thank
M. Lustig for the numerous, almost uncountable, discussions they had
about $\mr$-trees and the actions of free groups on $\mr$-trees. He is also grateful to
the University Blaise Pascal (Clermont-Ferrand II) where he worked during the elaboration
of most of this work. Both authors thank Yves Guivarc'h for his permanent encouragement and
fruitful comments, and Fran\c cois Dahmani for pointing out Proposition \ref{dahmani}.

\bibliographystyle{plain}
\bibliography{biblioGM}

\end{document}